\newtheorem{theorem}{Theorem}
\newtheorem{lemm}{Lemma}
\newtheorem{cor}[theorem]{Corollary}
\newtheorem{example}{Example}
\newtheorem{prop}[theorem]{Proposition}
\newtheorem{remi}{Remark}
\numberwithin{theorem}{section}
\numberwithin{lemm}{section}
\numberwithin{remi}{section}
\numberwithin{example}{section}
\newcommand{\zak}{\nobreak \ifvmode \relax \else
     \ifdim\lastskip<1.5em \hskip-\lastskip
     \hskip1.5em plus0em minus0.5em \fi \nobreak
     $\Box$\fi\\}
\def\Y{{\mathcal Y}}
\def\G{{\cal G}}
\def\EE{\mathbb{E}}
\newcommand{\eps}{\varepsilon}
\newcommand{\nbR}{\mathbb{R}}
\newcommand{\nbN}{\mathbb{N}}
\newcommand{\nbC}{\mathbb{C}}
\newcommand{\nbu}{\mathbb{1}}
\newcommand{\nbP}{\mathbb{P}}
\newcommand{\nbZ}{\mathbb{Z}}
\newcommand{\nbE}{\mathbb{E}}
\def\PP{{\mathbb P}}
\def\EE{{\mathbb E}}
\newcommand{\N}{\ensuremath{\mathbb{N}}}
\newcommand{\R}{\ensuremath{\mathbb{R}}}
\newcommand{\1}{\ensuremath{\mathbb{1}}}
\def\eps{{\epsilon}}
\def\1{{\mathbb 1}}
\def\var {\mathop{\rm Var}\nolimits}    
\def\cov{\mathop{\rm Cov}\nolimits}    
\begin{document}
\begin{frontmatter}

\title{Estimation of subcritical Galton Watson processes with correlated immigration
}

\author[uphf]{{{Yacouba}}
{{Boubacar Ma\"{\i}nassara}}
} 
\address[uphf]{
Université Polytechnique Hauts-de-France,
INSA Hauts-de-France, CERAMATHS - Laboratoire de
Matériaux Céramiques et de Mathématiques, F-59313 Valenciennes, France}
\ead{mailto:Yacouba.BoubacarMainassara@uphf.fr}
\author[ufc]{{{Landy }}
{{Rabehasaina}}
}
\address[ufc]{ Universit\'e Bourgogne Franche-Comt\'e, Laboratoire de math\'ematiques de Besan\c{c}on, UMR CNRS 6623, 16 Route de Gray, 25030 Besan\c{c}on, France}
\ead{mailto:lrabehas@univ-fcomte.fr}

%

%
\begin{abstract}

We consider an observed subcritical Galton Watson process $\{Y_n,\ n\in \nbZ\}$ with correlated stationary immigration process $\{\epsilon_n,\ n\in \nbZ \}$. Two situations are presented. The first one is when $\mbox{Cov}(\epsilon_0,\epsilon_k)=0$ for $k$ larger than some $k_0$: a consistent estimator for the reproduction and mean immigration rates is given, and a central limit theorem is proved. The second one is when $\{\epsilon_n,\ n\in \nbZ \}$ has general correlation structure: under mixing assumptions, we exhibit an estimator for the the logarithm of the reproduction rate and we prove that it converges in quadratic mean with explicit speed. In addition, when the mixing coefficients decrease fast enough, we provide and prove a two terms expansion for the estimator. Numerical illustrations are provided.
\end{abstract}
\begin{keyword}
Galton Watson processes, immigration, INAR processes
\end{keyword}
\end{frontmatter}
\normalsize

\section{Introduction}\label{sec:Intro}
Estimation of parameters in a Galton Watson process with immigration has a long history: we refer to the seminal paper \cite{KN78} for laying the ground and expliciting conditional least square estimators for the expectation of the reproduction and immigration sequences in the subcritical case. A central limit theorem for these estimators was later proved in \cite{V82}, using a time series point of view. Note that the link between such processes and the so called integer-valued times series INAR$(1)$ processes has been exploited, see e.g. \cite{AOA87} which studied such a process with particular distributions for the reproduction and immigration sequences. See also \cite{MCK2003} who reviews the literature on models for integer-valued time series.
These above processes play an important role in many scientific disciplines and applied fields such as epidemiology, economics, finance, to name a few. Note also that the link between such processes and the so called Hawkes  processes introduced by \cite{H1971a,H1971b} as a model for contagious processes such as measles infections or hijackings  has been exploited in the literature. See \cite{DVJ2003} for a reference book that covers many aspects of the Hawkes process. For instance, \citet[Example 6.3(c)]{DVJ2003} show that each immigrant of Hawkes process has the potential to produce descendants whose numbers in successive generations constitute a subcritical Galton–Watson branching process with Poisson offspring distribution (see also \cite{HO1974}). Recently, it was proved in \citet[Theorem 3]{K2016} that Hawkes prcesses can be approximated by integer-valued autoregressive models of infinite order (INAR($\infty)$) processes, which in turn are the limit of INAR$(p)$ processes as $p\to \infty$ see \citet[Proposition 2.5]{K2017}. A certain number of extensions for the model were later devised and studied. \cite{WW90} considered the general critical and supercritical case and proved central limit theorems for (modified) weighted least square estimators. In \cite{barczy2021}, the specific case where the immigration sequence has a regular variation distribution is considered, leading to asymptotic normality of the reproduction mean when the immigration mean is known. Generalization to two types processes have been recently investigated in \cite{ispany2014, kormendi2018}, including estimations for the criticality parameter. Note that these references have one of the following constraint on the immigration process: first, some of them assume that its expectation is known (and appears in the expression of the estimator for the reproduction sequence); second, this immigration process is assumed to be a sequence of {\it independent} random variables, in addition to be identically distributed. 
%

We aim in this paper at considering a process which is stationary, but where the immigration process is dependent. To our knowledge, this particular feature appears not to have been studied in the literature. 
Thus, 
there are two major contributions in this work. The first one is the relaxing of the standard independence assumption on  the immigration process in order to extend considerably the range of application of the Galton Watson  models and to show that the moment estimation procedure can be extended to the Galton Watson  models with correlated immigration to estimate the unknown mean reproduction and immigration parameters $\lambda_0$ and $m_0$. This goal is achieved thanks to Proposition \ref{prop_consistency} and  Theorem \ref{theo_CLT_vector_k_0} in which the consistency
and the asymptotic normality are stated when the immigration sequence is no longer correlated from a certain instant. Still in this context, another contribution is to propose a weakly consistent estimator of the unknown 
 asymptotic variance matrix (see Theorem \ref{convergence_Isp}), which may be very different from that obtained in the standard framework (see for instance \citet[Theorem 2.2]{KN78} or \citet[Section 4.2]{AOA87}). Thanks to this estimation of the asymptotic variance matrix, we can for instance construct a confidence region for the estimation of the parameters (the reproduction and mean immigration rates). The second major contribution is when the immigration process has general correlation structure: we exhibit an estimator for the logarithm of the reproduction rate and we prove that it converges in quadratic mean with explicit speed at most of the order ${1}/{\ln(n)}$ (see Theorem \ref{main_theo_general1}). Convergence  is thus very slow, which in particular explains why we need to pick very large $n$ in the numerical illustrations. Under mildly stronger assumption, we provide and prove a two terms expansion for the estimator (see Theorem \ref{main_theo_genera2}) and thus deduce that the speed of convergence is exactly of the order ${1}/{\ln(n)}$.

The paper is organized as follows. Section \ref{sec:assumption}
presents the Galton Watson process that we consider here. A series of Propositions and Lemmas which are going to be repeatedly used are presented in Section \ref{sec:preliminary}. It is shown in Section \ref{sec:ultimate} that the moment estimators of the unknown mean reproduction and immigration parameters $\lambda_0$ and $m_0$ are 
consistent and asymptotically normally distributed when the immigration sequence is no longer correlated from a certain instant and  satisfies mild mixing assumptions. The proof relies on using Herrnorf's result (see \cite{H84}) that involves a truncated expansion of the stationary Galton Watson with correlated immigration process. These results use tools used previously in the time series literature in the case of uncorrelated but non-independent error terms ({\em i.e.} weak white noise), see \cite{FZ98}, except for some major differences: first, the model is different from the one in a time series setting and requires a different approach for determining the estimators, and second, the asymptotic normality concerns estimators which are different from least square estimators in \cite{FZ98}. The asymptotic variance of the moment estimators may be very different in the correlated and independence immigration cases. 
The estimation of this unknown asymptotic covariance matrix is done in Section \ref{estimOmega}. We consider in Section \ref{sec:general} a more general case of correlated immigration process. Here the approach is completely novel to our knowledge. The core results of the section are the quadratic convergence of the estimator as well as a two terms stochastic expansion for the estimator of $\ln\lambda_0$. The ingredients for the proofs relies on fine estimates for the estimator coupled with a central limit theorem for mixing triangular arrays of variables whose dependence is allowed to grow with the sample size  proved in \cite{FZ05}. Note that in the aforementioned results all estimators are determined from the observation of the base Galton Watson process only; in particular, the expectation of the immigration is unknown, and actually also estimated. Numerical illustrations are given in Section \ref{sec:numerics}. Finally, proofs of intermediate technical results are in Appendix \ref{sec:app}.

\section{Model, notation and Assumptions}\label{sec:assumption}
We consider the following Galton Watson process with immigration as the stationary sequence $\{Y_n,\ n\in\nbZ \}$ satisfying
\begin{equation}\label{model}
Y_{n+1}=\sum_{k=1}^{Y_n} \xi_{n+1,k}+\eps_{n+1},\quad n\in \nbZ ,
\end{equation}
for some sequences $\{\xi_{n,k},\ n\in\nbZ,\ k\in \nbN \}$ and $\{\eps_n,\ n\in\nbZ \}$, named thereafter the {\it reproduction} sequence and the {\it immigration} sequence, which are such that
\begin{itemize}
\item $\{\xi_{n,k},\ n\in\nbZ,\ k\in \nbN\}$ and $\{\eps_n,\ n\in\nbZ \}$ are independent sequences,
\item the reproduction sequence $\{\xi_{n,k},\ n\in\nbZ,\ k\in \nbN \}$ is an i.i.d. (doubly indexed) sequence, with distribution of a generic r.v. denoted by $\xi$,
\item the immigration process $\{\eps_n,\ n\in\nbZ \}$ is stationary and ergodic, with distribution that of a generic r.v. denoted by $\eps$,
\item the moments $\lambda_0:=\nbE(\xi)$ and $m_0:=\nbE(\eps)$ of $\{\xi_{n,k},\ n\in\nbZ,\ k\in \nbN \}$ and $\{\eps_n,\ n\in\nbZ \}$ are unknown.
\end{itemize}
The aim of the paper is devoted to the problem of estimating the unknown reproduction rate $\lambda_0$ as well as the mean immigration $m_0$ from the observed sequence $\{Y_n,\ n\in\nbZ \}$. 

Furthermore, in order for the existence of the stationary process $\{Y_n,\ n\in\nbZ \}$ to exist and be unique, it is required that the subcritical case holds, namely that
 \begin{equation}\label{stab}
\lambda_0<1 . 
 \end{equation}
To control the serial dependence of the stationary process $\{\eps_n,\ n\in\nbZ \}$, we introduce the strong mixing coefficients $\alpha_{{\epsilon}}(h)$ defined by
$$\alpha_{{\epsilon}}\left(h\right)=\sup_{A\in\mathcal F^n_{-\infty},B\in\mathcal F_{n+h}^{\infty}}\left|\mathbb{P}\left(A\cap
B\right)-\mathbb{P}(A)\mathbb{P}(B)\right|,$$
where $\mathcal F_{-\infty}^n=\sigma ({\epsilon}_u, u\leq n )$ and $\mathcal F_{n+h}^{\infty}=\sigma ({\epsilon}_u, u\geq n+h )$. Note that $\alpha_{{\epsilon}}(h)$ does not depend on $n\in \nbZ$ thanks to the stationarity of $\{\eps_n,\ n\in\nbZ \}$.

To establish the asymptotic properties of the proposed estimator   of $\lambda_0$, the following assumptions are required.

\begin{itemize}
\item[\hspace*{1em} {\bf (A1):}]
\hspace*{1em}   The mixing coefficient $\alpha_{{\epsilon}}(\cdot)$ 
verifies the following  summability condition. There exists $\beta>2$ such that
$$
\sum_{h=0}^\infty \lambda_0^{-h} \alpha_{\epsilon}(h)^{1-2/\beta}<\infty .
$$
\end{itemize}
We next make an integrability assumption on the moments and covariances of the immigration process $\{\eps_n,\ n\in\nbZ \}$ and the reproduction sequence $\{\xi_{n,k},\ n\in\nbZ,\ k\in \nbN \}$.
We use $\|\cdot\|$ to denote the Euclidean norm  of a vector and for any (potentially matrix valued) random variable $X$, we will set $\|X\|_p^p:= \nbE\|X\|^p$ its $\mathbb{L}^p$ norm, with $p\ge 1$. 
\begin{itemize}
\item[\hspace*{1em} {\bf (A2):}]
\hspace*{1em}  The following moment conditions hold:
$$
\|\eps\|_{2\beta} = \left[\nbE\left|\eps\right|^{2\beta}\right]^{1/(2\beta)}<\infty\quad\text{and}\quad \|\xi\|_{2\beta} = \left[\nbE\left|\xi\right|^{2\beta}\right]^{1/(2\beta)}<\infty .
$$
\end{itemize}
\begin{itemize}
\item[\hspace*{1em} {\bf (A3):}]
\hspace*{1em}  
 The covariance of the immigration process $\nu_h:=\mbox{Cov}(\eps_0,\eps_h)$ verifies:
$$
\sum_{h=0}^\infty h \lambda_0^{-h}\left|\nu_{h+1}\right|<\infty .
$$
\end{itemize}
Assumption $\mathbf{ (A3)}$ may be removed if we replace $\mathbf{ (A1)}$ by $\sum_{h=0}^\infty h\lambda_0^{-h} \alpha_\epsilon(h)^{1-2/\beta}<\infty$ thanks to Davydov's inequality (see \citet[Inequality (2.2)]{D68}) and $\mathbf{ (A2)}$. Also note that {\bf (A1)} implies that the mixing coefficient $\alpha_{{\epsilon}}(\cdot)$ decreases at least exponentially, with decay rate depending on the unknown parameter $\lambda_0$. This unknown decay rate may be problematic because {\bf (A1)} may thus not be easy to verify it in practice. Note that, in the case when one imposes $\lambda_0$ to lie in a known interval $[\lambda_-,\lambda_+] $ where $0<\lambda_-<\lambda_+<1$ (as is the case later on in Section \ref{sec:general}), then {\bf (A1)} can be replaced by the stronger assumption $\sum_{h=0}^\infty \lambda_-^{-h} \alpha_{\epsilon}(h)^{1-2/\beta}<\infty $ which may be easier to check in practice.

We need to introduce the following notation in the sequel:
\begin{eqnarray}
u_k&:=& \nbE(Y_0 Y_{k+1})=\nbE(Y_1 Y_{-k}),\quad k\in \nbN ,\label{def_u_k}\\
f_\nu(x) &:=& \sum_{j=0}^\infty \nu_{j+1} x^j,\quad |x| \le  \lambda_0^{-1} 
\label{def_f_nu}\\
\chi_k &:=& -\frac{1}{\lambda_0^k}\sum_{j=k}^\infty \lambda_0^j\nu_{j+1}, \quad k\in \nbN , \label{def_chi}
\end{eqnarray}
Note that the existence of $\chi_k$ defined in \eqref{def_chi} is an easy consequence of the convergence of the series in Assumption $\mathbf{(A3)}$, and that $f_\nu$ is a power series with convergence radius larger than $\lambda_0^{-1}$ for the same reason. Observe also that $\chi_0=- f_\nu(\lambda_0)$.
Furthermore, it will be proved in  Proposition \ref{prop_stationarity} that, under Condition \eqref{stab} and Assumption $\mathbf{ (A2)}$, the process $\{Y_n,\ n\in\nbZ \}$ is a second-order stationary process of which the first and second-order moments are finite and defined by:
\begin{equation}\label{def_moments}
C_1:= \nbE(Y_0),\quad C_2:= \nbE(Y_0^2).
\end{equation}
Furthermore, taking the expectation in \eqref{model} with $n=0$ and using the independence of $\{ \xi_{1,j},\ j\in \nbN\}$'s and $Y_0$ imply the explicit expression of the first order moment of the process $\{Y_n,\ n\in\nbZ \}$:
\begin{equation}\label{expr_first_moment}
C_1=\frac{m_0}{1-\lambda_0}.
\end{equation}
The expression of $C_2$ is also available and will be derived later on in Lemma \ref{lemma_v_k}. 
\section{Preliminary results}\label{sec:preliminary}
\subsection{Expansion for $Y_n$}
Let us introduce for all $n\in \nbZ$ the random operator $\theta_n:\nbN\longrightarrow \nbN$ defined as
$$
\theta_n\circ k:=\sum_{i=1}^k \xi_{n,i}
$$
for all $k\in \nbN$. We note that $\theta_n$ depends on the $\xi_{n,i} $, $i\in\nbN$, so that the operators $(\theta_n)_{n\in\nbN}$ are independent. Following \citet[Relation (4)]{KW21}, iteration of \eqref{model} yields the representation for the stationary distribution
\begin{equation}\label{rep_stationary}
Y_0=\sum_{i=0}^\infty \theta^{(i)}\circ\epsilon_{-i}
\end{equation}
provided that the series converges, where $\theta^{(i)}$, $i\in \nbN$, are independent random operators, such that $\theta^{(0)}=\mbox{Id}$ and $\theta^{(i)}\stackrel{\cal D}{=}\theta_0\circ \theta_{-1}\circ ...\circ \theta_{-i+1}$, $i\ge 1$, and independent from the immigration sequence $\{ \epsilon_i,\ i\in\nbZ\}$. Even more generally and more precisely, let us introduce a family of i.i.d. Galton Watson processes $\{ \{\theta_{j,j'}(i),\ i\in \nbN \},\ (j, j')\in \nbN^2\}$ starting from $\theta_{j,j'}(0)=1 $ and with same offspring distribution as $\{\xi_{n,k},\ n\in\nbZ,\ k\in \nbN\}$, and independent from the immigration process $\{\eps_n,\ n\in\nbZ \}$. We may write
\begin{equation}\label{proof_step_2_CTL_k0}
Y_n=\sum_{i=0}^\infty \theta^{(i)}_n\circ\epsilon_{n-i},\quad n\in \nbZ,
\end{equation}
where $\{\theta^{(i)}_n,\ i\in \nbN,\ n\in \nbN\}$ are a set of operators defined from $\nbN$ to $\nbN$, written as
\begin{equation}\label{op_stationary}
\theta^{(i)}_n\circ k := \sum_{j=1}^k \theta_{n-i,j}(i).
\end{equation}
This way, the term $\theta_{n-i,j}(i)$ in $\theta^{(i)}_n\circ\epsilon_{n-i}=\sum_{j=1}^{\epsilon_{n-i}} \theta_{n-i,j}(i)$ in \eqref{proof_step_2_CTL_k0} may be interpreted as the number of descendants at time $n$ of the $j$th immigrants arrived at time $n-i$, $j\in\{1,...\epsilon_{n-i}\}$ and $i\in\nbN$. 
Note that the representation \eqref{op_stationary} in particular implies the two following important facts. First, the operators $\theta^{(i)}_n$, $i\in \nbN$, $n\in \nbZ$, are independent from the immigration sequence $\{ \epsilon_i,\ i\in\nbZ\}$. Second, for all $n$ and $r$ in $\nbZ$ and $i\ge 0$, $j\ge 0$, $\theta^{(i)}_n$ is independent from $\theta^{(j)}_r$ as soon as $ n-i\neq r-j$. This latter feature will be extensively used later on, particularly in the proofs of the central limit theorems (see upcoming Theorem \ref{theo_CLT_vector_k_0} and Proposition \ref{prop_main_theo2}).
\begin{prop}[Stationary distribution and existence of moments]\label{prop_stationarity}\normalfont
The stationary version of the model described by \eqref{model} exists and admits moments of order $2\beta$ under Assumption $\mathbf{ (A2)}$.
\end{prop}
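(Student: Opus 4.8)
The plan is to construct the stationary process directly from the series representation \eqref{rep_stationary}--\eqref{proof_step_2_CTL_k0} and to control it in $\mathbb{L}^{2\beta}$. Since the reproduction and immigration variables are nonnegative, the partial sums $Y_0^{(N)}:=\sum_{i=0}^N \theta^{(i)}\circ\eps_{-i}$ are finite and nondecreasing in $N$, hence converge almost surely to the limit $Y_0$ of \eqref{rep_stationary} (a priori valued in $[0,\infty]$). It therefore suffices to show $\sup_N \|Y_0^{(N)}\|_{2\beta}<\infty$, for then monotone convergence gives simultaneously $Y_0<\infty$ a.s. and $\nbE(Y_0^{2\beta})<\infty$. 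By Minkowski's inequality $\|Y_0^{(N)}\|_{2\beta}\le\sum_{i=0}^N\|\theta^{(i)}\circ\eps_{-i}\|_{2\beta}$, so the whole statement reduces to the summability
\[
\sum_{i=0}^\infty \big\|\theta^{(i)}\circ\eps_{-i}\big\|_{2\beta}<\infty .
\]
Running the same construction through \eqref{proof_step_2_CTL_k0} simultaneously for every $n$ produces a process satisfying \eqref{model} whose law is invariant under the time shift, which is exactly the required stationarity.

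The core is the estimate of a single term. Conditionally on $\eps_{-i}=m$, and using that $\theta^{(i)}$ is independent of the immigration (as noted just after \eqref{proof_step_2_CTL_k0}), the variable $\theta^{(i)}\circ\eps_{-i}$ is distributed as $\sum_{k=1}^{m} Z_i^{(k)}$, where the $Z_i^{(k)}$ are i.i.d. copies of the generation-$i$ size $Z_i:=\theta^{(i)}\circ 1$ of a plain subcritical Galton Watson process with offspring law that of $\xi$. I would invoke the classical subcritical moment estimates $\nbE(Z_i)=\lambda_0^i$, $\var(Z_i)=O(\lambda_0^i)$, and crucially $\|Z_i\|_{2\beta}\le C\rho^i$ for some $\rho\in(0,1)$, the latter obtained by the usual induction on the integer moments of branching processes together with $\nbE|\xi|^{2\beta}<\infty$ from $\mathbf{(A2)}$ and the subcriticality \eqref{stab}. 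Splitting $Z_i^{(k)}=\lambda_0^i+(Z_i^{(k)}-\lambda_0^i)$ and applying Minkowski gives a mean part of norm $\lambda_0^i\|\eps\|_{2\beta}$ plus a centered part; conditioning on $\eps_{-i}$ and applying a Rosenthal (Marcinkiewicz--Zygmund) inequality to the sum of independent centered summands, then integrating in $\eps_{-i}$, yields
\[
\Big\|\textstyle\sum_{k=1}^{\eps_{-i}}(Z_i^{(k)}-\lambda_0^i)\Big\|_{2\beta}^{2\beta}\le C\Big(\nbE(\eps)\,\|Z_i-\lambda_0^i\|_{2\beta}^{2\beta}+\nbE(\eps^{\beta})\,\var(Z_i)^{\beta}\Big),
\]
the moments $\nbE(\eps),\nbE(\eps^\beta)\le\|\eps\|_{2\beta}^{\beta}$ being finite by $\mathbf{(A2)}$. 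Every term on the right is geometrically small in $i$, so $\|\theta^{(i)}\circ\eps_{-i}\|_{2\beta}=O(\bar\rho^{\,i})$ for some $\bar\rho\in(0,1)$, which is summable and closes the argument.

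The main obstacle is precisely this per-term bound, where two difficulties must be handled at once: the \emph{random} number $\eps_{-i}$ of summands, dealt with by conditioning, by the independence of $\theta^{(i)}$ from the immigration, and by the Rosenthal inequality (all consuming the $2\beta$-integrability of $\eps$ supplied by $\mathbf{(A2)}$), and the \emph{decay} of the branching moments $\|Z_i\|_{2\beta}$, where subcriticality $\lambda_0<1$ and the $2\beta$-integrability of $\xi$ enter. Once geometric decay in $i$ is secured, convergence of the series and finiteness of the moment of order $2\beta$ follow immediately.
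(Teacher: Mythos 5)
Your proof is correct and rests on the same reduction as the paper's: both arguments start from the series representation \eqref{rep_stationary} and boil the statement down to the summability of $\sum_{i\ge 0}\|\theta^{(i)}\circ\epsilon_{-i}\|_{2\beta}$, obtained from a geometric decay of the individual terms. The difference lies in how that decay is established. The paper simply cites \citet[Section 3.1]{KW21} for $\|\theta^{(i)}\circ\epsilon_{-i}\|_{2\beta}=\mathrm{O}(\upsilon^i)$, remarking only that the argument there uses nothing beyond the independence of $\theta^{(i)}$ from $\epsilon_{-i}$, which survives the passage to correlated immigration. You instead prove the bound from scratch: conditioning on $\epsilon_{-i}$, invoking the branching property to write $\theta^{(i)}\circ\epsilon_{-i}$ as a random sum of i.i.d.\ copies of the generation-$i$ size of a single-ancestor subcritical process, splitting off the mean $\lambda_0^i\epsilon_{-i}$, and controlling the centered part by Rosenthal's inequality (legitimate here since $2\beta>4$) combined with the classical subcritical moment estimates. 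This is a valid, self-contained substitute for the citation, and your monotone-convergence passage from the partial sums to $Y_0$ justifies the a.s.\ convergence of the series slightly more carefully than the paper does. The one step you assert without carrying out is the induction giving $\|\theta^{(i)}\circ 1\|_{2\beta}\le C\rho^i$ for some $\rho<1$; it is standard for subcritical Galton--Watson processes with $\nbE|\xi|^{2\beta}<\infty$, but it is precisely where Assumption $\mathbf{(A2)}$ on $\xi$ is consumed, so it deserves at least a reference or a short induction on integer moments.
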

The proof of Proposition \ref{prop_stationarity} is postponed to Section \ref{sec:prop_beta_moments}.
\begin{lemm}\label{lemma_v_k}\normalfont
Let us define $v_k:=\nbE(\epsilon_1 Y_{-k})=\nbE(\epsilon_{k+1} Y_{0})$ for all $k\in \nbN$. Its explicit expression is given by
\begin{equation}\label{expr_v_k}
v_k= u_k - \lambda_0 u_{k-1}=\frac{m_0^2}{1-\lambda_0}- \chi_k,\quad k\in \nbN 
\end{equation}
where $u_k$ and $\chi_k$ are given by \eqref{def_u_k} and \eqref{def_chi}. As a consequence, the second moment of $Y_0$ is given by
\begin{equation}\label{expr_moments}
C_2 = \frac{1}{1-\lambda_0^2}\left[ \frac{V_{0,\xi} m_0}{1-\lambda_0}+2\lambda_0\left(\frac{m_0^2}{1-\lambda_0} +f_\nu(\lambda_0)\right)+\nbE\epsilon_1^2 \right]
\end{equation}
where $V_{0,\xi}:=\mbox{Var}(\xi)$.
\end{lemm}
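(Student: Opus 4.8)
The plan is to establish the two expressions for $v_k$ separately and then deduce $C_2$ from a second-moment recursion, relying throughout on the branching representation and on the independence of the reproduction operators from the immigration.

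For the identity $v_k = u_k - \lambda_0 u_{k-1}$, I would write the model \eqref{model} at time $1$ as $Y_1 = \theta_1\circ Y_0 + \epsilon_1$ and multiply by $Y_{-k}$. Since $\theta_1$ depends only on the reproduction variables $\{\xi_{1,j}\}$, which are independent of the pair $(Y_0,Y_{-k})$ and of $\epsilon_1$, conditioning on $(Y_0,Y_{-k})$ gives $\nbE(\theta_1\circ Y_0\cdot Y_{-k}) = \lambda_0\,\nbE(Y_0 Y_{-k})$. By stationarity $\nbE(Y_0 Y_{-k}) = \nbE(Y_0 Y_{k}) = u_{k-1}$ (with the convention $u_{-1}=C_2$), while $\nbE(Y_1 Y_{-k}) = u_k$ by \eqref{def_u_k} and $\nbE(\epsilon_1 Y_{-k}) = v_k$ by definition. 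Taking expectations yields $u_k = \lambda_0 u_{k-1} + v_k$, which is the first equality.

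For the closed form $v_k = \frac{m_0^2}{1-\lambda_0} - \chi_k$, I would insert the representation \eqref{rep_stationary}, namely $Y_0 = \sum_{i\ge 0}\theta^{(i)}\circ\epsilon_{-i}$, into $v_k = \nbE(\epsilon_{k+1}Y_0)$ and exchange expectation and summation. Because each $\theta^{(i)}$ is independent of the immigration sequence and satisfies $\nbE(\theta^{(i)}\circ\epsilon_{-i}\mid \{\epsilon_u\}) = \lambda_0^i\,\epsilon_{-i}$, each term reduces to $\lambda_0^i\,\nbE(\epsilon_{k+1}\epsilon_{-i}) = \lambda_0^i(\nu_{k+1+i}+m_0^2)$, using stationarity and $\mbox{Cov}(\epsilon_{k+1},\epsilon_{-i})=\nu_{k+1+i}$. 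Summing the geometric part gives $m_0^2/(1-\lambda_0)$, while the re-indexing $j=k+i$ turns $\sum_{i\ge0}\lambda_0^i\nu_{k+1+i}$ into $\lambda_0^{-k}\sum_{j\ge k}\lambda_0^j\nu_{j+1}=-\chi_k$ by \eqref{def_chi}. Specializing at $k=0$ and using $\chi_0=-f_\nu(\lambda_0)$ gives $v_0 = \frac{m_0^2}{1-\lambda_0}+f_\nu(\lambda_0)$. Then, for $C_2$, I would square $Y_1 = \theta_1\circ Y_0 + \epsilon_1$ and take expectations: conditioning on $Y_0$ (a sum of $Y_0$ conditionally i.i.d. copies of $\xi$) gives $\nbE((\theta_1\circ Y_0)^2) = V_{0,\xi}\,C_1 + \lambda_0^2 C_2$, the cross term gives $2\lambda_0\,\nbE(\epsilon_1 Y_0) = 2\lambda_0 v_0$, and $\nbE(\epsilon_1^2) = V_{0,\epsilon}+m_0^2$. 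Using $\nbE(Y_1^2)=C_2$ and $C_1 = m_0/(1-\lambda_0)$ from \eqref{expr_first_moment}, solving $(1-\lambda_0^2)C_2 = V_{0,\xi}C_1 + 2\lambda_0 v_0 + \nbE(\epsilon_1^2)$ and substituting $v_0$ reproduces \eqref{expr_moments}.

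The only genuine subtlety is justifying the interchange of expectation and infinite sum in the second step and the absolute convergence of $\sum_{i\ge0} \lambda_0^i\nu_{k+1+i}$. Both follow from the existence of moments of order $2\beta$ (Proposition \ref{prop_stationarity}) together with Assumption $\mathbf{(A3)}$, which guarantees that $\chi_k$ is well defined and that the series converges; a dominated-convergence argument with the summable majorant $\lambda_0^i\,\nbE|\epsilon_{k+1}\epsilon_{-i}|$ handles the exchange. Everything else is routine bookkeeping of conditional expectations exploiting the independence of the reproduction operators from the immigration.
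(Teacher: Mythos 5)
Your proof is correct, and for the central identity $v_k=\frac{m_0^2}{1-\lambda_0}-\chi_k$ it takes a genuinely different route from the paper. The paper does not expand $Y_0$ as the series \eqref{rep_stationary} here: it introduces $\eta_k:=\nbE(\epsilon_1\epsilon_{-k})=m_0^2+\nu_{k+1}$ and shows, by two applications of the one-step independence argument, that $\eta_k=v_k-\lambda_0 v_{k+1}$. This is an \emph{unstable} first-order recursion (each iteration divides by $\lambda_0<1$), so after iterating it the paper must argue that the coefficient of $\lambda_0^{-k}$ in the resulting expression vanishes, which it does by noting that $(v_k)_{k}$ is bounded (Cauchy--Schwarz) while $\chi_k\to 0$; this pins down $v_0$ and hence the general formula. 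Your direct computation via $Y_0=\sum_{i\ge 0}\theta^{(i)}\circ\epsilon_{-i}$ together with $\nbE\bigl(\theta^{(i)}\circ\epsilon_{-i}\mid\{\epsilon_u\}\bigr)=\lambda_0^i\epsilon_{-i}$ delivers the closed form in one pass and dispenses with the boundedness and limit argument entirely; the price is the interchange of sum and expectation, which you justify adequately (since all variables are non-negative, Tonelli with the majorant $\lambda_0^i\,\nbE(\epsilon_{k+1}\epsilon_{-i})\le\lambda_0^i\|\epsilon\|_2^2$ suffices, and the bound \eqref{bound_KW} gives the same conclusion in general). Your derivations of $v_k=u_k-\lambda_0 u_{k-1}$ and of $C_2$ coincide with the paper's, so the only real divergence is this cleaner, representation-based computation of $v_k$, which trades the paper's recursion-solving for a convergence justification.
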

The proof of Lemma \ref{lemma_v_k} is postponed to Section \ref{sec:lemma_vk}.
%
\subsection{Correlations of the process $\{Y_n,\ n\in \nbZ \}$} 
We next study properties related to the asymptotic behaviour of $\mbox{Cov}(Y_0,Y_n)$. The following result, proved in Appendix \ref{sec:proof_prop_C_1_u_k}, shows that the correlation of the process $\{Y_n,\ n\in\nbZ \}$ converges to $0$ exponentially fast.
\begin{cor}\label{prop_C_1_u_k}\normalfont
One has the following expression
\begin{equation}\label{expr_C_1_u_k}
C_1^2 - u_k= \lambda_0^k \left(\sum_{j=0}^k \lambda_0^{-j} \chi_j + \lambda_0(C_1^2-C_2)\right).
\end{equation}
In particular, one has that $\cov(Y_0,Y_{k+1})= u_k-C_1^2=\mathrm{O}(\lambda_0^k)$ as $k\to\infty$.
\end{cor}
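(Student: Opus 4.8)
The plan is to turn the identity of Lemma~\ref{lemma_v_k} into a first order linear recursion for $u_k$ and solve it explicitly. Lemma~\ref{lemma_v_k} provides both $v_k = u_k - \lambda_0 u_{k-1}$ and $v_k = \frac{m_0^2}{1-\lambda_0} - \chi_k$, so that for $k\ge 0$
\[
u_k = \lambda_0 u_{k-1} + \frac{m_0^2}{1-\lambda_0} - \chi_k ,
\]
where the boundary value $u_{-1} = \nbE(Y_0^2) = C_2$ is read off from the definition $u_k = \nbE(Y_0 Y_{k+1})$ taken at $k=-1$ using stationarity.

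Next I would center the recursion at $C_1^2$. Setting $w_k := C_1^2 - u_k$ and using \eqref{expr_first_moment}, i.e.\ $\frac{m_0^2}{1-\lambda_0} = (1-\lambda_0)C_1^2$, the constant terms cancel and the recursion collapses to the clean form
\[
w_k = \lambda_0 w_{k-1} + \chi_k , \qquad k \ge 0 .
\]
From $u_{-1} = C_2$ one computes the initial value $w_0 = C_1^2 - u_0 = \lambda_0(C_1^2 - C_2) + \chi_0$. Iterating the recursion gives $w_k = \lambda_0^k w_0 + \sum_{j=1}^k \lambda_0^{k-j}\chi_j$, and substituting $w_0$ and factoring out $\lambda_0^k$ produces exactly \eqref{expr_C_1_u_k}.

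For the asymptotic statement, stationarity gives $\cov(Y_0,Y_{k+1}) = u_k - C_1^2 = -w_k$, so it suffices to show that the bracket in \eqref{expr_C_1_u_k} stays bounded as $k\to\infty$. The constant $\lambda_0(C_1^2-C_2)$ is harmless, hence the point is to bound $\sum_{j=0}^k \lambda_0^{-j}\chi_j$ uniformly in $k$. Inserting the definition \eqref{def_chi} of $\chi_j$ and interchanging the two summations (legitimate by nonnegativity), I would estimate
\[
\sum_{j=0}^k \lambda_0^{-j}|\chi_j| \le \sum_{l=0}^\infty \lambda_0^{l}|\nu_{l+1}| \sum_{j=0}^l \lambda_0^{-2j} \le \frac{1}{1-\lambda_0^2}\sum_{l=0}^\infty \lambda_0^{-l}|\nu_{l+1}| ,
\]
using $\sum_{j=0}^l \lambda_0^{-2j} \le \lambda_0^{-2l}/(1-\lambda_0^2)$. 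The last series is finite by Assumption~$\mathbf{(A3)}$ (which bounds $\sum_h h\lambda_0^{-h}|\nu_{h+1}|$, hence a fortiori $\sum_h \lambda_0^{-h}|\nu_{h+1}|$), so the partial sums are bounded and $u_k - C_1^2 = \mathrm{O}(\lambda_0^k)$.

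The algebra is otherwise mechanical; the one step requiring care is the interchange of summations together with the geometric bound $\sum_{j=0}^l\lambda_0^{-2j}\le \lambda_0^{-2l}/(1-\lambda_0^2)$, since the growing factor $\lambda_0^{-2j}$ must be seen to be exactly absorbed by the $\lambda_0^l$ weight hidden in $\chi_j$, leaving the summable tail $\lambda_0^{-l}|\nu_{l+1}|$ that Assumption~$\mathbf{(A3)}$ controls.
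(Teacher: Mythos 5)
Your proof is correct and follows essentially the same route as the paper: Lemma~\ref{lemma_v_k} gives the recursion $C_1^2-u_k=\lambda_0(C_1^2-u_{k-1})+\chi_k$ with $u_{-1}=C_2$, which telescopes to \eqref{expr_C_1_u_k}. Your explicit bound on $\sum_{j=0}^k\lambda_0^{-j}|\chi_j|$ via Fubini and the geometric sum is the same computation the paper carries out (without absolute values) in \eqref{calcul_serie_chi}, and it correctly justifies the $\mathrm{O}(\lambda_0^k)$ claim under $\mathbf{(A3)}$.
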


One deduces from \eqref{expr_C_1_u_k} the asymptotic equivalent $C_1^2 - u_k\sim \lambda_0^k \left(\sum_{j=0}^\infty \lambda_0^{-j} \chi_j + \lambda_0(C_1^2-C_2)\right)$ as $k\to\infty$, when the quantity $\sum_{j=0}^\infty \lambda_0^{-j} \chi_j + \lambda_0(C_1^2-C_2)$ is not zero. The latter quantity will be studied later on, hence it will be interesting to write this quantity in function of the parameters $m$, $\lambda_0$ and the function $f_\nu$ defined in \eqref{def_f_nu}. This is done as follows. Using Fubini and \eqref{def_chi}, we first obtain that
\begin{multline}\label{calcul_serie_chi}
\sum_{j=0}^\infty \lambda_0^{-j} \chi_j= - \sum_{j=0}^\infty \lambda_0^{-2j} \sum_{r=j}^\infty \lambda_0^r \nu_{r+1}=-\sum_{r=0}^\infty \lambda_0^r \left( \sum_{j=0}^r \lambda_0^{-2j}\right)\nu_{r+1}\\
=-\sum_{r=0}^\infty\lambda_0^2 \frac{\lambda_0^{r}-\lambda_0^{-(r+2)}}{\lambda_0^2-1}\nu_{r+1}= \frac{\lambda_0^2}{1-\lambda_0^2}\left[ f_\nu(\lambda_0)-\lambda_0^{-2}f_\nu(\lambda_0^{-1})\right].
\end{multline}
Moreover, the expressions \eqref{expr_first_moment} and \eqref{expr_moments} of $C_1$ and $C_2$ the first and second moments of $\{Y_n,\,n\in\mathbb{Z}\}$ entail that
\begin{multline*}
C_1^2-C_2=\frac{m_0^2}{(1-\lambda_0)^2}-\frac{1}{1-\lambda_0^2}\left[ \frac{V_{0,\xi} m_0+2\lambda_0 m_0^2}{1-\lambda_0}+2\lambda_0f_\nu(\lambda_0)+V_{0,\epsilon} +m_0^2 \right]\\
= -  \frac{V_{0,\xi} m_0}{(1-\lambda_0^2)(1+\lambda_0)}- \frac{2\lambda_0}{1-\lambda_0^2} f_\nu (\lambda_0) - \frac{ V_{0,\epsilon}}{1-\lambda_0^2},
\end{multline*}
by using $\nbE\epsilon_1^2=V_{0,\epsilon}+m_0^2$ with $V_{0,\epsilon}:=\mbox{Var}(\epsilon)$. 
Combining the above expression and \eqref{calcul_serie_chi}, yields
\begin{equation}
\sum_{j=0}^\infty \lambda_0^{-j} \chi_j + \lambda_0(C_1^2-C_2) =-  \frac{V_{0,\xi} m_0 \lambda_0}{(1-\lambda_0^2)(1+\lambda_0)} - \frac{ V_{0,\epsilon} \lambda_0}{1-\lambda_0^2} - \frac{\lambda_0^2}{1-\lambda_0^2} f_\nu (\lambda_0) - \frac{1}{1-\lambda_0^2} f_\nu(\lambda_0^{-1}) .\label{expr_f_nu}
\end{equation}
We also mention that, still when $\sum_{j=0}^\infty \lambda_0^{-j} \chi_j + \lambda_0(C_1^2-C_2)$ is not zero, \eqref{expr_C_1_u_k} yields the following interesting limit
$$
\lambda_0= \lim_{k\to\infty}\frac{C_1^2 - u_k}{C_1^2 - u_{k-1}},
$$
which is the starting point for exhibiting estimators for $\lambda_0$ in the following Section.
\section{Ultimately uncorrelated immigration}\label{sec:ultimate}
We suppose in this section that Assumptions {\bf (A1)}, {\bf (A2)} and that $\mathbf{ (A3)_1}$ defined thereafter hold.
\begin{itemize}
\item[\hspace*{1em} $\mathbf{ (A3)_1}$:]
\hspace*{1em}   $\exists k_0\in \nbN\setminus \{0\}$ such that $\nu_k=0$ for all $k\ge k_0$, where the instant $k_0$ is known.
\end{itemize}
In other words, the above assumption means that the immigration is no more correlated from an instant $k_0$. Also note that $\mathbf{ (A3)_1}$ is stronger than {\bf (A3)}.
\begin{example}\label{example1}\normalfont
Let $\{Z_n,\ n\in\nbZ \}$ be any sequence of iid discrete (for instance: Poisson, Bernoulli, Binomial,\dots). For fixed $k_0>0$, let us assume that those random variables have finite $2\beta$ moments and let 
\begin{equation}\label{example_corr}
\epsilon_n:=\prod_{i=0}^{k_0-1} Z_{n-i}.
\end{equation}
We may alternatively assume that they have $4\beta$ moments and let $\epsilon_n:=Z_{n}^2\prod_{i=1}^{k_0-1} Z_{n-i}$. Thus we have:
\begin{equation}\label{corr1}
\nu_h=\left\{\begin{array}{ccc}
\mbox{Cov}\left(\epsilon_{n},\epsilon_{n-h}\right)\neq0 & \mbox{ if }  & h\leq k_0-1,\\
\mbox{Cov}\left(\epsilon_{n},\epsilon_{n-h}\right)=0 & \mbox{ if } &h\geq k_0 .
\end{array}\right.
\end{equation}
 Then the $\epsilon_n$ form an $(k_0-1)$-dependent sequence; that is, the mixing coefficients of the $\epsilon_n$  sequence satisfies $\alpha_{\epsilon}(h)=0$ if $h> k_0-1$, and the assumptions {\bf (A1)},  {\bf (A2)} and  $\mathbf{ (A3)_1}$ are satisfied.
\end{example}
\begin{example}\label{example_immigration}\normalfont
The previous example may be generalized when $k_0=2$ as follows. We suppose that we are in a setting where $\{Y_n,\ n\in\nbZ\}$ described by \eqref{model} represents the number of persons infected by an illness. We consider an exogenous system that provides the contaminated immigrants $\{\epsilon_n,\ n\in\nbZ\}$ described as follows. This system consists of contaminated individuals pertaining to two classes, namely the class of contagious individuals  (Class 1)  and the class of non contagious individuals  (Class 2).
We are endowed with a family of $\nbN\setminus\{0\}$ valued, independent and identically distributed processes $\{(\zeta_{n,k})_{k\in\nbN\setminus\{ 0\}},\ n\in\nbZ \}$, with the assumption that $\sup_{k\in\nbN\setminus\{ 0\}}\|\zeta_{n,k} \|_{2\beta}<\infty$. Note that we do not assume necessarily that $(\zeta_{n,k})_{k\in\nbN\setminus\{ 0\}}$ is i.i.d. for fixed $n\in\nbZ$. We suppose that at time $n-1$, the $k$-th individual, from Class 1 contaminate $ \zeta_{n,1}$ individuals of Class 1 if $k=1$ and $ \zeta_{n,k}$ individuals of Class 2 if $k\in\{2,...,\zeta_{n-1,1} \}$.
Soon after contaminating those individuals, they emigrate to our system and we then set
\begin{equation}\label{epsilon_ex_contamination}
\epsilon_n:=\sum_{k=1}^{\zeta_{n-1,1}}\zeta_{n,k},\quad n\in\nbZ .
\end{equation}
An illustration of the evolution of individuals from different classes is provided in Figure \ref{fig:immigrants}: the Class $1$ individuals are represented by crosses, the Class $1$ individuals that contaminate Class $1$ individuals are represented by boxed crosses, while the Class 2 individuals  are represented by dots.
\begin{figure}[hbtp!]
  \centering
  \includegraphics[scale=0.9]{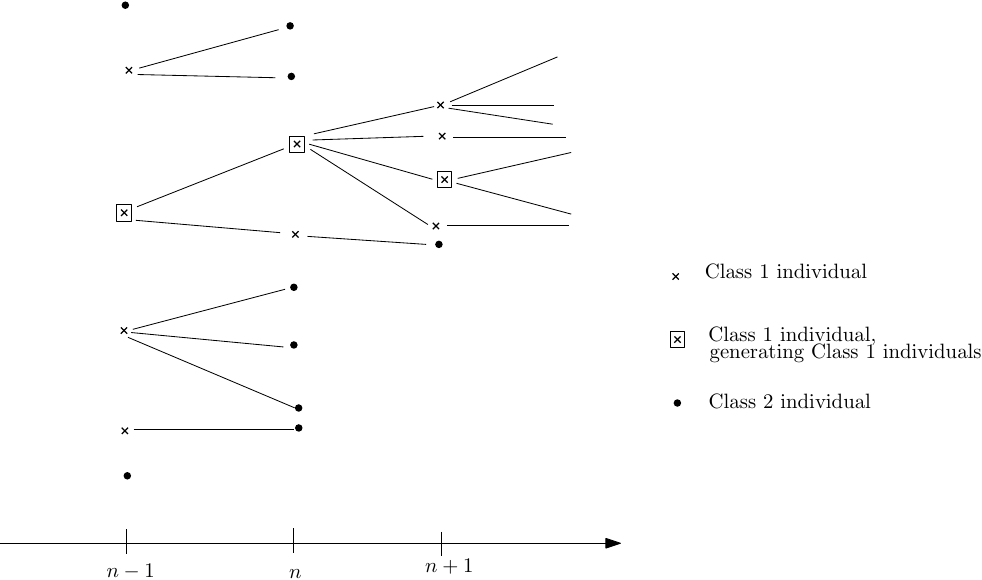}
\caption{Infected immigrants $\{\epsilon_n,\ n\in \nbZ\}$}
\label{fig:immigrants}
\end{figure}
Then, the sequence $\{\epsilon_n,\ n\in \nbZ\}$ given by \eqref{epsilon_ex_contamination} satisfies the $k_0$ dependence relation \eqref{corr1} with $k_0=2$. Also, we note that $\epsilon_n$ in Example \ref{example1} satisfies \eqref{epsilon_ex_contamination} with $ \zeta_{n,k}=Z_n$, which is independent from $k$.
\end{example}
Under the assumption $\mathbf{ (A3)_1}$, it is interesting to notice that $\chi_k$ defined in \eqref{def_chi} verifies $\chi_k=0$ for all $k\ge k_0-1$, so that we obtain from \eqref{expr_C_1_u_k} the following moment expression for the unknown reproduction rate:
\begin{equation}\label{lambda_0_k_0}
\lambda_0 = \frac{C_1^2- u_{k_0-1}}{C_1^2- u_{k_0-2}}.
\end{equation}
For all $k\in \nbN$, we introduce  the following notation:
\begin{multline}\label{def_estimators}
\bar{Y}_n:= \frac{1}{n} \sum_{j=1}^n Y_j,\,\, \bar{Y}_{k,n}:= \frac{1}{n} \sum_{j=1}^n Y_jY_{j+k},\,\, S_n(k):= \sum_{j=1}^n X_j(k),\,\, X_j(k):=(Y_j-C_1, Y_jY_{j+k+1}-u_{k})' \\
T_n(k):= \sum_{j=1}^n {\cal Y}_j(k),\quad {\cal Y}_j(k):=(Y_j-C_1,Y_jY_{j+k-1}-u_{k-2}, Y_jY_{j+k}-u_{k-1})' .
\end{multline}
 
Note that  \eqref{lambda_0_k_0} is the starting point for the estimation of the unknown parameter. 
The following consistance result holds.
\begin{prop}[Consistency]\label{prop_consistency}\normalfont
Assume that $\mathbf{ (A1)}$, $\mathbf{ (A2)}$ and $\mathbf{ (A3)_1}$ hold. The following estimators
\begin{equation}\label{estimator_k_0}
\hat{R}_{k_0,n}:= \frac{\left[ \bar{Y}_n \right]^2- \bar{Y}_{k_0,n}}{\left[ \bar{Y}_n \right]^2- \bar{Y}_{k_0-1,n}},\quad \hat{M}_{k_0,n}:= \bar{Y}_n (1-\hat{R}_{k_0,n})
\end{equation}
converge a.s. towards the unknown parameters $\lambda_0$ and $m_0$ as $n\to\infty$.
\end{prop}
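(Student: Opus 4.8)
The plan is to deduce the result from the ergodic theorem together with the moment identity \eqref{lambda_0_k_0}. The two estimators are continuous algebraic functions of the three empirical averages $\bar{Y}_n$, $\bar{Y}_{k_0,n}$ and $\bar{Y}_{k_0-1,n}$, so it suffices to identify the almost sure limit of each average and then pass to the limit in the expressions defining $\hat{R}_{k_0,n}$ and $\hat{M}_{k_0,n}$.

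First I would argue that the process $\{Y_n,\ n\in\nbZ\}$ is not only stationary but stationary \emph{and ergodic}. By Proposition \ref{prop_stationarity} the stationary version exists and has finite moments of order $2\beta>4$; in particular $Y_0$ and the products $Y_0Y_k$ are integrable, the latter by Cauchy--Schwarz since $\nbE|Y_0Y_k|\le\nbE(Y_0^2)<\infty$. Ergodicity is the genuinely non-routine point, and I would obtain it as follows. Iterating the recursion \eqref{model} (this is the pathwise content underlying the stationary representation \eqref{rep_stationary}) exhibits $Y_n$ as a fixed measurable function of the joint driving sequence $\{(\epsilon_m,\theta_m),\ m\le n\}$, where $\theta_m$ encodes the reproduction variables $\{\xi_{m,k},\ k\ge 1\}$ attached to generation $m$. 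Since the immigration $\{\epsilon_m\}$ is stationary and ergodic, while the reproduction sequence is i.i.d. and independent of it, the joint driving sequence is stationary and ergodic (the direct product of an ergodic system with an independent i.i.d. system is ergodic). As a shift-covariant measurable factor of this driving sequence, $\{Y_n\}$ is stationary and ergodic, and so is each bivariate sequence $\{(Y_j,Y_{j+k})\}_j$.

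Second, I would apply Birkhoff's ergodic theorem (the strong law of large numbers for stationary ergodic sequences) to the integrable sequences $\{Y_j\}_j$, $\{Y_jY_{j+k_0}\}_j$ and $\{Y_jY_{j+k_0-1}\}_j$. Using the definitions $C_1=\nbE(Y_0)$ and $u_k=\nbE(Y_0Y_{k+1})$ from \eqref{def_u_k}, this gives almost surely
$$\bar{Y}_n\longrightarrow C_1,\qquad \bar{Y}_{k_0,n}\longrightarrow u_{k_0-1},\qquad \bar{Y}_{k_0-1,n}\longrightarrow u_{k_0-2}.$$

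Finally, I would combine these limits. The denominator converges to $C_1^2-u_{k_0-2}$, which is non-zero under $\mathbf{(A3)_1}$ (this is precisely the well-posedness of the ratio in \eqref{lambda_0_k_0}), so by continuity of $(a,b)\mapsto a/b$ away from $b=0$ one gets almost surely
$$\hat{R}_{k_0,n}\longrightarrow\frac{C_1^2-u_{k_0-1}}{C_1^2-u_{k_0-2}}=\lambda_0,$$
the last equality being the moment identity \eqref{lambda_0_k_0}. Consequently $\hat{M}_{k_0,n}=\bar{Y}_n(1-\hat{R}_{k_0,n})$ converges almost surely to $C_1(1-\lambda_0)$, which equals $m_0$ by the first-moment formula $C_1=m_0/(1-\lambda_0)$ of \eqref{expr_first_moment}. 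The only substantive step, and the one I expect to be the main obstacle, is the rigorous justification that $\{Y_n\}$ inherits ergodicity from the immigration process and the independent i.i.d. reproduction; everything downstream is a routine passage to the limit.
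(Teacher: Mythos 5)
Your proposal is correct and follows essentially the same route as the paper: view $Y_n$ (and hence $Y_nY_{n+k}$) as a shift-covariant measurable function of the joint stationary ergodic driving sequence $\{(\epsilon_m,\xi_{m,\cdot})\}$, apply the ergodic theorem to the three empirical averages, and conclude via \eqref{lambda_0_k_0} and \eqref{expr_first_moment}. The only difference is cosmetic: you justify the ergodicity of the joint driving sequence (product of an ergodic system with an independent i.i.d., hence mixing, one), whereas the paper asserts it and invokes a textbook reference for the ergodic-theorem step.
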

The proof of Proposition \ref{prop_consistency} is postponed to Appendix \ref{sec:consistency_k_0}.
\begin{remi}\label{rem1}\normalfont
Note that, when $k_0=1$, the estimator $\hat{R}_{k_0,n}$ in Proposition \ref{prop_consistency} agrees with those in the case where the immigration process $\{\epsilon_n,\ n\in \nbZ\}$ is i.i.d., see \citet[Expression (5.1)]{KN78}, \citet[Expression (4.1)]{AOA87} and \citet[Section 1]{WW90}.
\end{remi}
The following theorem states the asymptotic distributions of the process ${n}^{-1/2}{T_n(k_0)}$, where we recall that $T_n(k_0)$ is defined in \eqref{def_estimators}.
\begin{theorem}\label{theo_CLT_vector_k_0}\normalfont
The following central limit theorem holds:
\begin{equation}\label{CLT_vector_k_0}
\frac{T_n(k_0)}{\sqrt{n}}=\sqrt{n}\left[\bar{Y}_n-C_1, \bar{Y}_{k_0-1,n}-u_{k_0-2}, \bar{Y}_{k_0,n}-u_{k_0-1}\right]' \xrightarrow[n\to\infty]{\mathcal{D}} {\cal N}(0,\mathfrak{S}_{k_0}),
\end{equation}
for some (non zero) semi-definite positive matrix $\mathfrak{S}_{k_0}\in \nbR^{3\times 3}$ given by
\begin{eqnarray}\label{expression_cov_series}
\mathfrak{S}_{k_0}&:=&\sum_{j=-\infty}^\infty \gamma_j(k_0),\quad \mbox{where}\label{def_frac_sigma}
\end{eqnarray}
\begin{align}
\gamma_j(k_0)&:=\nbE\left({\cal Y}_0(k_0){\cal Y}_j(k_0)'\right)\nonumber\\
&=
\left[
\begin{array}{ccc}
\mbox{Cov}(Y_0,Y_j) & \mbox{Cov}(Y_0, Y_jY_{j+k_0-1}) & \mbox{Cov}(Y_0, Y_jY_{j+k_0})\\
\mbox{Cov}(Y_0, Y_jY_{j+k_0-1}) & \mbox{Cov}(Y_0Y_{k_0-1}, Y_jY_{j+k_0-1}) & \mbox{Cov}(Y_0Y_{k_0-1}, Y_jY_{j+k_0})\\
\mbox{Cov}(Y_0, Y_jY_{j+k_0}) & \mbox{Cov}(Y_0Y_{k_0-1}, Y_jY_{j+k_0}) & \mbox{Cov}(Y_0Y_{k_0}, Y_jY_{j+k_0})
\end{array}
\right]
,\ j\in \nbN.\label{def_gamma}
\end{align}
\end{theorem}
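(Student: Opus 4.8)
The plan is to establish the multivariate statement through the Cram\'er--Wold device, reducing it to a one-dimensional central limit theorem for a strictly stationary sequence that I then treat by truncating the series representation \eqref{proof_step_2_CTL_k0} and invoking Herrndorf's central limit theorem for strongly mixing sequences \cite{H84}. Fix $a=(a_1,a_2,a_3)'\in\nbR^3$ and put $Z_j:=a'\mathcal{Y}_j(k_0)$, so that $a'T_n(k_0)=\sum_{j=1}^n Z_j$. By Proposition \ref{prop_stationarity} the sequence $\{Z_j,\ j\in\nbZ\}$ is strictly stationary and centered, and it possesses a finite moment of order $\beta>2$: the delicate terms are the products $Y_jY_{j+k_0}$, for which $\nbE|Y_jY_{j+k_0}|^{\beta}\le \|Y_0\|_{2\beta}^{2\beta}<\infty$ by Cauchy--Schwarz and the existence of moments of order $2\beta$ granted by Proposition \ref{prop_stationarity}. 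It therefore suffices to prove that $n^{-1/2}\sum_{j=1}^n Z_j\xrightarrow[n\to\infty]{\mathcal{D}}\mathcal{N}(0,a'\mathfrak{S}_{k_0}a)$ for every $a$, and to verify in passing that the matrix series defining $\mathfrak{S}_{k_0}$ converges absolutely; positive semi-definiteness is then automatic since $\mathfrak{S}_{k_0}=\lim_n n^{-1}\var(T_n(k_0))$.

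Next I would introduce, for an integer $m\ge0$, the truncated variables $Y_n^{(m)}:=\sum_{i=0}^m\theta_n^{(i)}\circ\epsilon_{n-i}$, obtained by keeping only the first $m+1$ terms of \eqref{proof_step_2_CTL_k0}, and the corresponding $\mathcal{Y}_j^{(m)}(k_0)$ and $Z_j^{(m)}:=a'\mathcal{Y}_j^{(m)}(k_0)$, recentered by the appropriate truncated means. The key structural point, read off from the independence properties of the operators $\theta_n^{(i)}$ stated below \eqref{proof_step_2_CTL_k0}, is that $Z_j^{(m)}$ is measurable with respect to the immigration variables $\epsilon_u$ with $j-m\le u\le j+k_0$ together with the reproduction operators attached to the same time window; since the reproduction sequence is i.i.d.\ across time and independent of $\{\epsilon_u\}$, the strong mixing coefficients of $\{Z_j^{(m)}\}$ satisfy $\alpha_{Z^{(m)}}(h)\le\alpha_\epsilon(h-m-k_0)$ for $h>m+k_0$. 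For fixed $m$ the sequence $\{Z_j^{(m)}\}$ is thus strictly stationary, centered, strongly mixing with a summable rate and with finite moments of order $\beta$, so Herrndorf's theorem \cite{H84} yields $n^{-1/2}\sum_{j=1}^n Z_j^{(m)}\xrightarrow{\mathcal D}\mathcal{N}(0,\sigma_m^2)$ with $\sigma_m^2=\sum_{j\in\nbZ}\cov(Z_0^{(m)},Z_j^{(m)})$, the required variance growth and uniform integrability of the normalized partial sums being consequences of the $\beta$-moments together with the summable mixing rate.

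It then remains to remove the truncation. I would bound the $\mathbb{L}^2$ truncation error $\|Z_j-Z_j^{(m)}\|_2$: since $Y_n-Y_n^{(m)}=\sum_{i>m}\theta_n^{(i)}\circ\epsilon_{n-i}$ has $\mathbb{L}^{2}$ norm decaying geometrically in $m$ (by subcriticality \eqref{stab} and Assumption $\mathbf{(A2)}$), the differences of products $Y_jY_{j+k_0}-Y_j^{(m)}Y_{j+k_0}^{(m)}$ are small in $\mathbb{L}^2$, uniformly in $j$, again by Cauchy--Schwarz. The crucial quantitative step is the uniform-in-$n$ estimate $\limsup_n n^{-1}\var\big(\sum_{j=1}^n(Z_j-Z_j^{(m)})\big)\to0$ as $m\to\infty$, for which one controls $\sum_{h}|\cov(Z_0-Z_0^{(m)},Z_h-Z_h^{(m)})|$ by Davydov's inequality \cite{D68}; here the geometric truncation decay $\lambda_0^m$ combines with the weighted mixing sum, and it is precisely the summability condition $\sum_h\lambda_0^{-h}\alpha_\epsilon(h)^{1-2/\beta}<\infty$ of Assumption $\mathbf{(A1)}$ that makes this bound go to zero. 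Granting this, the classical approximation lemma for weak convergence (the truncated sums converging to $\mathcal{N}(0,\sigma_m^2)$, with $\sigma_m^2\to a'\mathfrak{S}_{k_0}a$ by termwise convergence $\cov(Z_0^{(m)},Z_j^{(m)})\to\cov(Z_0,Z_j)$ and dominated convergence on the series) delivers $n^{-1/2}\sum_{j=1}^n Z_j\xrightarrow{\mathcal D}\mathcal{N}(0,a'\mathfrak{S}_{k_0}a)$. The absolute convergence of $\sum_j\gamma_j(k_0)$ used here follows from the same ingredients, the scalar autocovariances being $\mathrm{O}(\lambda_0^{|j|})$ by Corollary \ref{prop_C_1_u_k} and the product covariances being controlled via Davydov's inequality and Assumption $\mathbf{(A1)}$.

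The main obstacle is this last uniform control of the truncation error: unlike the truncated sequence, the full functional $Z_j$ depends on the entire past of the immigration, so mixing cannot be applied to it directly, and one must simultaneously track the geometric memory decay $\lambda_0^m$ of the Galton--Watson kernel and the mixing decay $\alpha_\epsilon(h)^{1-2/\beta}$. Balancing these two effects uniformly in the sample size is exactly what dictates the unusual weighting $\lambda_0^{-h}$ in Assumption $\mathbf{(A1)}$, and keeping careful track of which reproduction operators and immigration variables each product $Y_jY_{j+k_0}$ depends on --- via the independence structure recorded after \eqref{proof_step_2_CTL_k0} --- is the most error-prone bookkeeping in the argument.
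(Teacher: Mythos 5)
Your proposal is correct and follows essentially the same route as the paper's proof: truncation of the series representation \eqref{proof_step_2_CTL_k0} at level $m$, Herrndorf's CLT applied to the truncated (finitely-dependent-on-the-past, hence strongly mixing with coefficient bounded by a shifted $\alpha_\epsilon$) sequence, a uniform-in-$n$ $\mathbb{L}^2$ control of the remainder via Davydov's inequality, and the standard approximation lemma to pass to the limit, with Step 1 of the paper matching your verification of the absolute convergence of $\sum_j\gamma_j(k_0)$. The only cosmetic difference is that you make the Cram\'er--Wold reduction explicit while the paper works with the vector-valued sums directly.
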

The proof of Theorem \ref{theo_CLT_vector_k_0} is postponed to Appendix \ref{sec:proof_theo_k_0}.

Let us now define the open set ${\cal O}:=\{ (a,b,c) \in \nbR^3 |\ a^2 \neq b\}\subset \nbR^3$ as well as the following function
\begin{equation}\label{def_phi}
\varphi:(a,b,c)\in {\cal O}\mapsto \left( \displaystyle \frac{a^2-c}{a^2-b},\ a\left(1- \displaystyle \frac{a^2-c}{a^2-b}\right) \right)'\in \nbR^2 ,
\end{equation}
so that $(\lambda_0,m_0)'=\varphi(C_1, u_{k_0-2}, u_{k_0-1})$ thanks to Relations \eqref{expr_first_moment} and \eqref{lambda_0_k_0}. 

We are now able to state the following proposition, which shows the asymptotic normality of the estimators proposed in Proposition \ref{prop_consistency}.
The $\delta$-method as well as Theorem \ref{theo_CLT_vector_k_0} thus yield the main following result.
\begin{theorem}[Asymptotic normality]\label{theo_CLT_k_0}\normalfont
Assume that $\mathbf{ (A1)}$, $\mathbf{ (A2)}$ and $\mathbf{ (A3)_1}$ hold. The following central limit theorem holds:
\begin{equation}
\label{CLT_k_0}
\sqrt{n}[\hat{R}_{k_0,n}-\lambda_0, \hat{M}_{k_0,n}-m_0]' \xrightarrow[n\to\infty]{\mathcal{D}} {\cal N}(0,\Omega_{k_0}),
\end{equation}
where $\Omega_{k_0}=  \nabla \varphi(C_1, u_{k_0-2}, u_{k_0-1})' \mathfrak{S}_{k_0}\nabla\varphi(C_1, u_{k_0-2}, u_{k_0-1})$ with $\varphi$ and $\mathfrak{S}_{k_0}$ are defined in \eqref{def_phi} and  in Theorem \ref{theo_CLT_vector_k_0}. 
\end{theorem}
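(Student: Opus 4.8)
The plan is to recognize the two estimators as a single smooth transformation of the empirical moments and then to push the central limit theorem of Theorem \ref{theo_CLT_vector_k_0} through that transformation by the multivariate delta method, exactly as announced before the statement.

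First I would record the algebraic identity that makes this possible. Directly from the definitions in \eqref{estimator_k_0} and \eqref{def_phi} one has
$$
(\hat{R}_{k_0,n},\,\hat{M}_{k_0,n})' = \varphi\bigl(\bar{Y}_n,\, \bar{Y}_{k_0-1,n},\, \bar{Y}_{k_0,n}\bigr),
$$
since the first coordinate of $\varphi$ reproduces $\hat{R}_{k_0,n}$ and the second reproduces $\bar{Y}_n(1-\hat{R}_{k_0,n})=\hat{M}_{k_0,n}$. On the other hand, Relations \eqref{expr_first_moment} and \eqref{lambda_0_k_0} give $(\lambda_0,m_0)' = \varphi(C_1, u_{k_0-2}, u_{k_0-1})$. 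Thus both the estimator and the target are the same map $\varphi$ evaluated, respectively, at the empirical and at the true second-order moments, and the empirical argument converges a.s.\ to the true one by ergodicity (this is already the content of Proposition \ref{prop_consistency}).

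Second I would check that the limiting argument lies in the open set $\mathcal{O}$ on which $\varphi$ is defined and $C^\infty$, that is, that $C_1^2 - u_{k_0-2}\neq 0$. This quantity is precisely the denominator in the moment identity \eqref{lambda_0_k_0}, equal to $-\cov(Y_0,Y_{k_0-1})$ by Corollary \ref{prop_C_1_u_k}; under $\mathbf{(A3)_1}$ its value is given exactly by \eqref{expr_C_1_u_k} (with the bracketed factor nonzero), so its non-vanishing is exactly the well-posedness condition that makes \eqref{lambda_0_k_0}, and hence the estimator, meaningful. Granting this, $\varphi$ is continuously differentiable at $(C_1, u_{k_0-2}, u_{k_0-1})$ and its Jacobian $\nabla\varphi(C_1, u_{k_0-2}, u_{k_0-1})$ (a $3\times 2$ matrix in the paper's convention) is well defined. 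The delta method then applies: combining the differentiability of $\varphi$ at the limit point with the convergence $\sqrt{n}\bigl[(\bar{Y}_n, \bar{Y}_{k_0-1,n}, \bar{Y}_{k_0,n})' - (C_1, u_{k_0-2}, u_{k_0-1})'\bigr]\xrightarrow{\mathcal{D}}\mathcal{N}(0,\mathfrak{S}_{k_0})$ from Theorem \ref{theo_CLT_vector_k_0} yields
$$
\sqrt{n}\bigl[(\hat{R}_{k_0,n}, \hat{M}_{k_0,n})' - (\lambda_0, m_0)'\bigr]\xrightarrow{\mathcal{D}}\mathcal{N}\bigl(0,\ \nabla\varphi'\,\mathfrak{S}_{k_0}\,\nabla\varphi\bigr),
$$
which is exactly \eqref{CLT_k_0}.

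The only genuinely delicate point is the verification in the second step that $(C_1, u_{k_0-2}, u_{k_0-1})\in\mathcal{O}$: if $C_1^2 = u_{k_0-2}$ the map $\varphi$ is singular at the limit and the delta method is inapplicable, so one must argue that the bracketed factor in \eqref{expr_C_1_u_k} does not vanish. Everything else — the identification of the estimator with $\varphi$, the consistency of the empirical moments, and the differentiation of a rational map — is routine, so the proof reduces to invoking Theorem \ref{theo_CLT_vector_k_0} and a standard delta-method statement.
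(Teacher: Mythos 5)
Your proposal is correct and follows exactly the route the paper takes: it identifies $(\hat{R}_{k_0,n},\hat{M}_{k_0,n})'=\varphi(\bar Y_n,\bar Y_{k_0-1,n},\bar Y_{k_0,n})$ and $(\lambda_0,m_0)'=\varphi(C_1,u_{k_0-2},u_{k_0-1})$, and applies the delta method to the central limit theorem of Theorem \ref{theo_CLT_vector_k_0}. Your explicit check that $(C_1,u_{k_0-2},u_{k_0-1})\in{\cal O}$, i.e. $C_1^2\neq u_{k_0-2}$, is a detail the paper leaves implicit in the well-posedness of \eqref{lambda_0_k_0}, and it is a welcome addition.
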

\begin{remi}\label{remNAstrong}\normalfont
In the independent immigration case, \citet[Theorem 2.2]{KN78} show that the asymptotic covariance matrix is  reduced as $$\Omega_{S}:=J^{-1}WJ^{-1},\quad \mbox{where}$$
\[J := \mathbb{E} \left[ \dfrac{\partial g(\theta_0,Y_{n-1})}{\partial \theta} \dfrac{\partial g(\theta_0,Y_{n-1})}{\partial \theta'}\right], \quad
W:= \mathbb{E} \left[\left\{Y_n-g(\theta_0,Y_{n-1}) \right\}^2 \dfrac{\partial g(\theta_0,Y_{n-1})}{\partial \theta} \dfrac{\partial g(\theta_0,Y_{n-1})}{\partial \theta'}\right]\]
with $g(\theta_0,Y_{n-1})=Y_n-\lambda_0Y_{n-1}$ and $\theta_0=(\lambda_0,m_0)'$. See also \citet[Section 4.2]{AOA87}. The true asymptotic covariance matrix $\Omega_{k_0}=  \nabla \varphi(C_1, u_{k_0-2}, u_{k_0-1})' \mathfrak{S}_{k_0}\nabla\varphi(C_1, u_{k_0-2}, u_{k_0-1})$ obtained in the correlated immigration framework can be very different from $\Omega_{S}$ and can yield inacurrate estimation results when used on a model with correlated immigration sequence, as illustrated later on in Section \ref{sec:ultimately_correlated_numerics}.
This is why it is interesting to find an estimator of $\Omega_{k_0}$ which is consistent for both independent  and correlated immigration cases.
\end{remi}
\section{Estimating the asymptotic variance matrix}\label{estimOmega}

For statistical inference problem, the asymptotic variance $\Omega_{k_0}$ has to be estimated. In particular
Theorem \ref{theo_CLT_k_0} can be used to obtain confidence intervals.

The estimation of the long-run variance (LRV) matrix expressed by \eqref{expression_cov_series} and given by
$$\mathfrak{S}_{k_0}=\sum_{j=-\infty}^\infty \mbox{Cov}\left({\cal Y}_0(k_0),{\cal Y}_{j}(k_0)\right)$$ is  complicated. In the literature, two types of estimators are generally employed: heteroskedasticity and autocorrelation consistent (HAC) estimators based on kernel methods (see \cite{newey} and \cite{A91} for general references, and \cite{FZ07} for an application to testing strong linearity in ARMA models with dependent errors) and the spectral density estimators (see e.g. \citet{B74} and \cite{haan} for a general references; see also \cite{BMF11} for an application to a  multivariate ARMA (VARMA) model with dependent errors). In the present paper, we focus on an estimator based on a spectral density form.

 Following the arguments developed in \cite{BMCF12}, the matrix $\mathfrak{S}_{k_0}$ can be estimated using Berk's approach (see \cite{B74}). More precisely, by interpreting $\mathfrak{S}_{k_0}/2\pi$ as the spectral density of the stationary process $\{\mathcal{Y}_n=(\Y_n^1,\Y_n^2,\Y_n^3)',\ n\in\mathbb{Z}\}:=\{\mathcal{Y}_n(k_0),\ n\in\mathbb{Z}\}$ evaluated at frequency $0$, we can use a parametric autoregressive estimate of the spectral density of $(\mathcal{Y}_n)_{n\in\mathbb{Z}}$ in order to estimate the matrix $\mathfrak{S}_{k_0}$. This is explained more precisely in the following.

First, the stationary process $\{\mathcal{Y}_n,\ n\in\mathbb{Z}\}$ is in $\mathbb{L}^2$ hence admits the following Wold decomposition 
\begin{equation}\label{Y_Wold}
\mathcal{Y}_n=\varepsilon_{n}+\sum_{k=1}^{\infty}\Psi_k\varepsilon_{n-k},\quad n\in \nbZ ,
\end{equation}
where $(\varepsilon_{n})_{n\in\mathbb{Z}}$ is a $3-$variate uncorrelated but not independent white noise with variance matrix denoted by $\Sigma_\varepsilon$. We recall e.g. from \citet[Theorem 5.7.1 p.187]{BD91} that $\varepsilon_{n}\in \nbR^{3\times 1}$ and $\Psi_n\in \nbR^{3\times 3}$ are expressed in function of $\{\mathcal{Y}_n,\ n\in\mathbb{Z}\}$ as follows
\begin{equation}\label{Wold_expressions}
\begin{array}{rcl}
\varepsilon_{n} &= & \Y_n - P_{n-1} \Y_n,\quad n\in \nbZ ,\\
\Psi_n & = & \EE(\Y_0 \varepsilon_{-n}')\Sigma_\varepsilon^{-1}=\EE(\Y_n \varepsilon_{0}')\Sigma_\varepsilon^{-1},\quad n\ge 0,
\end{array}
\end{equation}
where $P_{n-1}\Y_n$ is the $\mathbb{L}^2$ projection of the random vector $\Y_n$ onto the vector space spanned by $\Y_k$, $k\le n-1$. It may be verified that $\Sigma_\varepsilon$ is indeed non-singular in the case when the reproduction sequence $\{\xi_{n,k},\ n\in\nbZ,\ k\in \nbN \}$ is non deterministic. 
We next introduce
\begin{equation}\label{def_psi_op}
\Psi(z):= \sum_{k=0}^{\infty}\Psi_k z^k ,
\end{equation}
for $z\in \nbC$ where the series converges, and where $\Psi_0$ is the identity matrix. We suppose in this section that Assumptions $\mathbf{ (A1)}$, $\mathbf{ (A2)_1}$, $\mathbf{ (A3)}$ and $\mathbf{ (A4)}$ hold where 
\begin{itemize}
\item[\hspace*{1em} {$\mathbf{ (A2)_1}$:}]
\hspace*{1em}  The following moment condition holds
$$\| \epsilon \|_{8\vee (2\beta)} < \infty \quad \mbox{and}\quad \|\xi\|_{2\beta}<\infty .$$
\item[\hspace*{1em} {$\mathbf{ (A4)}$:}]
\hspace*{1em}   The sequence $\{ \Psi_k,\ k\ge 1\}$ verifies that $\sum_{k=1}^{\infty}\|\Psi_k \|<\infty$ and
$$\det\left(\Psi(z)\right)\neq 0$$
for all $z\in {\mathbb{C}}$ such that $\left|z\right| \le 1$.
\end{itemize}
Under Assumption $\mathbf{ (A4)}$, $\{\mathcal{Y}_n,\ n\in\mathbb{Z}\}$ admits a  multivariate $\mathrm{AR}(\infty)$ representation (see \citet[Section 2]{A57}) with dependent errors of the form
\begin{equation}\label{AR_infty}
\mathcal{Y}_n-\sum_{k=1}^{\infty}\Phi_k\mathcal{Y}_{n-k}=\varepsilon_{n} ,
\end{equation}
where $(\Phi_k)_{k\ge 1}$ are $ \nbR^{3\times 3}$ matrices such that $\sum_{k=1}^{\infty}\left\|\Phi_k\right\|<\infty$ and $\det\left( \Phi(z)\right) \neq 0$, $\left|z\right|\leq 1$, where 
\begin{equation}\label{def_Phi}
\Phi(z):=I_3 - \sum_{k=1}^{\infty}\Phi_k z^k=\Psi(z)^{-1}.
\end{equation}
In particular, \eqref{Wold_expressions} implies that the $\mathbb{L}^2$ projection $P_{n-1} \Y_n$ is in this case expressed as the series $\sum_{k=1}^{\infty}\Phi_k\mathcal{Y}_{n-k}$. One may argue that Assumption $\mathbf{ (A4)}$ is quite complicated to verify in practice, and it may be informative and useful to give some assumption on the base model \eqref{model} such that it holds. This is provided in the following proposition (proved in Appendix \ref{app:proof_prop_theo_psi}). 
\begin{prop}\label{cond_psi_invertible}\normalfont
Let us define $\underline{\epsilon}_n:=(\epsilon_n, \epsilon_n\epsilon_{n+k_0-1},\epsilon_n \epsilon_{n+k_0})'$, $n\in \nbZ$, and suppose that
\begin{itemize}
\item the spectral density $f_{\underline{\epsilon}}:x\mapsto \frac{1}{2\pi} \sum_{h=-\infty}^\infty \mbox{Cov}(\underline{\epsilon}_0,\underline{\epsilon}_h)e^{ihx}$ is such that $f_{\underline{\epsilon}}(x)$ is invertible for all $x\in [0,2\pi]$.
\item $\mathbf{ (A1)}$ is replaced by the stronger assumption 
\begin{equation}\label{cond_mixing_coef}
\alpha_\epsilon(n)=\mathrm{O}\left(\lambda_0^{\frac{n}{1-2/\beta}} \right).
\end{equation}
\end{itemize}
Then $\mathbf{ (A4)}$ is verified if $\lambda_0$ is small enough, and the sequence $\{ \Phi_k,\ k \ge 1\}$ in the expansion \eqref{AR_infty} verifies $\|\Phi_k\| =\mathrm{o}(1/k^2)$ as $k\to \infty$.
\end{prop}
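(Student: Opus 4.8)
The plan is to recast $\mathbf{(A4)}$ as a positivity statement about the spectral density and then reduce it to the hypothesis on $f_{\underline\epsilon}$ by a perturbation in $\lambda_0$. By the Wold representation \eqref{Y_Wold} the spectral density of $\{\mathcal{Y}_n\}$ is $f_{\mathcal{Y}}(x)=\frac{1}{2\pi}\Psi(e^{-ix})\Sigma_\varepsilon\,\Psi(e^{-ix})^{\ast}$, so the two requirements of $\mathbf{(A4)}$ — namely $\sum_k\|\Psi_k\|<\infty$ together with $\det\Psi(z)\neq0$ on $|z|\le1$ — are equivalent to $f_{\mathcal{Y}}(x)$ being positive definite for every $x\in[0,2\pi]$ and smooth enough to admit a canonical factorization. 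Hence I would reduce the whole proposition to (i) proving $f_{\mathcal{Y}}(x)\succ0$ on $[0,2\pi]$ and (ii) establishing fast decay of the autocovariances $\gamma_h(k_0)$, and then invoke multivariate spectral factorization. The role of ``$\lambda_0$ small'' is that at $\lambda_0=0$ the density coincides with $f_{\underline\epsilon}$, which is positive definite by hypothesis, and positivity persists under a small perturbation.

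First I would identify the $\lambda_0\to0$ limit. When $\lambda_0=0$ the reproduction law is degenerate ($\xi=0$ a.s.), so \eqref{proof_step_2_CTL_k0} collapses to $Y_n=\epsilon_n$ and therefore $(Y_n,Y_nY_{n+k_0-1},Y_nY_{n+k_0})'=\underline\epsilon_n$; after centering this is exactly $\mathcal{Y}_n$, whence $f_{\mathcal{Y}}^{(0)}=f_{\underline\epsilon}$. For $\lambda_0>0$ I would expand the entries of $\gamma_h(k_0)$ through \eqref{proof_step_2_CTL_k0}, isolating the leading summand $\epsilon_n$ from the reproduction fluctuations. For integer-valued $\xi$ one has $\mathbb{P}(\xi\ge1)\le\mathbb{E}\xi=\lambda_0$, hence $\mathrm{Var}(\xi)\le\mathbb{E}[\xi^2\mathbb{1}_{\{\xi\ge1\}}]\le\|\xi\|_{2\beta}^2\,\lambda_0^{1-1/\beta}$ by Hölder, so the reproduction noise is $\mathrm{O}(\lambda_0^{1-1/\beta})$; consequently each entry of $\gamma_h(k_0)$ is continuous in $\lambda_0$ with $\gamma_h^{(\lambda_0)}(k_0)-\gamma_h^{(0)}(k_0)=\mathrm{O}(\lambda_0^{1-1/\beta})$ uniformly in $h$. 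Combining Corollary \ref{prop_C_1_u_k} (the $\mathrm{O}(\lambda_0^h)$ decay of the second-order correlations) with Davydov's inequality and the geometric mixing \eqref{cond_mixing_coef} — which under $\mathbf{(A2)}$ controls the fourth-order terms $\mathrm{Cov}(Y_0Y_{k_0},Y_hY_{h+k_0})$ — I would establish a geometric bound $\|\gamma_h^{(\lambda_0)}(k_0)\|\le C\rho^{|h|}$ with constants uniform for $\lambda_0$ in the admissible range. Splitting the resulting Fourier series at a truncation level and optimising then yields $\sup_x\|f_{\mathcal{Y}}^{(\lambda_0)}(x)-f_{\underline\epsilon}(x)\|\to0$ as $\lambda_0\to0$.

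Setting $\delta:=\min_{x\in[0,2\pi]}\lambda_{\min}\bigl(f_{\underline\epsilon}(x)\bigr)>0$ (positive by the invertibility hypothesis, continuity of $f_{\underline\epsilon}$, and compactness of $[0,2\pi]$), the previous estimate gives $f_{\mathcal{Y}}^{(\lambda_0)}(x)\succeq\bigl(\delta-\mathrm{o}(1)\bigr)I_3\succ0$ for all $x$ once $\lambda_0$ is small enough; in particular $\Sigma_\varepsilon$ is non-singular. I would then apply the canonical factorization of a positive-definite matrix spectral density (see \citet[Section 2]{A57} and \citet[Theorem 5.7.1]{BD91}) to obtain the Wold factor $\Psi$ with $\Psi_0=I_3$, $\sum_k\|\Psi_k\|<\infty$ and $\det\Psi(z)\neq0$ for $|z|<1$; since $\det f_{\mathcal{Y}}(x)=(2\pi)^{-3}|\det\Psi(e^{-ix})|^2\det\Sigma_\varepsilon$ and $f_{\mathcal{Y}}(x)\succ0$ on the boundary, non-vanishing extends to $|z|=1$, giving $\det\Psi(z)\neq0$ on $|z|\le1$. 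This is precisely $\mathbf{(A4)}$. Finally, the geometric bound on $\gamma_h(k_0)$ forces $\sum_h h^2\|\gamma_h(k_0)\|<\infty$, so $f_{\mathcal{Y}}\in C^2$; as $\Phi(e^{ix})=\Psi(e^{ix})^{-1}$ is then a $C^2$, invertible matrix function on the circle, its Fourier coefficients satisfy $\|\Phi_k\|=\mathrm{o}(1/k^2)$, which is the last claim.

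The main obstacle will be the uniform covariance estimates of the second step, specifically the fourth-order quantities $\mathrm{Cov}(Y_0Y_{k_0},Y_hY_{h+k_0})$: expanding them via \eqref{proof_step_2_CTL_k0} produces a quadruple sum of products of $\theta$-operators acting on four immigration variables, and one must separate the geometric gain $\lambda_0^h$ coming from non-overlapping reproduction index sets from the geometric gain coming from $\epsilon$-mixing, all while keeping the constants uniform as $\lambda_0$ varies in the admissible window. Verifying that the reproduction-noise contribution genuinely vanishes with $\lambda_0$ — so that the limiting density is exactly $f_{\underline\epsilon}$ and not a perturbation of comparable magnitude — and tracking this uniformity against the $\lambda_0$-dependence built into \eqref{cond_mixing_coef} is the delicate bookkeeping; everything downstream is a standard application of spectral factorization.
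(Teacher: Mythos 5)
Your proposal is sound in outline and coincides with the paper's proof on most of its scaffolding: the geometric covariance bounds giving $\sum_k\|\Psi_k\|<\infty$, the identification of the $\lambda_0\to0$ limit of the spectral density with $f_{\underline{\epsilon}}$, the perturbation-plus-compactness argument yielding invertibility of $f_{\mathcal{Y}}$ on the circle (hence $\det\Psi(e^{ix})\neq 0$ there), and the final passage to a smooth or analytic inverse to get $\|\Phi_k\|=\mathrm{o}(1/k^2)$. Where you genuinely diverge is on the step the paper itself calls the most delicate: extending $\det\Psi(z)\neq 0$ from $|z|=1$ to the open disk. You outsource this to the classical fact that the innovations (Wold) factor of a full-rank purely nondeterministic process is outer, so that $\det\Psi$ is a scalar outer function and cannot vanish for $|z|<1$; the paper instead proves the equivalent statement that the two-sided inverse expansion $\varepsilon_n=\sum_{j\in\nbZ}\phi_j\mathcal{Y}_{n-j}$ has $\phi_j=0$ for all $j\le -1$, via a bespoke measurability argument that conditions on $\mathcal{G}_n=\sigma(Y_k,k\le n,\ \xi_{j,\cdot},j\ge n+1)$ and tracks which terms can depend on $\xi_{n+1,\cdot}$. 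Your route is shorter and buys generality (it uses nothing about the branching structure), but the result you need is the Wiener--Masani/Rozanov outer spectral factorization, not \citet[Theorem 5.7.1]{BD91} or \citet{A57}, neither of which asserts outerness of the Wold factor; with the correct reference the argument closes. Two smaller points: your deduction that $\Phi=\Psi^{-1}$ is $C^2$ from $f_{\mathcal{Y}}\in C^2$ tacitly assumes a regularity theorem for spectral factors, whereas it is cleaner to use the geometric decay of $\|\Psi_k\|$ (already implied by your covariance bounds) so that $\Psi$, hence $\Phi$, is analytic on an annulus --- which is exactly how the paper obtains $\mathrm{o}(1/k^2)$; and your uniform-in-$h$ bound $\mathrm{O}(\lambda_0^{1-1/\beta})$ on $\gamma_h^{(\lambda_0)}-\gamma_h^{(0)}$ must indeed be paired with a $\lambda_0$-uniform geometric tail before summing, as you indicate, this being precisely where the hypothesis \eqref{cond_mixing_coef} enters in the paper's version as well.
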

Note that the proof of the above Proposition is technical and occupies most of Appendix \ref{app:proof_prop_theo_psi}. What is essentially proved is that the spectral density is close to $f_{\underline{\epsilon}}$ when $\lambda_0$ is small enough.

Thanks to the previous remarks, the estimation of $\mathfrak{S}_{k_0}$ in \eqref{def_frac_sigma} is therefore based on the following expression
$$\mathfrak{S}_{k_0}=\Phi^{-1}(1)\Sigma_\varepsilon\Phi^{-1 '}(1)=\left[ I_3 - \sum_{k=1}^{\infty}\Phi_k\right]^{-1}\Sigma_\varepsilon \left[ I_3 - \sum_{k=1}^{\infty}\Phi_k\right]^{-1'}.$$
For $r\in \nbN \setminus \{0\}$, consider the least square regression of $\mathcal{Y}_n$ on $\mathcal{Y}_{n-1},\dots,\mathcal{Y}_{n-r}$, for all $n\in \nbZ$, defined by
\begin{equation}\label{AR_tronquee}
\mathcal{Y}_n=\sum_{k=1}^{r}\Phi_{r,k}\mathcal{Y}_{n-k}+\varepsilon_{r,n},
\end{equation}
where $\varepsilon_{r,n}$ is uncorrelated with $\mathcal{Y}_{n-1},\dots,\mathcal{Y}_{n-r}$.
Since $\mathcal{Y}_n$ is not observable, we introduce $\hat{\mathcal{Y}}_n\in\mathbb{R}^{3}$ obtained by replacing $C_1$ by $\bar Y_n$, $u_{k_0-2}$ by $\bar{Y}_{k_0-1,n}$ and $u_{k_0-1}$ by $\bar{Y}_{k_0,n}$ in \eqref{def_estimators}:
\begin{align}\label{Ht_chapeau}
\hat{{\cal Y}}_n:=(Y_n-\bar Y_n,Y_nY_{n+k_0-1}-\bar{Y}_{k_0-1,n}, Y_nY_{n+k_0}-\bar{Y}_{k_0,n})' \ .
\end{align}
Let us define $\hat{\Phi}_r(z):=I_{3}-\sum_{k=1}^r\hat\Phi_{r,k}z^k$, where $\hat\Phi_{r,1},\dots,\hat\Phi_{r,r}$ denote the coefficients of the least squares regression
of $\hat{{\cal Y}}_n$ on $\hat{{\cal Y}}_{n-1},\dots,\hat{{\cal Y}}_{n-r}$. Let $\hat{\varepsilon}_{r,n}$ be the residuals of this regression so that $\hat{{\cal Y}}_n=\sum_{k=1}^r \hat{\Phi}_{r,k}\hat{{\cal Y}}_{n-k} + \hat{\varepsilon}_{r,n}$, and  let $\hat{\Sigma}_{\hat{\varepsilon}_r}=\frac{1}{n}\sum_{t=1}^n \hat{\varepsilon}_{r,t} \hat{\varepsilon}_{r,t}'$ be the empirical variance of $\hat{\varepsilon}_{r,1},\dots,\hat{\varepsilon}_{r,n}$.

In the case of linear processes with independent innovations, \cite{B74} has shown that the spectral density can be consistently estimated by fitting AR models of order $r=r(n)$, whenever $r$ tends to infinity and $r^3/n$ tends to $0$ as $n$ tends to infinity. Here, there are differences with \cite{B74}: $\{\mathcal{Y}_n,\ n\in\mathbb{Z}\}$ is multivariate, is not directly observed and is replaced by $\{\hat{\mathcal{Y}}_n,\ n\in\mathbb{Z}\}$. It is shown that this result remains valid for the multivariate linear process $\{\mathcal{Y}_n,\ n\in\mathbb{Z}\}$ with non-independent innovations (see for instance  \cite{BMCF12,BMF11}, for references in  (multivariate) ARMA models with dependent errors).

Thus, the asymptotic study of the estimator of $\mathfrak{S}_{k_0}$ using the spectral density method is given in the following theorem.
\begin{theorem}\label{convergence_Isp}\normalfont
Assume that $\mathbf{(A1)}$ , $\mathbf{(A2)_1}$ hold with $\beta=4+2\kappa$ for some $\kappa>0$ and $\mathbf{(A3)}$ hold, and that the process $\{\mathcal{Y}_n,\ n\in\mathbb{Z}\}$ defined in \eqref{def_estimators} verifies $\mathbf{(A4)}$ with $\Sigma_\varepsilon=\var(\varepsilon_{n})$ non-singular. Assume in addition that  the sequence $\{\Phi_k,\ k\ge 1\}$ defined in \eqref{AR_infty} verifies $\|\Phi_k\| =\mathrm{o}(1/k^2)$.
Then, the spectral estimator $\hat{\mathfrak{S}}_{k_0}^{\mathrm{SP}}$ of $\mathfrak{S}_{k_0}$ defined as follows verifies
\begin{equation}\label{convergence_spectral_estimator}
\hat{\mathfrak{S}}_{k_0}^{\mathrm{SP}}:=\hat{\Phi}_r^{-1}(1)\hat{\Sigma}_{\hat{\varepsilon}_r}\hat{\Phi}_r^{-1 '}(1)\xrightarrow[n\to\infty]{\mathbb{P}} \mathfrak{S}_{k_0}=\Phi^{-1}(1)\Sigma_\varepsilon\Phi^{-1'}(1)
\end{equation}
when $r=r(n)\to\infty$ and $r^{3}/n\to0$ as $n\to\infty$.
\end{theorem}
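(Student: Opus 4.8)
The plan is to reduce the convergence \eqref{convergence_spectral_estimator} to the two separate statements $\hat\Phi_r(1)\xrightarrow{\mathbb{P}}\Phi(1)$ and $\hat\Sigma_{\hat\varepsilon_r}\xrightarrow{\mathbb{P}}\Sigma_\varepsilon$, after which the conclusion follows from the continuous mapping theorem: under $\mathbf{(A4)}$ one has $\det\Phi(1)\neq 0$ and $z\mapsto z^{-1}$ is continuous at invertible matrices, so $\hat\Phi_r^{-1}(1)\to\Phi^{-1}(1)$ and the product converges to $\Phi^{-1}(1)\Sigma_\varepsilon\Phi^{-1\prime}(1)=\mathfrak{S}_{k_0}$. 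To organize the argument I would introduce three nested objects: the population truncated coefficients $\Phi_{r,k}$ and innovation variance $\Sigma_{\varepsilon_r}$ from \eqref{AR_tronquee}; the \emph{oracle} empirical coefficients $\tilde\Phi_{r,k}$ and $\tilde\Sigma_{\varepsilon_r}$ obtained by regressing the true (unobserved) $\mathcal{Y}_n$ on its lags; and the feasible estimators $\hat\Phi_{r,k}$, $\hat\Sigma_{\hat\varepsilon_r}$ built from $\hat{\mathcal{Y}}_n$. The decomposition $\hat\Phi_r(1)-\Phi(1)=[\hat\Phi_r(1)-\tilde\Phi_r(1)]+[\tilde\Phi_r(1)-\Phi_r(1)]+[\Phi_r(1)-\Phi(1)]$, together with its analogue for the innovation variances, then isolates a deterministic truncation error, a stochastic oracle error, and a plug-in error.

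For the deterministic term I would show $\sum_{k=1}^r\|\Phi_{r,k}-\Phi_k\|\to 0$ and $\Sigma_{\varepsilon_r}\to\Sigma_\varepsilon$ as $r\to\infty$. This is the classical Baxter/Berk argument: the finite block Yule--Walker solution converges to the $\mathrm{AR}(\infty)$ coefficients because $\sum_{k\ge1}\|\Phi_k\|<\infty$ under $\mathbf{(A4)}$ and the block-Toeplitz covariance operator of $\{\mathcal{Y}_n\}$ is boundedly invertible, which holds since under $\mathbf{(A4)}$ the spectral density $f(x)=\frac{1}{2\pi}\Psi(e^{ix})\Sigma_\varepsilon\Psi(e^{ix})^{*}$ is continuous and non-singular for every frequency $x$. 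The assumed rate $\|\Phi_k\|=\mathrm{o}(1/k^2)$ controls the tail $\sum_{k>r}\|\Phi_k\|$ and lets me bound $\Phi_r(1)-\Phi(1)=\sum_{k>r}\Phi_k$, so this term vanishes as soon as $r=r(n)\to\infty$.

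For the oracle stochastic term I would follow the multivariate Berk scheme of \cite{BMCF12,BMF11}: writing the empirical and population normal equations in block form, $\sum_{k=1}^r\|\tilde\Phi_{r,k}-\Phi_{r,k}\|$ is controlled by the deviation of the empirical block autocovariances $\hat\Gamma(h)-\Gamma(h)$ uniformly over $|h|\le r$, together with a uniform lower bound on the smallest eigenvalue of the empirical block-Toeplitz matrix. The key quantitative input is a bound of the form $\mathbb{E}\|\hat\Gamma(h)-\Gamma(h)\|^2=\mathrm{O}(1/n)$ uniform in $h$, which requires fourth-order moments of $\mathcal{Y}_n$ and summable fourth-order dependence; this is exactly where $\mathbf{(A2)}$ with $\beta=4+2\kappa$ (equivalently $\|\epsilon\|_8<\infty$) and the summability conditions $\mathbf{(A1)}$--$\mathbf{(A3)}$ enter, via the exponential decay of $\cov(Y_0,Y_k)$ from Corollary \ref{prop_C_1_u_k} and Davydov-type covariance inequalities. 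Summing over $|h|\le r$ and invoking $r^3/n\to 0$ yields $\sum_{k=1}^r\|\tilde\Phi_{r,k}-\Phi_{r,k}\|=\mathrm{o}_{\mathbb{P}}(1)$ and $\tilde\Sigma_{\varepsilon_r}-\Sigma_{\varepsilon_r}=\mathrm{o}_{\mathbb{P}}(1)$, hence $\tilde\Phi_r(1)\xrightarrow{\mathbb{P}}\Phi(1)$.

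The genuinely new term, and the one I expect to be the main obstacle, is the plug-in error from replacing $\mathcal{Y}_n$ by $\hat{\mathcal{Y}}_n$. Here I would exploit the structure $\hat{\mathcal{Y}}_j-\mathcal{Y}_j=-\Delta_n$ for every index $j$, where $\Delta_n:=(\bar Y_n-C_1,\ \bar Y_{k_0-1,n}-u_{k_0-2},\ \bar Y_{k_0,n}-u_{k_0-1})'=T_n(k_0)/n$ does \emph{not} depend on $j$ and satisfies $\Delta_n=\mathrm{O}_{\mathbb{P}}(n^{-1/2})$ by Theorem \ref{theo_CLT_vector_k_0}. Consequently the feasible empirical autocovariances $\hat\Gamma(h)$ differ from the oracle ones only through products and cross-products involving $\Delta_n$ and empirical means of $\mathcal{Y}_n$, all of which are $\mathrm{O}_{\mathbb{P}}(n^{-1/2})$ or smaller; the delicate point is to show that these perturbations stay negligible after being propagated through the inversion of an $r\times r$ block-Toeplitz matrix whose size grows with $n$, i.e. that the amplification is at most polynomial in $r$ and is killed by $r^3/n\to 0$. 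Establishing this uniform control --- that $\sum_{k=1}^r\|\hat\Phi_{r,k}-\tilde\Phi_{r,k}\|=\mathrm{o}_{\mathbb{P}}(1)$ and $\hat\Sigma_{\hat\varepsilon_r}-\tilde\Sigma_{\varepsilon_r}=\mathrm{o}_{\mathbb{P}}(1)$ despite the growing dimension --- is the crux. Combining the three terms gives $\hat\Phi_r(1)\xrightarrow{\mathbb{P}}\Phi(1)$ and $\hat\Sigma_{\hat\varepsilon_r}\xrightarrow{\mathbb{P}}\Sigma_\varepsilon$, and the continuous mapping theorem then finishes the proof.
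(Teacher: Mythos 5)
Your proposal follows essentially the same route as the paper: the same three-way decomposition into a deterministic truncation error, an oracle stochastic error for the regression on the true $\mathcal{Y}_n$, and a plug-in error from substituting $\hat{\mathcal{Y}}_n$, each controlled exactly as in the paper's Lemmas \ref{lemmaA9}--\ref{lemmaA13} under $r^3/n\to 0$, with the continuous-mapping/triangle-inequality assembly at the end. The only small inaccuracy is that under the paper's definition \eqref{Ht_chapeau} the centering of $\hat{\mathcal{Y}}_j$ uses the running means $\bar Y_j$, $\bar Y_{k_0,j}$, so $\hat{\mathcal{Y}}_j-\mathcal{Y}_j$ is not a single $j$-independent vector $\Delta_n$; this changes only the bookkeeping (the paper bounds $\sqrt{t}\,\|\bar Y_{k_0,t-r_2}-u_{k_0-1}\|_2\le K(1+\sqrt{r})$ uniformly in $t$), not the argument.
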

As explained previously, Proposition \ref{cond_psi_invertible} gives some conditions such that $\mathbf{(A4)}$ and $\|\Phi_k\| =\mathrm{o}(1/k^2)$ are  verified. Its proof occupies a large bulk of Appendix \ref{app:proof_prop_theo_psi}. The proof of Theorem \ref{convergence_Isp} is similar to that given by \citet[Section 3.3.1]{BMCF12} and is given in the same Appendix. 

Let $\nabla \varphi(\bar Y_n, \bar{Y}_{k_0-1,n}, \bar{Y}_{k_0,n})$ be a consistent estimator of $\nabla \varphi(C_1, u_{k_0-2}, u_{k_0-1})$, where $\varphi$ is the function defined by \eqref{def_phi}. 
Theorem~\ref{convergence_Isp} thus implies that
\begin{align}
\label{estThetaSP}
  \hat{\Omega}_{k_0}^{\mathrm{SP}}&:=\left(\nabla \varphi(\bar Y_n, \bar{Y}_{k_0-1,n}, \bar{Y}_{k_0,n})\right)' \hat{\mathfrak{S}}_{k_0}^{\mathrm{SP}}\left(\nabla \varphi(\bar Y_n, \bar{Y}_{k_0-1,n}, \bar{Y}_{k_0,n})\right)
\end{align}
is a weakly consistent estimator of ${\Omega}_{k_0}$. Some numerical illustrations are presented in Section \ref{sec:ultimately_correlated_numerics}.

\section{General correlated immigration}\label{sec:general}
We suppose throughout this section that $\mathbf{ (A1)}$, $\mathbf{ (A2)}$  and $\mathbf{ (A3)}$ hold, as well as the following additional Assumptions.
\begin{itemize}
\item[\hspace*{1em} $\mathbf{ (A5)}$:]
\hspace*{1em}  The unknown parameters $\lambda_0$, $m_0$, $V_{0,\xi}$ and $V_{0,\epsilon}$ (reminding that the two latter are the variances of $\xi$ and $\epsilon$) belong to $\Theta:= [\lambda_-,\lambda_+]\times \Theta_m\times \Theta_{V_{\xi}}\times \Theta_{V_{\epsilon}}$ for some known respective intervals $[\lambda_-,\lambda_+]$ and compact intervals $\Theta_m$, $\Theta_{V_{\xi}}$ and $\Theta_{V_{\epsilon}}$ included in $[0,\infty)$, with $0<\lambda_-<\lambda_+<1$, and the generating function $f_\nu:x\in [0,1]\mapsto \sum_{s=0}^\infty x^s \nu_{s+1}$ belongs to some known class of function ${\cal F}$.
\end{itemize}
\begin{itemize}
\item[\hspace*{1em} $\mathbf{ (A6)}$:]
\hspace*{1em} There exists a known quantity $K_m>0$ such that
\begin{equation}\label{def_Km}
\inf_{(\lambda,\mu,V_{\xi},V_{\epsilon})\in \Theta,\ f\in {\cal F}}\left| \Xi(\lambda,\mu, V_{\xi}, V_{\epsilon}, f) \right| \ge K_m
\end{equation}
where $\Xi$ is the function defined on $\Theta \times {\cal F}$ by
\begin{equation}\label{def_Xi}
\Xi(\lambda,\mu,V_{\xi},V_{\epsilon}, f):=
-  \frac{V_{\xi} m_0 \lambda}{(1-\lambda^2)(1+\lambda)} - \frac{ V_{\epsilon} \lambda}{1-\lambda^2} - \frac{\lambda^2}{1-\lambda^2} f (\lambda) - \frac{1}{1-\lambda^2} f(\lambda^{-1})
\end{equation}
where $(\lambda,\mu,V_{\xi},V_{\epsilon}, f)\in \Theta\times  {\cal F}$.
\end{itemize}
\begin{itemize}
\item[\hspace*{1em} $\mathbf{ (A7)}$:]
\hspace*{1em}  There exists a known constant $C_Y>0$ such that the $k$-th moments $\|Y_0\|_k$ of the stationary process, $k=1,2$ are less than $C_Y$.
\end{itemize}
We give a few comments on the two last assumptions. Assumption $\mathbf{ (A6)}$ may at first sight look artificial, but is actually natural for the following reason. First note that the function $\Xi$ in \eqref{def_Xi} verifies from \eqref{expr_f_nu} that $\Xi(\lambda_0,m_0,V_{0,\xi},V_{0,\epsilon},f_\nu)=\sum_{j=0}^\infty \lambda_0^{-j} \chi_j + \lambda_0(C_1^2-C_2)$ is non zero (a condition which will turn out to be necessary in the forthcoming Theorem \ref{main_theo_genera2}), and that it is still non zero in the neighborhood $\Theta\times {\cal F}$ of the unknown parameters $(\lambda_0,m_0,V_{0,\xi},V_{0,\epsilon},f_\nu)$. So that $\mathbf{ (A6)}$ is relevant because, unless in very specific circumstances, 
$\sum_{j=0}^\infty \lambda_0^{-j} \chi_j + \lambda_0(C_1^2-C_2)$ is not likely to be exactly zero. This is verified for example in the following situations:
\begin{itemize}
\item When $\cal F$ is the set of power series with non negative coefficents, meaning that the immigration sequence $\{\eps_n,\ n\in\nbZ \}$ is positively correlated, i.e. $\nu_n \ge 0$ for all $n\ge 1$. This is the case when one models the evolution of a disease with constant (increasing or decreasing) trend. In that case we have that $f(\lambda)$ and $f(\lambda^{-1})$ are non negative, so that we have from \eqref{def_Km} that
$$
\left| \Xi(\lambda,\mu, V_{\xi}, V_{\epsilon}, f) \right| \ge K_m:= \inf \Theta_{V_{\xi}} . \inf \Theta_m .\frac{\lambda_-}{(1-\lambda_-^2)(1+\lambda_+)} + \inf \Theta_{V_{\epsilon}}\frac{\lambda_-}{1-\lambda_-^2}.
$$
\item When $\cal F$ is included in the set of functions bounded by some constant $M_{\cal F}$, in which case we may set
$$
K_m:= \inf \Theta_{V_{\xi}} . \inf \Theta_m .\frac{\lambda_-}{(1-\lambda_-^2)(1+\lambda_+)} + \inf \Theta_{V_{\epsilon}}\frac{\lambda_-}{1-\lambda_-^2}- M_{\cal F}\left[ \frac{\lambda_+^2}{1-\lambda_+^2}+ \frac{1}{1-\lambda_+^2}\right],
$$
which is positive e.g. if the bound $M_{\cal F}$ is small enough.
\end{itemize}
As to $\mathbf{ (A7)}$, note that the explicit upper bound $C_Y$ in $\mathbf{(A7)}$ may be obtained from \citet[Section 3.1]{KW21} under the assumptions that $\|\epsilon\|_2$ and $\|\xi\|_2$ are finite, so that $\mathbf{(A7)}$ is not too restrictive. The expression of the bound obtained in that latter paper may not be simple, which is why one simple example where $C_Y$ has a nice expression may be illuminating. This is given in the following example:
\begin{example}\label{lem_bounds_moments}\normalfont
Assume that there exists known constants $M_\epsilon>0$ and $M_\xi \in (0,1)$ such that $\|\epsilon\|_k=[\nbE(|\epsilon|^k)]^{1/k} \le M_\epsilon$ and $\|\xi\|_k=[\nbE(|\xi|^k)]^{1/k} \le M_\xi$, $k=1,2$. Let us then write that
\begin{equation}\label{motivate_example}
\|Y_0\|_k = \left\| \sum_{j=1}^{Y_{-1}} \xi_{0,j}+\eps_0\right\|_k \le \left\| \sum_{j=1}^{Y_{-1}} \xi_{0,j}\right\|_k+ \left\| \eps_0\right\|_k .
\end{equation}
Thanks to the independence of $Y_{-1}$ from the $\xi_{0,j}$'s, $j\ge 1$:
\begin{eqnarray*}
\left\| \sum_{j=1}^{Y_{-1}} \xi_{0,j}\right\|_k^k &=&\nbE\left( \left[ \sum_{j=1}^{Y_{-1}} \xi_{0,j}\right]^k\right)= \sum_{n=0}^\infty \left\| \sum_{j=1}^{n} \xi_{0,j}\right\|_k^k \nbP(Y_{-1}=n)\\
&\le  &\sum_{n=0}^\infty n^k \|\xi\|_k^k \; \nbP(Y_{-1}=n) = \|\xi\|_k^k \;  \|Y_{-1}\|_k^k = \|\xi\|_k^k \;  \|Y_{0}\|_k^k
\end{eqnarray*}
which, plugged into \eqref{motivate_example} yields that the stationary process $\{Y_n,\ n\in\nbZ \}$ admits the following $k$-th moment upper bounds
$$
\|Y_0\|_k \le \frac{\|\epsilon\|_k}{1-\|\xi\|_k}\le \frac{M_\epsilon}{1-M_\xi},\quad k=1,2.
$$
Here, the fact that we impose for $M_\xi$ to be less than $1$ may look demanding, but is satisfied for instance if $\xi$ is zero with sufficiently high probability.
\end{example}
In order to state the main result of this section, we first introduce some auxiliary functions that will enable us to construct the estimator for the unknown parameters $\lambda_0$ and $m_0$. We first let $\varpi: \R \longrightarrow \R$ be a twice differentiable function such that 
\begin{eqnarray}
\varpi(x)& & \left\{
\begin{array}{cl}
=1, & x\in [1,\infty),\\
=0, & x\in (-\infty ,0],\\
\in [0,1], & x\in [0,1],
\end{array}
\right. \label{def_varpi}\\
\varpi''(x) &= & \mathrm{o}(x),\quad x\to 0, \label{equiv_varpi}
\end{eqnarray}
so that $\varpi'(x)=\mathrm{o}(x^2)$ and $\varpi(x)=\mathrm{o}(x^3)$ as $x\to 0$. 

We also need a twice differentiable function $G:\R\longrightarrow \R$ with finite support such that $G(x)=1$ for $x\in \left[0, \max \left( C_Y,C_Y^2\right) \right]$. We then let
\begin{equation}\label{def_chi_Km}
\varpi_k:x\in\R \mapsto \varpi \left( \frac{2}{K_m \lambda_-^k}x\right),\quad k\in \N ,
\end{equation}
$K_m$ being defined in \eqref{def_Km}, so that $\varpi_k(x)$ is equal to $1$ on $\left[ 2^{-1}{K_m \lambda_-^k},\infty\right)$, lies in $[0,1]$ on $\left[0, {2^{-1}K_m \lambda_-^k}\right]$ and is $0$ on $(-\infty,0]$. We finally define for all $k\in \N\setminus \{0\}$
\begin{eqnarray}
H_k: x\in \R & \mapsto & \frac{1}{k} \varpi_k(|x|) \ln |x| ,\label{def_H_k}\\
\psi_k: (a,b)\in \R^2 &\mapsto & G(a) G(b) H_k(a^2-b).\label{def_psi_k}
\end{eqnarray}
Now, to motivate the expression of the upcoming estimator we first make a few comments. Since $\sum_{j=0}^\infty \lambda_0^{-j} \chi_j$ is a convergent series (an easy consequence of the definition \eqref{def_chi} of $\chi_k$ and the convergence of the series in assumption ${\mathbf{(A2)}}$), and since we saw that $\sum_{j=0}^\infty \lambda_0^{-j} \chi_j + \lambda_0(C_1^2-C_2)$ is not zero thanks in particular to $\mathbf{ (A6)}$, the first crucial observation that we may make from \eqref{expr_C_1_u_k} is that
\begin{equation}\label{observation_S_k}
S_k:= \frac{1}{k}\ln |C_1^2-u_k| \longrightarrow \ln (\lambda_0),\quad k\to \infty .
\end{equation}
This limit brings us to two deductions. First, w.l.o.g. we are going to estimate the unknown parameter $\ln \lambda_0$ rather than $\lambda_0$. Second, since for fixed $k\in \N$, $C_1$ and $u_k$ are respectively estimated by
\begin{equation}\label{def_estim_gen1}
\bar{Y}_n:= \frac{1}{n} \sum_{i=1}^n Y_i,\quad \bar{Y}_{k+1,n}:=\frac{1}{n} \sum_{i=1}^n Y_i Y_{i+k+1},
\end{equation}
then it is natural to let $k=k_n$ in \eqref{observation_S_k} for an integer valued sequence $(k_n)_{n\in\N}$ such that $\lim_{n\to \infty} k_n=\infty$,  and introduce an estimator where $C_1$ and $u_{k_n}$ are respectively replaced by $\bar{Y}_n$ and $\bar{Y}_{k_n+1,n}$.

We now arrive at the definition of the estimator. As previously said, and in view of \eqref{observation_S_k}, a first idea that springs to mind is to consider the following estimator:
\begin{equation}\label{intuitive_estimator}
\hat{T}_n:=\frac{1}{k_n}\ln \left| [\bar{Y}_n]^2 -  \bar{Y}_{k_n+1,n}\right|
\end{equation}
for a sequence $(k_n)_{n\in\N}$ with $\lim_{n\to \infty} k_n=\infty$. Equation \eqref{intuitive_estimator} has however two drawbacks. First, by a very crude approximation we have that $[\bar{Y}_n]^2 -  \bar{Y}_{k_n+1,n}\approx C_1^2- u_{k_n}$ which tends to $0$ as $n\to \infty$: this observation foretells that $[\bar{Y}_n]^2 -  \bar{Y}_{k_n+1,n}$ can be arbitrarily too close to $0$ with possibly large probability as $n$ grows large, so that $\ln \left| [\bar{Y}_n]^2 -  \bar{Y}_{k_n+1,n}\right|$ is difficult to control and may e.g. tend to $ \infty$ faster or more slowly than $k_n$. Second, $[\bar{Y}_n]^2 -  \bar{Y}_{k_n+1,n}$ may also be arbitrarily large. Both those observations imply some potential "bad" behaviour of the estimator defined in \eqref{intuitive_estimator}, namely that it can grow large as $n\to\infty$, which will induce undesirable statistical properties. This justifies to rather consider the following regularized estimator
\begin{multline}\label{def_estimator_general}
\hat{S}_n:=\psi_{k_n}(\bar{Y}_n, \bar{Y}_{k_n+1,n})= G(\bar{Y}_n) G(\bar{Y}_{k_n+1,n}) H_{k_n} \left( \left| [\bar{Y}_n]^2 -  \bar{Y}_{k_n+1,n}\right| \right)\\
=G(\bar{Y}_n) G(\bar{Y}_{k_n+1,n})  \varpi_{k_n}\left( \left| [\bar{Y}_n]^2 -  \bar{Y}_{k_n+1,n}\right| \right) \frac{1}{k_n} \ln \left| [\bar{Y}_n]^2 -  \bar{Y}_{k_n+1,n}\right| ,\quad n\in \N ,
\end{multline}
for a well chosen sequence $(k_n)_{n\in\N}$ specified later, where $\psi_k$ is defined in \eqref{def_psi_k}. One sees that $\hat{S}_n$ resembles \eqref{intuitive_estimator}, but the behaviour of $[\bar{Y}_n]^2 -  \bar{Y}_{k_n+1,n}$ at $0$ and infinity is "smoothed out" thanks to the functions $\varpi_{k_n}(\cdot)$ and $G(\cdot)$, see \eqref{def_psi_k}. The role of the function $\psi_{k_n}$ in the definition \eqref{def_estimator_general} is motivated as follows. The terms $G(\bar{Y}_n)$ and  $G(\bar{Y}_{k_n+1,n})$ prevent $\hat{S}_n$ from being too large if $\bar{Y}_n$ or $\bar{Y}_{k_n+1,n}$ are large. The term $H_{k_n} \left( \left| [\bar{Y}_n]^2 -  \bar{Y}_{k_n+1,n}\right| \right)$ is such that it behaves like ${k^{-1}_n} \ln \left| [\bar{Y}_n]^2 -  \bar{Y}_{k_n+1,n}\right| $ when $\left| [\bar{Y}_n]^2 -  \bar{Y}_{k_n+1,n}\right| $ is far enough from $0$. When the latter is close to $0$, the definition \eqref{def_chi_Km} for $\varpi_{k_n}$ entails that $\varpi_{k_n}\left( \left| [\bar{Y}_n]^2 -  \bar{Y}_{k_n+1,n}\right| \right)$ tends to $\infty$ with the same order of magnitude as $k_n$, again preventing $\hat{S}_n$ from being too large.

The main results of this section show the convergence in $\mathbb{L}^2$ of the estimator towards the unknown parameter and gives an expansion for $\hat{S}_n$ for a suitable choice for $k_n$.
\begin{theorem}[Quadratic convergence of estimator]\label{main_theo_general1}\normalfont
We suppose that $\mathbf{ (A1)}$, $\mathbf{ (A2)}$, $\mathbf{ (A3)}$, $\mathbf{ (A5)}$, $\mathbf{ (A6)}$ and $\mathbf{ (A7)}$ hold. Let us set $k_n:=\lfloor c \ln n\rfloor$ where $c<-{1}/({2 \ln \lambda_-})$. The following convergence in quadratic mean holds:
\begin{equation}\label{conv_gen_estimator_quad}
\left| \left| \hat{S}_n - \ln \lambda_0\right| \right|_2=\mathrm{O}\left( \frac{1}{\ln n}\right) \longrightarrow 0,\quad n\to \infty 
\end{equation}
so that, in particular, $e^{\hat{S}_n}$ converges in probability towards $\lambda_0$ as $n\to\infty$. Furthermore, the estimator defined by $\hat{N}_n:=\bar{Y}_n \left(1-e^{\hat{S}_n}\right)$ converges in probability towards $m_0$ as $n\to \infty$.
\end{theorem}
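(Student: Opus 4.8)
The plan is to compare $\hat{S}_n$ with the deterministic sequence $S_{k_n}=k_n^{-1}\ln|C_1^2-u_{k_n}|$ introduced in \eqref{observation_S_k} and to split
$$\hat{S}_n-\ln\lambda_0=\bigl(\hat{S}_n-S_{k_n}\bigr)+\bigl(S_{k_n}-\ln\lambda_0\bigr),$$
where the deterministic second term will carry the announced $\mathrm{O}(1/\ln n)$ rate while the stochastic first term will be shown to be negligible. For the deterministic term I would invoke Corollary~\ref{prop_C_1_u_k}: setting $A_k:=\sum_{j=0}^k\lambda_0^{-j}\chi_j+\lambda_0(C_1^2-C_2)$, relation \eqref{expr_C_1_u_k} reads $C_1^2-u_{k_n}=\lambda_0^{k_n}A_{k_n}$, so that $S_{k_n}-\ln\lambda_0=k_n^{-1}\ln|A_{k_n}|$. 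By $\mathbf{(A3)}$ the sequence $(A_k)$ is bounded, and by \eqref{expr_f_nu} its limit is $\Xi(\lambda_0,m_0,V_{0,\xi},V_{0,\epsilon},f_\nu)$, which $\mathbf{(A6)}$ keeps bounded away from $0$; hence $\ln|A_{k_n}|$ stays bounded and, since $k_n=\lfloor c\ln n\rfloor$, one gets $S_{k_n}-\ln\lambda_0=\mathrm{O}(1/k_n)=\mathrm{O}(1/\ln n)$.

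I then set $d_n:=C_1^2-u_{k_n}$ and $D_n:=[\bar{Y}_n]^2-\bar{Y}_{k_n+1,n}$, its empirical analogue, and study the relative error $R_n:=(D_n-d_n)/d_n$; the heart of the argument is that $\|R_n\|_2\to0$ at a polynomial rate. For the numerator, writing $[\bar{Y}_n]^2-C_1^2=(\bar{Y}_n-C_1)^2+2C_1(\bar{Y}_n-C_1)$ and combining the mixing and integrability assumptions $\mathbf{(A1)}$–$\mathbf{(A2)}$ with Proposition~\ref{prop_stationarity} and the exponential decay of correlations of Corollary~\ref{prop_C_1_u_k} yields the second-moment bound $\|D_n-d_n\|_2=\mathrm{O}(n^{-1/2})$. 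For the denominator, $|d_n|=\lambda_0^{k_n}|A_{k_n}|\ge c_0\lambda_-^{k_n}$ for $n$ large, using $\lambda_0\ge\lambda_-$ and the lower bound on $|A_{k_n}|$ furnished by $\mathbf{(A6)}$. Since $\lambda_-^{k_n}\ge n^{c\ln\lambda_-}$, this gives
$$\|R_n\|_2=\mathrm{O}\bigl(n^{-1/2}\lambda_-^{-k_n}\bigr)=\mathrm{O}\bigl(n^{-(1/2+c\ln\lambda_-)}\bigr),$$
whose exponent is negative precisely because $c<-1/(2\ln\lambda_-)$ (recall $\ln\lambda_-<0$): this is exactly where the admissible range for $c$ is consumed.

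To transfer this to $\hat{S}_n$ I would localize on the good event
$$E_n:=\bigl\{G(\bar{Y}_n)=1\bigr\}\cap\bigl\{G(\bar{Y}_{k_n+1,n})=1\bigr\}\cap\bigl\{|R_n|\le 1/2\bigr\}.$$
On $E_n$, since $|A_{k_n}|\to|\Xi|\ge K_m$, for large $n$ one has $|D_n|\ge\tfrac12 K_m\lambda_-^{k_n}$, so by \eqref{def_chi_Km} the cutoff $\varpi_{k_n}(|D_n|)=1$ and every regularizing factor in \eqref{def_estimator_general} is inactive; thus $\hat{S}_n=k_n^{-1}\ln|D_n|$ and $\hat{S}_n-S_{k_n}=k_n^{-1}\ln|1+R_n|$, bounded by $2k_n^{-1}|R_n|$. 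Off $E_n$ I rely on two facts. First, $\hat{S}_n$ is uniformly bounded by a constant $M$: the finite support of $G$ confines $a^2-b$ to a bounded set, and the estimate $\varpi(x)=\mathrm{o}(x^3)$ noted after \eqref{equiv_varpi} makes $k_n^{-1}\varpi_{k_n}(x)\ln x$ uniformly bounded there. Second, $\P(E_n^c)\to0$ polynomially: Chebyshev gives $\P(|R_n|>1/2)\le 4\|R_n\|_2^2$, while $C_1=\nbE Y_0<C_Y$ and $u_{k_n}\le\|Y_0\|_2^2<C_Y^2$ from $\mathbf{(A7)}$ leave a uniform slack below the plateau of $G$, so the two $G$-events have probability $\mathrm{O}(n^{-1})$ by the same moment bounds.

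Assembling the pieces,
$$\|\hat{S}_n-S_{k_n}\|_2^2=\nbE\bigl[(\hat{S}_n-S_{k_n})^2\1_{E_n}\bigr]+\nbE\bigl[(\hat{S}_n-S_{k_n})^2\1_{E_n^c}\bigr]\le \frac{4}{k_n^2}\|R_n\|_2^2+M^2\,\P(E_n^c),$$
and both terms are $\mathrm{o}(1/(\ln n)^2)$, so the stochastic part is negligible against the deterministic $\mathrm{O}(1/\ln n)$ term, giving \eqref{conv_gen_estimator_quad}. Convergence in $\mathbb{L}^2$ implies convergence in probability, whence $e^{\hat{S}_n}\to\lambda_0$ by the continuous mapping theorem; and since $\bar{Y}_n\to C_1=m_0/(1-\lambda_0)$ by \eqref{expr_first_moment} and ergodicity, Slutsky's lemma gives $\hat{N}_n=\bar{Y}_n(1-e^{\hat{S}_n})\to C_1(1-\lambda_0)=m_0$ in probability. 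The step I expect to be the main obstacle is the uniform-in-$k_n$ second-moment control of the lagged empirical moment $\bar{Y}_{k_n+1,n}-u_{k_n}$: this is a triangular array whose lag grows with $n$, and obtaining the rate $\mathrm{O}(n^{-1/2})$ uniformly requires combining carefully the mixing rate $\mathbf{(A1)}$, Davydov's covariance inequality and the moment condition $\mathbf{(A2)}$; once this is secured, matching the $n^{-1/2}$ rate against the exponentially small threshold $\lambda_-^{k_n}$ is exactly what pins down the condition on $c$.
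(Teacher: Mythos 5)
Your proof is correct in substance and shares the paper's architecture --- a deterministic bias term $S_{k_n}-\ln\lambda_0=\mathrm{O}(1/k_n)$ controlled via Corollary \ref{prop_C_1_u_k} and $\mathbf{(A6)}$, plus a stochastic fluctuation whose $n^{-1/2}$ rate is traded against the exponentially small threshold $\lambda_-^{k_n}$, which is exactly what fixes the admissible range of $c$ --- but the technical mechanism in the middle is genuinely different. The paper never localizes on a good event: Proposition \ref{prop_main_theo1} applies the mean value theorem to $\psi_{k}$ directly, and the whole point of Lemma \ref{lemma_before_prop_main_theo1} is that $\|\nabla\psi_k\|\le M_1(k)=\mathrm{O}(1/\lambda_-^k)$ \emph{uniformly on $\R^2$}, so the regularizers $G$ and $\varpi_{k_n}$ are absorbed into the gradient bound and one gets $\|\hat S_n-\psi_{k_n}(C_1,u_{k_n})\|_2\le M_1(k_n)\|S_n(k_n)/n\|_2$ in one line. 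Your route instead switches the regularizers off on $E_n$ and pays for $E_n^c$ with the uniform boundedness of $\hat S_n$ plus a polynomial probability bound; this works, and it is arguably more transparent about \emph{why} the estimator behaves well, but it is longer and introduces two wrinkles the paper's route avoids. First, your relative-error term $R_n$ contains $(\bar Y_n-C_1)^2$, whose $\mathbb{L}^2$ norm is $\|\bar Y_n-C_1\|_4^2$: getting the rate here needs a fourth-moment (Rosenthal-type) bound for the mixing partial sum, which is not among the paper's established estimates (the paper only ever needs $\|S_n(k)/\sqrt n\|_2=\mathrm{O}(1)$, i.e.\ second moments, precisely because the MVT linearizes the map $(a,b)\mapsto a^2-b$ before any squaring occurs). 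You can repair this by intersecting $E_n$ with $\{|\bar Y_n-C_1|\le 1\}$. Second, your events $\{G(\bar Y_n)=1\}$ only have rapidly vanishing complements if there is strict slack between $C_1$ and $C_Y$ (resp.\ $u_{k_n}$ and $C_Y^2$); at the boundary of $\mathbf{(A7)}$ the plateau of $G$ gives no room, whereas the uniform gradient bound is insensitive to this. Finally, you correctly identify the real cost of the theorem: the uniform-in-$k_n$ control $\|S_n(k_n)/\sqrt n\|_2=\mathrm{O}(1)$ for the triangular array with growing lag, which is the content of convergence \eqref{CLT_general_Sigma} in Proposition \ref{prop_main_theo2} and occupies most of the paper's effort; your sketch assumes it rather than proves it, exactly as the theorem's own three-line proof does.
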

The proof of Theorem \ref{main_theo_general1} is postponed to Appendix \ref{proof_gen1}.
\begin{remi}\normalfont
The sequence $(k_n)_{n\in \N}$ grows like $\ln n$ in the previous theorem. It may be (wrongly) guessed that such a growth is inconsequential and that any sequence satisfying $k_n=\mathrm{o}(n)$ could guarantee the convergence of the estimator $\hat{S}_n$. In fact, it turns out that this growth rate is not only significant but actually careful chosen, as it is illustrated numerically in Section \ref{sec:numerics} that the estimator performs very badly if $k_n$ grows for example like $\sqrt{n}$ instead.
\end{remi}
Last, a finer behaviour for the estimator can be described when the covariance of the immigration process decreases fast enough. This is explained in the following result.
\begin{theorem}[Expansion for $\hat{S}_n$]\label{main_theo_genera2}\normalfont
We suppose that $\mathbf{ (A1)}$, $\mathbf{ (A2)}$, $\mathbf{ (A3)}$, $\mathbf{ (A5)}$, $\mathbf{ (A6)}$ and $\mathbf{ (A7)}$ hold. Let us also assume here that the covariance of the immigration process satisfies $\nu_h= \mathrm{O}(\zeta^h)$ for some $\zeta <\lambda_-$ (ensuring that $\mathbf{(A2)}$ holds). Let us set $k_n:=\lfloor c \ln n\rfloor$ where $c\in \left( -{1}/{2 \ln \zeta}, -{1}/{2 \ln \lambda_-}\right)$. Then one has the two terms expansion
\begin{equation}\label{expansion_Sn}
\hat{S}_n- \ln \lambda_0=\frac{1}{k_n} \ln \left| \sum_{j=0}^\infty \lambda_0^{-j} \chi_j + \lambda_0(C_1^2-C_2)\right| + \frac{1}{\sqrt{n}k_n \lambda_0^{k_n}} Z_n
\end{equation}
where $Z_n$ satisfies $Z_n \stackrel{\cal D}{\longrightarrow} {\cal N}(0,\sigma)$, $n\to\infty$, with
\begin{eqnarray}
\sigma &:=& V
 \mathfrak{S}V',\label{def_sigma_gen}\\
 V&:=& \left( \frac{2C_1}{ \left|\sum_{j=0}^{\infty} \lambda_0^{-j} \chi_j + \lambda_0(C_1^2-C_2)\right|},
- \frac{1}{ \left|\sum_{j=0}^{\infty} \lambda_0^{-j} \chi_j + \lambda_0(C_1^2-C_2)\right|}
\right)',\label{def_sigma_gen_V}
\end{eqnarray}
and where $\mathfrak{S}$ is given by
\begin{equation}\label{expression_big_Sigma}
\mathfrak{S}= \left[
\begin{array}{cc}
2 \sum_{h=1}^\infty (C_1^2 - u_{h-1}) &   
\begin{array}{c}
-2 \frac{m_0}{(1-\lambda_0)^2}  \sum_{h=0}^\infty \chi_h \\
+  \frac{C_1}{1-\lambda_0}(C_2-C_1^2) \\
+3 C_1 \sum_{h=1}^{\infty } (u_{h-1}-C_1^2)
\end{array}
        \\
    &    \\
\begin{array}{c}
-2 \frac{m_0}{(1-\lambda_0)^2}  \sum_{h=0}^\infty \chi_h \\
+  \frac{C_1}{1-\lambda_0}(C_2-C_1^2) \\
+3 C_1 \sum_{h=1}^{\infty } (u_{h-1}-C_1^2)
\end{array}
 & 
\begin{array}{c}
C_2^2 - C_1^4 + 2 \sum_{h=1}^{\infty} (u_{h-1}^2 -C_1^4)\\
 +  2 C_1^2 \sum_{h=0}^\infty (u_{h-1}- C_1^2)\\
 - 2 \frac{C_1 m_0}{(1-\lambda_0)^2}\sum_{t=1}^{\infty} \chi_{t-1}
\end{array}
\end{array}
\right].
\end{equation}
\end{theorem}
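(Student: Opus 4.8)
The plan is to strip the regularisation off the estimator, linearise the logarithm around its deterministic main term, and recognise the leading stochastic fluctuation as a fixed linear functional of the vector partial sum $S_n(k_n)$, to which a central limit theorem for mixing triangular arrays with growing memory can be applied. Throughout write $D_n:=[\bar Y_n]^2-\bar Y_{k_n+1,n}$, and set $A_k:=\sum_{j=0}^k \lambda_0^{-j}\chi_j+\lambda_0(C_1^2-C_2)$, with limit $A_\infty:=\sum_{j=0}^\infty \lambda_0^{-j}\chi_j+\lambda_0(C_1^2-C_2)=\Xi(\lambda_0,m_0,V_{0,\xi},V_{0,\epsilon},f_\nu)$, so that Corollary \ref{prop_C_1_u_k} reads $C_1^2-u_{k_n}=\lambda_0^{k_n}A_{k_n}$. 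First I would show that on an event of probability tending to one all three cut-offs are inactive, i.e. $G(\bar Y_n)=G(\bar Y_{k_n+1,n})=1$ and $\varpi_{k_n}(|D_n|)=1$, so that there $\hat S_n=k_n^{-1}\ln|D_n|$. The $G$-factors are handled by $\bar Y_n\to C_1\le C_Y$ and $\bar Y_{k_n+1,n}\to C_1^2\le C_Y^2$ (Assumption $\mathbf{(A7)}$ and the fact that $G\equiv 1$ on $[0,\max(C_Y,C_Y^2)]$). For $\varpi_{k_n}$ I would use $|A_\infty|\ge K_m$ (Assumption $\mathbf{(A6)}$ via \eqref{expr_f_nu}), $A_{k_n}\to A_\infty$ and $\lambda_0^{k_n}\ge\lambda_-^{k_n}$ to conclude that $|D_n|$ stays above the threshold $2^{-1}K_m\lambda_-^{k_n}$ with high probability; the off-event contribution is negligible for the distributional statement because $G$ has finite support.

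Next I would expand $D_n=(C_1^2-u_{k_n})+\mathcal{L}_n+(\bar Y_n-C_1)^2$, where the leading error is the linear term
\[
\mathcal{L}_n:=2C_1(\bar Y_n-C_1)-(\bar Y_{k_n+1,n}-u_{k_n})=(2C_1,-1)\,\tfrac1n S_n(k_n).
\]
Factoring out $\lambda_0^{k_n}A_{k_n}$ gives $\ln|D_n|=k_n\ln\lambda_0+\ln|A_{k_n}|+\ln|1+R_n|$ with $R_n:=\bigl(\mathcal{L}_n+(\bar Y_n-C_1)^2\bigr)/(\lambda_0^{k_n}A_{k_n})$. Since the limiting variance of $n^{-1/2}S_n(k_n)$ is finite, $\mathcal{L}_n=\mathrm{O}_{\mathbb{P}}(n^{-1/2})$ and $(\bar Y_n-C_1)^2=\mathrm{O}_{\mathbb{P}}(n^{-1})$, so $R_n=\mathrm{O}_{\mathbb{P}}\bigl((\sqrt n\,\lambda_0^{k_n})^{-1}\bigr)$ and the expansion $\ln|1+R_n|=R_n+\mathrm{O}_{\mathbb{P}}(R_n^2)$ is licit. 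Dividing by $k_n$ and subtracting $\ln\lambda_0$ yields
\[
\hat S_n-\ln\lambda_0=\frac{1}{k_n}\ln|A_\infty|+\frac{1}{\sqrt n\,k_n\lambda_0^{k_n}}Z_n+(\text{remainders}),\qquad Z_n:=\frac{(2C_1,-1)\,n^{-1/2}S_n(k_n)}{A_\infty},
\]
after replacing $A_{k_n}$ by $A_\infty$ in both the bias term $k_n^{-1}\ln|A_{k_n}|$ and the denominator of the stochastic term (a relative error $\mathrm{O}((\zeta/\lambda_0)^{k_n})$). The definition of $V$ in \eqref{def_sigma_gen_V} is exactly $(2C_1,-1)'/|A_\infty|$, the absolute value absorbing $\mathrm{sign}(A_\infty)$, which is irrelevant for the limiting centred Gaussian.

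The two bounds imposed on $c$ are precisely what make the remainders negligible relative to $(\sqrt n\,k_n\lambda_0^{k_n})^{-1}$. The upper bound $c<-1/(2\ln\lambda_-)$ forces $\sqrt n\,\lambda_0^{k_n}\to\infty$; this validates the log-linearisation, kills the quadratic terms $k_n^{-1}(\bar Y_n-C_1)^2/(\lambda_0^{k_n}A_{k_n})$ and $k_n^{-1}R_n^2$ (whose ratios to the stochastic term are $\mathrm{O}(n^{-1/2})$ and $\mathrm{O}((\sqrt n\,\lambda_0^{k_n})^{-1})$ respectively), and guarantees the cut-offs vanish. The lower bound $c>-1/(2\ln\zeta)$ is used only to control the deterministic bias: under $\nu_h=\mathrm{O}(\zeta^h)$ one gets $\chi_k=\mathrm{O}(\zeta^k)$ and hence $A_{k_n}-A_\infty=\mathrm{O}((\zeta/\lambda_0)^{k_n})$, so $k_n^{-1}\ln|A_{k_n}/A_\infty|=\mathrm{O}((\zeta/\lambda_0)^{k_n}/k_n)$, which is $\mathrm{o}\bigl((\sqrt n\,k_n\lambda_0^{k_n})^{-1}\bigr)$ exactly when $\sqrt n\,\zeta^{k_n}\to 0$, i.e. $c\ln\zeta<-1/2$.

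The hard part is the remaining convergence $n^{-1/2}S_n(k_n)\xrightarrow{\mathcal D}\mathcal N(0,\mathfrak{S})$, which is a central limit theorem for a triangular array whose second coordinate $n^{-1/2}\sum_j(Y_jY_{j+k_n+1}-u_{k_n})$ carries a lag $k_n+1$ diverging with $n$. I would express each $Y_j$ through the expansion \eqref{proof_step_2_CTL_k0}, so that the array is strongly mixing with coefficients controlled by $\mathbf{(A3)}_2$, verify the moment/Lindeberg and variance hypotheses of the growing-dependence CLT of \cite{FZ05} using the finite $2\beta$-moments $\mathbf{(A2)}$ and exponential mixing, and then identify $\mathfrak{S}=\lim_n\sum_{|h|<n}\mathrm{Cov}(X_0(k_n),X_h(k_n))$. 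The delicate bookkeeping lies in this last computation: because the correlations of $\{Y_n\}$ decay geometrically (Corollary \ref{prop_C_1_u_k}), only index configurations in which the clusters $\{0,k_n+1\}$ and $\{h,h+k_n+1\}$ overlap contribute, namely $h$ near $0$ and $h$ near $\pm(k_n+1)$; the asymptotic independence of far-apart blocks turns the genuine second-order terms into products (e.g. $\mathrm{Var}(Y_0Y_{k_n+1})\to C_2^2-C_1^4$ and $\mathrm{Cov}(Y_0Y_{k_n+1},Y_hY_{h+k_n+1})\to u_{|h|-1}^2-C_1^4$), from which the entries displayed in \eqref{expression_big_Sigma} are read off. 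Assembling these clusters, together with the stationarity expressions for $\mathrm{Cov}(Y_0,Y_h)$ and the $\chi_h$-sums, is where the bulk of the technical work resides.
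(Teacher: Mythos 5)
Your proposal is correct and follows essentially the same strategy as the paper: neutralise the cut-offs at the deterministic point $U(k_n)=(C_1,u_{k_n})$, linearise, recognise the leading fluctuation as $(2C_1,-1)\,n^{-1}S_n(k_n)/(\lambda_0^{k_n}A_\infty)$, invoke the triangular-array CLT of \cite{FZ05} for $n^{-1/2}S_n(k_n)$ (this is exactly Proposition \ref{prop_main_theo2}, whose proof indeed carries the bulk of the technical work, and your cluster-bookkeeping sketch for $\mathfrak{S}$ matches the paper's computation), and control the bias $k_n^{-1}\ln|A_{k_n}|$ versus $k_n^{-1}\ln|A_\infty|$ using $\chi_j=\mathrm{O}(\zeta^j)$ and the lower bound $c>-1/(2\ln\zeta)$, i.e. $\sqrt n\,\zeta^{k_n}\to 0$. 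The one genuine difference is in how the linearisation remainder is handled. You strip the regularisation on a high-probability event and expand $\ln|1+R_n|=R_n+\mathrm{O}_{\mathbb{P}}(R_n^2)$, discarding the off-event by Slutsky; this suffices for the distributional statement of the theorem. The paper instead keeps the full regularised map $\psi_{k_n}$ and applies the mean value theorem twice — once to $\psi_{k_n}$ and once to $\nabla\psi_{k_n}$ — using the \emph{global} bounds $\|\nabla\psi_{k}\|,\|D^2\psi_{k}\|=\mathrm{O}(\lambda_-^{-k})$ of Lemma \ref{lemma_before_prop_main_theo1}, which is precisely what makes the cut-offs worth their construction: it yields the remainder control in $\mathbb{L}^1$/$\mathbb{L}^2$ rather than merely in probability, and hence the quantitative byproduct \eqref{2_terms} stated after the theorem. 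If you only want the convergence in distribution of $Z_n$, your route is adequate; if you also want $\|Z_n\|_2$ bounded, you would need to quantify the off-event contribution, which your argument as written does not do.
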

The proof of Theorem \ref{main_theo_genera2} is postponed to Appendix \ref{proof_gen2}.

It seems that such expansions as \eqref{expansion_Sn} are quite rare in the literature and exist in the case of regression models, see convergence (3.6) in \citet[Theorem 2]{TP96} for a corrected central limit theorem for an estimator in a linear model. Also note that the normalization factor in the central limit theorem involved in \eqref{expansion_Sn} is $\sqrt{n}k_n \lambda_0^{k_n}$ i.e. in the form $ n^\delta\ln n$ with $\delta= 1/2 + c \ln \lambda_0$, which is different from the classical re-normalization in $\sqrt{n}$. 

Also, a byproduct of \eqref{expansion_Sn} is the following refinement of \eqref{conv_gen_estimator_quad} in Theorem \ref{main_theo_general1}: 
\begin{equation}\label{2_terms}
\hat{S}_n- \ln \lambda_0=\frac{1}{k_n} \ln \left| \sum_{j=0}^\infty \lambda_0^{-j} \chi_j + \lambda_0(C_1^2-C_2)\right| + \mathrm{O}_{\mathbb{L}^2}\left(\frac{1}{\sqrt{n}k_n \lambda_0^{k_n}}\right),
\end{equation}
as indeed it is shown in the proof of Theorem \ref{main_theo_genera2} that $||Z_n||_2$ converges as $n\to \infty$ to some finite quantity. This makes more precise the speed of convergence of $\hat{S}_n$ towards $ \ln \lambda_0$, as \eqref{2_terms} implies that 
\begin{equation}\label{slow_convergence}
\left| \left| \hat{S}_n - \ln \lambda_0\right| \right|_2\propto \frac{1}{\ln(n)},
\end{equation}
i.e. that the speed of convergence is {\it exactly} of the order ${1}/{\ln(n)}$ here, and not just at most of order ${1}/{\ln(n)}$ as shown in \eqref{conv_gen_estimator_quad}. Convergence  is thus very slow, which in particular explains why we need to pick very large $n$ in the numerical illustrations in the forthcoming Section \ref{sec:correlated}.

Theorems \ref{main_theo_general1} and \ref{main_theo_genera2} will heavily rely on the two following technical results, proved respectively in Appendices \ref{subsec:proof_gradient} and \ref{sec:proof_prop_main_theo2}.
\begin{prop}\label{prop_main_theo1}\normalfont
The following estimate holds for all $k$ and $n$ in $\N$:
\begin{equation}\label{bound_S_k_n}
||\psi_{k}(\bar{Y}_n, \bar{Y}_{k+1,n})- \ln \lambda_0||_2\le K_S \left( \frac{1}{ \lambda_-^k} \left|\left| \frac{S_n(k)}{n}\right|\right|_2 +  \frac{1}{k}\right)
\end{equation}
for some constant $K_S>0$, where we recall that $S_n(k)$ is defined in \eqref{def_estimators}.
\end{prop}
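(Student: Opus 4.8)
The plan is to estimate $\psi_k(\bar{Y}_n,\bar{Y}_{k+1,n})-\ln\lambda_0$ by splitting it, via the triangle inequality, into a stochastic fluctuation term and a deterministic bias term:
\[
\psi_k(\bar{Y}_n,\bar{Y}_{k+1,n})-\ln\lambda_0=\big(\psi_k(\bar{Y}_n,\bar{Y}_{k+1,n})-\psi_k(C_1,u_k)\big)+\big(\psi_k(C_1,u_k)-\ln\lambda_0\big).
\]
The key observation, read off from \eqref{def_estimators}, is that $S_n(k)/n=(\bar{Y}_n-C_1,\ \bar{Y}_{k+1,n}-u_k)'$, so the stochastic term is $\psi_k$ evaluated at a perturbation of size $S_n(k)/n$ around the deterministic point $(C_1,u_k)$. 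I would bound the deterministic term by $\mathrm{O}(1/k)$ and the stochastic one by $\mathrm{O}(\lambda_-^{-k})\,\|S_n(k)/n\|_2$, which together give \eqref{bound_S_k_n} upon choosing $K_S$ large enough.

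For the deterministic term, I would first use $\mathbf{(A7)}$ together with $|u_k|\le\|Y_0\|_2^2\le C_Y^2$ (Cauchy--Schwarz) to place $C_1$ and $u_k$ in the region where $G\equiv 1$, so $G(C_1)=G(u_k)=1$. Next, \eqref{expr_C_1_u_k} gives $C_1^2-u_k=\lambda_0^{k}\Xi_k$ with $\Xi_k:=\sum_{j=0}^k\lambda_0^{-j}\chi_j+\lambda_0(C_1^2-C_2)$, and $\Xi_k$ converges to $\Xi(\lambda_0,m_0,V_{0,\xi},V_{0,\epsilon},f_\nu)$, which is bounded below in modulus by $K_m$ thanks to $\mathbf{(A6)}$. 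Hence for $k$ large $|C_1^2-u_k|=\lambda_0^{k}|\Xi_k|\ge 2^{-1}K_m\lambda_-^{k}$ (using $\lambda_0\ge\lambda_-$), so $\varpi_k(|C_1^2-u_k|)=1$ by \eqref{def_chi_Km} and therefore $\psi_k(C_1,u_k)=k^{-1}\ln|C_1^2-u_k|=\ln\lambda_0+k^{-1}\ln|\Xi_k|$. As $\ln|\Xi_k|$ stays bounded, this yields $|\psi_k(C_1,u_k)-\ln\lambda_0|\le C/k$ for all large $k$, the finitely many remaining small indices being absorbed into the constant since $\psi_k(C_1,u_k)$ is uniformly bounded there.

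For the stochastic term I would show that $\psi_k$ is globally Lipschitz with constant $\mathrm{O}(\lambda_-^{-k})$. Because $G$ has finite support and $G,G'$ are bounded, both partial derivatives of $\psi_k(a,b)=G(a)G(b)H_k(a^2-b)$ vanish outside a fixed compact set in $(a,b)$, on which $a^2-b$ is bounded and one is reduced to controlling $H_k$ and $H_k'$. The bound $\|H_k\|_\infty=\mathrm{O}(1)$ and, crucially, $\|H_k'\|_\infty=\mathrm{O}(\lambda_-^{-k})$ are where the tuning of $\varpi$ enters: differentiating $H_k(z)=k^{-1}\varpi_k(|z|)\ln|z|$ with $\varpi_k(x)=\varpi(2x/(K_m\lambda_-^{k}))$, the contribution $k^{-1}\varpi_k(|z|)/z$ is $\mathrm{O}((k\lambda_-^{k})^{-1})$ once $|z|$ exceeds the threshold $2^{-1}K_m\lambda_-^{k}$, while for $|z|$ near $0$ the conditions $\varpi(y)=\mathrm{o}(y^3)$ and $\varpi'(y)=\mathrm{o}(y^2)$ from \eqref{def_varpi}--\eqref{equiv_varpi} kill the logarithmic singularity and keep both $k^{-1}\varpi_k'(|z|)\ln|z|$ and $k^{-1}\varpi_k(|z|)/z$ of order $\lambda_-^{-k}$. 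With $\|\nabla\psi_k\|_\infty\le C\lambda_-^{-k}$, the mean value inequality gives $|\psi_k(\bar{Y}_n,\bar{Y}_{k+1,n})-\psi_k(C_1,u_k)|\le C\lambda_-^{-k}\|S_n(k)/n\|$ pointwise, and taking $\mathbb{L}^2$ norms (the Lipschitz constant being deterministic) yields the claimed estimate.

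The main obstacle is precisely the uniform bound $\|H_k'\|_\infty=\mathrm{O}(\lambda_-^{-k})$: the factor $\ln|z|$ in $H_k$ has a singularity at $z=0$ whose derivative $1/z$ would blow up faster than $\lambda_-^{-k}$ on the relevant scale, and it is only the carefully imposed vanishing rates on $\varpi,\varpi',\varpi''$ near $0$ that tame this singularity and produce the clean Lipschitz constant of exact order $\lambda_-^{-k}$. Once this derivative estimate is secured, the rest of the argument is routine bookkeeping.
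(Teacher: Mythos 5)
Your proposal is correct and takes essentially the same route as the paper: the identical decomposition into a deterministic bias term of order $1/k$ (handled via \eqref{expr_C_1_u_k}, $\mathbf{(A6)}$ and $\mathbf{(A7)}$ to force $G$ and $\varpi_k$ to equal one at $(C_1,u_k)$) plus a stochastic term controlled by the mean value inequality with the gradient bound $\|\nabla\psi_k\|_\infty=\mathrm{O}(\lambda_-^{-k})$. That gradient bound, including the taming of the logarithmic singularity by the vanishing rates imposed on $\varpi$, $\varpi'$, $\varpi''$ near $0$, is exactly the content of the paper's Lemma \ref{lemma_before_prop_main_theo1}.
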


\begin{prop}\label{prop_main_theo2}\normalfont
Let the sequence $(k_n)_{n\in \N}$ be such that $k_n=\mathrm{o}(n)$ as $n\to\infty$.
Then the following convergence holds
\begin{equation}
\frac{1}{n} \nbE(S_n(k_n) S_n(k_n)')  \longrightarrow  \mathfrak{S} \label{CLT_general_Sigma}
\end{equation}
as $n\to \infty$, where we recall that $S_n(\cdot)$ is defined in \eqref{def_estimators}, and where $\mathfrak{S}$ is given by \eqref{expression_big_Sigma}. If furthermore the sequence $(k_n)_{n\in \N}$ verifies
\begin{equation}\label{Cond_CLT_k_n}
k_n=\mathrm{O}(n^\kappa),\ n\to \infty \mbox{ with }\kappa\in \left(0,1\wedge \frac{\beta-1}{2(2\beta-1)}\right),
\end{equation}
then the following central limit theorem holds
\begin{equation}
\frac{S_n(k_n)}{\sqrt{n}}\stackrel{\cal D}{\longrightarrow} {\cal N}(0, \mathfrak{S}),\quad n\to \infty . \label{CLT_general}
\end{equation}
\end{prop}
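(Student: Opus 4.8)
The statement splits into a second-moment computation \eqref{CLT_general_Sigma} and a central limit theorem \eqref{CLT_general}, and I would treat them separately, the first feeding the variance of the second. Throughout I would exploit the moving-average representation \eqref{proof_step_2_CTL_k0} of $Y_n$ together with the exponential decay of correlations from Corollary \ref{prop_C_1_u_k}.

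For \eqref{CLT_general_Sigma}, since each $X_j(k)$ is centred and $\{Y_n\}$ is stationary, I would first write
\[\frac{1}{n}\nbE\bigl(S_n(k)S_n(k)'\bigr)=\sum_{|h|<n}\Bigl(1-\frac{|h|}{n}\Bigr)\Gamma_h(k),\qquad \Gamma_h(k):=\nbE\bigl(X_0(k)X_h(k)'\bigr),\]
and then pass to the limit $k=k_n\to\infty$ entry by entry. The $(1,1)$ entry does not involve $k$ and is simply the long-run variance of $\{Y_n-C_1\}$, which converges by the summability of $\cov(Y_0,Y_h)=u_{h-1}-C_1^2=\mathrm{O}(\lambda_0^{h})$. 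The delicate entries are $(1,2)$ and $(2,2)$, because the second coordinate $Y_jY_{j+k_n+1}-u_{k_n}$ carries a lag growing with $n$: in $\Gamma_h(k_n)$ the four time indices split into two clusters $\{0,h\}$ and $\{k_n+1,h+k_n+1\}$ separated by $\approx k_n$. For $h$ in a fixed window these clusters are asymptotically independent and contribute the ``diagonal'' terms $u_{h-1}^2-C_1^4$, $C_1(u_{h-1}-C_1^2)$, and so on; but there is a second, equally important family of contributions coming from lags $h$ near $\pm(k_n+1)$ where the clusters realign (the late index of one product meeting the early index of the other), and these produce exactly the correction terms in \eqref{expression_big_Sigma} involving $\sum_h\chi_h$ and $m_0/(1-\lambda_0)^2$. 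The hypothesis $k_n=\mathrm{o}(n)$ is used precisely here: it guarantees that the Ces\`aro weight $1-|h|/n\to1$ even for the realignment lags $h\approx k_n$, so those contributions survive in the limit. I would control the remaining lags (indices strictly between the two clusters) by the exponential bound of Corollary \ref{prop_C_1_u_k} and Assumptions $\mathbf{(A1)}$--$\mathbf{(A3)}$, which furnish a dominating summable sequence justifying the term-by-term passage to the limit, and then match the resulting sum to $\mathfrak{S}$.

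For the CLT \eqref{CLT_general}, by the Cram\'er--Wold device it suffices to prove asymptotic normality of $v'S_n(k_n)/\sqrt{n}$ for each fixed $v\in\nbR^2$, whose limiting variance is $v'\mathfrak{S}v$ by the first part. I would realise $v'X_j(k_n)$ as a strongly mixing triangular array whose dependence range grows like $k_n$, and invoke the central limit theorem of \cite{FZ05} for mixing arrays with sample-size-dependent dependence. The hypotheses to check are: (i) that $\{Y_n\}$, hence the array, inherits strong mixing with coefficients controlled by $\alpha_\epsilon$ through \eqref{proof_step_2_CTL_k0} and the exponential decay; (ii) a Lyapunov/moment bound on $v'X_j(k_n)$, which needs finite $(2+\delta)$ moments of $Y_jY_{j+k_n+1}$ and hence the $2\beta$ moments of Assumption $\mathbf{(A2)}$ with $\beta>2$; and (iii) compatibility of a big-block/small-block decomposition, in which the small blocks must exceed the dependence length $\sim k_n$. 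Tracking these constraints, i.e. balancing the block sizes against the mixing rate and the Lyapunov exponent, is what yields the admissible range $\kappa\in(0,1\wedge\frac{\beta-1}{2(2\beta-1)})$ for $k_n=\mathrm{O}(n^\kappa)$.

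The main obstacle, in both parts, is the growing lag $k_n$. In \eqref{CLT_general_Sigma} it forces one to identify and collect the realignment contributions at lags of order $k_n$ (not merely the naive ``independent-cluster'' limits), which is exactly what makes $\mathfrak{S}$ in \eqref{expression_big_Sigma} more than a product of one-dimensional variances; in \eqref{CLT_general} it is the verification that the growing-dependence CLT of \cite{FZ05} applies, namely that the expanding mixing range is still slow enough, under $k_n=\mathrm{O}(n^\kappa)$, for the block argument and the Lyapunov condition to close.
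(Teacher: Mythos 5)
Your treatment of the variance limit \eqref{CLT_general_Sigma} follows essentially the paper's route: write the normalized second moment as a Ces\`aro sum of $\nbE(X_0(k_n)X_h(k_n)')$, split the lag range according to the position of $h$ relative to $k_n$ (the paper uses the blocks $h\le\lfloor k_n/2\rfloor$, $\lfloor k_n/2\rfloor<h\le k_n+1$ and $h\ge k_n+2$ for the $(2,2)$ entry, and an analogous split after reflecting $h\mapsto-h$ for the $(1,2)$ entry), dominate each block by the Davydov-type bounds of Lemmas \ref{lemma_cov} and \ref{lemma_cov2}, and use $k_n=\mathrm{o}(n)$ to keep the Ces\`aro weights near $1$ at lags of order $k_n$. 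Your identification of the contributions surviving at lags comparable to $k_n$ as the source of the $\chi$- and $m_0/(1-\lambda_0)^2$-terms in \eqref{expression_big_Sigma} is the right picture.

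The CLT part, however, has a genuine gap. You propose to apply the triangular-array CLT of \cite{FZ05} directly to $v'X_j(k_n)$, asserting in your step (i) that the array \emph{inherits} strong mixing with coefficients controlled by $\alpha_\epsilon$ through \eqref{proof_step_2_CTL_k0}. But $Y_j=\sum_{i=0}^\infty\theta_j^{(i)}\circ\epsilon_{j-i}$ is a functional of the entire past of the immigration sequence: $\sigma(Y_u,\,u\le n)$ is not contained in $\sigma(\epsilon_u,\,u\le n+c)$ for any finite window, and strong mixing does not pass to such infinite-memory functionals, so no bound of the form $\alpha_Y(h)\le\alpha_\epsilon(h-c)$ is available. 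This is precisely why the paper does not work with $X_j(k_n)$ itself: it truncates the expansion as $Y_n=Y_{n,m}+Z_{n,m}$ with $Y_{n,m}=\sum_{i=0}^m\theta_n^{(i)}\circ\epsilon_{n-i}$, so that the truncated array $X^1_{j,m}(k_n)$ depends on only finitely many $\epsilon$'s and is strongly mixing with coefficients bounded by a shift of $\alpha_\epsilon$; the CLT of \cite{FZ05} is then applied at each fixed truncation level $m$ (this is where \eqref{Cond_CLT_k_n} enters, to verify Condition (3) of that reference with dependence length of order $k_n$), the remainder $n^{-1/2}\sum_{i=1}^n X^2_{i,m}(k_n)$ is shown to be negligible uniformly in $n$ as $m\to\infty$ via covariance bounds of order $\upsilon^m$, and the two limits are exchanged by the standard approximation lemma. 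Without this truncation-plus-uniform-negligibility scheme your step (i) is unjustified and the block argument cannot start; the moment condition in your step (ii) and the origin of the exponent $\kappa$ are otherwise in the right spirit.
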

Finally, the proofs of Theorem \ref{main_theo_general1} and Theorem \ref{main_theo_genera2} are resppectively presented in Appendices \ref{proof_gen1} and \ref{proof_gen2}.

\section{Numerical illustrations}\label{sec:numerics}
%

\subsection{Ultimately correlated case}\label{sec:ultimately_correlated_numerics}
We  study numerically the behavior of  the moment estimator for independent and correlated immigration model \eqref{model} on $N=1000$ independent simulated trajectories to assess the performance of the  moment estimator in finite sample. 
 We generated from model \eqref{model} three trajectories of length $n=300$, $n=2000$  and $n=5000$, in which  the reproduction sequence $\{\xi_{n,k},\ n\in\nbZ,\ k\in \nbN\}$  is taken to be:
 \begin{itemize}
 \item[1)] random variables ${\cal P}(\lambda_0)$ distributed,
 \item[2)] Bernoulli random variables with expectation $\lambda_0$.
 \end{itemize}
 The immigration process $\{\epsilon_n,\ n\in\nbZ \}$ is taken to be:
 \begin{itemize}
 \item[1)] ${\cal P}(1)$ distributed (for the independent case),
 \item[2)] of the form $\epsilon_n=\prod_{i=0}^{k_0-1} Z_{n-i}$ where $\{Z_n,\ n\in \nbZ \}$ is i.i.d. with ${\cal P}(a_0)$ distribution (for the correlated case), for $k_0=2, 3$. In view of \eqref{corr1}, we have:
\begin{equation*}
\mbox{Cov}\left(\epsilon_{n},\epsilon_{n-h}\right)=\left\{\begin{array}{ccc}
a_0^{2h}(a_0+a_0^2)^{k_0-h}- a_0^{k_0}  & \mbox{if}  & h\leq k_0-1 .\\
0 & \mbox{if} &h\geq k_0
\end{array}\right.
\end{equation*}
 \end{itemize}
We take $a_0=1$ in our numerical illustrations.
Table \ref{tab} summarizes the distribution of the moment estimator $\hat{R}_{k_0,n}$ of $\lambda_0$ over these simulations experiments.
As expected, Table \ref{tab} shows that the bias and the RMSE decrease when the size of the sample increases. 
From Table \ref{tab}, the estimation of $\lambda_0$ becomes more difficult when $k_0$ increases.  We will now numerically evaluate the potential effects of underestimation or overestimation of the instant $k_0$ (see Assumption $(\textbf{A3})_1$). Table \ref{tab1}  summarizes the  distribution of the moment estimators $\hat{R}_{1,n}$ and $\hat{R}_{3,n}$ of $\lambda_0$ for the  model \eqref{model} when $\epsilon_n= Z_{n}Z_{n-1}$ i.e. $k_0=2$ with $Z_n \sim IID{\cal P}(1)$. As expected $\hat{R}_{1,n}$ is inconsistent and the bias of the estimation  increases with increasing $n$. In contrast, when $n$ increases, the distributions of $\hat{R}_{3,n}$ are close to those of Table \ref{tab} (see $k_0=2$) except for $\lambda_0=0.2$.
From this example we draw the conclusion that, in the case of an overestimation of the parameter $k_0$ the proposed estimator $\hat{R}_{k_0,n}$ of $\lambda_0$ remain asymptotically consistent. On the other hand, if the instant $k_0$ is underestimated, the proposed estimator $\hat{R}_{k_0,n}$ is inconsistent even as $n$ increases.
\\
 Figures~\ref{fig:param},~\ref{fig:error},~\ref{fig:qq},~\ref{fig:qq1},~\ref{fig:hist} and~\ref{fig:hist1}  summarize via box-plots and  compare the distribution of the proposed moment estimators and those proposed by \cite{KN78} (see also \cite{AOA87}) in the independent  and correlated cases.  For these simulations the distributions of the proposed moment estimators and those proposed by \cite{KN78}  are similar in the independent case (see the left panels of Figure~\ref{fig:param}), whereas the distributions of our proposed estimators $\hat{R}_{k_0,n}$ and $\hat{M}_{k_0,n}$ are more accurate in the correlated case than those proposed by \cite{KN78} (see the right panels of Figures~\ref{fig:param} and~\ref{fig:error}). Under non-independent immigration, it appears that the estimators proposed by  \cite{KN78} (see also \cite{AOA87})  are unreliable (see the right-bottom panel of Figures~\ref{fig:param} and~\ref{fig:error}). See also the right-bottom panel of Figures~\ref{fig:hist} and~\ref{fig:hist1} for which the  estimation of the centered Gaussian density with the same variance plotted in dotted line is very poorly approximated because the fact that the  estimators proposed by \cite{KN78}  is very biased ($\text{bias}(\lambda_0)=0.165$ and $\text{bias}(m_0)=-0.331$) in the correlated case.
 For these simulations we also observe that:  the precision around the estimated coefficients is better when the size of the sample increases and the distributions of $\hat{R}_{k_0,n}$ and $\hat{M}_{k_0,n}$ are more accurate in the independent case than in the correlated one. This is in accordance with the results of \cite{RT96} who showed that, with similar $(k_0-1)$-dependent noises, the asymptotic variance of the sample autocorrelations can be greater than 1 as well (1 is the asymptotic variance for independent white noises).

Figure~\ref{fig:var} compares the standard estimator $\hat{\Omega}_S$ of $\Omega_S$ in Remark \ref{remNAstrong} by \citet{KN78} with the proposed  estimator based on spectral density estimation $\hat{\Omega}_{k_0}^\mathrm{{SP}}$ of the asymptotic variance ${\Omega}_{k_0}$ in \eqref{estThetaSP}. We used the spectral estimator defined as \eqref{convergence_spectral_estimator} in Theorem~\ref{convergence_Isp}. The AR order $r$ in this theorem is taken equal to $\lfloor n^{(1/3)-\verb".Machine$double.eps"}\rfloor$ where \verb".Machine$double.eps" is the smallest positive floating-point number $x$ such that $n+x\neq n$. The order $r$ can also be automatically selected by AIC or BIC, using the function \verb"VARselect()" of the \textit{vars} R package).
In the case  of independent immigration we know that the two estimators are consistent. In view of the 
 left-top and the left-bottom panels of Figure~\ref{fig:var}, it seems that the standard estimator is most accurate than the proposed  estimators in the independent case. This is not surprising because the spectral estimator is more robust, in the sense that this estimator continues to be consistent in the correlated case, contrary to the standard estimator. It is clear that in the correlated case $n \mbox{Var}(\hat{R}_{k_0,n}-\lambda_0)^2$ and $n \mbox{Var}(\hat{M}_{k_0,n}-m_0)^2$ are better estimated by $\hat{\Omega}_{k_0}^\mathrm{{SP}}(1,1)$ and  $\hat{\Omega}_{k_0}^\mathrm{{SP}}(2,2)$  (see the box-plots $1$ and $2$ of the right-bottom panel of Figure~\ref{fig:var}) than by $\hat{\Omega}_{S}(1,1)$ and $\hat{\Omega}_{S}(2,2)$ (see the box-plots 1 and $2$ of the right-top panel). The failure of the standard estimator of ${\Omega}_{k_0}$ in the correlated immigration model setting may have important consequences in terms of statistical inference.

\begin{table}[H]
 \caption{\small{Sampling distribution of the moment estimator $\hat{R}_{k_0,n}$ of $\lambda_0$ for the 
  model \eqref{model} with $\epsilon_n=\prod_{i=0}^{k_0-1} Z_{n-i}$ where $Z_n \sim IID{\cal P}(1)$. }}
{\scriptsize
\begin{center}
\begin{tabular}{lll rrrr rrrr}
\hline\hline
&  &   & \multicolumn{4}{c}{$\xi \sim {\cal P}(\lambda_0)$}& \multicolumn{4}{c}{$\xi \sim {\cal B}(\lambda_0)$} \\
& &  & \multicolumn{4}{c}{$\lambda_0$}&  \multicolumn{4}{c}{$\lambda_0$}
\\
&&&$0.2$&$0.5$&$0.7$&$0.9$
$\quad$& $0.2$&$0.5$&$0.7$&$0.9$
\vspace*{0.1cm}\\
&& Biais &-0.006 &-0.033 &-0.029& -0.029 $\quad$&-0.007& -0.027& -0.027 &-0.034 \vspace*{0.1cm}\\
&& RMSE& 0.108 &0.103 &0.075& 0.049 $\quad$& 0.110 &0.090 &0.066 &0.051 
\vspace*{0.1cm}\\
 && $\hat{\Omega}_{k_0}^{\mathrm{SP}}(1,1)$&4.582 &1.938 &1.006 &0.293 $\quad$&4.527 &2.261 &0.833 &0.301\vspace*{0.1cm}
\\
&& Min&0.001 & 0.116 &0.406 &0.694 $\quad$& 0.002 &0.220 &0.463 &0.710  \vspace*{0.1cm}\\
 $k_0=2$&$n=300$ & $Q_1$&0.108 & 0.402 &0.627 &0.846 $\quad$& 0.109 &0.414 &0.637 &0.844  \vspace*{0.1cm}\\
 && $Q_2$&0.183 & 0.473 &0.678 &0.876 $\quad$& 0.185 &0.475 &0.677 &0.873 \vspace*{0.1cm}\\
 && Mean &0.194 & 0.467 &0.671 &0.871 $\quad$& 0.193 &0.473 &0.673 &0.866  \vspace*{0.1cm}\\
 && $Q_3$&0.268 & 0.536 &0.720 &0.899 $\quad$& 0.266 &0.533 &0.716 &0.894  \vspace*{0.1cm}\\
 && Max &0.513 & 0.753 &0.834 &0.956 $\quad$& 0.591 &0.716 &0.821 &0.954  \vspace*{0.1cm} 

 \\                                                   
 \\               
&& Biais &-0.007 &-0.007 &-0.005& -0.004 $\quad$&-0.009& -0.007& -0.004 &-0.004 \vspace*{0.1cm}\\
&& RMSE& 0.057 &0.037 &0.026& 0.014 $\quad$& 0.057 &0.035 &0.023 &0.012 
\vspace*{0.1cm}\\
 && $\hat{\Omega}_{k_0}^{\mathrm{SP}}(1,1)$&5.935 &2.417 &1.202 &0.337 $\quad$&5.690 &2.081 &0.938 &0.240 \vspace*{0.1cm}
\\
&&  Min&0.002 & 0.374 &0.599 &0.857 $\quad$& 0.031 &0.372 &0.598 &0.856  \vspace*{0.1cm}\\
$k_0=2$&$n=2,000$&  $Q_1$&0.158 & 0.470 &0.678 &0.887 $\quad$& 0.156 &0.470 &0.683 &0.889  \vspace*{0.1cm}\\
 && $Q_2$&0.193 & 0.493 &0.696 &0.897 $\quad$& 0.190 &0.494 &0.697 &0.896 \vspace*{0.1cm}\\
 && Mean &0.193 & 0.493 &0.695 &0.896 $\quad$& 0.191 &0.493 &0.696 &0.896 \vspace*{0.1cm}\\
 && $Q_3$&0.229 & 0.518 &0.713 &0.905 $\quad$& 0.236 &0.515 &0.712 &0.904  \vspace*{0.1cm}\\
 && Max &0.379 & 0.634 &0.782 &0.934 $\quad$& 0.372 &0.595 &0.765 &  0.929
 \vspace*{0.1cm} 
\\
\hline
&& Biais &0.051 &-0.057 &-0.040& -0.031 $\quad$&0.060& -0.056& -0.038 &-0.030 \vspace*{0.1cm}\\
&& RMSE& 0.164 &0.153 &0.096& 0.052 $\quad$& 0.171 &0.142 &0.083 &0.049 
\vspace*{0.1cm}\\
 && $\hat{\Omega}_{k_0}^{\mathrm{SP}}(1,1)$&14.888 &4.184 &1.409 &1.098 $\quad$&14.043 &3.533 &1.428 &0.371\vspace*{0.1cm}
\\
&&  Min&0.000 & 0.006 &0.291 &0.691 $\quad$& 0.000 &0.003 &0.394 &0.644  \vspace*{0.1cm}\\
$k_0=3$&$n=300$&  $Q_1$&0.133 & 0.356 &0.607 &0.843 $\quad$& 0.129 &0.363 &0.617 &0.849  \vspace*{0.1cm}\\
 && $Q_2$&0.233 & 0.454 &0.669 &0.874 $\quad$& 0.249 &0.445 &0.666 &0.875 \vspace*{0.1cm}\\
 && Mean &0.251 & 0.443 &0.660 &0.869 $\quad$& 0.260 &0.444 &0.662 &0.870  \vspace*{0.1cm}\\
 && $Q_3$&0.354 & 0.547 &0.721 &0.898 $\quad$& 0.364 &0.539 &0.714 &0.896  \vspace*{0.1cm}\\
 && Max &0.780 & 0.790 &0.860 &0.959 $\quad$& 0.864                      &0.825 &0.838 &0.957  \vspace*{0.1cm} 

 \\                                                                 
 \\                              
&& Biais &-0.008 &-0.009 &-0.007& -0.005 $\quad$&0.001& -0.009& -0.006 &-0.005 \vspace*{0.1cm}\\
&& RMSE& 0.101 &0.056 &0.033& 0.016 $\quad$& 0.104 &0.053 &0.029 &0.013 
\vspace*{0.1cm}\\
 && $\hat{\Omega}_{k_0}^{\mathrm{SP}}(1,1)$&24.418 &5.098 &1.742 &0.390 $\quad$&24.525 &4.525 &1.353 &0.250 \vspace*{0.1cm}
\\
&& Min&0.000 & 0.268 &0.580 &0.830 $\quad$& 0.000 &0.328 &0.616 &0.852  \vspace*{0.1cm}\\
 $k_0=3$&$n=2,000$&  $Q_1$&0.113 & 0.455 &0.670 &0.886 $\quad$& 0.122 &0.458 &0.675 &0.888  \vspace*{0.1cm}\\
 && $Q_2$&0.190 & 0.494 &0.694 &0.896 $\quad$& 0.195 &0.491 &0.694 &0.896 \vspace*{0.1cm}\\
 && Mean &0.192 & 0.491 &0.693 &0.895 $\quad$& 0.201 &0.491 &0.694 &0.895 \vspace*{0.1cm}\\
 && $Q_3$&0.259 & 0.526 &0.715 &0.905 $\quad$& 0.273 &0.524 &0.714 &0.904  \vspace*{0.1cm}\\
 && Max &0.525 & 0.688 &0.782 &0.947 $\quad$& 0.581                        &0.670 &0.797 &0.933                                       

  \vspace*{0.1cm} 
                
 \\                                                             
\hline\hline 
\end{tabular}
\end{center}
}
\label{tab}
\end{table}

\begin{table}[H]

 \caption{\small{Sampling distribution of the moment estimator $\hat{R}_{k_0,n}$ of $\lambda_0$ for the 
  model \eqref{model} with $\epsilon_n= Z_{n}Z_{n-1}$ i.e. $k_0=2$ where $Z_n \sim IID{\cal P}(1)$. }}
{\scriptsize
\begin{center}
\begin{tabular}{lll rrrr rrrr}
\hline\hline
&  &   & \multicolumn{4}{c}{ $\xi \sim {\cal P}(\lambda_0)$}& \multicolumn{4}{c}{$\xi \sim {\cal B}(\lambda_0)$} \\
& &  & \multicolumn{4}{c}{$\lambda_0$}&  \multicolumn{4}{c}{$\lambda_0$}
\\
&&&$0.2$&$0.5$&$0.7$&$0.9$
$\quad$& $0.2$&$0.5$&$0.7$&$0.9$
\vspace*{0.1cm}\\
&& Biais &0.162&0.060  &0.014 &-0.018 $\quad$   &0.165 &0.062 &0.021&-0.012 \vspace*{0.1cm}\\
&& RMSE &0.208 &0.106 &0.064 &0.039  $\quad$ & 0.214 &0.102 &0.059 &0.035
\vspace*{0.1cm}\\
&& Min &0.020 &0.270 &0.379 &0.696  $\quad$ &0.004 &0.299 &0.534 &0.721 \vspace*{0.1cm}\\
 $k_0=1$&$n=300$ & $Q_1$ &0.276 &0.501 &0.672 &0.863  $\quad$&0.270 & 0.513 &0.683 &0.871 \vspace*{0.1cm}\\
 && $Q_2$ &0.360 &0.562 &0.717 &0.886 $\quad$ &0.367 & 0.564 &0.725 &0.892 \vspace*{0.1cm}\\
 && Mean &0.362 &0.560 &0.714 &0.882  $\quad$ &0.365 & 0.562 &0.721 &0.888 \vspace*{0.1cm}\\
 && $Q_3$ &0.451 &0.617 &0.757 &0.906 $\quad$ &0.460 & 0.618 &0.762 &0.910 \vspace*{0.1cm}\\
 && Max  &0.732 &0.858 &0.865 &0.957  $\quad$ &0.847 & 0.797 &0.863 &0.961 \vspace*{0.1cm}                            
 \\                                                                                                                                          
 \\               
&& Biais &0.192 &0.085 &0.037 &0.004 $\quad$  &0.198 &0.086 &0.042 & 0.009 \vspace*{0.1cm}\\
&& RMSE &0.201 &0.092 &0.045 &0.014 $\quad$   & 0.206 &0.094 &0.048& 0.014
\vspace*{0.1cm}\\
&&  Min &0.188 &0.455 &0.657 &0.857 $\quad$ &0.194 & 0.497 &0.675 & 0.873 \vspace*{0.1cm}\\
$k_0=1$&$n=2,000$&  $Q_1$ &0.352 &0.561 &0.721 &0.896  $\quad$&0.359 & 0.560 &0.727 &0.902 \vspace*{0.1cm}\\
 && $Q_2$ &0.393 &0.586 &0.737 &0.905 $\quad$ &0.398 & 0.586 &0.742 &0.909 \vspace*{0.1cm}\\
 && Mean &0.392 &0.585 &0.737 &0.904 $\quad$  &0.398 & 0.586 &0.742 &0.909 \vspace*{0.1cm}\\
 && $Q_3$ &0.431 &0.608 &0.754 &0.914  $\quad$ &0.436 & 0.609 &0.758 &0.916 \vspace*{0.1cm}\\
 && Max &0.645 &0.726 &0.824 &0.947 $\quad$ &0.587 & 0.715 &0.829 &0.939
 \vspace*{0.1cm}                                                                                           
\\                                                      
\hline
&& Biais &0.303& -0.030& -0.060 &-0.035 $\quad$  &0.305 &-0.041 &-0.059&-0.034  \vspace*{0.1cm}\\
&& RMSE &0.409 &0.222 &0.148 &0.060  $\quad$ & 0.403 &0.214 &0.132& 0.058
\vspace*{0.1cm}\\
&&  Min & 0.001 &0.000 &0.047 &0.645 $\quad$ &0.003 &0.000 &0.015 &0.587 \vspace*{0.1cm}\\
$k_0=3$&$n=300$&  $Q_1$ &0.274 &0.324 &0.563 &0.835 $\quad$ &0.294 &0.300 &0.572 &0.844 \vspace*{0.1cm}\\
 && $Q_2$ &0.509 &0.474 &0.658 &0.871 $\quad$ &0.498 &0.471 &0.656 &0.873 \vspace*{0.1cm}\\
 && Mean &0.503 &0.474 &0.640 &0.865 $\quad$  &0.505 &0.459 &0.641 &0.866 \vspace*{0.1cm}\\
 && $Q_3$ &0.737 &0.633 &0.739 &0.900 $\quad$ &0.705 &0.609 &0.722 &0.896 \vspace*{0.1cm}\\
 && Max &0.996 &0.989 &0.938 &0.964 $\quad$   &0.998 &0.975 &0.961 &0.961 \vspace*{0.1cm}  
 \\                                                                                                                            
 \\                              
&& Biais &0.201 &-0.019 &-0.009 &-0.006  $\quad$ &0.202 &-0.016 &-0.006 &-0.005 \vspace*{0.1cm}\\
&& RMSE &0.317 &0.112 &0.045 &0.018  $\quad$ &0.317 &0.103 &0.040 &0.014 \vspace*{0.1cm}\\
&& Min &0.001 &0.056 &0.532 &0.830  $\quad$ &0.000 &0.046 &0.573 &0.848 \vspace*{0.1cm}\\
 $k_0=3$ &$n=2,000$ &$Q_1$ &0.209 &0.415 &0.663 &0.884 $\quad$  & 0.202 &0.421 &0.668 &0.887 \vspace*{0.1cm}\\
 && $Q_2$ &0.371 &0.489 &0.693 &0.895 $\quad$ &0.390 &0.492 &0.694 &0.896 \vspace*{0.1cm}\\
 && Mean &0.401 &0.481 &0.691 &0.894 $\quad$ &0.402 &0.484 &0.694 &0.895 \vspace*{0.1cm}\\
 && $Q_3$ &0.573 &0.549 &0.721 &0.906 $\quad$ &0.577 &0.552 &0.718 &0.905 \vspace*{0.1cm}\\
 && Max &0.998 &0.800 &0.820 &0.955  $\quad$ &0.994 &0.770 &0.821 & 0.934 \vspace*{0.1cm}                                                         
 \\                                                                                                                            
 \\                              
&& Biais &0.128 &-0.010 &-0.004 &-0.002  $\quad$ &0.134 &-0.008 &-0.003 &-0.001 \vspace*{0.1cm}\\
&& RMSE &0.238 &0.069 &0.028 &0.011  $\quad$ &0.242 &0.065 &0.026 &0.008 \vspace*{0.1cm}\\
&& Min &0.000 &0.275 &0.603 &0.864  $\quad$ &0.002 &0.280 &0.609 &0.870 \vspace*{0.1cm}\\
 $k_0=3$ &$n=5,000$ &$Q_1$ &0.169 &0.444 &0.677 &0.891 $\quad$  & 0.173 &0.451 &0.681 &0.893 \vspace*{0.1cm}\\
 && $Q_2$ &0.310 &0.494 &0.697 &0.899 $\quad$ &0.314 &0.496 &0.698 &0.899 \vspace*{0.1cm}\\
 && Mean &0.328 &0.490 &0.696 &0.898 $\quad$ &0.334 &0.492 &0.697 &0.899 \vspace*{0.1cm}\\
 && $Q_3$ &0.465 &0.535 &0.714 &0.905 $\quad$ &0.468 &0.537 &0.714 &0.904 \vspace*{0.1cm}\\
 && Max &0.952 &0.688 &0.774 &0.938  $\quad$ &0.999 &0.689 &0.785 & 0.929 \vspace*{0.1cm}                                                                                                                                         
 \\                                                                                                                                                                                     
\hline\hline 
\end{tabular}
\end{center}
}
\label{tab1}
\end{table}

\begin{figure}[H]
  \centering
  \includegraphics[width=0.5\linewidth]{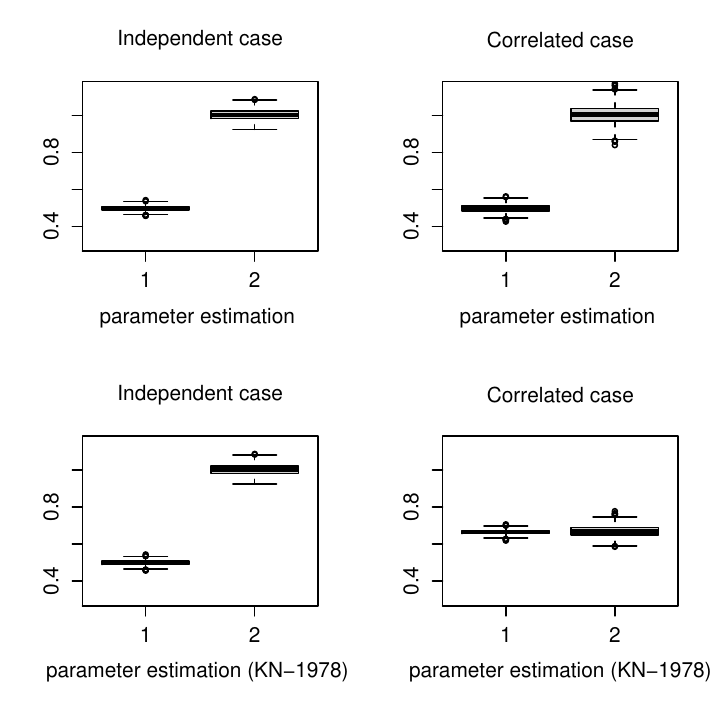}
  \caption{The moment estimators and the estimators proposed by \cite{KN78} of $N=1,000$ independent simulations of model \eqref{model} of size $n=5,000$ with unknown parameter $\lambda_0=0.5$ and $m_0=1$, when the immigration is independent (left panels, with $\epsilon_n \sim IID{\cal P}(1)$) and when the immigration is correlated (right panels, with $\epsilon_n=Z_n Z_{n-1}$ where $Z_n \sim IID{\cal P}(1)$). The reproduction sequence $\xi\sim\mathcal{B}(\lambda_0)$.
The panels display the distribution of the estimators $\hat{R}_{k_0,n}$ and  $\hat{M}_{k_0,n}$. The two top panels (resp. two bottom panels) correspond to our proposed estimators (resp. to the estimators proposed by \cite{KN78} and \cite{AOA87}).}
\label{fig:param}
\end{figure}

\begin{figure}[H]
  \centering
  \includegraphics[width=0.5\linewidth]{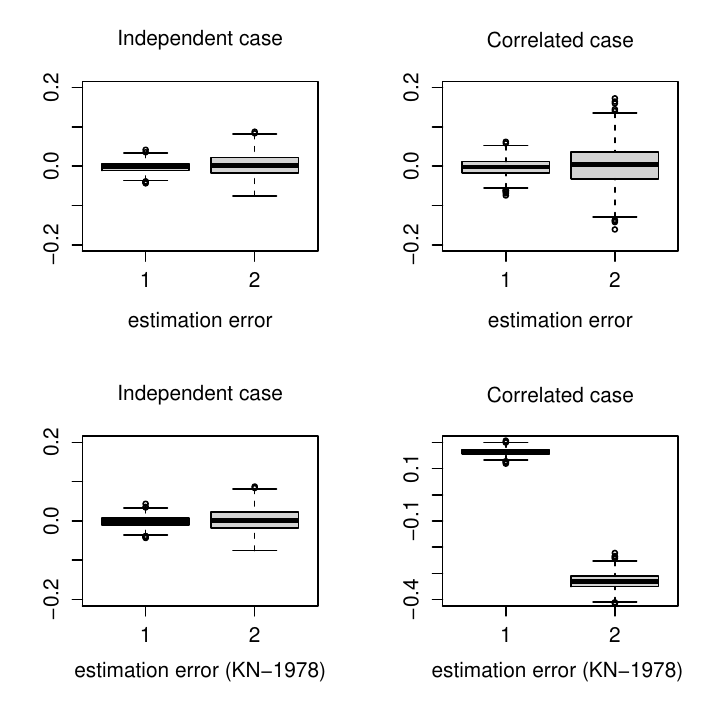}
  \caption{The panels at the top present the distribution of the estimation error of the estimates $\lambda_0$ and $m_0$ given in Figure \ref{fig:param}. The left (resp. right) panel corresponds to
the case of independent (correlated) immigration.
 Points 1 and 2, in the box-plots, display the distribution of the estimation error  $\hat{R}_{k_0,n}-\lambda_0$ and  $\hat{M}_{k_0,n}-m_0$. The two top panels (resp. two bottom panels) correspond to our proposed estimators (resp. to the estimators proposed by \cite{KN78} and \cite{AOA87}).}
 \label{fig:error}
\end{figure}

\begin{figure}[H]
  \centering
  \includegraphics[width=0.55\linewidth]{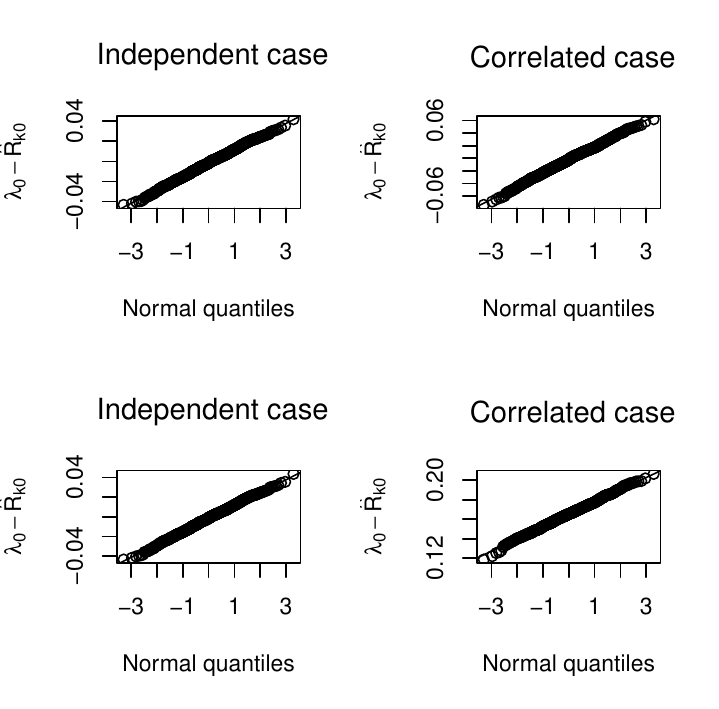}
  \caption{
The panels at the top present the Q–Q plot of the estimates $\lambda_0$ given in Figure \ref{fig:param}. The left (resp. right) panel corresponds to
the case of independent (correlated) immigration.
The two top panels (resp. two bottom panels) correspond to our proposed estimators (resp. to the estimators proposed by \cite{KN78} and \cite{AOA87}).} 
  \label{fig:qq}
\end{figure}

\begin{figure}[H]
  \centering
  \includegraphics[width=0.55\linewidth]{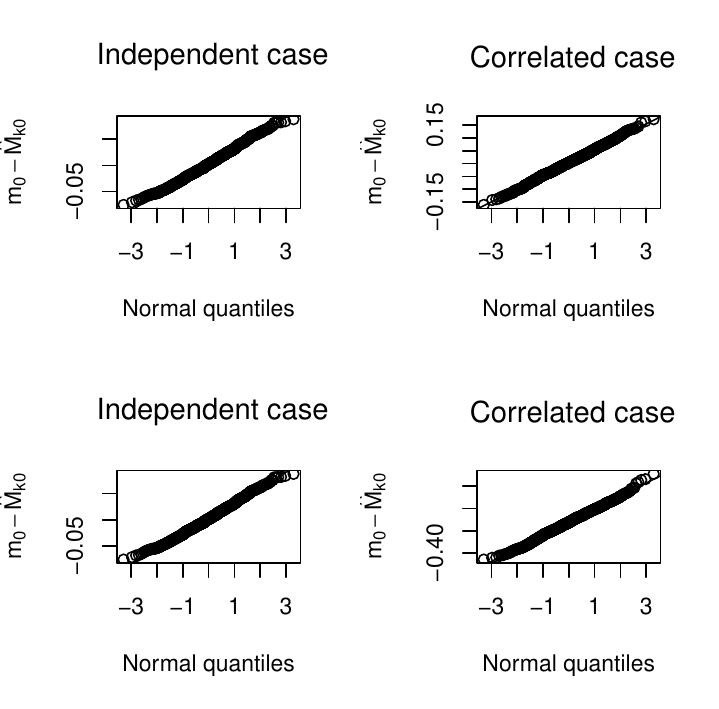}
  \caption{
The panels at the top present the Q–Q plot of the estimates $m_0$ given in Figure \ref{fig:param}. The left (resp. right) panel corresponds to
the case of independent (correlated) immigration.
The two top panels (resp. two bottom panels) correspond to our proposed estimators (resp. to the estimators proposed by \cite{KN78} and \cite{AOA87}).} 
  \label{fig:qq1}
\end{figure}

\begin{figure}[H]
  \centering
  \includegraphics[width=0.55\linewidth]{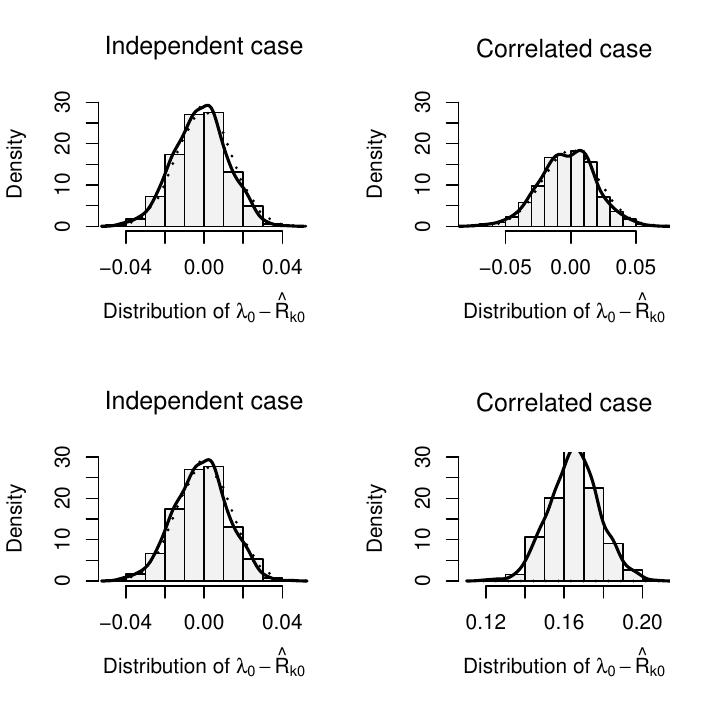}
  \caption{
The panels at the top display the distribution of the estimates $\lambda_0$  given in Figure \ref{fig:param}. The kernel density estimate is displayed in full line, and the centered Gaussian density with the same variance is plotted in dotted line. The left (resp. right) panel corresponds to the case of independent (correlated) immigration. The two top panels (resp. two bottom panels) correspond to our proposed estimators (resp. to the estimators proposed by \cite{KN78} and \cite{AOA87}).}
\label{fig:hist}
\end{figure}

\begin{figure}[H]
  \centering
  \includegraphics[width=0.55\linewidth]{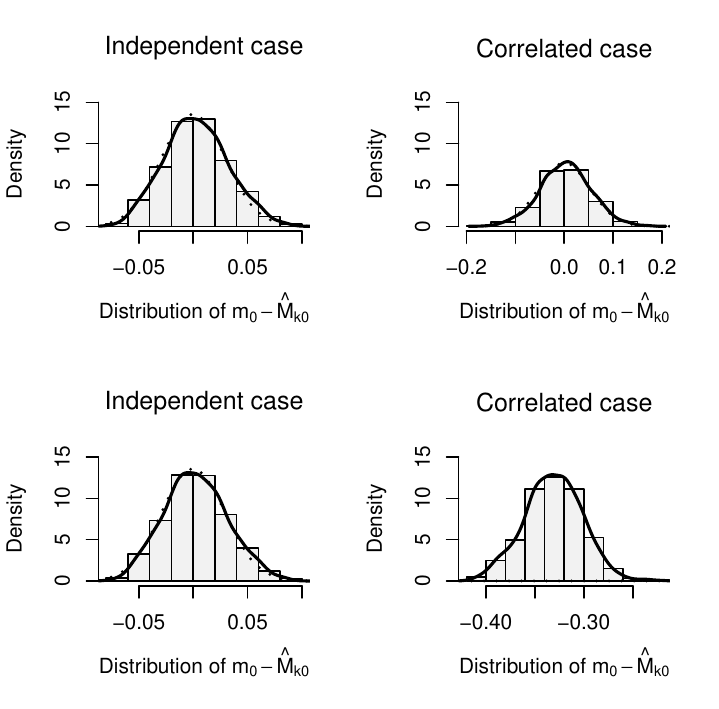}
  \caption{
The panels at the top display the distribution of the estimates $m_0$ given in Figure \ref{fig:param}. The kernel density estimate is displayed in full line, and the centered Gaussian density with the same variance is plotted in dotted line. The left (resp. right) panel corresponds to the case of independent (correlated) immigration. The two top panels (resp. two bottom panels) correspond to our proposed estimators (resp. to the estimators proposed by \cite{KN78} and \cite{AOA87}).}
\label{fig:hist1}
\end{figure}

\begin{figure}[H]
  \centering
  \includegraphics[width=0.55\linewidth]{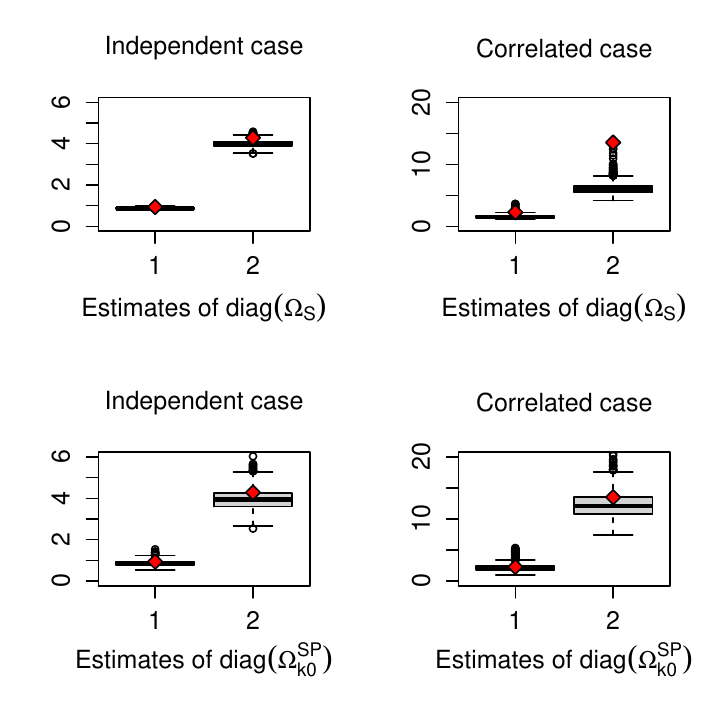}
\caption{Comparison of standard and modified estimates of the asymptotic variance of the moment estimator of the model parameters, on the simulated models presented in~Figure~\ref{fig:param}. The diamond symbols represent the mean, over $N=1,000$ replications, of the standardized squared errors $n\left(\hat{R}_{k_0,n}-0.5\right)^2$ for 1 (0.934 in the independent immigration case and 2.260 in the correlated immigration case)  and $n\left(\hat{M}_{k_0,n}-1\right)^2$ for 2 (4.283 in the independent immigration case and 13.519 in the correlated immigration case) .}
\label{fig:var}
\end{figure}

\subsection{Correlated case}\label{sec:correlated}
We now illustrate the performance of the estimator $\hat{S}_n$ in \eqref{def_estimator_general} of which theorical convergence result was presented in Theorems \ref{main_theo_general1} and \ref{main_theo_genera2}. 

The process considered here is the stationary version of \eqref{model}, where $\xi \sim {\cal P}(\lambda_0)$, and where $\{\eps_n,\ n\in\nbZ \}$ is the stationary Markov chain with state space $\{0,1\}$ and transition matrix $P=\left( \begin{array}{cc}
1/2 & 1/2\\
1 & 0
\end{array}\right)$. It is standard that $ \alpha_\epsilon$ is exponentially decreasing and that $\mathbf{ (A1)}$ is satisfied if $\lambda_0$ is not close to $0$, and all simulations below are performed such that $\Xi(\lambda_0,m_0,V_0, f_\nu)\neq 0$ (with the function $\Xi$ defined in \eqref{def_Xi}) with $V_0=\mbox{Var}(\xi)=\lambda_0$ and (after some easy computation) $m_0=\nbE(\epsilon)=1/3$.
\begin{table}[hbtp!]
\centering
\begin{tabular}[hbtp!]{|c|c|c|c|c|c|c|}
\hline
$c$ & $\lambda_0$ &$\ln (\lambda_0)$ & $\nbE(\hat{S}_n)$ & $\mbox{Var}(\hat{S}_n)$ & $||\hat{S}_n- \ln (\lambda_0) ||_2$& $\nbE(\exp(\hat{S}_n))$\\
\hline 
 $4.7$ & $0.9$ & $-0.1053605$ & $-0.0665497$  & $0.0001058$ & $  0.0401508$ & $0.9356657$ \\
\hline
 $1.4$ & $0.7$ & $-0.3566749$ &  $-0.3192667$ &  $0.0066237$  &  $0.0895714$ & $0.7292888$ \\%
\hline
 $0.7$ & $0.5$ & $-0.6931472$ & $-0.6326057$ & $0.0369518$ & $ 0.2015367$ & $0.5423869$ \\
\hline
 $0.3$ & $0.2$ & $-1.6094379$ &  $-1.490265$ &  $0.0000447$ & $0.1193602$ & $0.225318$\\ 
\hline
\end{tabular}
\caption{Illustration of estimator $\hat{S}_n$ with $k_n=\lfloor c \ln n\rfloor$ (with $n=5.10^6$)}\label{MC_T_hat}
\end{table}
Inspired by Theorems \ref{main_theo_general1} and \ref{main_theo_genera2}, we proceed to estimate $\nbE(\hat{S}_n)$ and $\nbE(\exp(\hat{S}_n))$ by Monte Carlo simulation by simulating $20$ times $\hat{S}_n$ with $n=5.10^6$ and $k_n=\lfloor c \ln n\rfloor$. The factor $c$ is chosen purposefully close to the value $ -{1}/({2 \ln \lambda_-})$ while assuming that $\lambda_-$ is close to $\lambda_0$ in Theorem \ref{main_theo_general1}, which theoretically guarantees the convergence for $\hat{S}_n$. The results are displayed in Table \ref{MC_T_hat}. We first remember that convergence is extremely slow, as was argued in \eqref{slow_convergence}, hence the choice of the very lengthy trajectory $n=5.10^6$ (as opposed to the shorter lengths considered in Section \ref{sec:ultimately_correlated_numerics}). Table \ref{MC_T_hat} shows that the estimator $\hat{S}_n$ performs well: in particular, $\nbE(\exp(\hat{S}_n))$ is close to $\lambda_0$ for different values of the reproduction parameter.

We next discuss the choice of the lag sequence $(k_n)_{n\in \nbN}$. To show that that the choice $k_n \propto \ln n$ (which, again, is motivated by Theorem \ref{main_theo_general1}) is efficient, we provide numerics for the case where the sequence tends to $\infty$ at a different speed, namely faster than $\ln n$, while still being an $\mathrm{o}(n)$. Thus, and for comparison, Table \ref{MC_T_hat_nul} shows estimations when $k_n=\lfloor  \sqrt{n}\rfloor$ (which still satisfies $k_n\ll n$): it is clear that the corresponding estimator performs very badly, with values of $\nbE(\hat{S}_n)$ being very close to $0$, and $\nbE(\exp(\hat{S}_n))$ around $0.99$, independently of the value of $\lambda_0$.

\begin{table}[hbtp!]
\centering
\begin{tabular}[hbtp!]{|c|c|c|c|c|c|}
\hline
$\lambda_0$ &$\ln (\lambda_0)$ & $\nbE(\hat{S}_n)$ & $\mbox{Var}(\hat{S}_n)$ & $\nbE(\exp(\hat{S}_n))$\\
\hline
 $0.9$ & $-0.1053605$ & $ -0.0001556$  & $8.019 .10^{-10}$ &  $0.9998444$\\
\hline
  $0.7$ & $-0.3566749$ & $ -0.0008472$   & $7.539.10^{-09}$  &  $0.9991532  $ \\
\hline
  $0.5$ & $-0.6931472 $ & $-0.001969$  & $2.651.10^{-08}$ & $0.998033$ \\
\hline
  $0.2$ & $-1.6094379$ & $-0.0055293$  & $0.0000007 $  &  $0.9944862$ \\ 
\hline
\end{tabular}
\caption{Illustration of estimator $\hat{S}_n$ with $k_n=\lfloor  \sqrt{n}\rfloor$ (with $n=5.10^6$)}\label{MC_T_hat_nul}
\end{table}

\appendix
\section{Appendix : Proofs of the main results}\label{sec:app}
\subsection{Proof of Proposition \ref{prop_stationarity}}\label{sec:prop_beta_moments}
%
In order to prove that $Y_0$ admits moments of order $2\beta$, it suffices from \eqref{rep_stationary} to prove that $\sum_{i=0}^\infty ||\theta^{(i)}\circ\epsilon_{-i}||_{2\beta}$ converges. This comes from the fact that
\begin{equation}\label{bound_KW}
||\theta^{(i)}\circ\epsilon_{-i}||_{2\beta}=\mathrm{O}(\upsilon^i),\quad ||\theta^{(i)}_n\circ\epsilon_{n-i}||_{2\beta}=\mathrm{O}(\upsilon^i),\quad \forall n\in \nbZ
\end{equation}
for some $\upsilon<1$, see the proof in \citet[Section 3.1]{KW21}. Note that the authors proved this latter exponential decrease in a context where the immigration process is i.i.d., but a close look reveals that the only ingredient in the proof is the independence of $\theta^{(i)}$ from $ \epsilon_{-i}$ for all $i\in \nbN$, which is the case here since $\{\xi_{n,k},\ n\in\nbZ,\ k\in \nbN \}$ and $\{\eps_n,\ n\in\nbZ \}$ are independent sequences.
\zak
\subsection{Proof of Lemma \ref{lemma_v_k}} \label{sec:lemma_vk}
We introduce the quantity $\eta_k:= \nbE(\epsilon_1 \epsilon_{-k})$, $k\in\N$, and observe, using \eqref{model} as well as the independence of $\epsilon_1$ and $Y_{-k-1}$ from $\{ \xi_{-k,j},\ j\in \nbN\}$, that
\begin{eqnarray}
\eta_k&=& \nbE \left( \epsilon_1 \left[ Y_{-k}- \sum_{j=1}^{Y_{-k-1}} \xi_{-k,j}\right]\right)=\nbE(\epsilon_1 Y_{-k}) - \lambda_0 \nbE(\epsilon_1 Y_{-k-1})\nonumber\\
&=&v_k-\lambda_0 v_{k+1},\quad k\in \nbN .\label{lem_eta1}
\end{eqnarray}
Observing now that $\eta_k=m_0^2+\nu_{k+1}$, $k\in \nbN$, yields the following recursive first order relation from \eqref{lem_eta1}
$$
v_{k+1}= \frac{1}{\lambda_0} v_k - \frac{1}{\lambda_0}(m_0^2+\nu_{k+1}),\quad k\in \nbN ,
$$
leading to the expression
\begin{eqnarray}
v_k&=& - \sum_{j=1}^k \frac{1}{\lambda_0^j}(m_0^2+\nu_{k+1-j}) + \frac{1}{\lambda_0^k} v_0 \nonumber\\
&=& -m_0^2 \frac{1}{\lambda_0}\frac{1-1/\lambda_0^k}{1-1/\lambda_0}- \sum_{j=1}^k \frac{\nu_{k+1-j}}{\lambda_0^j}+ \frac{1}{\lambda_0^k} v_0  = \frac{m_0^2}{1-\lambda_0} + \frac{1}{\lambda_0^k}\left[ \frac{m_0^2}{\lambda_0-1} - \sum_{j=0}^{k-1} \lambda_0 ^j \nu_{j+1} + v_0\right]\nonumber\\
&=& \frac{m_0^2}{1-\lambda_0} + \frac{1}{\lambda_0^k}\left[ \frac{m_0^2}{\lambda_0-1} - \sum_{j=0}^{\infty} \lambda_0 ^j \nu_{j+1} + v_0\right] - \chi_k.\label{lem_eta2}
\end{eqnarray}
We note now that $0\le  v_k\le [\nbE (\epsilon_1^2)]^{1/2} [\nbE(Y_{-k}^2)]^{1/2}= [\nbE (\epsilon_1^2)]^{1/2} C_2^{1/2}$ thanks to the Cauchy Schwarz inequality, so that the sequence $\{v_k,\ k\in \nbN \}$ is uniformly bounded. Also, one has that $\left|\chi_k\right|\le \sum_{j=k}^\infty |\nu_{j+1}|$, which tends to $0$ as $k\to \infty$ thanks to the (stronger) assumption $\mathbf{(A3)}$. The subcriticality condition \eqref{stab} for $\lambda_0$ thus entails from \eqref{lem_eta2} the explicit expression
\begin{equation}\label{expr_v_0}
v_0= \frac{m_0^2}{1-\lambda_0} + \sum_{j=0}^{\infty} \lambda_0 ^j \nu_{j+1}= \frac{m_0^2}{1-\lambda_0} - \chi_0,
\end{equation}
as well as the more general expression \eqref{expr_v_k} for $v_k$, $k\in \nbN$. 

We now come back to \eqref{expr_moments}. Using \eqref{model}, the independence of the sequence $\{ \xi_{1,j},\ j\in \nbN\}$ as well as its independence from $Y_0$, yields
\begin{eqnarray*}
C_2=\nbE(Y_1^2)&=&\nbE \left( \left[\sum_{j=1}^{Y_{0}} \xi_{1,j} + \epsilon_1\right]^2\right)\\
&=& \nbE\left( \sum_{j=1}^{Y_{0}} \xi_{1,j}^2 + \sum_{1\le r\neq r'\le Y_0}\xi_{1,r} \xi_{1,r'} + 2 \epsilon_1 \sum_{j=1}^{Y_{0}} \xi_{1,j} +\epsilon_1^2 \right)\\
&=& \nbE(\xi^2) \nbE(Y_0) + [\nbE(\xi)]^2 \nbE(Y_0(Y_0-1)) + 2 \nbE(\xi) \nbE(\epsilon_1 Y_0)+\nbE\epsilon_1^2\\
&=& V_{0,\xi} C_1 + \lambda_0^2 C_2 + 2 \lambda_0 v_0 +\nbE\epsilon_1^2,
\end{eqnarray*}
which, thanks to \eqref{expr_v_0} and the fact that $\chi_0=-f_\nu(\lambda_0)$, yields \eqref{expr_moments}.
\zak
\subsection{Proof of Corollary \ref{prop_C_1_u_k}}\label{sec:proof_prop_C_1_u_k}
\eqref{expr_first_moment} coupled with the relation \eqref{expr_v_k} obtained in Lemma \ref{lemma_v_k} yields $u_k - \lambda_0 u_{k-1}=C_1^2(1-\lambda_0)- \chi_k$, from which in turn we deduce the following recursive relation
$$
C_1^2-u_k= \lambda_0(C_1^2- u_{k-1})-\chi_k,\quad k\in \nbN .
$$
Since $u_{-1}=C_2$, we arrive then at $C_1^2-u_k=\sum_{j=0}^k \lambda_0^j \chi_{k-j} + \lambda_0^{k+1}(C_1^2-C_2)$, hence \eqref{expr_C_1_u_k}.
\zak
\subsection{Proof of Proposition \ref{prop_consistency}}\label{sec:consistency_k_0}
Iterating Equation \eqref{model} yields that $Y_n$ is a function of $\epsilon_k$, $k\le n$ and $\xi_{k,i}$, $k\le n$, $i\in \nbN$. $\{Y_n,\ n\in \nbZ\}$ and $\{Y_nY_{n+k_0},\ n\in \nbZ\}$ being stationary and integrable, and $\{(\epsilon_n,\xi_{n,i}),\ n\in \nbZ,\ i\in \nbN\setminus \{0,\} \}$ being ergodic, yields the a.s. convergence of $\hat{Y}_n$ and $\hat{Y}_{k_0,n}$ (defined in \eqref{def_estimators}) respectively towards $C_1$ and $u_{k_0}$ by \citet[Theorem A.2 p.344]{FZ19}. Hence $\hat{R}_{k_0,n}$ and, in turn, $\hat{M}_{k_0,n}$, converge a.s. to $\lambda_0$ and $m_0$ as $n\to\infty$ based on \eqref{lambda_0_k_0} and \eqref{expr_first_moment}.
\zak

\subsection{Proof of Theorem \ref{theo_CLT_vector_k_0} }\label{sec:proof_theo_k_0}
We start by stating two technical lemmas.
\begin{lemm}\label{lemma_cov}\normalfont
Let $k$, $j$ and $m$ in $\nbN$ with $m>j> k$ and $\varphi_k^-$ (resp. $\varphi_{m}^+$) be a $\sigma(\epsilon_n,\ n\le k)\otimes \sigma(\xi_{n,i}, n\le k, i\in \N)$ measurable (resp. $\sigma(\epsilon_n,\ n\ge m)$ measurable) random variables belonging to $\mathbb{L}^{(1-2/\beta)\vee 4}$. The following decompositions hold:
\begin{eqnarray}
\mbox{Cov}(\varphi_k^-,Y_j Y_{m})&=& \sum_{s=0}^{m-j-1}\lambda_0^s \left[ \sum_{t=0}^{j-k-1} \lambda_0^t \mbox{Cov}(\varphi_k^-, \epsilon_{j-t} \epsilon_{m-s}) + \lambda_0^{j-k}
\mbox{Cov}(\varphi_k^-, Y_k \epsilon_{m-s}) \right]\nonumber\\
&&\hspace{1.5cm} + \lambda_0^{m-j} \mbox{Cov}(\varphi_k^-, Y_j^2),\label{decompo1}\\
\mbox{Cov}(\varphi_k^-,Y_j^2 \varphi_{m}^+)&=& \sum_{s=0}^{j-k-1}\lambda_0^{2s} f_{j-s}(k,m) +\lambda_0^{2(j-k)} \mbox{Cov}(\varphi_k^-,Y_k^2 \varphi_{m}^+),\quad\mbox{where} \label{decompo2}\\
f_j(k,m) &=&V_{0,\xi} \mbox{Cov}(\varphi_k^-,Y_{j-1}\varphi_{m}^+) + \mbox{Cov}(\varphi_k^-, \epsilon_j^2 \varphi_{m}^+)  
+ 2\lambda_0 \mbox{Cov}(\varphi_k^-,Y_{j-1} \epsilon_j \varphi_{m}^+),\label{decompo3}
\end{eqnarray}
with $V_{0,\xi}=\mbox{Var}(\xi)$.
\end{lemm}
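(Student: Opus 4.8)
The plan is to establish both identities by peeling off one generation at a time from the branching recursion \eqref{model}, written compactly as $Y_n=\theta_n\circ Y_{n-1}+\epsilon_n$, and then summing the resulting geometric recursions. The single structural fact driving everything is that the reproduction block $\{\xi_{n,i},\ i\in\nbN\}$ at a given time $n$ is independent of $Y_{n-1}$, of the immigration sequence, and of $\varphi_k^-$ and $\varphi_m^+$ (whose measurability only involves reproduction indices up to time $k$, respectively immigration indices at least $m$), together with $\nbE(\xi)=\lambda_0$ and $\nbE(\xi^2)=V_{0,\xi}+\lambda_0^2$. Throughout, the integrability hypotheses combined with the $\mathbb{L}^{2\beta}$ moment bound of Proposition \ref{prop_stationarity} guarantee, via Hölder's inequality, that every covariance written below is finite.

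For \eqref{decompo1} I first expand $Y_m$. Conditioning on everything except $\{\xi_{m,i}\}$ replaces $\theta_m\circ Y_{m-1}$ by its conditional mean $\lambda_0 Y_{m-1}$, giving the one-step identity $\mbox{Cov}(\varphi_k^-,Y_j Y_m)=\lambda_0\,\mbox{Cov}(\varphi_k^-,Y_j Y_{m-1})+\mbox{Cov}(\varphi_k^-,Y_j\epsilon_m)$. Iterating from $m$ down to $j$ produces $\sum_{s=0}^{m-j-1}\lambda_0^s\,\mbox{Cov}(\varphi_k^-,Y_j\epsilon_{m-s})+\lambda_0^{m-j}\mbox{Cov}(\varphi_k^-,Y_j^2)$, which already yields the last term of \eqref{decompo1}. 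I then expand $Y_j$ inside each $\mbox{Cov}(\varphi_k^-,Y_j\epsilon_{m-s})$ by the same mechanism, where $\epsilon_{m-s}$ (with $m-s>j$) is a passive factor independent of $\{\xi_{j,i}\}$, obtaining $\mbox{Cov}(\varphi_k^-,Y_j\epsilon_{m-s})=\lambda_0\mbox{Cov}(\varphi_k^-,Y_{j-1}\epsilon_{m-s})+\mbox{Cov}(\varphi_k^-,\epsilon_j\epsilon_{m-s})$ and, after iterating down to $Y_k$, the inner bracket of \eqref{decompo1}. Substituting gives the claim.

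For \eqref{decompo2}--\eqref{decompo3} the same peeling is applied to the square. Writing $Y_j^2=(\theta_j\circ Y_{j-1})^2+2\epsilon_j(\theta_j\circ Y_{j-1})+\epsilon_j^2$ and taking the conditional expectation given everything but $\{\xi_{j,i}\}$, the term $(\theta_j\circ Y_{j-1})^2$ contributes $\lambda_0^2 Y_{j-1}^2+V_{0,\xi}Y_{j-1}$ (the coefficient $V_{0,\xi}=\nbE(\xi^2)-\lambda_0^2$ arising from the diagonal/off-diagonal split of the double sum), while the cross term contributes $2\lambda_0\epsilon_j Y_{j-1}$. This yields the one-step recursion $\mbox{Cov}(\varphi_k^-,Y_j^2\varphi_m^+)=\lambda_0^2\mbox{Cov}(\varphi_k^-,Y_{j-1}^2\varphi_m^+)+f_j(k,m)$ with $f_j(k,m)$ exactly as in \eqref{decompo3}; iterating from $j$ down to $k$ gives \eqref{decompo2}.

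The main point requiring care is the justification of each conditional-expectation step, i.e. that the reproduction block $\{\xi_{n,i}\}$ being peeled at a given stage is genuinely independent of the full collection of remaining factors $\varphi_k^-$, $\varphi_m^+$, $Y_{n-1}$ and the relevant $\epsilon$'s. This is where the precise measurability of $\varphi_k^-$ and $\varphi_m^+$ is used (all peeled reproduction times exceed $k$, and the $\epsilon$-factors lie outside the window being peeled), and it is what permits replacing $\theta_n\circ Y_{n-1}$ and $(\theta_n\circ Y_{n-1})^2$ by their conditional means inside each covariance. Everything else is the routine solution of a first-order linear recursion with constant ratio $\lambda_0$, respectively $\lambda_0^2$.
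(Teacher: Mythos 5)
Your proof is correct and follows essentially the same route as the paper's: peel off the reproduction block one generation at a time using the independence of $\{\xi_{n,i}\}$ from $Y_{n-1}$, $\varphi_k^-$, $\varphi_m^+$ and the immigration terms, obtain the one-step linear recursions with ratio $\lambda_0$ (resp.\ $\lambda_0^2$, with the $V_{0,\xi}Y_{j-1}$ correction from the diagonal terms of the square), and iterate. The only cosmetic difference is that the paper writes the expansion of $Y_j^2$ via the explicit diagonal/off-diagonal sums $\sum_p \xi_{j,p}^2+\sum_{p\neq p'}\xi_{j,p}\xi_{j,p'}$ rather than through the conditional mean of $(\theta_j\circ Y_{j-1})^2$, which is the same computation.
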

\begin{proof}
In view of \eqref{model}, the independence of $\xi_{m,p}$ from $\varphi_k^-$, $Y_{j}$ yields:
$$
\mbox{Cov}(\varphi_k^-,Y_j Y_{m})= \mbox{Cov}\left(\varphi_k^-,Y_j \left[ \sum_{p=1}^{Y_{m-1}} \xi_{m,p}+\eps_{m} \right]\right)= \lambda_0 \mbox{Cov}(\varphi_k^-, Y_{j} Y_{m-1})+ \mbox{Cov}(\varphi_k^-, Y_{j}\eps_{m} )
$$
which, by direct induction, gives
$$
\mbox{Cov}(\varphi_k^-,Y_j Y_{m})= \sum_{s=0}^{m-j-1}\lambda_0^s \mbox{Cov}(\varphi_k^-, Y_{j}\eps_{m-s} ) + \lambda_0^{m-j} \mbox{Cov}(\varphi_k^-, Y_{j}^2).
$$
A similar argument (involving this time the independence of $\varphi_k^-$ and $\eps_{m-s}$ from $\{ \xi_{r,p},\ p\ge 1\}$, $r=k+1,...,j$), yields that $\mbox{Cov}(\varphi_k^-, Y_{j}\eps_{m-s} )=\sum_{t=0}^{j-k-1} \lambda_0^t \mbox{Cov}(\varphi_k^-, \epsilon_{j-t} \epsilon_{m-s}) + \lambda_0^{j-k}
\mbox{Cov}(\varphi_k^-, Y_k \epsilon_{m-s})$ which, plugged in the above equality, yields \eqref{decompo1}.\\
In order to prove \eqref{decompo2}, we first expand $Y_j^2=\left[ \sum_{p=1}^{Y_{j-1}} \xi_{j,p}+\eps_{j}\right]^2=\sum_{p=1}^{Y_{j-1}} \xi_{j,p}^2 + \sum_{1\le p\neq p'\le Y_{j-1}} \xi_{j,p}\xi_{j,p'}  + \eps_{j}^2 + 2 \eps_{j} \sum_{p=1}^{Y_{j-1}} \xi_{j,p}$, which, still by independence arguments and since $\nbE(\xi^2)=V_{0,\xi}+\lambda_0^2$, yields
\begin{align*}
\mbox{Cov}(\varphi_k^-,Y_j^2 \varphi_{m}^+)&=(V_{0,\xi}+\lambda_0^2) \mbox{Cov}(\varphi_k^-,Y_{j-1} \varphi_{m}^+)+\lambda_0^2 \mbox{Cov}(\varphi_k^-, Y_{j-1}(Y_{j-1}-1) \varphi_{m}^+)\\&\hspace*{2cm}+ \mbox{Cov}(\varphi_k^-,\eps_{j}^2 \varphi_{m}^+)
+ 2\lambda_0 \mbox{Cov}(\varphi_k^-,\eps_{j} Y_{j-1}\varphi_{m}^+)
\\&= \lambda_0^2 \mbox{Cov}(\varphi_k^-,Y_{j-1}^2 \varphi_{m}^+) + f_j(k,m)
\end{align*}
where $f_j(k,m)$ is defined by \eqref{decompo3}. Direct induction in the above equality yields \eqref{decompo2}.
\end{proof}
\begin{lemm}\label{lemma_cov2}\normalfont
Let $k$, $j$ in $\nbN$ with $j> k$ and $\varphi_t^-$ (resp. $\varphi_{t}^+$) be a $\sigma(\epsilon_n,\ n\le t,\ \xi_{n,i}, n\le t, i\in \N)$ measurable (resp. $\sigma(\epsilon_n,\ n\ge t)$ measurable) random variables belonging to $\mathbb{L}^{\beta\vee 4}$, uniformly in $t=k,...,j$. The following upper bound holds
\begin{equation}\label{bound_sum_Davydov0}
\left| \sum_{s=0}^{j-k-1} \lambda_0^s \mbox{Cov}(\varphi_k^-,\varphi_{j-s}^+)\right|\le K\left[ \alpha_\epsilon\left( \left\lfloor \frac{j-k-1}{2}\right\rfloor\right)^{1-2/\beta} + \lambda_0^{\left\lfloor \frac{j-k-1}{2}\right\rfloor}\right],
\end{equation}
where $\beta>2$ is defined in $\mathbf{(A1)}$. 
 As a consequence, the following similar bound holds for all $k<j<m$ in $\nbN$:
\begin{equation}\label{bound_sum_Davydov1}
\left| \mbox{Cov}(\varphi_k^-,Y_j^\ell \varphi_m^+)\right|\le K\left[ \alpha_\epsilon\left( \left\lfloor \frac{j-k-1}{2}\right\rfloor\right)^{1-2/\beta} + \lambda_0^{\left\lfloor \frac{j-k-1}{2}\right\rfloor}\right]
\end{equation}
for $\ell =1,2$. 
\end{lemm}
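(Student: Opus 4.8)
The plan is to prove both inequalities by pairing Davydov's covariance inequality with a ``split at the midpoint'' argument, using subcriticality $\lambda_0<1$ to geometrically damp the terms in which the mixing gap is too small. Throughout set $S:=\lfloor(j-k-1)/2\rfloor$ and let $M:=\sup_t\left(\|\varphi_t^-\|_{\beta\vee4}\vee\|\varphi_t^+\|_{\beta\vee4}\right)$, finite by stationarity in the applications. For \eqref{bound_sum_Davydov0} first note that $\varphi_k^-$ is $\sigma(\epsilon_n,n\le k)\vee\sigma(\xi)$-measurable while $\varphi_{j-s}^+$ is $\sigma(\epsilon_n,n\ge j-s)$-measurable and free of $\xi$; conditioning on the independent family $\{\xi_{n,i}\}$ and applying Davydov's inequality \citet[Inequality (2.2)]{D68} to the $\epsilon$-process (with Hölder exponents chosen so the relevant $\mathbb{L}^p$ norms are finite by $\mathbf{(A2)}$) reduces the bound to the $\epsilon$-mixing coefficient, giving $|\mbox{Cov}(\varphi_k^-,\varphi_{j-s}^+)|\le C\,\alpha_\epsilon((j-s)-k)^{1-2/\beta}$. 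I then split the sum at $s=S$. For $s\le S$ the gap satisfies $(j-s)-k\ge S$, so monotonicity of $\alpha_\epsilon$ and $\sum_s\lambda_0^s\le(1-\lambda_0)^{-1}$ bound the head by $C'\alpha_\epsilon(S)^{1-2/\beta}$; for $s>S$ I use the crude estimate $|\mbox{Cov}(\varphi_k^-,\varphi_{j-s}^+)|\le\|\varphi_k^-\|_2\|\varphi_{j-s}^+\|_2\le M^2$ (Cauchy--Schwarz, licit since $\beta>2$) together with the geometric tail $\sum_{s>S}\lambda_0^s\le\lambda_0^S/(1-\lambda_0)$, giving $C''\lambda_0^S$. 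Adding the two pieces yields \eqref{bound_sum_Davydov0}.

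For \eqref{bound_sum_Davydov1} the idea is to convert $Y_j^\ell\varphi_m^+$, whose dependence reaches arbitrarily far into the past, into a geometric sum of covariances whose ``future'' factor involves only $\epsilon$'s at times $>k$ (handled by the first inequality) plus a remainder in which the only past ingredient is $Y_k$ or $Y_k^2$. For $\ell=1$, iterating \eqref{model} exactly as in the proof of Lemma \ref{lemma_cov} gives $\mbox{Cov}(\varphi_k^-,Y_j\varphi_m^+)=\sum_{t=0}^{j-k-1}\lambda_0^t\,\mbox{Cov}(\varphi_k^-,\epsilon_{j-t}\varphi_m^+)+\lambda_0^{j-k}\mbox{Cov}(\varphi_k^-,Y_k\varphi_m^+)$; each $\epsilon_{j-t}\varphi_m^+$ is $\sigma(\epsilon_n,n\ge j-t)$-measurable, so the sum is exactly of the form already treated and is $\le K[\alpha_\epsilon(S)^{1-2/\beta}+\lambda_0^S]$, while the remainder is bounded by a constant (Hölder, the $\mathbb{L}^{\beta\vee4}$ hypothesis supplying the $4$-th moments of $\varphi_k^-,\varphi_m^+$ and $\mathbf{(A2)}$/Proposition \ref{prop_stationarity} the moments of $Y_k$) times $\lambda_0^{j-k}\le\lambda_0^S$. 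For $\ell=2$ I start from \eqref{decompo2}--\eqref{decompo3} and then peel each surviving first-power factor $Y_{j-s-1}$ down to $Y_k$ by the same $\ell=1$ iteration, so that $\mbox{Cov}(\varphi_k^-,Y_j^2\varphi_m^+)$ becomes a double sum of pure-$\epsilon$ covariances of the shape $\sum_{s,t}\lambda_0^{2s+t}\mbox{Cov}(\varphi_k^-,(\cdots)_{j-s-1-t}\varphi_m^+)$ plus remainders weighted by at least $\lambda_0^{j-k-1}$ and $\lambda_0^{2(j-k)}$ containing only $Y_k$ or $Y_k^2$. The remainders are again $\mathrm{O}(\lambda_0^S)$, and the double sum is split according to whether the gap $(j-s-1-t)-k$ is $\ge S$ or $<S$: on the first region $\alpha_\epsilon$-monotonicity gives $\alpha_\epsilon(S)^{1-2/\beta}$ times a convergent $\sum_{s,t}\lambda_0^{2s+t}$, and on the second region a direct computation $\sum_{s,t:\,s+t>S}\lambda_0^{2s+t}\le C\lambda_0^S$ (where the factor $2s$ is what matters) closes the estimate, proving \eqref{bound_sum_Davydov1}.

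I expect the $\ell=2$ case to be the main obstacle, for two reasons. First, one must obtain the sharp gap $\lfloor(j-k-1)/2\rfloor$ and not a halved one: using the already established $\ell=1$ bound as a black box for the inner terms $\mbox{Cov}(\varphi_k^-,Y_{j-s-1}\varphi_m^+)$ would introduce a second factor $1/2$ and only produce $\alpha_\epsilon(S/2)$, so the inner $Y$'s must be peeled \emph{directly} to $Y_k$, keeping each pure-$\epsilon$ gap at its true value. Second, the bookkeeping of the double geometric sum is delicate precisely where the weight $\lambda_0^{2s}$ (rather than merely $\lambda_0^{s}$) is needed: retaining the coefficient $2$ is what absorbs the linearly many pairs with $s+t$ fixed and prevents a spurious factor of order $S=\mathrm{O}(j-k)$, which would degrade the bound from $\lambda_0^S$ to $S\lambda_0^S$. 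A secondary, purely technical point is the integrability of the $\epsilon$-products serving as future factors, which is guaranteed by selecting the Davydov exponents through Hölder together with $\mathbf{(A2)}$ and the $2\beta$-moment bound of Proposition \ref{prop_stationarity}.
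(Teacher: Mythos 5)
Your proof is correct and follows the same strategy as the paper's: Davydov's inequality versus the Cauchy--Schwarz bound, with the sum split at the midpoint $\lfloor(j-k-1)/2\rfloor$, for \eqref{bound_sum_Davydov0}; and the recursive decompositions \eqref{decompo1}--\eqref{decompo3} reduced to that first bound for \eqref{bound_sum_Davydov1}. The one place where you go beyond the paper is the $\ell=2$ case, and your instinct there is right: the paper simply asserts ``by standard arguments'' that $|f_j(k,m)|\le K\bigl[\alpha_\epsilon(\lfloor(j-1)/2\rfloor)^{1-2/\beta}+\lambda_0^{\lfloor(j-1)/2\rfloor}\bigr]$ and then splits the outer sum over $s$; read literally, feeding an $\ell=1$-type bound for each inner term $\mbox{Cov}(\varphi_k^-,Y_{j-s-1}\varphi_m^+)$ into that split only yields a mixing gap of order $(j-k)/4$, exactly the halving you warn against. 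Your fix --- peeling each inner $Y_{j-s-1}$ all the way down to $Y_k$ so that every pure-$\epsilon$ covariance keeps its true gap $j-k-1-(s+t)$, then reindexing by $u=s+t$ and using $\sum_{s+t=u}\lambda_0^{2s+t}\le\lambda_0^{u}/(1-\lambda_0)$ --- is what actually delivers the stated exponent $\lfloor(j-k-1)/2\rfloor$, the extra power in $\lambda_0^{2s}$ absorbing the $u+1$ pairs at each level as you say. The distinction is immaterial for every application of the lemma (only summability against Assumption $\mathbf{(A1)}$ is ever used), but your version is the one that proves the bound as stated; the remaining technical point, which you already flag, is that future factors such as $\epsilon_{j-t}\varphi_m^+$ must be placed in $\mathbb{L}^{\beta}$ via H\"older, which is covered by the $2\beta$-moments in $\mathbf{(A2)}$.
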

\begin{proof}
Since $\{\xi_{m,i}, m\in \N, i\in \N\}$ is independent from $\{\epsilon_n,\ n\in \nbZ\}$, Davydov's inequality (see \cite{D68}) as well as the $\mathbb{L}^{\beta}$ assumption on $\varphi_t^-$ and $\varphi_{t}^+$, $t=k,...,j$ (see Assumption \textbf{(A2)}), yields that $|\lambda_0^s\mbox{Cov}(\varphi_k^-,\varphi_{j-s}^+)|$ is less than $K\;  \alpha_\epsilon(j-s-k)^{\beta}$ or $K\; \lambda_0^{s}$ for $s=0,...,j-k-1$ (thanks to the Cauchy Scwharz inequality thanks to the $\mathbb{L}^{4}$, hence $\mathbb{L}^{2}$, assumption), for some constant $K>0$. We split the sum on the lefthanside of \eqref{bound_sum_Davydov0} in $s=0,..., \lfloor({j-k-1})/{2}\rfloor$ and $s=\lfloor({j-k-1})/{2}\rfloor,...,j-k-1$. When $s=0,..., \lfloor({j-k-1})/{2}\rfloor$ we get that $j-s\ge \lfloor({j-k-1})/{2}\rfloor$, so that Davydov's inequality  reads
\begin{equation}\label{Davydov1_lemmaA2}
\lambda_0^s \left| \mbox{Cov}(\varphi_k^-,\varphi_{j-s}^+)\right| \le C \lambda_0^s \alpha_\eps \left( \left\lfloor \frac{j-k-1}{2}\right\rfloor\right)^{1-2/\beta}  \left\| \varphi_k^-\right\|_2  .\left\| \varphi_{j-s}^+\right\|_2 \le K \lambda_0^s \alpha_\eps \left( \left\lfloor \frac{j-k-1}{2}\right\rfloor\right)^{1-2/\beta}
\end{equation}
for some universal positive constants $C$ and $K$. So that summing \eqref{Davydov1_lemmaA2} yields 
\begin{equation}\label{Davydov2_lemmaA2}
\left| \sum_{s=0}^{  \lfloor({j-k-1})/{2}\rfloor}\lambda_0^s  \mbox{Cov}(\varphi_k^-,\varphi_{j-s}^+)\right| \le K  \alpha_\eps \left( \left\lfloor \frac{j-k-1}{2}\right\rfloor\right)^{1-2/\beta}
\end{equation}
for some (different) constant $K>0$. When $s=\lfloor({j-k-1})/{2}\rfloor +1,... j-k-1$, one easily gets
\begin{equation}\label{Davydov3_lemmaA2}
\left| \sum_{s=\lfloor({j-k-1})/{2}\rfloor +1}^{ j-k-1}\lambda_0^s  \mbox{Cov}(\varphi_k^-,\varphi_{j-s}^+)\right| \le K \lambda_0^{\left\lfloor \frac{j-k-1}{2}\right\rfloor},
\end{equation}
so that combining \eqref{Davydov2_lemmaA2} and \eqref{Davydov3_lemmaA2}
yields \eqref{bound_sum_Davydov0}.\\ 
We next prove \eqref{bound_sum_Davydov1} for $s=2$ ($s=1$ being proved similarly). 

Observing each term on the righthandside of \eqref{decompo3}, we have (still by standard arguments) that $$|f_j(k,m)|\le K \left[ \alpha_\epsilon\left( \left\lfloor \frac{j-1}{2}\right\rfloor\right)^{1-2/\beta} + \lambda_0^{\left\lfloor \frac{j-1}{2}\right\rfloor}\right].$$ Hence, splitting the sum on the righthandside of \eqref{decompo2} in $s=0,..., \left\lfloor ({j-k-1})/{2}\right\rfloor$ and $s=\left\lfloor ({j-k-1})/{2}\right\rfloor+1,..., j-k-1$ and using the fact that $\alpha_\epsilon (\cdot)$ is decreasing, yields
\begin{align*}
\left|\sum_{s=0}^{j-k-1}\lambda_0^{2s} f_{j-s}(k,m)\right|&\le \sum_{s=0}^{\left\lfloor \frac{j-k-1}{2}\right\rfloor} |\lambda_0^{2s} f_{j-s}(k,m)| +  \sum_{s=\left\lfloor \frac{j-k-1}{2}\right\rfloor+1}^{j-k-1} |\lambda_0^{2s} f_{j-s}(k,m)|\\&
\le K \left[ \alpha_{{\epsilon}}\left( \left\lfloor \frac{j-k-1}{2}\right\rfloor\right)^{1-2/\beta} + \lambda_0^{\left\lfloor \frac{j-k-1}{2}\right\rfloor}\right] 
+ K \lambda_0^{2 \left\lfloor \frac{j-k-1}{2}\right\rfloor}\\&\le K \left[ \alpha_{{\epsilon}}\left( \left\lfloor \frac{j-k-1}{2}\right\rfloor\right)^{1-2/\beta} + \lambda_0^{\left\lfloor \frac{j-k-1}{2}\right\rfloor}\right] ,
\end{align*}
where the constant $K>0$ changes at each inequality. Combined with the fact that: $$\left|\lambda_0^{2(j-k)} \mbox{Cov}(\varphi_k^-,Y_k^2 \varphi_{m}^+)\right|\le K \lambda_0^{2(j-k)},$$ (a consequence of the Cauchy-Schwarz inequality, thanks to the $\mathbb{L}^{\beta\vee 4}$ integrability assumption of $\varphi_k^-$, $\varphi_{m}^+$ and the $\mathbb{L}^4$ property of $Y_j$), we thus obtain \eqref{bound_sum_Davydov1} from \eqref{decompo2}.
\end{proof}

{\bf Proof of Theorem \ref{theo_CLT_vector_k_0}.} The proof follows the line of proof in \cite{FZ98}, with some noticeable technical differences. More precisely, we intend to apply \cite{H84} to a truncation of $Y_n$ expressed as the series \eqref{proof_step_2_CTL_k0}.
\paragraph{$\diamond$  Step 1:} we prove that
$\mathfrak{S}_{k_0}$ defined in  \eqref{def_frac_sigma} is a convergent series.
To prove this, and in view of \eqref{def_gamma}, we prove here only that $\sum_{j=1}^\infty \mbox{Cov}(Y_0Y_{k_0}, Y_jY_{j+k_0})$ converges, the series involving the other terms in \eqref{def_gamma} being proved similarly. Using \eqref{decompo1} with $k:=k_0$, $m:=j+k_0$,  $\varphi_k^-:=Y_0 Y_{k_0}$,  yields for $j>k_0$
\begin{multline}\label{proof_step_1_CTL_k0}
\mbox{Cov}(Y_0Y_{k_0}, Y_jY_{j+k_0})=\sum_{s=0}^{k_0-1}\lambda_0^s \left[ \sum_{t=0}^{j-k_0-1} \lambda_0^t \mbox{Cov}(Y_0Y_{k_0}, \epsilon_{j-t} \epsilon_{j+k_0-s}) + \lambda_0^{j-k_0}
\mbox{Cov}(Y_0Y_{k_0}, Y_{k_0} \epsilon_{j+k_0-s}) \right]\\
 + \lambda_0^{k_0} \mbox{Cov}(Y_0Y_{k_0}, Y_j^2),
\end{multline}
of which different terms are appropriately upper bounded. Applying inequalities \eqref{bound_sum_Davydov0}, the Cauchy Schwarz inequality (under the assumptions $\mathbf{ (A2)}$ and thanks to Proposition \ref{prop_stationarity}) and \eqref{bound_sum_Davydov1} give respectively
\begin{eqnarray*}
\sum_{t=0}^{j-k_0-1} \lambda_0^t \left|\mbox{Cov}(Y_0Y_{k_0}, \epsilon_{j-t} \epsilon_{j+k_0-s})\right| & \le & K\left[ \alpha_\epsilon\left( \left\lfloor \frac{j-k_0-1}{2}\right\rfloor\right)^{1-2/\beta} + \lambda_0^{\left\lfloor \frac{j-k_0-1}{2}\right\rfloor}\right],\\
\left|\mbox{Cov}(Y_0Y_{k_0}, Y_{k_0} \epsilon_{j+k_0-s}) \right| &\le & K,\\
\left|\mbox{Cov}(Y_0Y_{k_0}, Y_j^2)\right| & \le &  K\left[ \alpha_\epsilon\left( \left\lfloor \frac{j-k_0-1}{2}\right\rfloor\right)^{1-2/\beta} + \lambda_0^{\left\lfloor \frac{j-k_0-1}{2}\right\rfloor}\right],
\end{eqnarray*}
for some constant $K>0$, which, plugged into \eqref{proof_step_1_CTL_k0}, gives
\begin{equation}\label{ineg_utile}
\left|\mbox{Cov}(Y_0Y_{k_0}, Y_jY_{j+k_0})\right| \le K \left[ \alpha_\epsilon\left( \left\lfloor \frac{j-k_0-1}{2}\right\rfloor\right)^{1-2/\beta} + \lambda_0^{\left\lfloor \frac{j-k_0-1}{2}\right\rfloor} + \lambda_0^{j-k_0}\right]
\end{equation}
for $j>k_0$, yielding that $\sum_{j=1}^\infty |\mbox{Cov}(Y_0Y_{k_0}, Y_jY_{j+k_0})|$ is indeed finite thanks to Condition \eqref{stab} and Assumption $\mathbf{ (A1)}$.
\paragraph{$\diamond$ Step 2:} Recall that $Y_n$ may be expanded as \eqref{proof_step_2_CTL_k0}.
We then further decompose \eqref{proof_step_2_CTL_k0} for all $m\in \nbN$ into
\begin{equation}\label{proof_step_2_CTL_k0_2_0}
Y_n=Y_{n,m} + Z_{n,m},\quad Y_{n,m}:=\sum_{i=0}^m \theta^{(i)}_n\circ\epsilon_{n-i},\quad Z_{n,m}:=\sum_{i=m+1}^\infty \theta^{(i)}_n\circ\epsilon_{n-i},
\end{equation}
and ${\cal Y}_n(k_0)=\left[Y_n-C_1, Y_nY_{n+k_0-1}-u_{k_0-1}, Y_nY_{n+k_0}-u_{k_0}\right]'$ into
\begin{eqnarray}
{\cal Y}_n(k_0)&=&{\cal Y}_{n,m}(k_0) + {\cal Z}_{n,m}(k_0),\label{proof_step_2_CTL_k0_2}\\
{\cal Y}_{n,m}(k_0)&:= &\left[Y_{n,m}-\nbE(Y_{n,m}), Y_{n,m}Y_{n+k_0-1,m}-\nbE(Y_{n,m}Y_{n+k_0-1,m}), \right. \nonumber \\
&& \left. Y_{n,m}Y_{m+k_0,m}-\nbE(Y_{n,m}Y_{n+k_0,m})\right]',\nonumber\\
 {\cal Z}_{n,m}(k_0)&:= & \left[Z_{n,m}-\nbE(Z_{n,m}), Y_{n,m}Z_{n+k_0-1,m}+ Z_{n,m}Y_{n+k_0-1,m}+ Z_{n,m}Z_{n+k_0-1,m}\right. \nonumber \\
  && -\nbE(Y_{n,m}Z_{n+k_0-1,m}+ Z_{n,m}Y_{n+k_0-1,m}+ Z_{n,m}Z_{n+k_0-1,m}), \nonumber \\
 && Y_{n,m}Z_{n+k_0,m}+ Z_{n,m}Y_{n+k_0,m}+ Z_{n,m}Z_{n+k_0,m} \nonumber \\
 && \left. -\nbE(Y_{n,m}Z_{n+k_0,m}+ Z_{n,m}Y_{n+k_0,m}+ Z_{n,m}Z_{n+k_0,m}),\right]'\nonumber
\end{eqnarray}
for all $m\in \nbN$. We also recall from the proof of Proposition \ref{prop_stationarity} that $\|\theta^{(i)}_n\circ\epsilon_{n-i}\|_{2\beta}=\|\theta^{(i)}\circ\epsilon_{-i}\|_{2\beta}$ is an $\mathrm{O}(\upsilon^i)$ for some $\upsilon<1$, so that $\|Y_n\|_{2\beta}$, $\|Y_{n,m}\|_{2\beta}$, $\|Z_{n,m}\|_{2\beta}$ are finite for all $n\in \nbZ$ and $m\in \nbN$. We prove in this Step that 
\begin{equation}\label{conv_distribution_m}
\frac{1}{\sqrt{n}}\sum_{i=1}^n {\cal Y}_{i,m}(k_0)\xrightarrow[n\to\infty]{\mathcal{D}} {\cal N}(0,\mathfrak{S}_{k_0}^m),
\end{equation}
for some semi-definite positive matrix $\mathfrak{S}_{k_0}^m\in \nbR^{3\times 3}$, for all $m\in \nbN$ given by
\begin{equation}\label{proof_step_2_CTL_k0_def_sigma}
\mathfrak{S}_{k_0}^m:=\mbox{Var}({\cal Y}_{0,m}(k_0))+\sum_{h=1}^\infty [\mbox{Cov}({\cal Y}_{0,m}(k_0),{\cal Y}_{h,m}(k_0))+\mbox{Cov}({\cal Y}_{0,m}(k_0),{\cal Y}_{h,m}(k_0))']
\end{equation}
For this we verify Assumption (1.2) in \cite{H84} which says that the following limit exists
\begin{multline}\label{proof_step_2_CTL_k0_3}
\frac{1}{n}\nbE\left( \left( \sum_{i=1}^n {\cal Y}_{i,m}(k_0)\right)\left( \sum_{i=1}^n {\cal Y}_{i,m}(k_0)\right)'\right)=\frac{1}{n}\nbE\left( \sum_{1\le i,j \le n}{\cal Y}_{i,m}(k_0)'{\cal Y}_{j,m}(k_0)\right)\\
=\frac{1}{n}\sum_{h=-n}^n (n-|h|) \mbox{Cov} ({\cal Y}_{0,m}(k_0),{\cal Y}_{h,m}(k_0))
\xrightarrow[n\to\infty]{} \mathfrak{S}_{k_0}^m,
\end{multline}
as indeed, if we prove that $\mathfrak{S}_{k_0}^m$ in \eqref{proof_step_2_CTL_k0_def_sigma} converges absolutely then the limit \eqref{proof_step_2_CTL_k0_3} exists by the dominated convergence theorem.
Then, under Assumption {\bf (A1)} and  $m\in\nbN$ fixed, we
prove that the process $(\mathcal{Y}_{n,m}(k_0))_{n\in\mathbb{Z}}$ is strongly mixing (see \citet[ Theorem 14.1]{D94}), i.e. that the following condition holds:
\begin{equation}\label{proof_step_2_CTL_k0_4}
\sum_{h=1}^\infty \alpha_{m,{\cal Y}}(h)^{1-2/\beta}<\infty
\end{equation}
where $\alpha_{m,{\cal Y}}(h):= \sup\{ |\PP(A\bigcap B)- \PP(A)\PP(B)|,\ A\in \sigma({\cal Y}_{n,m}(k_0),\ 1\le n\le r ),\ B\in \sigma({\cal Y}_{n,m}(k_0),\ n\ge r+h)\}$ is the mixing coefficient associated to the sequence $\{{\cal Y}_{i,m}(k_0),\ i\in \nbN \}$.

Now, in view of the definition \eqref{proof_step_2_CTL_k0_2} of ${\cal Y}_{n,m}(k_0)$,  we first prove the following limit
\begin{equation}\label{proof_step_2_CTL_k0_6}
\sum_{h=0}^\infty \left| \mbox{Cov}(Y_{0,m}Y_{k_0-t_1,m}^{t_2}, Y_{h,m}Y_{h+k_0-t_1,m}^{t_2})-\mbox{Cov}(Y_{0}Y_{k_0-t_1}^{t_2}, Y_{h}Y_{h+k_0-t_1}^{t_2}) \right| \xrightarrow[m\to\infty]{} 0, 
\end{equation}
for all $t_1$, $t_2$ in $\{0,1\}$. Indeed, one deduces easily from this both that $\mathfrak{S}_{k_0}^m$ in \eqref{proof_step_2_CTL_k0_def_sigma} is finite for all $m$, and that it converges to $\mathfrak{S}_{k_0}$ defined in \eqref{def_frac_sigma}. We prove \eqref{proof_step_2_CTL_k0_6} only for $t_1=0$ and $t_2=1$, the other cases being similar. For these $t_1$ and $t_2$, we decompose the summands in the above series as
\begin{multline*}
\mbox{Cov}(Y_{0,m}Y_{k_0,m}, Y_{h,m}Y_{h+k_0,m})-\mbox{Cov}(Y_{0}Y_{k_0}, Y_{h}Y_{h+k_0})
= \mbox{Cov}([Y_{0,m}-Y_0]Y_{k_0,m}, Y_{h,m}Y_{h+k_0,m})\\
+ \mbox{Cov}(Y_{0}[Y_{k_0,m}-Y_{k_0}], Y_{h,m}Y_{h+k_0,m}) + \mbox{Cov}(Y_{0}Y_{k_0}, [Y_{h,m}-Y_h]Y_{h+k_0,m})+\mbox{Cov}(Y_{0}Y_{k_0}, Y_{h}[Y_{h+k_0,m}-Y_{h+k_0}])
\end{multline*}
and we focus on the first term in the righhandside, the other terms being tackled with similarly, so that we prove that
\begin{equation}\label{proof_step_2_CTL_k0_7}
\sum_{h=0}^\infty \left|\mbox{Cov}([Y_{0,m}-Y_0]Y_{k_0,m}, Y_{h,m}Y_{h+k_0,m})\right| \xrightarrow[m\to\infty]{} 0. 
\end{equation}
For this, we observe that, letting
\begin{equation}\label{def_phi_minus_k_0}
\varphi^-_{k_0,m}:= Z_{0,m}Y_{k_0,m},
\end{equation}
and in view of \eqref{proof_step_2_CTL_k0_2_0} and for $h/2>k_0$:
\begin{eqnarray}
-\mbox{Cov}([Y_{0,m}-Y_0]Y_{k_0,m}, Y_{h,m}Y_{h+k_0,m})&=& \mbox{Cov}(Z_{0,m}Y_{k_0,m}, Y_{h,m}Y_{h+k_0,m}) = \mbox{Cov}(\varphi^-_{k_0,m}, Y_{h,m}Y_{h+k_0,m})\nonumber\\
&=& \sum_{i_1=0}^m \sum_{i_2=0}^m \mbox{Cov} \left(\varphi^-_{k_0,m}, [\theta^{(i_1)}_h\circ\epsilon_{h-i_1}][\theta^{(i_2)}_{h+k_0}\circ\epsilon_{h+k_0-i_2}]\right),\label{proof_step_2_CTL_k0_8}\\\nonumber
&=& \sum_{j=1}^4 I_j(h,m),
\end{eqnarray}
where
\begin{eqnarray*}
I_1(h,m)&=& \sum_{\substack{0\le i_1 \le \min (\lfloor h/2 \rfloor - k_0,m)\\0 \le i_2 \le \min(\lfloor h/2 \rfloor,m)}} \mbox{Cov} \left(\varphi^-_{k_0,m}, [\theta^{(i_1)}_h\circ\epsilon_{h-i_1}][\theta^{(i_2)}_{h+k_0}\circ\epsilon_{h+k_0-i_2}]\right), \nonumber\\
I_2(h,m)&=& \sum_{\substack{\lfloor h/2 \rfloor - k_0<i_1 \le m\\0 \le i_2 \le \lfloor h/2 \rfloor}} \mbox{Cov} \left(\varphi^-_{k_0,m}, [\theta^{(i_1)}_h\circ\epsilon_{h-i_1}][\theta^{(i_2)}_{h+k_0}\circ\epsilon_{h+k_0-i_2}]\right), \nonumber\\
I_3(h,m)&=& \sum_{\substack{0\le i_1 \le \lfloor h/2 \rfloor - k_0\\  \lfloor h/2 \rfloor < i_2 \le m}} \mbox{Cov} \left(\varphi^-_{k_0,m}, [\theta^{(i_1)}_h\circ\epsilon_{h-i_1}][\theta^{(i_2)}_{h+k_0}\circ\epsilon_{h+k_0-i_2}]\right), \nonumber\\
I_4(h,m)&=& \sum_{\substack{\lfloor h/2 \rfloor - k_0<i_1 \le m\\  \lfloor h/2 \rfloor < i_2 \le m}} \mbox{Cov} \left(\varphi^-_{k_0,m}, [\theta^{(i_1)}_h\circ\epsilon_{h-i_1}][\theta^{(i_2)}_{h+k_0}\circ\epsilon_{h+k_0-i_2}]\right), \nonumber
\end{eqnarray*}
where a sum over an empty set is equal to zero by convention.
We notice that $\varphi^-_{k_0,m}$ depends on $(\theta^{(i)}_0)_{i\in \nbN}$, $(\theta^{(i)}_{k_0})_{i\in \nbN}$ and $\epsilon_j$, $j\le k_0$. Also, we recall from \eqref{op_stationary} that $\theta^{(i)}_0$ and $\theta^{(i)}_{k_0}$ are independent from $\theta^{(i_1)}_h$ and $\theta^{(i_2)}_{h+k_0}$ for all $i\in \nbN$ as soon as $[\{-i,k_0 -i\}\cap \{ h-i_1,h+k_0-i_2\} =\emptyset,\ \forall i\in \nbN]\iff [k_0<h-i_1,\ k_0<h+k_0-i_2]\iff[i_1<h-k_0,\ i_2<h]$. The mixing coefficient
$\alpha_{\theta}(h):= \sup\{ |\PP(A\bigcap B)- \PP(A)\PP(B)|,\ A\in \sigma((\theta^{(i)}_n)_{i\in \nbN}, 1 \le n\le r),\ B\in \sigma( \theta^{(i)}_n, n-i+1\ge r +h)\}$ verifies $\alpha_\theta(h)=0$ as soon as $h\ge 1$ for similar independence reasons. And, we remember that $\{ (\theta^{(i)}_n)_{i\in \nbN}, \ n\in \nbZ\}$ and $\{\epsilon_n,\ n\in \nbZ \}$ are independent. Thus, the following upper bound holds, using Davydov's inequality (see \cite{D68}):
\begin{multline}
\left|I_1(h,m)\right| \le \sum_{\substack{0\le i_1 \le \lfloor h/2 \rfloor - k_0\\0 \le i_2 \le \lfloor h/2 \rfloor}} K \left\| \varphi^-_{k_0,m} - \nbE (\varphi^-_{k_0,m})\right\|_\beta \\ \left\| [\theta^{(i_1)}_h\circ\epsilon_{h-i_1}][\theta^{(i_2)}_{h+k_0}\circ\epsilon_{h+k_0-i_2}] - \nbE \left([\theta^{(i_1)}_h\circ\epsilon_{h-i_1}][\theta^{(i_2)}_{h+k_0}\circ\epsilon_{h+k_0-i_2}]\right) \right\|_\beta \;
\alpha_\epsilon(\lfloor h/2 \rfloor)^{1-2/\beta}\ , \label{proof_step_2_CTL_k0_9}
\end{multline}
for some constant $K$. The terms involved on the right-hand side of the above inequality are bounded as follows. First, thanks to Minkoswki's inequality followed by Cauchy Schwarz as well as \eqref{bound_KW}, we get
\begin{eqnarray}
\left\| \varphi^-_{k_0,m} - \nbE (\varphi^-_{k_0,m})\right\|_\beta &\le &\left\|Z_{0,m} Y_{k_0,m} \right\|_{\beta} + \left|\nbE(Z_{0,m} Y_{k_0,m})\right|\\ &\le&
\left\|Z_{0,m} \right\|_{2\beta} \left\|Y_{k_0,m} \right\|_{2\beta} + \left\|Z_{0,m} \right\|_{2} \left\|Y_{k_0,m} \right\|_{2}  \nonumber\\
&\le & \left( \sum_{i=m+1}^\infty  \left\|\theta^{(i)}\circ\epsilon_{-i}\right\|_{2\beta} \right) \left( \sum_{i=0}^m \left\|\theta^{(i)}\circ\epsilon_{-i}\right\|_{2\beta} \right)  \nonumber\\
&& + \left( \sum_{i=m+1}^\infty  \left\|\theta^{(i)}\circ\epsilon_{-i}\right\|_{2} \right) \left( \sum_{i=0}^m \left\|\theta^{(i)}\circ\epsilon_{-i}\right\|_{2} \right) =\mathrm{O}(\upsilon^m).\label{proof_step_2_CTL_k0_10}
\end{eqnarray}
Similarly,
\begin{eqnarray}
\left\| \left[\theta^{(i_1)}_h\circ\epsilon_{h-i_1}\right]\left[\theta^{(i_2)}_{h+k_0}\circ\epsilon_{h+k_0-i_2}\right] - \nbE \left(\left[\theta^{(i_1)}_h\circ\epsilon_{h-i_1}\right]\left[\theta^{(i_2)}_{h+k_0}\circ\epsilon_{h+k_0-i_2}\right]\right) \right\|_\beta &\le & \mathrm{O}(\upsilon^{i_1+i_2}). \label{proof_step_2_CTL_k0_11}
\end{eqnarray}
Plugging \eqref{proof_step_2_CTL_k0_10} and \eqref{proof_step_2_CTL_k0_11} into \eqref{proof_step_2_CTL_k0_9} yields their upper bound
\begin{equation}\label{proof_step_2_CTL_k0_12}
\left|I_1(h,m)\right| \le K \upsilon^m \alpha_\epsilon\left(\lfloor h/2 \rfloor\right)^{1-2/\beta}, 
\end{equation}
for some constant $K$. Similarly, using again \eqref{proof_step_2_CTL_k0_10} and \eqref{proof_step_2_CTL_k0_11} yields
\begin{eqnarray}
\left|I_2(h,m)\right| &\le & K \upsilon^{\lfloor h/2 \rfloor} \upsilon^m , \label{proof_step_2_CTL_k0_13}\\
\left|I_3(h,m)\right| &\le & K \upsilon^{\lfloor h/2 \rfloor} \upsilon^m , \label{proof_step_2_CTL_k0_14}\\
\left|I_4(h,m)\right| &\le & K \upsilon^{2\lfloor h/2 \rfloor} \upsilon^m , \label{proof_step_2_CTL_k0_15}
\end{eqnarray}
for some constant $K$. Using \eqref{proof_step_2_CTL_k0_8} and plugging the bounds, \eqref{proof_step_2_CTL_k0_12}, \eqref{proof_step_2_CTL_k0_13}, \eqref{proof_step_2_CTL_k0_14} and \eqref{proof_step_2_CTL_k0_15} we thus obtain
\begin{align}\nonumber
\sum_{h/2>k_0}& \left|\mbox{Cov}([Y_{0,m}-Y_0]Y_{k_0,m}, Y_{h,m}Y_{h+k_0,m})\right|\\ \label{proof_step_2_CTL_k0_16}&\le K \upsilon^m  \sum_{h/2>k_0}  \left[ \alpha_\epsilon\left(\lfloor h/2 \rfloor\right)^{1-2/\beta}  + \upsilon^{\lfloor h/2 \rfloor} + \upsilon^{2\lfloor h/2 \rfloor}\right] \xrightarrow[m\to\infty]{} 0, 
\end{align}
for some constant $K$. Since it is easy to check that $$\left|\mbox{Cov}\left([Y_{0,m}-Y_0]Y_{k_0,m}, Y_{h,m}Y_{h+k_0,m}\right)\right|\xrightarrow[m\to\infty]{} 0,$$  for the finite number of terms $h$ such that $h/2 \le k_0$, one then deduces that \eqref{proof_step_2_CTL_k0_7} holds. And all in all, we thus proved \eqref{proof_step_2_CTL_k0_6}, ensuring that $\mathfrak{S}_{k_0}^m\longrightarrow \mathfrak{S}_{k_0}$ as $m\to\infty$.\\
In order to prove \eqref{proof_step_2_CTL_k0_4}, we notice from \eqref{proof_step_2_CTL_k0_2} that ${\cal Y}_{n,m}(k_0)$ depends on $\epsilon_i$, $i=n-m,...,n+k_0$, and $\theta_n^{(i)}$, $\theta_{n+k_0-1}^{(i)}$, $\theta_{n+k_0}^{(i)}$, $i=0,...,m$. We already observed that the operators $\theta^{(i)}_q$, $i\in\nbN$, $q\in \nbZ$ and $\theta^{(j)}_p$, $j\in\nbN$, $p\in \nbZ$ are independent as soon as $q-i\neq p-j$. Remembering that $\{ (\theta^{(i)}_n)_{i\in \nbN}, \ n\in \nbZ\}$ and $\{\epsilon_n,\ n\in \nbZ \}$ are independent, we hence obtain that
\begin{equation}\label{mixing_calY}
\alpha_{m,{\cal Y}}(h) \le \alpha_\epsilon (h-k_0-m)
\end{equation}
as soon as $k_0+m<h$, so that \eqref{proof_step_2_CTL_k0_4} holds thanks to Assumption $\mathbf{ (A1)}$. 
All in all, the finiteness of $\mathfrak{S}_{k_0}^m$ and \eqref{proof_step_2_CTL_k0_4} imply the convergence \eqref{conv_distribution_m} thanks to \citet[Corollary 1]{H84}. 

\paragraph{$\diamond$ Step 3:} We then prove that
\begin{equation}\label{proof_step_2_CTL_k0_17}
\lim_{m\to \infty}\limsup_{n\to \infty}\nbP\left( \left\|  \frac{1}{\sqrt{n}} \sum_{i=1}^n {\cal Z}_{i,m}(k_0) \right\|> \epsilon\right)=0,\quad \forall \epsilon >0,
\end{equation}
where the norm $\|\cdot\|$ is defined here on $\nbR^3$ by $\|u\|^2=u'u$, $u\in\nbR^3$ a column vector. The result \eqref{proof_step_2_CTL_k0_17} follows from a
straightforward adaptation of \citet[Theorem 7.7.1 and Corollary 7.7.1, pages 425-426]{anderson}. Indeed, Markov's inequality as well as the stationarity of $\{ {\cal Z}_{i,m}(k_0),\ i\in \nbZ\}$ yields
\begin{multline}\label{proof_step_2_CTL_k0_18}
\nbP\left( \left\| \frac{1}{\sqrt{n}} \sum_{i=1}^n {\cal Z}_{i,m}(k_0) \right\|> \epsilon\right) \le \frac{1}{n\epsilon^2}\nbE\left( \left\| \sum_{i=1}^n {\cal Z}_{i,m}(k_0) \right\|^2\right)
= \frac{1}{n\epsilon^2}\nbE\left( \sum_{1\le i,j \le n}{\cal Z}_{i,m}(k_0)'{\cal Z}_{j,m}(k_0)\right)\\
\xrightarrow[n\to\infty]{} \frac{1}{\epsilon^2}\sum_{h=-\infty}^\infty \nbE\left( {\cal Z}_{0,m}(k_0)'{\cal Z}_{h,m}(k_0)\right),
\end{multline}
where the latter limit is obtained similarly as in \eqref{proof_step_2_CTL_k0_3}, provided that $\sum_{h=1}^\infty \nbE\left( {\cal Z}_{0,m}(k_0)'{\cal Z}_{h,m}(k_0)\right)$ is a convergent series that tends to $0$ as $m\to \infty$. In view of the definition \eqref{proof_step_2_CTL_k0_2_0}, this is the case if we prove that the following series:

\begin{multline*}
\sum_{h=1}^\infty \mbox{Cov}(Z_{0,m}Y_{k_0-t_2,m}^{t_1}, \Upsilon_{h,m}^{t_3}),\quad \sum_{h=1}^\infty\mbox{Cov}(Y_{0,m}Z_{k_0-t_2,m}, \Upsilon_{h,m}^{t_3}), \quad \sum_{h=1}^\infty\mbox{Cov}(Z_{0,m}Z_{k_0-t_2,m}, \Upsilon_{h,m}^{t_3}),\\
\mbox{where }\Upsilon_{h,m}^{t_3} \in \{ Z_{h,m}Y_{h+k_0-t_3,m}, Y_{h,m}Z_{h+k_0-t_3,m}, Z_{h,m}Y_{h+k_0-t_3,m}\},
\end{multline*}
converge to $0$ as $m\to \infty$ for $t_1$,$t_2$ and $t_3$ in $\{0,1\}$. We proceed to do prove this convergence for the series with general term $\mbox{Cov}(Z_{0,m}Z_{k_0,m}, Z_{h,m}Z_{h+k_0,m})$, the other cases being similar. As previously while studying \eqref{proof_step_2_CTL_k0_8}, we write that
$$
\mbox{Cov}(Z_{0,m}Z_{k_0,m}, Z_{h,m}Z_{h+k_0,m})=  \sum_{i_1=m+1}^\infty \sum_{i_2=m+1}^\infty \mbox{Cov} \left(Z_{0,m}Z_{k_0,m}, [\theta^{(i_1)}_h\circ\epsilon_{h-i_1}][\theta^{(i_2)}_{h+k_0}\circ\epsilon_{h+k_0-i_2}]\right).
$$
Splitting the above double sum into $i_1$ and $i_2$ less or larger than $h/2$, the following equivalent of \eqref{proof_step_2_CTL_k0_16} may be obtained:
\begin{equation}\label{proof_step_2_CTL_k0_19}
\sum_{h/2>k_0} \left|\mbox{Cov}(Z_{0,m}Z_{k_0,m}, Z_{h,m}Z_{h+k_0,m})\right|\le K \upsilon^m  \sum_{h/2>k_0}  \left[ \alpha_\epsilon(\lfloor h/2 \rfloor)^{1-2/\beta}  + \upsilon^{\lfloor h/2 \rfloor} + \upsilon^{2\lfloor h/2 \rfloor}\right] =\mathrm{O}(\upsilon^m)
\end{equation}
for some constant $K>0$. It can be also proved that for $h/2 \le k_0$ we have $\left|\mbox{Cov}(Z_{0,m}Z_{k_0,m}, Z_{h,m}Z_{h+k_0,m})\right|=\mathrm{O}(\upsilon^m)$, which, combined with \eqref{proof_step_2_CTL_k0_19}, yields
$\sum_{h=1}^\infty \left|\mbox{Cov}(Z_{0,m}Z_{k_0,m}, Z_{h,m}Z_{h+k_0,m})\right|= \mathrm{O}(\upsilon^m),$
from which we in turn get
\begin{equation}\label{Step3_boundZ}
\sum_{h=-\infty}^\infty \nbE\left( {\cal Z}_{0,m}(k_0)'{\cal Z}_{h,m}(k_0)\right)= \mathrm{O}(\upsilon^m).
\end{equation}
Combining the above with \eqref{proof_step_2_CTL_k0_18} thus yields \eqref{proof_step_2_CTL_k0_17}.
\paragraph{$\diamond$ Step 4:} 
Combining \eqref{conv_distribution_m} and \eqref{proof_step_2_CTL_k0_17} yields by \citet[Proposition 6.3.9 p.208]{BD91} the final result \eqref{CLT_vector_k_0}. \zak
As a side result of the previous proof, let us prove that the following convergence holds:
\begin{equation}\label{convergence_Herrndorf}
\frac{1}{n}\nbE(T_n(k_0)T_n(k_0)')\xrightarrow[n\to\infty]{} \mathfrak{S}_{k_0}
\end{equation}
which will be useful later on. For this, we write, remembering the notation \eqref{def_estimators} and \eqref{proof_step_2_CTL_k0_2},
\begin{multline}\label{conv_Herrnorf1}
\left\| \EE \left( \frac{T_n(k_0)T_n(k_0)'}{n}\right) - \mathfrak{S}_{k_0}\right\|  \le
\left\| \EE \left( \frac{T_n(k_0)T_n(k_0)'}{n}\right) - \mathfrak{S}_{k_0}^m\right\| 
+\left\| \mathfrak{S}_{k_0}^m- \mathfrak{S}_{k_0}\right\|    \\
\le \left\| \EE \left(\left[   \frac{1}{\sqrt{n}} \sum_{i=1}^n {\cal Y}_{i,m}(k_0)+    
\frac{1}{\sqrt{n}} \sum_{i=1}^n {\cal Z}_{i,m}(k_0)
\right] \left[  \frac{1}{\sqrt{n}} \sum_{i=1}^n {\cal Y}_{i,m}(k_0)+    
\frac{1}{\sqrt{n}} \sum_{i=1}^n {\cal Z}_{i,m}(k_0)
\right]'\right)  - \mathfrak{S}_{k_0}^m\right\|\\
+ \left\| \mathfrak{S}_{k_0}^m- \mathfrak{S}_{k_0}\right\|\\
\le \left\| \EE \left(    \left[  \frac{1}{\sqrt{n}} \sum_{i=1}^n {\cal Y}_{i,m}(k_0)
\right] \left[  \frac{1}{\sqrt{n}} \sum_{i=1}^n {\cal Y}_{i,m}(k_0)
\right]'\right) - \mathfrak{S}_{k_0}^m\right\|\\
+ 2 \left\| \EE \left(\left[  \frac{1}{\sqrt{n}} \sum_{i=1}^n {\cal Y}_{i,m}(k_0)
\right] \left[  
\frac{1}{\sqrt{n}} \sum_{i=1}^n {\cal Z}_{i,m}(k_0)
\right]'\right)\right\|+  \left\| \mathfrak{S}_{k_0}^m- \mathfrak{S}_{k_0}\right\| .
\end{multline}
We now observe that, thanks to \eqref{proof_step_2_CTL_k0_3},
\begin{equation}\label{conv_Herrnorf2}
\left\| \EE \left(    \left[  \frac{1}{\sqrt{n}} \sum_{i=1}^n {\cal Y}_{i,m}(k_0)
\right] \left[  \frac{1}{\sqrt{n}} \sum_{i=1}^n {\cal Y}_{i,m}(k_0)
\right]'\right) - \mathfrak{S}_{k_0}^m\right\| \xrightarrow[n\to\infty]{} 0,\quad \forall m\in \nbN .
\end{equation}
Next, we have that $ \left\| \frac{1}{\sqrt{n}}\sum_{i=1}^n {\cal Y}_{i,m}(k_0)\right\|_2 $ is upper bounded by a constant $C$ independent from $n$ and $m$ thanks to \eqref{proof_step_2_CTL_k0_3} and the fact that $\mathfrak{S}_{k_0}^m$ converges to the finite limit $\mathfrak{S}_{k_0}$ as $m\to\infty$. Also a byproduct of \eqref{proof_step_2_CTL_k0_18} and \eqref{Step3_boundZ} is that $\lim_{n\to\infty} \left\|\frac{1}{\sqrt{n}} \sum_{i=1}^n {\cal Z}_{i,m}(k_0)\right\|_2= \sum_{h=-\infty}^\infty \nbE\left( {\cal Z}_{0,m}(k_0)'{\cal Z}_{h,m}(k_0)\right)= \mathrm{O}(\upsilon^m)$. Thus, Cauchy Schwarz's inequality yields that
\begin{multline}\label{conv_Herrnorf3}
\left\| \EE \left(\left[  \frac{1}{\sqrt{n}} \sum_{i=1}^n {\cal Y}_{i,m}(k_0)
\right] \left[  
\frac{1}{\sqrt{n}} \sum_{i=1}^n {\cal Z}_{i,m}(k_0)
\right]'\right)\right\|\le 
 \EE \left(\left\|  \frac{1}{\sqrt{n}} \sum_{i=1}^n {\cal Y}_{i,m}(k_0)
\right\| \; \left\|
\frac{1}{\sqrt{n}} \sum_{i=1}^n {\cal Z}_{i,m}(k_0)
\right\|\right)\\
\le \left\| \frac{1}{\sqrt{n}}\sum_{i=1}^n {\cal Y}_{i,m}(k_0)\right\|_2 \left\|\frac{1}{\sqrt{n}} \sum_{i=1}^n {\cal Z}_{i,m}(k_0)\right\|_2\le C \left\|\frac{1}{\sqrt{n}} \sum_{i=1}^n {\cal Z}_{i,m}(k_0)\right\|_2\\
\xrightarrow[n\to\infty]{} C \sum_{h=-\infty}^\infty \nbE\left( {\cal Z}_{0,m}(k_0)'{\cal Z}_{h,m}(k_0)\right)= \mathrm{O}(\upsilon^m).
\end{multline}
Thus, \eqref{conv_Herrnorf2}, \eqref{conv_Herrnorf3} along with $\lim_{m\to\infty}\mathfrak{S}_{k_0}^m=\mathfrak{S}_{k_0}$ yields from \eqref{conv_Herrnorf1} that 
$$
\limsup_{n\to\infty} \left\| \EE \left( \frac{T_n(k_0)T_n(k_0)'}{n}\right) - \mathfrak{S}_{k_0}\right\|  \le \mathrm{O}(\upsilon^m)+ \left\| \mathfrak{S}_{k_0}^m- \mathfrak{S}_{k_0}\right\| \xrightarrow[m\to\infty]{} 0,
$$
which implies \eqref{convergence_Herrndorf}.
\subsection{Proof of Proposition \ref{cond_psi_invertible} and Theorem \ref{convergence_Isp}}\label{app:proof_prop_theo_psi}
{\bf Proof of Proposition \ref{cond_psi_invertible}. }
Remembering the expression \eqref{def_estimators}, we recall that ${\cal Y}_n={\cal Y}_n(k_0)=(Y_n-C_1,Y_nY_{n+k_0-1}-u_{k_0-2}, Y_nY_{n+k_0}-u_{k_0-1})' $. Writing the $\nbR^3$ vector $\Sigma_\varepsilon^{-1}\varepsilon_{0}$ in the form $\Sigma_\varepsilon^{-1}\varepsilon_{0}= \left(\varepsilon_{0}^{(1)}, \varepsilon_{0}^{(2)}, \varepsilon_{0}^{(3)}\right)'$, we thus have from \eqref{Wold_expressions} that
$$
\Psi_n=\mbox{Cov}({\cal Y}_n, \Sigma_\varepsilon^{-1}\varepsilon_{0}),
$$
which is a $\nbR^{3\times 3}$ matrix. We first prove that
\begin{equation}\label{convergence_series_psi}
\sum_{n=1}^\infty \| \Psi_n \| < \infty .
\end{equation}
The first row, third line of $\| \Psi_n \|$ is equal to $\mbox{Cov}(\varepsilon_{0}^{(1)}, Y_nY_{n+k_0})$. 
Since $\varepsilon_{0}^{(1)}$ depends on $Y_j$, $j\le k_0$, an argument similar to \eqref{ineg_utile} yields that
$$\left|\mbox{Cov}(\varepsilon_{0}^{(1)}, Y_nY_{n+k_0})\right| \le K \left[ \alpha_\epsilon\left( \left\lfloor \frac{n-k_0-1}{2}\right\rfloor\right)^{1-2/\beta} + \lambda_0^{\left\lfloor \frac{n-k_0-1}{2}\right\rfloor} + \lambda_0^{n}\right]$$
And, more generally one obtains a similar bound for all coefficients of $\Psi_n$, which thus implies \eqref{convergence_series_psi}.\\
We next introduce the spectral density associated to the process $\{\mathcal{Y}_n,\ n\in\mathbb{Z}\}:=\{\mathcal{Y}_n(k_0),\ n\in\mathbb{Z}\}$ defined as
\begin{equation}\label{def_spectral_density}
f_\Y(x):=\frac{1}{2\pi}\sum_{h=-\infty}^\infty C(h)e^{ihx}\quad \mbox{where } C(h):=\nbE(\Y_0 \Y_h ' )=\mbox{Cov}(\Y_0,\Y_h).
\end{equation}
We note that the above series is indeed convergent thanks to arguments similar to the ones leading to the convergence of \eqref{convergence_series_psi}. Also note that, still thanks to that latter convergence, we have the relation
\begin{equation}\label{relation_spectral_psi}
f_\Y(x)= \overline{\Psi(e^{ix})} \Sigma_\varepsilon \Psi(e^{ix}) ' ,\quad \forall x\in \nbR ,
\end{equation}
where we recall that $\Psi(\cdot)$ is defined by \eqref{def_psi_op}. We first prove that 
\begin{equation}\label{A4_light}
\det\left(\Psi(z)\right)\neq 0,\quad \forall z\in {\mathbb{C}} \mbox{ such that }\left|z\right| =1.
\end{equation}
In view of \eqref{relation_spectral_psi}, the above is satisfied if and only if we show that $f_\Y(x)$ is invertible for all $x\in [0, 2\pi]$, which we proceed to do if $\lambda_0$ is small enough. For this we will momentarily write the dependence of the spectral density of $\{\mathcal{Y}_n,\ n\in\mathbb{Z}\}$ and write $f_\Y(\lambda_0,x)$ instead of $f_\Y(x)$, and we are going to prove that 
\begin{equation}\label{limitt_spectral_density}
\sup_{x\in [0, 2\pi]} \|f_\Y(\lambda_0,x) - f_{\underline{\epsilon}}(x)\|\longrightarrow 0,\quad \lambda_0 \to 0. 
\end{equation}
In view of the definitions of those spectral densities, it is not difficult to check that this amounts to show that the quantities 
\begin{equation}\label{series_that_tend_0}
\begin{array}{c}
\sum_{h=-\infty}^\infty |\mbox{Cov}(Y_0,Y_h) - \mbox{Cov}(\epsilon_0,\epsilon_h)|,\\
\sum_{h=-\infty}^\infty |\mbox{Cov}(Y_0,Y_hY_{h+k_0-1}) - \mbox{Cov}(\epsilon_0,\epsilon_h \epsilon_{h+k_0-1})|,\\
\sum_{h=-\infty}^\infty |\mbox{Cov}(Y_0,Y_hY_{h+k_0}) - \mbox{Cov}(\epsilon_0,\epsilon_h \epsilon_{h+k_0})|,\\
\sum_{h=-\infty}^\infty |\mbox{Cov}(Y_0Y_{k_0-1},Y_hY_{h+k_0-1}) - \mbox{Cov}(\epsilon_0 \epsilon_{k_0-1},\epsilon_h \epsilon_{h+k_0-1})|,\\
\sum_{h=-\infty}^\infty |\mbox{Cov}(Y_0Y_{k_0-1},Y_hY_{h+k_0}) - \mbox{Cov}(\epsilon_0 \epsilon_{k_0-1},\epsilon_h \epsilon_{h+k_0})|,\\
\sum_{h=-\infty}^\infty |\mbox{Cov}(Y_0 Y_{k_0},Y_hY_{h+k_0}) - \mbox{Cov}(\epsilon_0 \epsilon_{k_0},\epsilon_h \epsilon_{h+k_0})|
\end{array}
\end{equation}
tend to $0$ as $\lambda_0$ tends to $0$: we prove this only for $\sum_{h=-\infty}^\infty |\mbox{Cov}(Y_0 Y_{k_0},Y_hY_{h+k_0}) - \mbox{Cov}(\epsilon_0 \epsilon_{k_0},\epsilon_h \epsilon_{h+k_0})|$, the other cases being dealt with similarly. We first note that
\begin{multline}\label{A4_light_proof1}
\mbox{Cov}(Y_0 Y_{k_0},Y_hY_{h+k_0})=\lambda_0 [ \mbox{Cov}(Y_0 Y_{k_0},Y_hY_{h+k_0-1}) + \mbox{Cov}(Y_0 Y_{k_0},Y_{h-1}\epsilon_{h+k_0})\\
+ \mbox{Cov}(Y_0 Y_{k_0-1},\epsilon_{h}\epsilon_{h+k_0}) + \mbox{Cov}(Y_{-1} \epsilon_{k_0},\epsilon_{h}\epsilon_{h+k_0})] + \mbox{Cov}(\epsilon_{k_0} \epsilon_{k_0},\epsilon_{h}\epsilon_{h+k_0}),\quad h> k_0.
\end{multline}
Now, \eqref{ineg_utile} implies that $\left|\mbox{Cov}(Y_0Y_{k_0}, Y_hY_{h+k_0-1})\right| \le K \left[ \alpha_\epsilon\left( \left\lfloor \frac{h-k_0-1}{2}\right\rfloor\right)^{1-2/\beta} + \lambda_0^{\left\lfloor \frac{h-k_0-1}{2}\right\rfloor}\right]$ where $K$ can be verified to be uniformly bounded in $\lambda_0\in [\lambda_-,\lambda_+]$. This inequality along with the assumption \eqref{cond_mixing_coef} entails that $\left|\mbox{Cov}(Y_0Y_{k_0}, Y_hY_{h+k_0-1})\right| \le K \lambda_+^{\left\lfloor \frac{h-k_0-1}{2}\right\rfloor}$ for $h> k_0$, so that $\sum_{h=k_0+1}^\infty \left|\mbox{Cov}(Y_0Y_{k_0}, Y_hY_{h+k_0-1})\right|$ is finite and bounded in $\lambda_0\in [\lambda_-,\lambda_+]$. A similar argument and bound holds for $\sum_{h=k_0+1}^\infty \left|\mbox{Cov}(Y_0 Y_{k_0},Y_{h-1}\epsilon_{h+k_0})\right|$, $\sum_{h=k_0}^\infty \left|\mbox{Cov}(Y_0 Y_{k_0-1},\epsilon_{h}\epsilon_{h+k_0}) \right|$ and $\sum_{h=k_0+1}^\infty \left| \mbox{Cov}(Y_{-1} \epsilon_{k_0},\epsilon_{h}\epsilon_{h+k_0})\right|$. All in all, one thus deduce from \eqref{A4_light_proof1} that $$\sum_{h=k_0+1}^\infty |\mbox{Cov}(Y_0 Y_{k_0},Y_hY_{h+k_0}) - \mbox{Cov}(\epsilon_0 \epsilon_{k_0},\epsilon_h \epsilon_{h+k_0})|=\mathrm{O}(\lambda_0).$$
Besides, one can verify that $|\mbox{Cov}(Y_0 Y_{k_0},Y_hY_{h+k_0}) - \mbox{Cov}(\epsilon_0 \epsilon_{k_0},\epsilon_h \epsilon_{h+k_0})|=\mathrm{O}(\lambda_0)$ for $h=0,...,k_0$ so that one finally obtains that
$$
\sum_{h=0}^\infty |\mbox{Cov}(Y_0 Y_{k_0},Y_hY_{h+k_0}) - \mbox{Cov}(\epsilon_0 \epsilon_{k_0},\epsilon_h \epsilon_{h+k_0})|=\mathrm{O}(\lambda_0).
$$
A similar argument holds for $\sum_{h=-\infty}^0 |\mbox{Cov}(Y_0 Y_{k_0},Y_hY_{h+k_0}) - \mbox{Cov}(\epsilon_0 \epsilon_{k_0},\epsilon_h \epsilon_{h+k_0})|$, so that we eventually have that $\sum_{h=-\infty}^\infty |\mbox{Cov}(Y_0 Y_{k_0},Y_hY_{h+k_0}) - \mbox{Cov}(\epsilon_0 \epsilon_{k_0},\epsilon_h \epsilon_{h+k_0})|=\mathrm{O}(\lambda_0)$ as $\lambda_0$ tends to $0$. As explained earlier, all series in \eqref{series_that_tend_0} can be shown to be also $\mathrm{O}(\lambda_0)$, so that \eqref{limitt_spectral_density} is proved. This in turn implies that $\lim_{\lambda_0 \to 0}\sup_{x\in [0, 2\pi]} |\det f_\Y(\lambda_0,x) - \det f_{\underline{\epsilon}}(x)|=0$. Hence we have that, for $\lambda_0$ small enough:
$$\inf_{x\in [0, 2\pi]} |\det f_\Y(\lambda_0,x)| \ge \inf_{x\in [0, 2\pi]} |\det f_{\underline{\epsilon}} (x)| - \sup_{x\in [0, 2\pi]} |\det f_\Y(\lambda_0,x) - \det f_{\underline{\epsilon}}(x)| >0 ,$$
so that \eqref{A4_light} is proved. We now prove that $\mathbf{ (A4)}$ holds, i.e. that $\det\left(\Psi(z)\right)\neq 0$ not just on $z$ such that $|z|=1$ but on the broader set $\{z\in \nbC |\ |z| \le 1 \}$. This is the most delicate part of the proof, and the main argument is an analysis related to the measurability with respect to the immigration sequences. We first observe that \eqref{A4_light} implies that the expansion \eqref{Y_Wold} can be inverted and there exists a sequence of $\nbR^{3\times 3}$ matrices $(\phi_n)_{n\in \nbZ}$ such that
\begin{equation}\label{epsilon_2_sides}
\varepsilon_n=\sum_{j\in \nbZ}\phi_j \Y_{n-j},\quad n\in \nbZ ,
\end{equation}
and we are going to prove that $\phi_{j}=0$ for all $j\le -1$, so that the expansion \eqref{AR_infty} holds with $\phi_0=\mathbf{I_3}$ and $\Phi_j=-\phi_j$, $j\ge 1$, which in turn is equivalent to Condition $\mathbf{ (A4)}$. We first recall that, for all $n\in \nbZ$, $\Y_n$ depends on the random variables $\epsilon_j$ and $\{\xi_{j,i}\ i\in \nbN \}$, $j\le n+k_0$ (see \eqref{model} as well as the expression \eqref{def_estimators} for $\Y_n$). Next, we write, using the definition \eqref{Wold_expressions} for $\varepsilon_n$ and the expansion \eqref{epsilon_2_sides}:
\begin{equation}\label{equality_to0}
\Y_n - P_{n-1} \Y_n -  \sum_{j=0}^{\infty}\phi_j \Y_{n-j}=\sum_{j=-\infty}^{-1}\phi_j \Y_{n-j}.
\end{equation}
We now prove that the righthanside of \eqref{equality_to0} is $0$ by using arguments related to the indepence of the sequences $\{ \xi_{n,k},\ k\in\N \}$ when $n\in\N$. To avoid tedious notation we will denote by $\xi_{n,.}$ the latter sequence in the rest of the proof. We define the following $\sigma -$ algebras
\begin{equation}\label{def_Gn}
{\cal G}_n:= \sigma (Y_k, k\le n,\ \xi_{j,.}, j\ge n+1),\quad n\in \nbZ,
\end{equation}
and we proceed to express the conditional expectation of $\Y_{n-j}$ given ${\cal G}_n$ in function of $j\in \nbZ$ in order to take the conditional expectation on both sides of \eqref{equality_to0}. We observe that
\begin{equation}\label{cond_expectation_1}
\nbE(Y_{n-j}| \G_n )= \left\{
\begin{array}{cl}
Y_{n-j} & \mbox{if } j\ge 0,\\
G_{-j}^{(1)} (\xi_{k,.},k=n+1,...,n-j) & \mbox{if } j\le -1 .
\end{array}
\right.
\end{equation}
Here $G_{-j}^{(1)} (\xi_{k,.},k=n+1,...,n-j)$ is a random variable that depends on the sequences $\xi_{k,.}$,$ k=n+1,...,n-j$ as well as on the random variables $Y_k$, $k\le n$; however, only the dependence on $\xi_{k,.}$ is mentioned in order to avoid tedious notation. Similarly, we have
\begin{eqnarray}
\nbE(Y_{n-j} Y_{n-j+k_0-1}| \G_n ) &= & \left\{
\begin{array}{cl}
Y_{n-j} Y_{n-j+k_0-1} & \mbox{if } j\ge k_0-1,\\
G_{-j}^{(2)} (\xi_{k,.},k=n+1,...,n-j+k_0-1) & \mbox{if } j\le -1,\\
Y_{n-j} \tilde G_{-j}^{(2)} (\xi_{k,.},k=n+1,...,n-j+k_0-1) & \mbox{if } j=0,...,k_0-2 ,
\end{array}
\right. \label{cond_expectation_2}\\
\nbE(Y_{n-j} Y_{n-j+k_0}| \G_n ) &= & \left\{
\begin{array}{cl}
Y_{n-j} Y_{n-j+k_0} & \mbox{if } j\ge k_0,\\
G_{-j}^{(3)} (\xi_{k,.},k=n+1,...,n-j+k_0) & \mbox{if } j\le -1,\\
Y_{n-j} \tilde G_{-j}^{(3)} (\xi_{k,.},k=n+1,...,n-j+k_0) & \mbox{if } j=0,...,k_0-1 ,
\end{array}
\right. \label{cond_expectation_3}
\end{eqnarray}
where $\tilde G_{-j}^{(2)} (\xi_{k,.},k=n+1,...,n-j+k_0-1)= \nbE(Y_{n-j+k_0-1}| \G_n )$ and $ \tilde G_{-j}^{(3)} (\xi_{k,.},k=n+1,...,n-j+k_0)= \nbE(Y_{n-j+k_0}| \G_n )$. We now note that, by definition of the projection operator $P_{n-1}$, there exists a sequence of combinations $\left(\sum_{j=1}^{N_m} \alpha_j^{(m)} \Y_{n-j} \right)_{m\in \nbN}$ such that, $\alpha_j^{(m)}$ is a $\nbR^{3\times 3}$ matrix and $\sum_{j=1}^{N_m} \alpha_j^{(m)} \Y_{n-j}\stackrel{\mathbb{L}^2}{\longrightarrow} P_{n-1}  \Y_n $ as $m\to \infty$. Thus, the lefthanside of \eqref{equality_to0} is the limit in $\mathbb{L}^2$ of an (potentially infinite) combination $\sum_{j=1}^{\infty} \phi_j^{(m)} \Y_{n-j}$ as $m\to\infty$, where $(\phi_j^{(m)})_{j\in \nbN} $ are $\nbR^{3\times 3}$ matrices given by $\phi_0^{(m)}= I_3- \alpha_0^{(m)}$, $\phi_j^{(m)}=-\alpha_j^{(m)} - \phi_j$, $j=1,...,N_m$, and $\phi_j^{(m)}=- \phi_j$, $j> N_m$. Then, taking the conditional expectation in \eqref{equality_to0} reads
\begin{equation}\label{equality_to01}
\lim_{m\to \infty }\sum_{j=0}^{\infty} \phi_j^{(m)} \nbE (\Y_{n-j}| {\cal G}_n)= \sum_{j=-\infty}^{-1}\phi_j \nbE (\Y_{n-j}| {\cal G}_n).
\end{equation}
Let us now write the matrices $\phi_j$, $j\le -1$, and $\phi_j^{(m)}$, $j\ge 0$, in the form of  $\phi_j=[C_j^{(1)}, C_j^{(2)}, C_j^{(3)}]$ and $\phi_j^{(m)}=[C_j^{(1)(m)}, C_j^{(2)(m)}, C_j^{(3)(m)}]$ where $C_j^{(i)}$, $C_j^{(i)(m)}$, $i=1,2,3$, are $3\times 1$ column vectors. We next identify in both sides of \eqref{equality_to01} the terms that feature the sequence $\xi_{n+1,.}$, and only this sequence, among $\xi_{k,.}$, $k\ge n+1$. From the expressions \eqref{cond_expectation_1}, \eqref{cond_expectation_2} and \eqref{cond_expectation_3}, we may see that the only term on the righthandside of \eqref{equality_to01} with that property is $C_{-1}^{(1)} G_{-1}^{(1)} (\xi_{n+1,.})=C_{-1}^{(1)}\nbE(Y_{n+1}| \G_n )=C_{-1}^{(1)}\left[ \sum_{r=1}^{Y_n} \xi_{n+1,r} + \nbE(\epsilon_{n+1} | \G_n  )\right]$, with $\nbE(\epsilon_{n+1} | \G_n  )$ depending only on $Y_j$, $j\le n$ because of the independence of $\epsilon_{n+1}$ with $\xi_{j,.}$, $j\ge n+1$. On the lefthandside, and in view of the expressions \eqref{cond_expectation_2} and \eqref{cond_expectation_3}, the only case for $\nbE(Y_{n-j} Y_{n-j+k_0-1}| \G_n )$ and $\nbE(Y_{n-j} Y_{n-j+k_0}| \G_n )$ to depend on $\xi_{n+1,.}$ is when $j$ is respectively equal to $k_0-2$ and $k_0-1$: in that case $\nbE(Y_{n-(k_0-2)} Y_{n+1}| \G_n )= Y_{n-(k_0-2)} \tilde G_{-(k_0-2)}^{(2)} (\xi_{n+1,.})= Y_{n-(k_0-2)} \nbE(Y_{n+1}| \G_n )=Y_{n-(k_0-2)}\left[ \sum_{r=1}^{Y_n} \xi_{n+1,r} + \nbE(\epsilon_{n+1} | \G_n  )\right]$ and $\nbE(Y_{n-(k_0-1)} Y_{n+1}| \G_n )=Y_{n-(k_0-1)} \tilde G_{-(k_0-2)}^{(3)} (\xi_{n+1,.})=Y_{n-(k_0-1)}\left[ \sum_{r=1}^{Y_n} \xi_{n+1,r} + \nbE(\epsilon_{n+1} | \G_n  )\right]$. Isolating these particular terms, one thus checks that \eqref{equality_to01} can be equivalently written as
\begin{multline}\label{equality_to02}
{\cal E}_m:= - C_{-1}^{(1)} G_{-1}^{(1)} (\xi_{n+1,.}) + C_{-(k_0-2)}^{(2)(m)} Y_{n-(k_0-2)} \tilde G_{-(k_0-2)}^{(2)} (\xi_{n+1,.}) \\
+ C_{-(k_0-1)}^{(3)(m)} Y_{n-(k_0-1)} \tilde G_{-(k_0-1)}^{(3)} (\xi_{n+1,.}) + X^{(m)}\\
=\left[ - C_{-1}^{(1)} + C_{-(k_0-2)}^{(2)(m)} Y_{n-(k_0-2)}  + C_{-(k_0-1)}^{(3)(m)} Y_{n-(k_0-1)} 
 \right]\left[ \sum_{r=1}^{Y_n} \xi_{n+1,r} + \nbE(\epsilon_{n+1} | \G_n  )\right]\\
 +X^{(m)} \stackrel{\mathbb{L}^2}{\longrightarrow} 0,\quad m\to \infty ,
\end{multline}
where $X^{(m)}$ is a random vector which is the sum of 
\begin{itemize}
\item a random vector that involves the terms $\nbE (Y_{n-j}| {\cal G}_n)$, $j\ge 0$, $\nbE (Y_{n-j}Y_{n-j+k_0-1}| {\cal G}_n)$, $j\ge k_0-1$, $\nbE (Y_{n-j}Y_{n-j+k_0}| {\cal G}_n)$, $j\ge k_0$, that are respectively equal to $Y_{n-j}$, $Y_{n-j}Y_{n-j+k_0-1}$ and $Y_{n-j}Y_{n-j+k_0}$, and that thus depend on $Y_k$, $k\le n$  only,
\item  and of a random vector that figures terms of the form $\nbE(Y_{n+j}| \G_n )$, $\nbE( Y_{n+j-(k_0-1)} Y_{n+j}| \G_n )$, $\nbE( Y_{n+j-(k_0-2)} Y_{n+j}| \G_n )$, $j\ge 2$, that thus depend on $Y_k$, $k\le n$ and $\xi_{k,.}$, $k\ge n+1$.
\end{itemize}
Let us now suppose that
\begin{equation}\label{assumption_to0}
\limsup_{m\to \infty }\|  - C_{-1}^{(1)} + C_{-(k_0-2)}^{(2)(m)} Y_{n-(k_0-2)}  + C_{-(k_0-1)}^{(3)(m)} Y_{n-(k_0-1)}\| > 0 
\end{equation}
on a non negligible set. Then, from \eqref{equality_to02}, the $3\times 1$ vectors $D_m:= - C_{-1}^{(1)} + C_{-(k_0-2)}^{(2)(m)} Y_{n-(k_0-2)}  + C_{-(k_0-1)}^{(3)(m)} Y_{n-(k_0-1)}$, ${\cal E}_m$ and $X^{(m)}$ are such that there exists $i_0\in \{1,2,3\}$ and $\limsup_{m\to \infty} |D_{m,i_0}|>0$, where $D_{m,i_0}$ is the $i_0$th entries . With the same notation for ${\cal E}_{m,i_0}$ and $X^{(m)}_{i_0}$, we obtain by taking the limsup in \eqref{equality_to02} (observing that $\sum_{r=1}^{Y_n} \xi_{n+1,r} + \nbE(\epsilon_{n+1} | \G_n  )$ is non negative) that
\begin{equation}\label{assumption_to1}
\left[ \limsup_{m\to \infty} |D_{m,i_0} |\right] .\left[\sum_{r=1}^{Y_n} \xi_{n+1,r} + \nbE(\epsilon_{n+1} | \G_n  ) \right] = \limsup_{m\to \infty} \left|{\cal E}_{m,i_0}- X^{(m)}_{i_0} \right| .
\end{equation}
Up to a subsequence, we have from \eqref{equality_to02} that ${\cal E}_m\longrightarrow 0$ a.s. as $m\to \infty$; Thus we have that ${\cal E}_{m,i_0}\longrightarrow 0$.
So that,  on a non negligible set, \eqref{assumption_to1} yields
\begin{equation}\label{assumption_to2}
\sum_{r=1}^{Y_n} \xi_{n+1,r} + \nbE(\epsilon_{n+1} | \G_n  ) = \frac{\limsup_{m\to \infty } \left|X^{(m)}_{i_0} \right|}{\limsup_{m\to \infty } \left|D_{m,i_0}\right|}.
\end{equation}
Let us now define the operator ${\cal F}(\xi_{j,.},\epsilon_j): y\in \nbN\mapsto \sum_{r=1}^y \xi_{j,r} + \epsilon_j=\theta_j \circ y+ \epsilon_j$ for all $j\in \nbZ$. We may thus write that, for all $k\ge 2$,
\begin{equation}\label{Y_with_operator}
Y_{n+k}= {\cal F}(\xi_{n+k,.},\epsilon_{n+k})\circ \ldots \circ {\cal F}(\xi_{n+1,.},\epsilon_{n+1}) (Y_n),
\end{equation}
so that, obviously, $Y_{n+k}$, $Y_{n+k-(k_0-2)}$, $Y_{n+k-(k_0-1)}$, and thus also their conditional expectation given $\G_n$, depend on the sequences $\xi_{n+1,.}$, $\xi_{n+2,.}$,..., $\xi_{n+k,.}$ for all $k\ge 2$. More importantly  a crucial property is that they depend $\xi_{n+1,.}$ if and only if they depend on say $\xi_{n+2,.}$, in view of the expression of the operators ${\cal F}(\xi_{j,.},\epsilon_j)$ and the expression \eqref{Y_with_operator}. Thus, the righthandside of \eqref{assumption_to2} in particular depends on $\xi_{n+1,.}$ if and only if it depends on $\xi_{n+2,.}$, which is a contradiction with the expression on the lefthandside which only depends on $\xi_{n+1,.}$ (in addition to the $Y_k$, $k\le n$, which are independent from the the $\xi_{k,.}$, $k\ge n+1$). Thus, \eqref{assumption_to0} is violated and we have
\begin{equation}\label{assumption_to3}
\lim_{m\to \infty }  - C_{-1}^{(1)} + C_{-(k_0-2)}^{(2)(m)} Y_{n-(k_0-2)}  + C_{-(k_0-1)}^{(3)(m)} Y_{n-(k_0-1)}=0 \quad \mbox{a.s.}
\end{equation}
Let us now notice that the sequences $(C_{-(k_0-2)}^{(2)(m)})_{m\in \nbN}$ and $(C_{-(k_0-1)}^{(3)(m)})_{m\in \nbN}$ are then necessarily bounded. Indeed, if say $(C_{-(k_0-2)}^{(2)(m)})_{m\in \nbN}$ was not bounded, then we may suppose that $\|C_{-(k_0-2)}^{(2)(m)} \|\longrightarrow \infty$ up to a subsequence. Then \eqref{assumption_to3} implies that $\|C_{-(k_0-1)}^{(3)(m)} \|\longrightarrow \infty$, and, dividing on both sides of \eqref{assumption_to3} by $\|C_{-(k_0-1)}^{(3)(m)}\|$ and letting $m\to \infty$ yields that the (deterministic) limit $L:= \lim_{m\to \infty} C_{-(k_0-2)}^{(2)(m)}/ \|C_{-(k_0-1)}^{(3)(m)}\|$ exists and that $ Y_{n-(k_0-1)}=L  Y_{n-(k_0-2)} $, which is not possible by an independence argument. Hence the boundedness of the two sequences, from which one gets that $(C_{-(k_0-2)}^{(2)(m)})_{m\in \nbN}$ and $(C_{-(k_0-1)}^{(3)(m)})_{m\in \nbN}$ converge (up to a subsequence) towards some limits $L_2$ and $L_3$, which in turn yields from \eqref{assumption_to3} that $ - C_{-1}^{(1)} + L_2 Y_{n-(k_0-2)}  + L_3 Y_{n-(k_0-1)}=0$ a.s. An indenpendence argument again yields that $L_2$, $L_3$ and, more importantly, that $C_{-1}^{(1)}$ are equal to zero. Similar arguments yield that $C_{-1}^{(2)}=C_{-1}^{(3)}=0$ i.e. $\phi_{-1}=0$. Similarly, one proves that $\phi_{-j}=0$ for all $j\ge 2$.
\\

We now prove that $\|\Phi_k\| =\mathrm{o}(1/k^2)$ as $k\to\infty$. We define, as in \eqref{def_spectral_density}, the function
$$g_\Y(z):=\frac{1}{2\pi}\sum_{h=-\infty}^\infty C(h)z^{h},\quad C(h):=\mbox{Cov}(\Y_0,\Y_h),$$
which is convergent when $\lambda_0^{1/2}<|z|<1/\lambda_0^{1/2}$ thanks to arguments similar to the ones leading to the convergence of \eqref{convergence_series_psi}. As in \eqref{relation_spectral_psi}, one has the relation
\begin{equation}\label{relation_gY_psi}
g_\Y(z)= \Psi(1/z) \Sigma_\varepsilon \Psi(z) ' ,\quad \lambda_0^{1/2}<|z|<1/\lambda_0^{1/2}.
\end{equation} 
Note now that the assumption that $f_{\underline{\epsilon}}$ is invertible on $[0,2\pi]$ implies that there exists $\delta>0$ small enough such that ${\cal C}_\delta:=\{z\in \nbC |\ 1-\delta \le |z| \le 1+\delta\}\subset \left\{z\in \nbC |\ \lambda_0^{1/2}<|z|<1/\lambda_0^{1/2}\right\}$ and the function
$$
g_{\underline{\epsilon}}: z\mapsto \frac{1}{2\pi} \sum_{h=-\infty}^\infty \mbox{Cov}(\underline{\epsilon}_0,\underline{\epsilon}_h)z^h
$$
is invertible on ${\cal C}_\delta$. As in \eqref{limitt_spectral_density}, one may specify the dependence of $g_\Y$ w.r.t. $\lambda_0$ and prove that $\sup_{z\in {\cal C}_\delta} \|g_\Y(\lambda_0,z) - g_{\underline{\epsilon}}(z)\|\longrightarrow 0$ as $\lambda_0 \to 0$ so that, if $\lambda_0$ is small enough, $g_\Y$ is invertible on ${\cal C}_\delta$. Thus, for such a $\lambda_0$, one may invert \eqref{relation_gY_psi} and deduce in particular that $\Phi(z)$ in \eqref{def_Phi} is a convergent series on ${\cal C}_\delta$. This in particular implies that $\sum_{k=1}^\infty \| \Phi_k \| (1+\delta)^k <\infty$, which in turn implies $\|\Phi_k\| =\mathrm{o}(1/k^2)$.
\zak

{\bf Proof of Theorem \ref{convergence_Isp}.} As announced after the statement of the theorem, the scheme of the proof is quite standard and is similar to \citet[Section 3.3.1]{BMCF12}. Therefore, the following series of Lemmas will be proved only when their proofs are significantly different from the corresponding ones in the latter paper. We start by introducing some notation. If $r\ge 1$, we let $\Phi_r(z)=I_{3}-\sum_{k=1}^r\Phi_{r,k}z^k$, where $\Phi_{r,1},\dots,\Phi_{r,r}$ denote the coefficients of the least squares regression of ${\cal Y}_n$ on ${\cal Y}_{n-1},\dots,{\cal Y}_{n-r}$ as defined in \eqref{AR_tronquee}, and we recall that $\varepsilon_{r,n}$ is the corresponding residual. We let $\Y_{r,t}:= (\Y_{t-1}',\ldots, \Y_{t-r}')'\in \nbR^{3r}$, and introduce the following (real or empirical) covariance matrices
\begin{eqnarray}
&& \begin{array}{rcl}
\Sigma_{\Y} &:= & \nbE(\Y_{0}\Y_{0}')\in \nbR^{3\times 3},\\
\Sigma_{\Y_r}&:=& \nbE(\Y_{r,0}\Y_{r,0}')\in \nbR^{3r\times 3r},\\
\Sigma_{\Y,\Y_r}&:=& \nbE(\Y_{0}\Y_{r,0}')\in \nbR^{3\times 3r},
\end{array}\label{SigmaYr}\\
&& \begin{array}{rcl}
\hat\Sigma_{\Y}&:=& \frac{1}{n}\sum_{t=1}^n\Y_{t}\Y_{t}'\in \nbR^{3\times 3},\\
\hat\Sigma_{\Y_r}&:=& \frac{1}{n}\sum_{t=1}^n\Y_{r,t}\Y_{r,t}'\in \nbR^{3r\times 3r},\\
\hat\Sigma_{\Y,\Y_r}&:=& \frac{1}{n}\sum_{t=1}^n \Y_{t}\Y_{r,t}'\in \nbR^{3\times 3r},
\end{array}\label{hatSigmaYr}\\
&& \begin{array}{rcl}
\hat\Sigma_{\hat\Y}&:=& \frac{1}{n}\sum_{t=1}^n\hat\Y_{t}\hat\Y_{t}'\in \nbR^{3\times 3},\\
\hat\Sigma_{\hat\Y_r}&:=& \frac{1}{n}\sum_{t=1}^n\hat\Y_{r,t}\hat\Y_{r,t}'\in \nbR^{3r\times 3r},\\
\hat\Sigma_{\hat\Y,\hat\Y_r}&:=& \frac{1}{n}\sum_{t=1}^n \hat\Y_{t}\hat\Y_{r,t}'\in \nbR^{3\times 3r},
\end{array}\label{hatSigmahatYr}
\end{eqnarray}
Then we have the matrix expressions for the least square projections:
\begin{equation}\label{Phi_barre}
\underline{\Phi_r} := (\Phi_{r,1},\ldots,\Phi_{r,r})'=\Sigma_{\Y,\Y_r}\Sigma_{\Y_r}^{-1},\quad \underline{\hat\Phi_r} := (\hat\Phi_{r,1},\ldots,\hat\Phi_{r,r})'=\hat\Sigma_{\hat\Y,\hat\Y_r}\hat\Sigma_{\hat\Y_r}^{-1}.
\end{equation}
We use here the matrix norm $\| A\|:= \sup_{\|x\| \ \le 1}\| Ax\|$ for all matrix $A=(a_{ij})_{i=1,...,d_1,j=1,...,d_2}\in \nbR^{d_1\times d_2}$, associated to the usual euclidean norm, and recall that it is submultiplicative and verifies
\begin{equation}\label{subm_norm}
\| A\|^2 \le \sum_{i,j} a_{ij}^2.
\end{equation}
We now state the following lemmas that lead to the proof of the theorem.
\begin{lemm}\normalfont\label{lemmaA6}
One has that $\sup_{r\ge 1} \| \Sigma_{\Y_r}\|$, $\sup_{r\ge 1} \| \Sigma_{\Y,\Y_r}\|$ and $\sup_{r\ge 1} \| \Sigma_{\Y_r}^{-1}\|$ are finite.
\end{lemm}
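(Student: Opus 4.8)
The plan is to reduce all three bounds to two properties of the matrix autocovariances $C(h)=\mbox{Cov}(\Y_0,\Y_h)=\nbE(\Y_0\Y_h')\in\nbR^{3\times 3}$ of the centered stationary process $\{\Y_n,\ n\in\nbZ\}$, namely their absolute summability and the uniform invertibility of the associated spectral density. The starting observation, already exploited in the proof of Proposition \ref{cond_psi_invertible}, is that every entry of $C(h)$ is a covariance of the type estimated in \eqref{ineg_utile}, so that under Assumption $\mathbf{(A1)}$ and Condition \eqref{stab} one has $\sum_{h=-\infty}^\infty \|C(h)\|<\infty$. In particular the series \eqref{def_spectral_density} converges uniformly, whence $\lambda\mapsto f_\Y(\lambda)$ is continuous on $[-\pi,\pi]$.

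For the first two quantities I would simply use that $\Sigma_{\Y_r}$ is block Toeplitz with $(j,k)$-block $C(j-k)$, and that $\Sigma_{\Y,\Y_r}$ is a single block row $[C(-1),\dots,C(-r)]$. Writing $x=(x_1',\dots,x_r')'$, $y=(y_1',\dots,y_r')'$ in $\nbR^{3r}$ with $x_j,y_j\in\nbR^3$, setting $h=j-k$ and applying Cauchy--Schwarz to $\sum_{j-k=h}\|y_j\|\|x_k\|\le\|x\|\,\|y\|$ gives
$$|y'\Sigma_{\Y_r}x|=\left|\sum_{j,k=1}^r y_j'C(j-k)x_k\right|\le \sum_{h}\|C(h)\|\sum_{j-k=h}\|y_j\|\,\|x_k\|\le \Big(\sum_h\|C(h)\|\Big)\|x\|\,\|y\|,$$
so that $\sup_{r\ge1}\|\Sigma_{\Y_r}\|\le\sum_h\|C(h)\|<\infty$; the same type of estimate applied to the single block row bounds $\sup_{r\ge1}\|\Sigma_{\Y,\Y_r}\|$ by $\sum_{h\ge1}\|C(h)\|$.

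The delicate point, and the main obstacle, is the uniform control of $\|\Sigma_{\Y_r}^{-1}\|$, which amounts to a lower bound on $\lambda_{\min}(\Sigma_{\Y_r})$ that does not deteriorate as $r\to\infty$; here I would pass through the spectral density. Since $\{\Y_n\}$ is centered, $x'\Sigma_{\Y_r}x=\var\big(\sum_{j=1}^r x_j'\Y_{-j}\big)$, and Fourier inversion of \eqref{def_spectral_density}, using that the $(j,k)$-block of $\Sigma_{\Y_r}$ is $C(j-k)=\int_{-\pi}^{\pi}f_\Y(\lambda)e^{i(k-j)\lambda}\,d\lambda$, yields the identity
$$x'\Sigma_{\Y_r}x=\int_{-\pi}^{\pi}P(x,\lambda)^*\,f_\Y(\lambda)\,P(x,\lambda)\,d\lambda,\qquad P(x,\lambda):=\sum_{j=1}^r x_j e^{ij\lambda}\in\nbC^3 .$$
Putting $m:=\inf_{\lambda\in[-\pi,\pi]}\lambda_{\min}\big(f_\Y(\lambda)\big)$ and invoking the orthogonality relation $\int_{-\pi}^{\pi}e^{i(k-j)\lambda}\,d\lambda=2\pi\delta_{jk}$, the integral is bounded below by $m\int_{-\pi}^{\pi}|P(x,\lambda)|^2\,d\lambda=2\pi m\|x\|^2$, giving $\lambda_{\min}(\Sigma_{\Y_r})\ge 2\pi m$ and hence $\|\Sigma_{\Y_r}^{-1}\|\le(2\pi m)^{-1}$ uniformly in $r$. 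It then remains to verify $m>0$: by \eqref{relation_spectral_psi} one has $f_\Y(\lambda)=\overline{\Psi(e^{i\lambda})}\,\Sigma_\varepsilon\,\Psi(e^{i\lambda})'$, which is Hermitian positive definite for every $\lambda$ since $\Sigma_\varepsilon$ is non-singular and $\Psi(e^{i\lambda})$ is invertible by Assumption $\mathbf{(A4)}$ (i.e. $\det\Psi(z)\neq0$ on $|z|=1$). As $\lambda\mapsto\lambda_{\min}(f_\Y(\lambda))$ is continuous and strictly positive on the compact $[-\pi,\pi]$, its infimum $m$ is positive, which concludes the proof.
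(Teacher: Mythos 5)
Your proof is correct and complete. Note that the paper itself does not prove Lemma \ref{lemmaA6}: it is one of the auxiliary lemmas whose proofs are deferred to \citet{BMCF12}, and the argument given there (and in the Berk-type literature generally) is exactly the one you use — absolute summability of the autocovariances $C(h)$ for the upper bounds on the block Toeplitz matrices, and a uniform lower bound $\lambda_{\min}(\Sigma_{\Y_r})\ge 2\pi\inf_\lambda\lambda_{\min}(f_\Y(\lambda))>0$ via the spectral representation, with positivity of the infimum coming from $\mathbf{(A4)}$ and the non-singularity of $\Sigma_\varepsilon$ through \eqref{relation_spectral_psi}, both of which are hypotheses of Theorem \ref{convergence_Isp} and hence legitimately available here. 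All intermediate ingredients you invoke ($\sum_h\|C(h)\|<\infty$ from the bounds of type \eqref{ineg_utile}, the Fourier inversion identity, the orthogonality relation) are either established in the paper or standard, so there is no gap.
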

\begin{proof}
We follow the proof of \citet[Lemma 1]{BMCF12}. We first observe that for all column vector $x\in \nbR^{3r}$ we have the inequalities $\left\| \Sigma_{\Y_{r+1}}x \right\|\le  \left\| \Sigma_{\Y_{r+1}} (x', 0_3') \right\|$ and $\left\| \Sigma_{\Y,\Y_r} (0_3',x') \right\|$, where $0_3:=(0,0,0)$, so that $(\| \Sigma_{\Y_r}\|)_{r\ge 1}$ is an increasing sequence and $\| \Sigma_{\Y,\Y_r}\|\le \| \Sigma_{\Y_{r+1}}\|$ for all $r\ge 1$. Thus it suffices to prove that $\sup_{r\ge 1} \| \Sigma_{\Y_r}\|<\infty$ in order for $\sup_{r\ge 1} \| \Sigma_{\Y,\Y_r}\|$ to be finite. \\
For this, we recall that the spectral density $ f_\Y$ associated to the process $ \{\Y_n,\ n\in \nbZ \}$ defined by \eqref{def_spectral_density}
is absolutely convergent for all $x\in \nbR$, thus verifies 
\begin{equation}\label{proof_lemmaA3_0}
\|f_\Y(x)\|\le K_{max},\quad x\in \nbR,
\end{equation}
for some constant $K_{max}>0$. Also, $\Sigma_{\Y_r}$ is symmetric matrix with real entries, so that the matrix norm entails that $\| \Sigma_{\Y_r}\|$ is its  largest (real) eigenvalue, that we denote by $\rho_{max}(r)$. Hence, there exists an eigenvector $\delta^{(r)}=(\delta^{(r)'}_1,...,\delta^{(r)'}_r)'\in \nbR^{3r}$, where $\delta_i\in \nbR^3$, such that
\begin{equation}\label{proof_lemmaA3_1}
\| \Sigma_{\Y_r}\|= \rho_{max}(r)=\delta^{(r)'} \Sigma_{\Y_r}\delta^{(r)}\quad\mbox{and}\quad \|\delta^{(r)}\|=1.
\end{equation}
Besides, one obtains from \eqref{def_spectral_density} and the inversion formula that $C(h)=\int_{-\pi}^\pi f_{\Y}(x) e^{-ihx}dx$ for all $h\in\nbZ$, so that we obtain from \eqref{proof_lemmaA3_1} that
\begin{multline}
\label{proof_lemmaA3_2}
\| \Sigma_{\!Y_r}\|=\sum_{j,s=1}^r \delta^{(r)'}_j\left[ \int_{-\pi}^\pi f_\Y(x) e^{i(s-j)x}dx\right]   \delta^{(r)}_s\\
= \int_{-\pi}^\pi \left( \sum_{s=1}^r  \delta^{(r)}_s e^{i(s-1)x} \right)'  f_\Y(x) \overline{\left( \sum_{s=1}^r  \delta^{(r)}_s e^{i(s-1)x} \right)} dx.
\end{multline}
Since $f_\Y(x)$ is an hermitian matrix for all $x\in [0,2\pi]$, one has $X' f_\Y(x) \bar{X}\le \|f_\Y(x)\| X'\bar{X}$ for all column vector $X\in \nbC^{3r}$. Coupled with \eqref{proof_lemmaA3_0}, \eqref{proof_lemmaA3_2} as well as some easy computation, this yields the bound
$$
\| \Sigma_{\Y_r}\|\le K_{max} \int_{-\pi}^\pi \left( \sum_{s=1}^r  \delta^{(r)}_s e^{i(s-1)x} \right)' \overline{\left( \sum_{s=1}^r  \delta^{(r)}_s e^{i(s-1)x} \right)} dx= K_{max} \|\delta^{(r)}\|^2=K_{max},
$$
a constant which is independent from $r\ge 1$, which proves $\sup_{r\ge 1} \| \Sigma_{\Y_r}\|<\infty$.\\
We now turn to $\sup_{r\ge 1} \| \Sigma_{\Y_r}^{-1}\|$. We start from the observation that $\| \Sigma_{\Y_r}^{-1}\|$ is the inverse of the smallest eigenvalue $\rho_{min}(r)$ (which is positive) of $\Sigma_{\Y_r}$. As for $\rho_{max}(r)$, there thus exists an eigenvector $\iota^{(r)}=(\iota^{(r)'}_1,...,\iota^{(r)'}_r)'\in \nbR^{3r}$, where $\iota_i\in \nbR^3$, such that
\begin{equation}\label{proof_lemmaA3_3}
\| \Sigma_{\Y_r}^{-1}\|^{-1}= \rho_{min}(r)=\iota^{(r)'} \Sigma_{\Y_r}\iota^{(r)}\quad\mbox{and}\quad \|\iota^{(r)}\|=1.
\end{equation}
Now, $\mathbf{ (A4)}$ and \eqref{relation_spectral_psi} imply that $f_\Y(x)$ is invertible for all $x\in [-\pi,\pi]$, so that $X' f_\Y(x) \bar{X} >0$ for all $X\in S(\nbC^{3r}):=\{ V\in \nbC^{3r}|\ \|V\|^2=V' \bar{V}=1\}$. Since $[-\pi,\pi]\times S(\nbC^{3r})$ is a compact set of $\nbR\times \nbC^{3r}$, we deduce that there exists some constant $K_{min}>0$ such that
$$
X' f_\Y(x) \bar{X} \ge K_{min},\quad \forall (x,X)\in [-\pi,\pi]\times S(\nbC^{3r}),
$$
from which an analysis similar to \eqref{proof_lemmaA3_2} yields from \eqref{proof_lemmaA3_3} that
\begin{multline*}
\| \Sigma_{\Y_r}^{-1}\|^{-1}=\int_{-\pi}^\pi \left( \sum_{s=1}^r  \iota^{(r)}_s e^{i(s-1)x} \right)'  f_\Y(x) \overline{\left( \sum_{s=1}^r  \iota^{(r)}_s e^{i(s-1)x} \right)} dx\\
\ge K_{min} \int_{-\pi}^\pi \left( \sum_{s=1}^r  \iota^{(r)}_s e^{i(s-1)x} \right)' \overline{\left( \sum_{s=1}^r  \iota^{(r)}_s e^{i(s-1)x} \right)} dx= K_{min} \|\iota^{(r)}\|^2=K_{min},
\end{multline*}
so that $\| \Sigma_{\Y_r}^{-1}\|\le K_{min}^{-1}$, a constant independent from $r\ge 1$, proving that $\sup_{r\ge 1} \| \Sigma_{\Y_r}^{-1}\|<\infty$.
\end{proof}
\begin{lemm}\normalfont\label{lemmaA8}
For all $s\in \nbN$, the series $\sum_{h=-\infty}^\infty \|\mbox{Cov} (\Y_0 \Y_s',\Y_h \Y_{h+s}')\|$ is convergent.
\end{lemm}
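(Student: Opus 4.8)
The statement concerns a matrix covariance, which I read entrywise, so the plan is first to reduce it to finitely many scalar series. Writing $\Y_n=(A_n,B_n,C_n)'$ with $A_n=Y_n-C_1$, $B_n=Y_nY_{n+k_0-1}-u_{k_0-2}$ and $C_n=Y_nY_{n+k_0}-u_{k_0-1}$, each entry of $\Y_0\Y_s'$ is a centered monomial $(\Y_0)_i(\Y_s)_{i'}$ which is a polynomial of degree at most $4$ in the variables $Y_t$, $t\le s+k_0$, while each entry of $\Y_h\Y_{h+s}'$ is a degree-$\le 4$ polynomial in the $Y_t$, $t\in[h,h+s+k_0]$. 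By the entrywise bound \eqref{subm_norm} it then suffices to show that $\sum_{h\in\nbZ}\left|\mbox{Cov}\big((\Y_0)_i(\Y_s)_{i'},(\Y_h)_j(\Y_{h+s})_{j'}\big)\right|<\infty$ for every choice of indices $i,i',j,j'$. By Proposition \ref{prop_stationarity} and Assumption $\mathbf{(A2)}$ with $\beta=4+2\kappa$ one has $Y_0\in\mathbb{L}^{2\beta}=\mathbb{L}^{8+4\kappa}$, so each such degree-$\le 4$ monomial lies in $\mathbb{L}^{2+\kappa}$, with $2+\kappa>2$; this margin is what makes the covariance bounds below legitimate.

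Next I would split the bilateral sum. Fixing $s$, the finitely many terms with $|h|\le 2(s+k_0+1)$ are each finite by the generalized Hölder inequality and the finiteness of the $2\beta$-moments of $Y$, so only the tails $h\to+\infty$ (and, by stationarity, symmetrically $h\to-\infty$) matter. For the tail I set $k:=s+k_0$ and treat $\varphi_k^-:=(\Y_0)_i(\Y_s)_{i'}$ as a centered, past-measurable (i.e. $\sigma(\epsilon_u,u\le k)\otimes\sigma(\xi_{u,\cdot})$-measurable) variable, exactly as in Step~1 of the proof of Theorem \ref{theo_CLT_vector_k_0}.

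The core of the argument is to bound $\mbox{Cov}(\varphi_k^-,(\Y_h)_j(\Y_{h+s})_{j'})$ for large $h$ by mimicking the derivation of \eqref{ineg_utile}. Expanding $(\Y_h)_j(\Y_{h+s})_{j'}$ into monomials $Y_{a_1}\cdots Y_{a_\ell}$ ($\ell\le 4$, all $a_p\ge h$), I would iterate the one-step identity $Y_m=\theta_m\circ Y_{m-1}+\epsilon_m$ coming from \eqref{model} on the largest index, using $\nbE(\theta_m\circ y\mid Y_{m-1}=y)=\lambda_0 y$ (and the analogous second moment when two factors share an index), just as in Lemma \ref{lemma_cov}. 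Each step lowers an index at the cost of a factor $\lambda_0$ and produces an immigration remainder, so after reducing all indices down to the boundary $k$ the covariance becomes a $\lambda_0$-weighted sum of terms $\mbox{Cov}(\varphi_k^-,\epsilon_{b_1}\cdots\epsilon_{b_q})$ whose immigration indices $b_p$ lie in the gap $(k,h+s+k_0]$, plus boundary terms carrying a factor $\lambda_0^{\,h-k}$ and involving only $Y_k$-type quantities. Applying Davydov's inequality to each $\mbox{Cov}(\varphi_k^-,\epsilon_{b_1}\cdots\epsilon_{b_q})$, legitimate since both factors lie in $\mathbb{L}^{2+\kappa}$ and $2/(2+\kappa)<1$, and splitting each geometric sum at its midpoint as in Lemma \ref{lemma_cov2}, I expect to reach a bound of the form
$$\left|\mbox{Cov}\big(\varphi_k^-,(\Y_h)_j(\Y_{h+s})_{j'}\big)\right|\le K\Big[\alpha_\epsilon\big(\lfloor(h-k-1)/2\rfloor\big)^{\delta}+\lambda_0^{\lfloor(h-k-1)/2\rfloor}\Big]$$
for $h$ large, for some constant $K=K(s)$ and some $\delta>0$ (one may take $\delta=\kappa/(2+\kappa)$). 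Finally, since $\lambda_0<1$ and Assumption $\mathbf{(A1)}$ forces $\alpha_\epsilon(h)=\mathrm{o}(\lambda_0^{\,h/(1-2/\beta)})$, both terms decay geometrically in $h$, so the tail sum converges; combined with the finitely many central terms this yields the claimed convergence.

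The main obstacle is that every $Y_n$ depends on the entire immigration past, so $\Y_h\Y_{h+s}'$ is not measurable with respect to a ``future'' $\sigma$-algebra and a naive application of mixing fails; the peeling/decomposition mechanism of Lemma \ref{lemma_cov} is essential precisely to convert this long memory into genuinely gapped immigration covariances plus geometrically small boundary terms. A secondary, purely bookkeeping difficulty is that the relevant products have degree up to $4$ rather than $2$, so the integrability is tighter than in Theorem \ref{theo_CLT_vector_k_0}; this is exactly absorbed by the stronger moment assumption $\beta=4+2\kappa$ (hence $Y_0\in\mathbb{L}^{8+4\kappa}$), which keeps all factors in $\mathbb{L}^{2+\kappa}$ with a strictly positive Davydov exponent.
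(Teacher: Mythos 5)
Your proposal is correct and follows essentially the same route as the paper's proof: reduce to entrywise covariances of degree-$\le 4$ monomials in the $Y_t$, peel the future block down to the gap via the recursion of Lemma \ref{lemma_cov}, and control the resulting immigration covariances with Davydov's inequality and the midpoint splitting of Lemma \ref{lemma_cov2}, the degree-$4$ products being exactly what the strengthened moment assumption is there to absorb. Your bookkeeping of the Davydov exponent ($\delta=\kappa/(2+\kappa)$ rather than $1-2/\beta$) is if anything slightly more careful than the paper's, and your observation that $\mathbf{(A1)}$ forces geometric decay of $\alpha_\epsilon$ closes the summability step correctly.
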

\begin{proof}
Let us put $w_h(s):=\mbox{Cov} (Y_0 Y_{k_0}Y_s Y_{s+k_0}, Y_h Y_{h+k_0}Y_{s+h} Y_{h+s+k_0})$, which is one of the entries of $\mbox{Cov} (\Y_0 \Y_s',\Y_h \Y_{h+s}')$. We proceed by establishing estimates similar to the ones in Lemmas \ref{lemma_cov} and \ref{lemma_cov2}. As in \eqref{decompo1}, we have
\begin{eqnarray*}
w_h(s)&=& \sum_{s'=0}^{k_0-1}\lambda_0^{s'} \left[ \sum_{t=0}^{s-k_0-1} \lambda_0^t \mbox{Cov}(  Y_0 Y_{k_0}Y_s Y_{s+k_0} ,  Y_h Y_{h+k_0} \epsilon_{s+h} \epsilon_{s+h+k_0-s'}) \right.\\
&& \left.  + \lambda_0^{s-k_0}
\mbox{Cov}(  Y_0 Y_{k_0}Y_s Y_{s+k_0} , Y_h Y_{h+k_0}^2  \epsilon_{s+h+k_0-s'} ) \right]  \\
&&       + \lambda_0^{k_0} \mbox{Cov}(  Y_0 Y_{k_0}Y_s Y_{s+k_0} ,  Y_h Y_{h+k_0}   Y_{s+h}^2  ),
\end{eqnarray*}
Similarly to the bound \eqref{bound_sum_Davydov1}, and since $\epsilon$ has moments of order $8$ thanks to Assumption $\mathbf{ (A2)_1}$, both quantities $\sum_{t=0}^{s-k_0-1}|\lambda_0^t \mbox{Cov}(  Y_0 Y_{k_0}Y_s Y_{s+k_0} ,  Y_h Y_{h+k_0} \epsilon_{s+h} \epsilon_{s+h+k_0-s'})|$ and\\ $| \lambda_0^{s-k_0}
\mbox{Cov}(  Y_0 Y_{k_0}Y_s Y_{s+k_0} , Y_h Y_{h+k_0}^2  \epsilon_{s+h+k_0-s'} ) |$ are less than $$K\left[ \alpha_\epsilon\left( \left\lfloor \frac{h-s-k_0-1}{2}\right\rfloor\right)^{1-2/\beta} + \lambda_0^{\left\lfloor \frac{h-s-k_0-1}{2}\right\rfloor}\right]$$ for $h \ge s+k_0+1$ and some constant $K>0$. Also, estimations similar to \eqref{decompo2} and \eqref{decompo3} leading to \eqref{bound_sum_Davydov1} may be used to obtain that $| \mbox{Cov}(  Y_0 Y_{k_0}Y_s Y_{s+k_0} ,  Y_h Y_{h+k_0}   Y_{s+h}^2  )|$ has the same upper bound, so that $\sum_{h=0}^\infty |w_h(s) |<\infty$. We prove similarly that $\sum_{h=-\infty}^0 |w_h(s) |$ and, more generally, that $\sum_{h=-\infty}^\infty \|\mbox{Cov} (\Y_0 \Y_s',\Y_h \Y_{h+s}')\|$ is convergent.
\end{proof}
\begin{lemm}\normalfont\label{lemmaA9}
The r.v. $\sqrt{r}\| \hat\Sigma_{\Y_r} - \Sigma_{\Y_r} \|$, $\sqrt{r}\| \hat\Sigma_{\Y} - \Sigma_{\Y} \|$ and $\sqrt{r}\| \hat\Sigma_{\Y,\Y_r} - \Sigma_{\Y,\Y_r} \|$ converge to $0$ in probability as $n\to \infty$ when $r=r(n)=\mathrm{o}(n^{1/3})$.
\end{lemm}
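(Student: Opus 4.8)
\emph{Strategy.} The plan is to bound each of the three differences in quadratic mean and then to conclude by the Markov inequality, using throughout the elementary estimate \eqref{subm_norm} to replace the operator norm by the square root of the sum of squared entries. The binding term is $\hat\Sigma_{\Y_r}-\Sigma_{\Y_r}$, which lives in $\nbR^{3r\times 3r}$. Writing $C(h):=\mbox{Cov}(\Y_0,\Y_h)$ as in \eqref{def_spectral_density} and recalling $\Y_{r,t}=(\Y_{t-1}',\ldots,\Y_{t-r}')'$, stationarity gives that the $(i,j)$ block of $\Sigma_{\Y_r}$ is $C(i-j)$, so the $(i,j)$ block of $\hat\Sigma_{\Y_r}-\Sigma_{\Y_r}$ is $\frac1n\sum_{t=1}^n \Y_{t-i}\Y_{t-j}'-C(i-j)$. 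Hence by \eqref{subm_norm},
\begin{equation*}
\nbE\left\|\hat\Sigma_{\Y_r}-\Sigma_{\Y_r}\right\|^2\le \sum_{i,j=1}^r\sum_{a,b=1}^3 \nbE\left[\left(\frac1n\sum_{t=1}^n \Y_{t-i}^{(a)}\Y_{t-j}^{(b)}-\nbE\big(\Y_0^{(a)}\Y_{i-j}^{(b)}\big)\right)^2\right].
\end{equation*}

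\emph{Variance of each empirical covariance.} Each summand is the variance of the empirical mean of the stationary sequence $\big(\Y_{t-i}^{(a)}\Y_{t-j}^{(b)}\big)_t$; a standard computation together with stationarity yields, with $s:=i-j$,
\begin{equation*}
\nbE\left[\left(\frac1n\sum_{t=1}^n \Y_{t-i}^{(a)}\Y_{t-j}^{(b)}-\nbE\big(\Y_0^{(a)}\Y_{s}^{(b)}\big)\right)^2\right]\le \frac1n\sum_{h=-\infty}^\infty\left|\mbox{Cov}\big(\Y_0^{(a)}\Y_{s}^{(b)},\Y_h^{(a)}\Y_{h+s}^{(b)}\big)\right|=:\frac{1}{n}W_{a,b}(s).
\end{equation*}
The fourth-order moments making these quantities meaningful are finite because, under the standing assumption $\beta=4+2\kappa$, Proposition \ref{prop_stationarity} gives $Y_0\in\mathbb{L}^{8}$ and hence $\Y_0^{(a)}\in \mathbb{L}^4$ for $a=1,2,3$.

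\emph{Uniform summability and conclusion.} The crux is to show $\sup_s W_{a,b}(s)<\infty$, which is exactly the uniform-in-$s$ version of Lemma \ref{lemmaA8}: its proof provides, for $h\ge s+k_0+1$, a bound $K[\alpha_\epsilon(\lfloor(h-s-k_0-1)/2\rfloor)^{1-2/\beta}+\lambda_0^{\lfloor(h-s-k_0-1)/2\rfloor}]$ depending on $h$ only through the lag $h-s$, hence summable with a bound free of $s$; the remaining overlapping-index range is treated as indicated there, the exponential localisation stemming from subcriticality \eqref{stab} together with the centering of the $\Y_n^{(a)}$ confining the non-negligible lags to $\mathrm{O}(1)$-sized neighbourhoods of finitely many resonances, each contributing $\mathrm{O}(1)$ uniformly in $s$. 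Granting $W_{a,b}(s)\le W^\ast<\infty$, and since $\sum_{s=-(r-1)}^{r-1}(r-|s|)=r^2$,
\begin{equation*}
\nbE\left\|\hat\Sigma_{\Y_r}-\Sigma_{\Y_r}\right\|^2\le \frac{9}{n}\sum_{s=-(r-1)}^{r-1}(r-|s|)\,W^\ast\le \frac{C r^2}{n},
\end{equation*}
so that $\nbE\big[r\|\hat\Sigma_{\Y_r}-\Sigma_{\Y_r}\|^2\big]=\mathrm{O}(r^3/n)\to0$ whenever $r=\mathrm{o}(n^{1/3})$, and the Markov inequality gives $\sqrt r\,\|\hat\Sigma_{\Y_r}-\Sigma_{\Y_r}\|\to0$ in probability. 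The two remaining statements are strictly easier: $\hat\Sigma_{\Y}-\Sigma_{\Y}$ only involves the single lag $s=0$, whence $\nbE\|\hat\Sigma_{\Y}-\Sigma_{\Y}\|^2=\mathrm{O}(1/n)$, while $\hat\Sigma_{\Y,\Y_r}-\Sigma_{\Y,\Y_r}\in\nbR^{3\times 3r}$ sums only over $j=1,\ldots,r$, giving $\nbE\|\hat\Sigma_{\Y,\Y_r}-\Sigma_{\Y,\Y_r}\|^2=\mathrm{O}(r/n)$; both yield the claim after multiplying by $r$ and applying Markov. The main obstacle, as emphasised, is the uniform-in-$s$ summability $\sup_s W_{a,b}(s)<\infty$.
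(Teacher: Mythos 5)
Your proof is correct and follows essentially the same route as the paper's: bound $\nbE\|\hat\Sigma_{\Y_r}-\Sigma_{\Y_r}\|^2$ by the sum of the $(3r)^2$ entrywise variances, each of which is the variance of an empirical mean of a stationary sequence and is $\mathrm{O}(1/n)$ uniformly in the lag thanks to Lemma \ref{lemmaA8}, yielding $\mathrm{O}(r^3/n)\to 0$ after multiplying by $r$ and hence convergence in $\mathbb{L}^2$, thus in probability. Your explicit insistence on the uniformity in $s$ of the bound $\sup_s W_{a,b}(s)<\infty$ is a welcome precision that the paper's proof uses implicitly (it invokes a single constant $C_1$ valid for all $r_1,r_2$), but it does not alter the argument.
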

\begin{proof}
Remember from \eqref{SigmaYr} and \eqref{hatSigmaYr} that $\hat\Sigma_{\Y_r} - \Sigma_{\Y_r}$ has $(3r)^2$ coefficients. Each entry of $\hat\Sigma_{\Y_r}$ is of the form
$$
\hat Z_{r_1,r_2}^{i,j}:=\frac{1}{n}\sum_{t=1}^n \Y^i_{t-r_1}\Y^j_{t-r_2},\quad i,j=1,2,3,\ 1\le r_1,r_2 \le r ,
$$
of which variance verifies $\mbox{Var} (\hat Z_{r_1,r_2}^{i,j})=\frac{1}{n^2}\sum_{h=-n+1}^{n-1}(n-|h|)\mbox{Cov}(\Y^i_{-r_1}\Y^j_{-r_2}, \Y^i_{h-r_1}\Y^j_{h-r_2})$ which is less than $C_1/n$ for some constant $C_1$ thanks to Lemma \ref{lemmaA8}. We deduce that
\begin{multline*}
\nbE\left( \left[ \sqrt{r} \| \hat\Sigma_{\Y_r} - \Sigma_{\Y_r}\| \right]^2\right)=r\; \nbE\left( \| \hat\Sigma_{\Y_r} - \Sigma_{\Y_r}\|^2\right) \le r \sum_{i,j=1}^3 \sum_{r_1,r_2=1}^r \mbox{Var} (\hat Z_{r_1,r_2}^{i,j})\\
\le \frac{3r^3}{n}C_1 \longrightarrow 0,\quad n\to \infty, \ r=r(n)=\mathrm{o}(n^{1/3}),
\end{multline*}
which proves that $\sqrt{r} \| \hat\Sigma_{\Y_r} - \Sigma_{\Y_r}\|$ converges in $\mathbb{L}^2$, hence in probability, to $0$. We prove the same convergence for $\sqrt{r}\| \hat\Sigma_{\Y} - \Sigma_{\Y} \|$ and $\sqrt{r}\| \hat\Sigma_{\Y,\Y_r} - \Sigma_{\Y,\Y_r} \|$ in an analogous manner.
\end{proof}
\begin{lemm}\normalfont\label{lemmaA10}
The r.v. $\sqrt{r}\| \hat\Sigma_{\hat\Y_r} - \Sigma_{\Y_r} \|$, $\sqrt{r}\| \hat\Sigma_{\hat\Y} - \Sigma_{\Y} \|$ and $\sqrt{r}\| \hat\Sigma_{\hat\Y,\hat\Y_r} - \Sigma_{\Y,\Y_r} \|$ converge to $0$ in probability as $n\to \infty$ when $r=r(n)=\mathrm{o}(n^{1/3})$.
\end{lemm}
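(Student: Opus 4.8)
The plan is to reduce Lemma \ref{lemmaA10} to the already-established Lemma \ref{lemmaA9} by exploiting the fact that $\hat{\Y}_t$ and $\Y_t$ differ by a single vector that does not depend on $t$. Comparing the definition \eqref{Ht_chapeau} of $\hat{\Y}_t$ with that of $\Y_t=\Y_t(k_0)$ in \eqref{def_estimators}, one sees that $\hat{\Y}_t-\Y_t=\Delta$ for every $t$, where
\begin{equation*}
\Delta:=\left(C_1-\bar{Y}_n,\ u_{k_0-2}-\bar{Y}_{k_0-1,n},\ u_{k_0-1}-\bar{Y}_{k_0,n}\right)'=-\frac{1}{n}\sum_{t=1}^n\Y_t=-\frac{1}{n}T_n(k_0)\in\nbR^3 .
\end{equation*}
Theorem \ref{theo_CLT_vector_k_0} then gives $\sqrt{n}\,\|\Delta\|=\|n^{-1/2}T_n(k_0)\|=\mathrm{O}_{\mathbb{P}}(1)$, i.e. $\|\Delta\|=\mathrm{O}_{\mathbb{P}}(n^{-1/2})$. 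Stacking $r$ copies, $\hat{\Y}_{r,t}-\Y_{r,t}=\Delta_r:=(\Delta',\dots,\Delta')'\in\nbR^{3r}$ for all $t$, so that $\|\Delta_r\|=\sqrt{r}\,\|\Delta\|=\mathrm{O}_{\mathbb{P}}(\sqrt{r/n})$.

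First I would expand each hatted empirical covariance about its non-hatted version. Inserting $\hat{\Y}_t=\Y_t+\Delta$ and $\hat{\Y}_{r,t}=\Y_{r,t}+\Delta_r$ and expanding yields identities such as
\begin{equation*}
\hat\Sigma_{\hat\Y}-\hat\Sigma_\Y=\Big(\tfrac1n\textstyle\sum_{t=1}^n\Y_t\Big)\Delta'+\Delta\Big(\tfrac1n\textstyle\sum_{t=1}^n\Y_t\Big)'+\Delta\Delta',
\end{equation*}
together with the analogous expansions of $\hat\Sigma_{\hat\Y_r}-\hat\Sigma_{\Y_r}$ (replacing $\Y_t,\Delta$ by $\Y_{r,t},\Delta_r$) and of $\hat\Sigma_{\hat\Y,\hat\Y_r}-\hat\Sigma_{\Y,\Y_r}$ (mixing the two). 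Since $\frac1n\sum_t\Y_t=-\Delta$ has norm $\|\Delta\|$, the triangle inequality and the submultiplicativity \eqref{subm_norm} of the norm reduce the lemma to showing that $\sqrt r$ times each correction term tends to $0$ in probability, as Lemma \ref{lemmaA9} already controls $\sqrt r\|\hat\Sigma_{\Y_r}-\Sigma_{\Y_r}\|$, $\sqrt r\|\hat\Sigma_{\Y}-\Sigma_{\Y}\|$ and $\sqrt r\|\hat\Sigma_{\Y,\Y_r}-\Sigma_{\Y,\Y_r}\|$.

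The one auxiliary estimate I would need is $\big\|\frac1n\sum_{t=1}^n\Y_{r,t}\big\|=\mathrm{O}_{\mathbb{P}}(\sqrt{r/n})$. This holds because each block $\frac1n\sum_t\Y_{t-k}$ is the sample mean of the centered stationary sequence $\{\Y_n\}$ whose autocovariances are summable (Step 1 of the proof of Theorem \ref{theo_CLT_vector_k_0}), giving $\nbE\big\|\frac1n\sum_t\Y_{t-k}\big\|^2\le C/n$ uniformly in $k$; summing over the $r$ blocks and applying Markov's inequality yields the claim. Feeding $\|\Delta\|=\mathrm{O}_{\mathbb{P}}(n^{-1/2})$, $\|\Delta_r\|=\mathrm{O}_{\mathbb{P}}(\sqrt{r/n})$ and this estimate into the three expansions shows that every correction term is $\mathrm{O}_{\mathbb{P}}(r/n)$ or smaller; after multiplying by $\sqrt r$ the largest contribution is the quadratic term $\sqrt r\,\|\Delta_r\|^2=\mathrm{O}_{\mathbb{P}}(r^{3/2}/n)$.

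The main (and essentially only) obstacle is the bookkeeping of the powers of $r$: one must check that $r=r(n)=\mathrm{o}(n^{1/3})$ is exactly what makes the dominant term $r^{3/2}/n$ vanish, since $r^{3/2}=\mathrm{o}(n^{1/2})$ forces $r^{3/2}/n=\mathrm{o}(n^{-1/2})\to 0$; all remaining terms are of order $r/n$ or $\sqrt r/n$ and vanish a fortiori. Everything else is routine manipulation with the matrix norm \eqref{subm_norm} and the $\mathrm{O}_{\mathbb{P}}$ rates recorded above.
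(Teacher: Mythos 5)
Your reduction is genuinely different from the paper's argument, and it is correct \emph{provided} every $\hat\Y_t$, $t=1,\dots,n$, is centered by the same full-sample averages $\bar Y_n$, $\bar Y_{k_0-1,n}$, $\bar Y_{k_0,n}$. Under that reading $\hat\Y_t-\Y_t$ is indeed the single vector $\Delta=-n^{-1}T_n(k_0)$, Theorem \ref{theo_CLT_vector_k_0} gives $\|\Delta\|=\mathrm{O}_{\mathbb{P}}(n^{-1/2})$, the correction to the non-stacked matrix even collapses exactly to $-\Delta\Delta'$, and your bookkeeping correctly identifies $\sqrt r\,\|\Delta_r\|^2=\mathrm{O}_{\mathbb{P}}(r^{3/2}/n)$ as the dominant term, which is precisely where $r=\mathrm{o}(n^{1/3})$ enters. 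Your auxiliary estimate $\big\|\frac1n\sum_{t}\Y_{r,t}\big\|=\mathrm{O}_{\mathbb{P}}(\sqrt{r/n})$ is also justified by the summability of the autocovariances established in Step 1 of the proof of Theorem \ref{theo_CLT_vector_k_0}.

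The caveat is that the paper's own proof does \emph{not} use this centering. In its displayed computation it manipulates terms such as $Y_{t-r_1}Y_{t+k_0-r_1}-\bar Y_{k_0,t-r_1}$, i.e.\ $\hat\Y_{t-r_1}$ is centered by the \emph{running} averages up to time $t-r_1$, so the difference $\hat\Y_t-\Y_t$ varies with $t$ and your constant-shift identity fails at the first line. The paper therefore cannot reduce everything to a rank-one correction; it instead establishes the uniform-in-$t$ bound $\sqrt t\,\|\bar Y_{k_0,t-r_2}-u_{k_0-1}\|_2\le K(1+\sqrt r)$ and sums $\frac1n\sum_{t=1}^n t^{-1/2}=\mathrm{O}(n^{-1/2})$ term by term in $\mathbb{L}^1$, arriving at the same $\mathrm{o}(n^{1/3})$ threshold. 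If the intended definition of $\hat\Y_t$ in \eqref{Ht_chapeau} is the full-sample one (the more standard estimator), your proof is a shorter and cleaner alternative; if it is the running-average one actually used in the paper's proof, your opening identity is false and the argument must be replaced by a $t$-by-$t$ estimate of the paper's type. In either case you should state explicitly which definition you are adopting before writing $\hat\Y_t-\Y_t=\Delta$.
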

\begin{proof}
We are going to prove that $\sqrt{r}\| \hat\Sigma_{\hat\Y_r} - \hat \Sigma_{\Y_r} \|\stackrel{\nbP}{\longrightarrow} 0$ as $n\to \infty$ when $r=r(n)=\mathrm{o}(n^{1/3})$ so as to conclude the convergence of $\sqrt{r}\| \hat\Sigma_{\hat\Y_r} - \Sigma_{\Y_r} \|$ thanks to Lemma \ref{lemmaA9} and the triangular inequality. We note that the entries of $\hat\Sigma_{\hat\Y_r} - \hat \Sigma_{\Y_r}$ are of the form
$$
\frac{1}{n}\sum_{t=1}^n \left[  \Y^i_{t-r_1} \Y^j_{t-r_2}  - \hat \Y^i_{t-r_1} \hat\Y^j_{t-r_2}\right],\quad i,j=1,2,3,\ 1\le r_1,r_2 \le r ,
$$
of which most complex term is the following:
\begin{eqnarray}
&&\frac{1}{n}\sum_{t=1}^n \left\{ (Y_{t-r_1}Y_{t+k_0-r_1} - u_{k_0-1})(Y_{t-r_2}Y_{t+k_0-r_2} - u_{k_0-1}) \right. \nonumber\\
&& \left. - (Y_{t-r_1}Y_{t+k_0-r_1} -  \bar Y_{k_0,t-r_1}  )(Y_{t-r_2}Y_{t+k_0-r_2} - \bar Y_{k_0,t-r_2}) \right\}\nonumber\\
&=& \frac{1}{n}\sum_{t=1}^n \left\{ u_{k_0-1}^2 - \bar Y_{k_0,t-r_1} \bar Y_{k_0,t-r_2}+ Y_{t-r_1}Y_{t+k_0-r_1}(\bar Y_{k_0,t-r_2} -  u_{k_0-1}  ) \right. \nonumber\\
&& \left. + Y_{t-r_2}Y_{t+k_0-r_2}(\bar Y_{k_0,t-r_1} -  u_{k_0-1}  )\right\}.\label{term_lemmaA10}
\end{eqnarray}
We study each terms on the righthandside of the above equality and start with
\begin{equation}\label{1term_lemmaA10}
\frac{1}{n}\sum_{t=1}^n Y_{t-r_1}Y_{t+k_0-r_1}(\bar Y_{k_0,t-r_2} -  u_{k_0-1}  ).
\end{equation}
We first observe that a consequence of one of the steps in the proof of Theorem \ref{CLT_vector_k_0} (see convergence \eqref{convergence_Herrndorf} in Step 1 of the proof of this theorem) is $\sqrt{n} \| \bar Y_{k_0,n} -  u_{k_0-1} \|_2$ is upper bounded by some constant $K$ for all $n\ge 1$, of which consequence is obviously that $\| \bar Y_{k_0,n} -  u_{k_0-1} \|_2$ is obviously upper bounded in $n\in \nbN$. Hence, using the classical inequality $\sqrt{a+b}\le \sqrt{a} + \sqrt{b}$ for all $a,b \ge 0$, we have, for all $r_2 \le r$,%
\begin{eqnarray*}
\sqrt{t}  \| \bar Y_{k_0,t-r_2} -  u_{k_0-1} \|_2 &\le & K\sqrt{r}, \quad \forall t=1,\ldots r_2, \\
\sqrt{t}  \| \bar Y_{k_0,t-r_2} -  u_{k_0-1} \|_2 & \le & \sqrt{t-r_2}  \| \bar Y_{k_0,t-r_2} -  u_{k_0-1} \|_2 + \sqrt{r_2}  \| \bar Y_{k_0,t-r_2} -  u_{k_0-1} \|_2 \\
&\le & K(1+\sqrt{r}),\quad \forall t \ge r_2,
\end{eqnarray*}
from which one has the global bound $\sqrt{t}  \| \bar Y_{k_0,t-r_2} -  u_{k_0-1} \|_2 \le K(1+\sqrt{r})$ for some constant $K$ and for all $t\ge 0$. Coupled with the fact that $\| Y_n\|_4$ is uniformly bounded by some constant $C$ as well as Cauchy Scwharz inequality, we thus deduce that \eqref{1term_lemmaA10} is bounded as follows
$$
\left|\left| \frac{1}{n}\sum_{t=1}^n Y_{t-r_1}Y_{t+k_0-r_1}(\bar Y_{k_0,t-r_2} -  u_{k_0-1}  ) \right|\right|_1 \le \frac{1}{n}\sum_{t=1}^n C^2 \frac{K(1+\sqrt{r})}{\sqrt{t}}\le K' \frac{1+\sqrt{r}}{ \sqrt{n}}
$$
for some contant $K'$, where we used that $\sum_{t=1}^n \frac{1}{\sqrt{t}}$ is equivalent to $\sqrt{n}$ up to a factor. We thus deduce that
\begin{equation}\label{1term_lemmaA10_ineg}
\sqrt{r}\; \left|\left| \frac{1}{n}\sum_{t=1}^n Y_{t-r_1}Y_{t+k_0-r_1}(\bar Y_{k_0,t-r_2} -  u_{k_0-1}  ) \right|\right|_1 \longrightarrow 0,\quad n\to\infty,\ r=r(n)=\mathrm{o}(n^{1/3}).
\end{equation}
A similar approach yields the following limit that concerns the second term in \eqref{term_lemmaA10}:
\begin{equation}\label{2term_lemmaA10_ineg}
\sqrt{r}\; \left|\left| \frac{1}{n}\sum_{t=1}^n Y_{t-r_2}Y_{t+k_0-r_2}(\bar Y_{k_0,t-r_1} -  u_{k_0-1}  ) \right|\right|_1 \longrightarrow 0,\quad n\to\infty,\ r=r(n)=\mathrm{o}(n^{1/3}).
\end{equation}
We now turn to the last first term in \eqref{term_lemmaA10} namely ${n}^{-1}\sum_{t=1}^n (u_{k_0-1}^2 - \bar Y_{k_0,t-r_1} \bar Y_{k_0,t-r_2})$. For this, we write $u_{k_0-1}^2 - \bar Y_{k_0,t-r_1} \bar Y_{k_0,t-r_2}=u_{k_0-1} (u_{k_0-1}- \bar Y_{k_0,t-r_1}) + \bar Y_{k_0,t-r_1} (u_{k_0-1}- \bar Y_{k_0,t-r_1})$ and conclude in a similar manner that
\begin{equation}\label{3term_lemmaA10_ineg}
\sqrt{r}\; \left|\left| \frac{1}{n}\sum_{t=1}^n (u_{k_0-1}^2 - \bar Y_{k_0,t-r_1} \bar Y_{k_0,t-r_2})\right|\right|_1 \longrightarrow 0,\quad n\to\infty,\ r=r(n)=\mathrm{o}(n^{1/3}).
\end{equation}
Gathering \eqref{1term_lemmaA10_ineg}, \eqref{2term_lemmaA10_ineg} and \eqref{3term_lemmaA10_ineg} we deduce that \eqref{term_lemmaA10} multiplied by $\sqrt{r}$ tends to $0$ in $\mathbb{L}^1$ hence in probability as $n\to\infty$ when $r=r(n)=\mathrm{o}(n^{1/3})$. As announced earlier, we prove similarly that all other entries of $\sqrt{r} [ \hat\Sigma_{\hat\Y_r} - \hat \Sigma_{\Y_r}]$ also converges in probability. This concludes the proof.
\end{proof}
\begin{lemm}\normalfont \label{lemmaA11}
Let us define $\Phi_r^*:=(\Phi_1,...,\Phi _r)'$. Then $\sqrt{r}\|\Phi_r^* - \underline{\Phi_r}\|\longrightarrow 0$ as $r\to\infty$, where we recall that $\underline{\Phi_r}$ is defined in \eqref{Phi_barre}.
\end{lemm}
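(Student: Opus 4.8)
The plan is to exploit the Yule--Walker (normal) equations satisfied by both the infinite--order coefficients $(\Phi_k)_{k\ge 1}$ and the finite--order regression coefficients $\underline{\Phi_r}$, and to control their difference through the tail of the sequence $(\Phi_k)$. Writing $C(h):=\mathbb{E}(\Y_0\Y_h')$ as in \eqref{def_spectral_density}, the defining property of the finite projection \eqref{Phi_barre} gives, for $j=1,\dots,r$, the normal equations $C(-j)=\sum_{k=1}^r \Phi_{r,k}C(k-j)$, i.e. $\underline{\Phi_r}\Sigma_{\Y_r}=\Sigma_{\Y,\Y_r}$. On the other hand, taking the $\mathbb{L}^2$ inner product of the $\mathrm{AR}(\infty)$ representation \eqref{AR_infty} with $\Y_{n-j}$ and using that $\varepsilon_n$ is uncorrelated with $\Y_{n-j}$ for every $j\ge 1$ (legitimate since $\sum_{k\ge 1}\|\Phi_k\|<\infty$ under $\mathbf{(A4)}$) yields $C(-j)=\sum_{k=1}^\infty \Phi_k C(k-j)$ for all $j\ge 1$. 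I would treat $\Phi_r^*$ and $\underline{\Phi_r}$ as the same $3\times 3r$ block--row array, the operator norm being transpose invariant.

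Subtracting these two families of identities for $j=1,\dots,r$, the first $r$ terms combine into a matrix product against $\Sigma_{\Y_r}$ and the remainder is a tail term: setting $D_r:=\underline{\Phi_r}-\Phi_r^*$, one obtains $D_r\Sigma_{\Y_r}=R_r$, where $R_r$ is the $3\times 3r$ matrix whose $j$-th block equals $\sum_{k=r+1}^\infty \Phi_k\, C(k-j)$. Consequently $D_r=R_r\Sigma_{\Y_r}^{-1}$, and submultiplicativity of the operator norm gives $\|D_r\|\le \|R_r\|\,\|\Sigma_{\Y_r}^{-1}\|$. By Lemma \ref{lemmaA6} the factor $\|\Sigma_{\Y_r}^{-1}\|$ is bounded uniformly in $r$, so it remains only to show $\sqrt{r}\,\|R_r\|\to 0$.

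To bound $R_r$ I would use the elementary inequality \eqref{subm_norm} together with the uniform boundedness $\sup_h\|C(h)\|\le M<\infty$ (a consequence of Cauchy--Schwarz and the finiteness of the second moments of $\Y_0$). Each block of $R_r$ satisfies $\big\|\sum_{k=r+1}^\infty \Phi_k C(k-j)\big\|\le M\sum_{k=r+1}^\infty \|\Phi_k\|$, uniformly in $j$, whence, by \eqref{subm_norm}, $\|R_r\|\le K\sqrt{r}\,\sum_{k=r+1}^\infty\|\Phi_k\|$ for some constant $K$. Therefore $\sqrt{r}\,\|D_r\|\le K'\, r\sum_{k=r+1}^\infty\|\Phi_k\|$. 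Invoking the hypothesis $\|\Phi_k\|=\mathrm{o}(1/k^2)$, the tail satisfies $\sum_{k=r+1}^\infty\|\Phi_k\|=\mathrm{o}(1/r)$, so $r\sum_{k>r}\|\Phi_k\|\to 0$ and the claim follows.

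The main obstacle is the rigorous justification of the infinite Yule--Walker identity and of the interchange of summation and expectation that produces the tail representation of $R_r$; this rests on the absolute convergence $\sum_k\|\Phi_k\|<\infty$ from $\mathbf{(A4)}$ and on the uniform bound on $\|C(h)\|$. Everything downstream is a soft norm estimate, and the decisive quantitative input is precisely the polynomial decay $\|\Phi_k\|=\mathrm{o}(1/k^2)$, which upgrades the crude bound $\sqrt{r}\,\|R_r\|=\mathrm{O}\big(r\sum_{k>r}\|\Phi_k\|\big)$ into a genuine limit $0$.
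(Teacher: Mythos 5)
Your proof is correct: the Yule--Walker subtraction $D_r\Sigma_{\Y_r}=R_r$, the uniform bound $\sup_r\|\Sigma_{\Y_r}^{-1}\|<\infty$ from Lemma \ref{lemmaA6}, the $\sqrt{r}$-factor from \eqref{subm_norm}, and the tail estimate $r\sum_{k>r}\|\Phi_k\|\to 0$ from $\|\Phi_k\|=\mathrm{o}(1/k^2)$ fit together exactly as claimed. The paper does not spell out a proof of this lemma (it defers it to \citet{BMCF12}, following \citet{B74}), and the argument used there is precisely this classical normal-equations comparison, so your route coincides with the intended one.
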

\begin{proof}
We follow the proof of \citet[Lemma 5]{BMCF12}. We recall from \eqref{AR_infty} and the definition of the least square regression coefficients $\Phi_{r,1},\dots,\Phi_{r,r}$ with corresponding residual $\varepsilon_{n,r} $ that $
\Y_n=\sum_{k=1}^{r}\Phi_k\mathcal{Y}_{n-k}+\sum_{k=r+1}^{\infty}\Phi_k\mathcal{Y}_{n-k}+\varepsilon_{n}= \sum_{k=1}^{r}\Phi_{r,k}\mathcal{Y}_{n-k}+\varepsilon_{n,r}
$
which entails the matrix relation
\begin{equation}\label{lemmaA11_1}
(\underline{\Phi_r}-\Phi_r^*)\Y_{r,n}=\sum_{k=r+1}^{\infty}\Phi_k\mathcal{Y}_{n-k}+\varepsilon_{n}:=\varepsilon_{n,r}^*,
\end{equation}
from which we deduce that
\begin{equation}\label{lemmaA11_2}
\underline{\Phi_r}-\Phi_r^*= \Sigma_{\varepsilon_{r}^*}\Sigma_{\Y_r}^{-1},\quad \mbox{where }\Sigma_{\varepsilon_{r}^*}:=\EE(\varepsilon_{n,r}^*\Y_{r,n}').
\end{equation}
Let us note that $\varepsilon_{n}$ is orthogonal to $\Y_{r,n}$ i.e. $\EE(\varepsilon_{n}\Y_{r,n}')=0 $, as this vector depends on $\Y_t$ for $t=n-r,...,n-1$. So that we obtain from \eqref{lemmaA11_1} and \eqref{lemmaA11_2} that
\begin{equation}\label{lemmaA11_3}
\Sigma_{\varepsilon_{r}^*}=\sum_{k=r+1}^{\infty}\Phi_k\EE(\mathcal{Y}_{n-k}\Y_{r,n}').
\end{equation}
We proved in Step 1 of the proof of Theorem \eqref{CLT_vector_k_0} that $\mathfrak{S}_{k_0}$ in \eqref{expression_cov_series} is an absolute convergent series: a byproduct of this is that $\|\Y_0\|_2 $ is finite. Also, since $ \Y_{r,n}$ is a $3r\times 3$ matrix, the matrix norm property \eqref{subm_norm} implies that $\|\Y_{r,n}\|_2\le \sqrt{Kr}$ for some constant $K>0$. Thus, we deduce from the Cauchy-Schwarz inequality that $\|\EE(\mathcal{Y}_{n-k}\Y_{r,n}')\|\le \EE(\|\mathcal{Y}_{n-k}\Y_{r,n}'\|) \le \EE(|\mathcal{Y}_{n-k}\|\;\|\Y_{r,n}'\|)\le |\mathcal{Y}_{n-k}\|_2\;\|\Y_{r,n}'\|_2=\mathrm{O}(\sqrt{r})$. So that, by \eqref{lemmaA11_3}:
$$
\|\Sigma_{\varepsilon_{r}^*}\|\le \sum_{k=r+1}^{\infty}\|\Phi_k\|\;\|\EE(\mathcal{Y}_{n-k}\Y_{r,n}')\|=\mathrm{O}(\sqrt{r})\sum_{k=r+1}^{\infty}\|\Phi_k\|,
$$
hence we obtain from \eqref{lemmaA11_2} that $\sqrt{r}\|\Phi_r^* - \underline{\Phi_r}\|=\mathrm{O}(r)\sum_{k=r+1}^{\infty}\|\Phi_k\|\longrightarrow 0$ thanks to the assumption $\|\Phi_k\| =\mathrm{o}(1/k^2)$.
\end{proof}
\begin{lemm}\normalfont \label{lemmaA12}
The r.v. $\sqrt{r}\| \hat\Sigma_{\hat\Y_r}^{-1} - \Sigma_{\Y_r}^{-1} \|$ converge to $0$ in probability as $n\to \infty$ when $r=r(n)=\mathrm{o}(n^{1/3})$.
\end{lemm}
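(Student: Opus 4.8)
The plan is to exploit the standard resolvent identity for matrix inverses together with the perturbation bounds already established in Lemmas \ref{lemmaA6} and \ref{lemmaA10}. First I would write, for the matrices $\hat\Sigma_{\hat\Y_r}$ (invertible on a suitable high-probability event, as discussed below) and $\Sigma_{\Y_r}$, the identity
\begin{equation*}
\hat\Sigma_{\hat\Y_r}^{-1} - \Sigma_{\Y_r}^{-1} = \hat\Sigma_{\hat\Y_r}^{-1}\left( \Sigma_{\Y_r} - \hat\Sigma_{\hat\Y_r}\right) \Sigma_{\Y_r}^{-1},
\end{equation*}
so that, by submultiplicativity of $\|\cdot\|$,
\begin{equation*}
\sqrt{r}\,\| \hat\Sigma_{\hat\Y_r}^{-1} - \Sigma_{\Y_r}^{-1} \| \le \| \hat\Sigma_{\hat\Y_r}^{-1}\| \cdot \sqrt{r}\,\| \hat\Sigma_{\hat\Y_r} - \Sigma_{\Y_r}\| \cdot \| \Sigma_{\Y_r}^{-1}\|.
\end{equation*}
The middle factor tends to $0$ in probability by Lemma \ref{lemmaA10}, and the last factor is bounded uniformly in $r$ by Lemma \ref{lemmaA6}. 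Hence the whole argument reduces to showing that $\| \hat\Sigma_{\hat\Y_r}^{-1}\|$ is stochastically bounded, uniformly in $r=r(n)=\mathrm{o}(n^{1/3})$.

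For this I would set $M:= \sup_{r\ge 1}\| \Sigma_{\Y_r}^{-1}\|$, which is finite by Lemma \ref{lemmaA6}, and factor $\hat\Sigma_{\hat\Y_r}= \Sigma_{\Y_r}\big( I_{3r} + \Sigma_{\Y_r}^{-1}(\hat\Sigma_{\hat\Y_r}-\Sigma_{\Y_r})\big)$. On the event $E_n:= \{ \| \hat\Sigma_{\hat\Y_r}-\Sigma_{\Y_r}\| \le 1/(2M)\}$ the perturbation term has operator norm at most $1/2$, so the bracketed matrix is invertible via its Neumann series and
\begin{equation*}
\| \hat\Sigma_{\hat\Y_r}^{-1}\| \le \frac{\| \Sigma_{\Y_r}^{-1}\|}{1 - \| \Sigma_{\Y_r}^{-1}\| \, \| \hat\Sigma_{\hat\Y_r}-\Sigma_{\Y_r}\|} \le 2M .
\end{equation*}
Since Lemma \ref{lemmaA10} gives $\sqrt{r}\,\| \hat\Sigma_{\hat\Y_r}-\Sigma_{\Y_r}\| \to 0$ in probability, a fortiori $\| \hat\Sigma_{\hat\Y_r}-\Sigma_{\Y_r}\| \to 0$ in probability (as $\sqrt{r}\ge 1$), whence $\nbP(E_n)\to 1$.

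Combining the two displays, on $E_n$ one has $\sqrt{r}\,\| \hat\Sigma_{\hat\Y_r}^{-1}-\Sigma_{\Y_r}^{-1}\| \le 2M^2\, \sqrt{r}\,\| \hat\Sigma_{\hat\Y_r}-\Sigma_{\Y_r}\|$, and the right-hand side converges to $0$ in probability. Since $\nbP(E_n)\to 1$, a routine $\varepsilon$-$\delta$ argument (bounding $\nbP(\sqrt{r}\,\|\hat\Sigma_{\hat\Y_r}^{-1}-\Sigma_{\Y_r}^{-1}\|>\varepsilon)$ by $\nbP(E_n^c)$ plus the probability that the dominating term exceeds $\varepsilon$) yields the claimed convergence in probability. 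The only genuinely delicate point is the uniformity in the growing dimension $3r$: it is handled entirely through the operator-norm estimates, the $r$-uniform bound $M$ of Lemma \ref{lemmaA6} being exactly what keeps the Neumann-series control from degrading as $r\to\infty$.
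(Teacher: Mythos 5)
Your proof is correct, and it is exactly the standard perturbation argument (resolvent identity plus a Neumann-series bound on $\| \hat\Sigma_{\hat\Y_r}^{-1}\|$, with the $r$-uniform bound $\sup_{r\ge 1}\| \Sigma_{\Y_r}^{-1}\|<\infty$ from Lemma \ref{lemmaA6} doing the work as the dimension grows) that the paper itself omits and delegates to the corresponding lemma in \citet{BMCF12}. Nothing is missing; the reduction to Lemmas \ref{lemmaA6} and \ref{lemmaA10} and the high-probability invertibility event are handled as one would expect.
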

\begin{proof}
The proof is exactly the same as the one in \citet[Lemma 6]{BMCF12}, up to a change of notation.
\end{proof}
\begin{lemm}\normalfont \label{lemmaA13}
The r.v. $\sqrt{r}\| \underline{\hat\Phi_r} - \underline{\Phi_r}  \|$ converge to $0$ in probability as $n\to \infty$ when $r=r(n)=\mathrm{o}(n^{1/3})$.
\end{lemm}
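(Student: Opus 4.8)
The plan is to reduce the statement to the preceding lemmas via a standard plug-in decomposition of the empirical regression coefficients. Writing $\underline{\hat\Phi_r}=\hat\Sigma_{\hat\Y,\hat\Y_r}\hat\Sigma_{\hat\Y_r}^{-1}$ and $\underline{\Phi_r}=\Sigma_{\Y,\Y_r}\Sigma_{\Y_r}^{-1}$ as in \eqref{Phi_barre}, I would insert the cross term $\Sigma_{\Y,\Y_r}\hat\Sigma_{\hat\Y_r}^{-1}$ to obtain the algebraic identity
\begin{equation*}
\underline{\hat\Phi_r}-\underline{\Phi_r}=\left(\hat\Sigma_{\hat\Y,\hat\Y_r}-\Sigma_{\Y,\Y_r}\right)\hat\Sigma_{\hat\Y_r}^{-1}+\Sigma_{\Y,\Y_r}\left(\hat\Sigma_{\hat\Y_r}^{-1}-\Sigma_{\Y_r}^{-1}\right),
\end{equation*}
and then apply submultiplicativity of the operator norm introduced in \eqref{subm_norm}.

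Next, after multiplication by $\sqrt{r}$, I would bound the two resulting terms by
\begin{equation*}
\sqrt{r}\,\|\underline{\hat\Phi_r}-\underline{\Phi_r}\|\le \left(\sqrt{r}\,\|\hat\Sigma_{\hat\Y,\hat\Y_r}-\Sigma_{\Y,\Y_r}\|\right)\|\hat\Sigma_{\hat\Y_r}^{-1}\|+\|\Sigma_{\Y,\Y_r}\|\left(\sqrt{r}\,\|\hat\Sigma_{\hat\Y_r}^{-1}-\Sigma_{\Y_r}^{-1}\|\right).
\end{equation*}
Here the two parenthesised factors $\sqrt{r}\,\|\hat\Sigma_{\hat\Y,\hat\Y_r}-\Sigma_{\Y,\Y_r}\|$ and $\sqrt{r}\,\|\hat\Sigma_{\hat\Y_r}^{-1}-\Sigma_{\Y_r}^{-1}\|$ both tend to $0$ in probability as $n\to\infty$ with $r=r(n)=\mathrm{o}(n^{1/3})$, by Lemmas \ref{lemmaA10} and \ref{lemmaA12} respectively, while $\|\Sigma_{\Y,\Y_r}\|$ is bounded uniformly in $r$ by Lemma \ref{lemmaA6}.

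The only remaining point is to control the stochastic factor $\|\hat\Sigma_{\hat\Y_r}^{-1}\|$, which I would show to be bounded in probability uniformly enough in $r$. Writing $\hat\Sigma_{\hat\Y_r}^{-1}=\Sigma_{\Y_r}^{-1}+\left(\hat\Sigma_{\hat\Y_r}^{-1}-\Sigma_{\Y_r}^{-1}\right)$, the first summand has norm bounded uniformly in $r$ by Lemma \ref{lemmaA6}, and the second tends to $0$ in probability by Lemma \ref{lemmaA12} (a fortiori, since even its $\sqrt{r}$-scaled version does); hence $\|\hat\Sigma_{\hat\Y_r}^{-1}\|=\mathrm{O}_{\nbP}(1)$. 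Combining, the right-hand side is the sum of a product $\mathrm{o}_{\nbP}(1)\cdot\mathrm{O}_{\nbP}(1)$ and a product $\mathrm{O}(1)\cdot\mathrm{o}_{\nbP}(1)$, hence tends to $0$ in probability, which is exactly the claim. I do not anticipate any genuine obstacle: once Lemmas \ref{lemmaA6}, \ref{lemmaA10} and \ref{lemmaA12} are granted the argument is purely algebraic, the only mild care being the boundedness in probability of $\|\hat\Sigma_{\hat\Y_r}^{-1}\|$, whose verification is nevertheless immediate from the triangle inequality above.
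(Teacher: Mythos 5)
Your proof is correct and is essentially the standard argument: the paper itself does not write out a proof of this lemma (it defers to the corresponding result in \citet[Section 3.3.1]{BMCF12}), and the decomposition $\underline{\hat\Phi_r}-\underline{\Phi_r}=(\hat\Sigma_{\hat\Y,\hat\Y_r}-\Sigma_{\Y,\Y_r})\hat\Sigma_{\hat\Y_r}^{-1}+\Sigma_{\Y,\Y_r}(\hat\Sigma_{\hat\Y_r}^{-1}-\Sigma_{\Y_r}^{-1})$ combined with Lemmas \ref{lemmaA6}, \ref{lemmaA10} and \ref{lemmaA12}, plus the observation that $\|\hat\Sigma_{\hat\Y_r}^{-1}\|=\mathrm{O}_{\nbP}(1)$, is exactly what that reference does. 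Your reading of \eqref{Phi_barre} with hats on the $\Y$'s (i.e. $\underline{\hat\Phi_r}=\hat\Sigma_{\hat\Y,\hat\Y_r}\hat\Sigma_{\hat\Y_r}^{-1}$) is the intended one, consistent with the definition of $\hat\Phi_{r,k}$ as the regression coefficients of $\hat\Y_n$ on its lags.
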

\begin{proof}
We follow the proof of \citet[Lemma 7]{BMCF12}. Similarly to \eqref{lemmaA11_2}, and thanks to \eqref{Phi_barre}, one has that 
\begin{multline*}
\underline{\hat\Phi_r} - \underline{\Phi_r} =\hat\Sigma_{\hat\Y,\hat\Y_r}\hat\Sigma_{\hat\Y_r}^{-1}-\Sigma_{\Y,\Y_r}\Sigma_{\Y_r}^{-1}= (\hat\Sigma_{\hat\Y,\hat\Y_r}-\Sigma_{\Y,\Y_r})(\hat\Sigma_{\hat\Y_r}^{-1}-\Sigma_{\Y_r}^{-1}) \\
+(\hat\Sigma_{\hat\Y,\hat\Y_r}-\Sigma_{\Y,\Y_r})\Sigma_{\Y_r}^{-1}+\Sigma_{\Y,\Y_r}(\hat\Sigma_{\hat\Y_r}^{-1}- \Sigma_{\Y_r}^{-1}),
\end{multline*}
whence one obtains for all $a>0$ that
\begin{multline}\label{lemmaA13_1}
\PP(\sqrt{r}\| \underline{\hat\Phi_r} - \underline{\Phi_r}  \|>a)\le \PP\left(
\sqrt{r}\| \hat\Sigma_{\hat\Y,\hat\Y_r}-\Sigma_{\Y,\Y_r}\|\; \|(\hat\Sigma_{\hat\Y_r}^{-1}-\Sigma_{\Y_r}^{-1})\| \right.\\
\left.  +\sqrt{r} \| \hat\Sigma_{\hat\Y,\hat\Y_r}-\Sigma_{\Y,\Y_r}\|\;\| \Sigma_{\Y_r}^{-1}\| +\sqrt{r} \| \Sigma_{\Y,\Y_r}\|\; \|\hat\Sigma_{\hat\Y_r}^{-1}- \Sigma_{\Y_r}^{-1}\|>a 
\right)  \\
\le \PP\left( \sqrt{r}\| \hat\Sigma_{\hat\Y,\hat\Y_r}-\Sigma_{\Y,\Y_r}\|\; \|(\hat\Sigma_{\hat\Y_r}^{-1}-\Sigma_{\Y_r}^{-1})\|>a/3
\right)  
+ \PP\left(
\sqrt{r} \| \hat\Sigma_{\hat\Y,\hat\Y_r}-\Sigma_{\Y,\Y_r}\|>a/(3\; \sup_{j\ge 1}\| \Sigma_{\Y_j}^{-1}\|)
\right) \\
+ \PP\left(
\sqrt{r} \| \Sigma_{\Y,\Y_r}\| \|\hat\Sigma_{\hat\Y_r}^{-1}- \Sigma_{\Y_r}^{-1}\|> a/(3\;\sup_{j\ge 1} \| \Sigma_{\Y,\Y_j}\|) 
\right).
\end{multline}
Now, each of the terms in \eqref{lemmaA13_1} tends to $0$ thanks to Lemmas \ref{lemmaA10} and \ref{lemmaA6}, and the proof is complete.
\end{proof}
The end of proof of Theorem \ref{convergence_Isp} relies on the above Lemmas \ref{lemmaA6} to \ref{lemmaA12}, and is the same as the end of proof as \citet[Theorem A.1, page 7 of Supplementary material]{BMCF12}. We give it below for the sake of presentation. Let $\nbE_r:= \mathbf{I}_3 \otimes \mathbf{1}_r$ where we recall that $\mathbf{I}_3$ is the $3\times 3$ identity matrix, $\mathbf{1}_r=(1,...,1)'$ of size $r$. Then by the submultiplicativity of the matrix norm and Lemma \ref{lemmaA12}: $\| \sum_{i=1}^r \left( \hat\Phi_{r,i} - \Phi_{r,i}\right)\| = \| (\hat\Phi_{r} - \Phi_{r}) \nbE_r\| \le \sqrt{3r}\|\hat\Phi_{r} - \Phi_{r} \|\stackrel{\nbP}{\longrightarrow} 0$ as $n\to \infty$ when $r=r(n)=\mathrm{o}(n^{1/3})$. Similarly using Lemma \ref{lemmaA11}, we have that $\| \sum_{i=1}^r \left( \Phi_{r,i} - \Phi_{i}\right)\|\|\longrightarrow 0$. We then deduce by triangular inequality that
\begin{multline}\label{convergence_spectral_estimator1}
\| \hat{\Phi}_r(1) - \Phi(1)\| \le \left|\left| \sum_{i=1}^r \left( \hat\Phi_{r,i} - \Phi_{r,i}\right)\right|\right| + \left|\left| \sum_{i=1}^r \left( \Phi_{r,i} - \Phi_{i}\right)\right|\right|\\
+ \left|\left| \sum_{i=r+1}^\infty \Phi_i \right|\right| \stackrel{\nbP}{\longrightarrow} 0,\quad n\to \infty,\ r=r(n)=\mathrm{o}(n^{1/3}).
\end{multline}
We now pass on to the convergence of the estimator $\hat{\Sigma}_{\hat{\varepsilon}_r}$. \eqref{AR_tronquee} implies that $\hat{\Sigma}_{\hat{\varepsilon}_r}=\hat\Sigma_{\hat\Y} - \underline{\hat \Phi_{r}} \hat\Sigma_{\hat\Y,\hat\Y_r} '$. Also, the expression \eqref{Wold_expressions}, the orthogonality of $\varepsilon_0$ and $P_{-1} \Y_0$ and \eqref{AR_infty} imply that
\begin{multline*}
\Sigma_\varepsilon=\nbE (\varepsilon_0 \varepsilon_0')= \nbE (\varepsilon_0 \Y_0')=\nbE \left[ \left( \Y_0 -\sum_{k=1}^{\infty}\Phi_k\mathcal{Y}_{-k} \right)  \Y_0' \right]\\
=\Sigma_\Y- \sum_{k=1}^{\infty}\Phi_k \nbE(\mathcal{Y}_{-k} \Y_0')= \Sigma_\Y-  \Phi_{r}^* \Sigma_{\Y,\Y_r} ' - \sum_{k=r+1}^{\infty}\Phi_k \nbE(\mathcal{Y}_{-k}  \Y_0') .
\end{multline*}
We deduce that
\begin{eqnarray}
\| \hat{\Sigma}_{\hat{\varepsilon}_r} - {\Sigma}_{{\varepsilon}_r}\| &= & \Big|\Big|\hat\Sigma_{\hat\Y}  -  \Sigma_{\Y} - \Big( \underline{\hat \Phi_{r}} -  \Phi_{r}^* \Big)\hat\Sigma_{\hat\Y,\hat\Y_r} '\nonumber\\
&& - \Phi_{r}^*\Big( \hat\Sigma_{\hat\Y,\hat\Y_r} ' - \Sigma_{\Y,\Y_r} ' \Big) + \sum_{k=r+1}^{\infty}\Phi_k \nbE(\mathcal{Y}_{-k}  \Y_0')\Big|\Big| \nonumber\\
&\le & \|  \hat\Sigma_{\hat\Y}  -  \Sigma_{\Y}\| + \Big|\Big| \Big( \underline{\hat \Phi_{r}} -  \Phi_{r}^* \Big)  \Big( \hat\Sigma_{\hat\Y,\hat\Y_r} '  - \Sigma_{\Y,\Y_r} '\Big) \Big|\Big| +  \Big|\Big| \Big( \underline{\hat \Phi_{r}} -  \Phi_{r}^* \Big)  \Sigma_{\Y,\Y_r}' \Big|\Big| \nonumber\\
&& + \Big|\Big| \Phi_{r}^* \Big( \hat\Sigma_{\hat\Y,\hat\Y_r} '  - \Sigma_{\Y,\Y_r} '\Big) \Big|\Big|+
\Big|\Big| \sum_{k=r+1}^{\infty}\Phi_k \nbE(\mathcal{Y}_{-k}  \Y_0') \Big|\Big|. \label{convergence_spectral_estimator2}
\end{eqnarray}
Lemma \ref{lemmaA9} entails that the first term on the right hand side of \eqref{convergence_spectral_estimator2} converges in probability to $0$. The combination of Lemmas \ref{lemmaA11}, \ref{lemmaA13} and \ref{lemmaA10} yield that the second term converges to $0$, and likewise the fourth term tends to $0$ because the $\{\Phi_i,\ i\in \nbN \}$ is a bounded sequence thanks for example to the assumption $\|\Phi_k\| =\mathrm{o}(1/k^2)$. Lemma \ref{lemmaA6} also entails that the third term tends to $0$. Finally, the last term tends to $0$ since it is the remainder of a convergent series. All in all, we thus deduce from \eqref{convergence_spectral_estimator2} that $\hat{\Sigma}_{\hat{\varepsilon}_r}\stackrel{\nbP}{\longrightarrow}0$ as $n\to\infty$, $r=r(n)=\mathrm{o}(n^{1/3})$, which combined with \eqref{convergence_spectral_estimator1}, proves \eqref{convergence_spectral_estimator}.\zak

\subsection{Proof of Proposition \ref{prop_main_theo1}}\label{subsec:proof_gradient}
The following lemma will be needed.
\begin{lemm}\label{lemma_before_prop_main_theo1}\normalfont
There exist constants $M_\varpi$, $M_H^1$ and $M_H^2$ such that
\begin{eqnarray}
|\varpi_k'(|x|) \ln (|x|)| &\le & M_\varpi \frac{k}{\lambda_-^k},\label{lemma_prop_main1}\\
|H_k'(x)| &\le & M_H^1 \frac{1}{\lambda_-^k}, \label{lemma_prop_main2}\\
|H_k''(x)| &\le & M_H^2 \frac{1}{\lambda_-^k}, \label{lemma_prop_main2_bis}
\end{eqnarray}
for all $x\in \R$. Besides, for each $k\in \N$ there exists $M_1(k)$ such that
\begin{eqnarray}
||\nabla \psi_k(a,b) || \le M_1(k),\quad \forall (a,b)\in \R^2 , \label{lemma_prop_main3}\\
||D^2 \psi_k(a,b) || \le M_2(k),\quad \forall (a,b)\in \R^2 , \label{lemma_prop_main3_bis}
\end{eqnarray}
with $M_1(k)$ and $M_2(k)$ being $\mathrm{O}\left(\frac{1}{\lambda_-^k} \right)$.
\end{lemm}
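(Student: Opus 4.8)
The plan is to reduce everything to one-variable bounds on $H_k$ and then transport them to $\psi_k$ through the product and chain rules. Since $H_k$ depends only on $|x|$ it is even, so it suffices to argue for $x>0$. I would split $(0,\infty)$ into the \emph{saturated} region $\{x\ge 2^{-1}K_m\lambda_-^k\}$, where $\varpi_k(x)=1$ by \eqref{def_chi_Km} and hence $H_k(x)=\tfrac1k\ln x$ with the elementary derivatives $H_k'(x)=\tfrac1{kx}$, $H_k''(x)=-\tfrac1{kx^2}$ (all controlled using $x\ge 2^{-1}K_m\lambda_-^k$), and the \emph{transition} region $(0,2^{-1}K_m\lambda_-^k)$, which is the only delicate part.

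On the transition region I would substitute $u=\tfrac{2}{K_m\lambda_-^k}\,x\in(0,1)$, so that $\varpi_k(x)=\varpi(u)$ and $\ln x=\ln u+L_k$ with $L_k:=\ln\!\big(\tfrac{K_m\lambda_-^k}{2}\big)$. Two structural facts then drive the estimates. First, $|L_k|=\mathrm{O}(k)$ because $\lambda_-<1$; this is exactly the source of the factor $k$ appearing in \eqref{lemma_prop_main1}. Second, the vanishing hypotheses \eqref{def_varpi}--\eqref{equiv_varpi} --- namely $\varpi''(x)=\mathrm{o}(x)$, and consequently $\varpi'(x)=\mathrm{o}(x^2)$, $\varpi(x)=\mathrm{o}(x^3)$ near $0$ --- guarantee that the products $\varpi'(u)\ln u$, $\varpi''(u)\ln u$, $\varpi'(u)/u$ and $\varpi(u)/u^2$ all extend continuously to $[0,1]$ (each tends to $0$ at $u=0$) and are therefore bounded. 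This is precisely what neutralises the logarithmic singularity of $\ln x$ at the origin. Estimate \eqref{lemma_prop_main1} is then immediate, since $\varpi_k'(|x|)\ln|x|=\tfrac{2}{K_m\lambda_-^k}\,\varpi'(u)\big(L_k+\ln u\big)$, whose bracket is $\mathrm{O}(k)$ and whose prefactor is $\mathrm{O}(\lambda_-^{-k})$.

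The bounds \eqref{lemma_prop_main2}--\eqref{lemma_prop_main2_bis} follow by differentiating $H_k(x)=\tfrac1k\varpi_k(x)\ln x$ and collecting terms. Writing $c_k:=\tfrac{2}{K_m\lambda_-^k}$, one finds $H_k'(x)=\tfrac{c_k}{k}\big[\varpi'(u)(L_k+\ln u)+\varpi(u)/u\big]$, whose bracket is again $\mathrm{O}(k)$; here the factor $k$ cancels the $1/k$ and leaves $H_k'=\mathrm{O}(\lambda_-^{-k})$, which is \eqref{lemma_prop_main2}. Carrying the second derivative in the same fashion, and on the saturated side using $H_k''(x)=-\tfrac1{kx^2}$ together with $x\ge 2^{-1}K_m\lambda_-^k$, yields \eqref{lemma_prop_main2_bis}. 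The main obstacle, and the step demanding the most care, is exactly this bookkeeping: one must track how the scaling factor $c_k\sim\lambda_-^{-k}$ accumulates under each differentiation while simultaneously invoking the $\mathrm{o}(\cdot)$ vanishing of $\varpi$ to absorb the $\ln x$ together with the $1/x$ and $1/x^2$ singularities, and one must check that $H_k$ is genuinely $C^2$ across the junction $x=2^{-1}K_m\lambda_-^k$ (which holds since $\varpi$ is twice differentiable and $\equiv1$ beyond $1$) so that the bounds are uniform in $x\in\R$.

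Finally, for $\psi_k(a,b)=G(a)G(b)H_k(a^2-b)$ as in \eqref{def_psi_k} I would expand $\nabla\psi_k$ and $D^2\psi_k$ by the product and chain rules. Since $G$, $G'$, $G''$ are bounded with finite support, any nonvanishing factor $G^{(i)}(a)G^{(j)}(b)$ confines $(a,b)$ to a fixed compact set, on which $a^2-b$ stays in a compact interval and $|2a|$ is bounded; there $H_k$ is itself $\mathrm{O}(1)$ by the argument above. Thus $\nabla\psi_k$ is a bounded combination of terms carrying $H_k$ or $H_k'$, and $D^2\psi_k$ of terms carrying $H_k$, $H_k'$ or $H_k''$, so that \eqref{lemma_prop_main3}--\eqref{lemma_prop_main3_bis} follow directly from \eqref{lemma_prop_main2}--\eqref{lemma_prop_main2_bis}, with $M_1(k)$ and $M_2(k)$ inheriting the respective orders in $\lambda_-^{-k}$ produced by $H_k'$ and $H_k''$.
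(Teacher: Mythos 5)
Your route is the same as the paper's: split at the threshold $2^{-1}K_m\lambda_-^k$, use the vanishing of $\varpi,\varpi',\varpi''$ at $0$ to neutralise the singularities of $\ln|x|$, $1/x$ and $1/x^2$, then transport the one-variable bounds to $\psi_k$ via the product and chain rules together with the compact support of $G$. Your substitution $u=\tfrac{2}{K_m\lambda_-^k}x$ is only a cleaner packaging of the paper's two-scale splitting, and your treatment of \eqref{lemma_prop_main1}, \eqref{lemma_prop_main2} and \eqref{lemma_prop_main3} is correct and matches the paper's.

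There is, however, a genuine gap at \eqref{lemma_prop_main2_bis}, and your own formulas expose it. Writing $c_k:=\tfrac{2}{K_m\lambda_-^k}$, every differentiation of $H_k$ costs a factor $c_k$: in your notation $\varpi_k''(|x|)=c_k^2\varpi''(u)$, $\varpi_k'(|x|)/x=c_k^2\,\varpi'(u)/u$ and $\varpi_k(|x|)/x^2=c_k^2\,\varpi(u)/u^2$, so the transition-region bookkeeping yields $|H_k''|=\mathrm{O}(c_k^2)=\mathrm{O}(\lambda_-^{-2k})$, not $\mathrm{O}(\lambda_-^{-k})$. The saturated side is even more explicit: your formula $H_k''(x)=-\tfrac{1}{kx^2}$ evaluated at $x=2^{-1}K_m\lambda_-^k$ has modulus $\tfrac{4}{kK_m^2}\lambda_-^{-2k}$, and since $\lambda_-^{-2k}/(k\,\lambda_-^{-k})=1/(k\lambda_-^{k})\to\infty$, no constant $M_H^2$ can make $|H_k''(x)|\le M_H^2\lambda_-^{-k}$ hold for all $x$ and all $k$. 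So "carrying the second derivative in the same fashion" does not yield \eqref{lemma_prop_main2_bis}; it yields the weaker order $\lambda_-^{-2k}$, which then propagates to $M_2(k)$ in \eqref{lemma_prop_main3_bis}. To be fair, the paper's own proof is equally terse at this point ("obtained by differentiating $H_k'(x)$ and using $\varpi(x)=\mathrm{o}(x^3)$") and produces the same $\lambda_-^{-2k}$, so the exponent in the statement appears to be the real culprit; correcting it would tighten the admissible range of $c$ in the proof of Theorem \ref{main_theo_genera2}, where \eqref{lemma_prop_main3_bis} is invoked. In your write-up you should either prove the $\mathrm{O}(\lambda_-^{-2k})$ bound and flag the discrepancy, or supply an argument (I do not see one) recovering a full factor $\lambda_-^{k}$ in the second derivative.
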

\begin{proof}
The assumption \eqref{equiv_varpi} on the regularity at $0$ of the function $\varpi$ imply that $\varpi'(x)=\mathrm{O}(x)$ as $x\to ^+0$, hence the existence of some positive $\vartheta^1$ (that we may also suppose less than $1$ w.l.o.g.) and $C_\varpi^1$ such that $0 \le \varpi'(x) \le C_\varpi^1 x$ when $x\in [0,\vartheta^1 ]$. Furthermore, the regularity conditions implies the existence of some $C_\varpi >0$ such that $0 \le \varpi'(x) \le C_\varpi$, $x\in [0,1]$; we also recall that $\varpi'(x)=0$ if $x<0$ or $x>1$. Consequently, $\varpi_k$ defined in \eqref{def_chi_Km} verifies
\begin{equation}\label{proof_lemma_estim_gen1}
|\varpi_k'(|x|) \ln (|x|)| \le  |\ln (|x|)| C_\varpi^1 \frac{2}{K_m \lambda_-^k}x \quad \mbox{if }  \frac{2}{K_m \lambda_-^k}|x| \le \vartheta^1 \iff  |x| \le \frac{K_m \lambda_-^k}{2} \vartheta^1 .
\end{equation}
We now observe that the function $x\mapsto x \ln x $ is decreasing and negative on the interval $[0, e^{-1}]$, and in particular on the interval $\left[0, {K_m \lambda_-^k} \vartheta^1/ {2}\right]$ for $k$ large enough. We thus deduce from \eqref{proof_lemma_estim_gen1} that $|\varpi_k'(|x|) \ln (|x|)| \le C_\chi^1 \left|\ln \left({K_m \lambda_-^k} \vartheta^1/ {2}\right)\right|=\mathrm{O}(k)$. Since $\lambda_-<1$, an $\mathrm{O}(k)$ is also an $\mathrm{O}(k/\lambda_-^k)$ so that we obtain
\begin{equation}\label{proof_lemma_estim_gen2}
|\varpi_k'(|x|) \ln (|x|)| = \mathrm{O}(k/\lambda_-^k),\quad x\in \left[0, \frac{K_m \lambda_-^k}{2} \vartheta^1 \right].
\end{equation}
Now, since $x\mapsto \ln x$ is increasing and negative on $(0 ,1 ]$, and picking $k$ large enough such that $({K_m \lambda_-^k})/{2} <1$, we have
\begin{multline}\label{proof_lemma_estim_gen3}
|\varpi_k'(|x|) \ln (|x|)|\le  C_\varpi^1  |\ln (|x|)| \le  C_\varpi^1 \left|\ln \left(\frac{K_m \lambda_-^k}{2} \vartheta^1 \right) \right|\\
=\mathrm{O}(k/\lambda_-^k),\quad x\in \left[ \frac{K_m \lambda_-^k}{2} \vartheta^1, \frac{K_m \lambda_-^k}{2} \right].
\end{multline}
And, since $\varpi_k'(|x|) \ln (|x|)=0$ when $|x|>({K_m \lambda_-^k})/{2}$, \eqref{proof_lemma_estim_gen2} and \eqref{proof_lemma_estim_gen3} yield \eqref{lemma_prop_main1}.\\
We now compute
\begin{equation}\label{proof_lemma_estim_gen4}
H_k'(x)=\frac{1}{k}\frac{1}{|x|} \varpi_k(|x|) + \frac{1}{k} \varpi_k'(|x|)\ln |x|,\quad x\in \nbR ,
\end{equation}
observing incidentally that $H_k'(x)$ is continuous at $x=0$ thanks to the fact that $\varpi(x)=\mathrm{o}(x^2)$ and $\varpi'(x)=\mathrm{o}(x)$ when $x\to 0$. We first note that \eqref{lemma_prop_main1} implies that the second term on the righthandside of \eqref{proof_lemma_estim_gen4} verifies $\frac{1}{k} \varpi_k'(x)\ln |x|=\mathrm{O}(1/\lambda_-^k)$. Furthermore, $\varpi$ has the property that $\varpi(x)=\mathrm{O}(x)$ so that there exists $\vartheta^0>0$ and $C_\varpi^0$ such that $0 \le \varpi(x) \le C_\varpi^0 x$ when $x\in [0,\vartheta^0 ]$. Hence the following upper bound
$$
\left| \frac{1}{k}\frac{1}{|x|} \varpi_k(|x|)\right| \le \frac{1}{k} \frac{2}{K_m \lambda_-^k} C_\chi^0=\mathrm{O}(1/\lambda_-^k)\quad \mbox{if }  \left|\frac{2}{K_m \lambda_-^k}x \right|\le \vartheta^0 \iff  |x| \le \frac{K_m \lambda_-^k}{2} \vartheta^0 .
$$
If $|x|\ge ({K_m \lambda_-^k}\vartheta^0)/{2} $ then $\left|  \varpi_k(|x|)/({k}{|x|})\right|\le \left| 1/({k}{|x|} )\right| C_\varpi \le \left| {2}/({k}{K_m \lambda_-^k \vartheta^0}) \right| C_\varpi=\mathrm{O}(1/\lambda_-^k)$ which, coupled with the above inequality, yields that the first term on the righthandside of \eqref{proof_lemma_estim_gen4} is an $\mathrm{O}(1/\lambda_-^k)$. All in all, we thus obtain the inequality \eqref{lemma_prop_main2}. With similar arguments, \eqref{lemma_prop_main2_bis} is obtained by differentiating $H_k'(x)$ and using $\varpi(x)=\mathrm{o}(x^3)$.\\
We now prove \eqref{lemma_prop_main3} from the two afore established upper bounds \eqref{lemma_prop_main1} and  \eqref{lemma_prop_main2}. We observe that $\psi$ is twice differentiable, and compute its first derivative w.r.t. $a$ as
\begin{equation}\label{derive_psi}
\partial_a \psi_k(a,b)=G'(a)G(b) H_k(a^2-b) + 2a G(a)G(b) H_k'(a^2-b).
\end{equation}
Since $\varpi(x)=\mathrm{O}(x)$ as $x\to 0$, a similar approach leading to \eqref{proof_lemma_estim_gen2} yields that $|\varpi_k(|x|) \ln (|x|)| = \mathrm{O}(k/\lambda_-^k)$ on $|x|\le ({K_m \lambda_-^k}\vartheta^0)/{2} $. Consequently we have that
\begin{equation}\label{first_uniform_bound}
|G'(a)G(b) H_k(a^2-b)|= \mathrm{O}(1/\lambda_-^k),\quad |a^2-b| \in \left[0, \frac{K_m \lambda_-^k}{2} \vartheta^0 \right].
\end{equation}
Since $G$ has a finite support and is thus zero outside of some compact interval say $I$, $(a,b)\mapsto G'(a)G(b) H_k(a^2-b)$ is $0$ when $(a,b)\notin I^2$, and is continuous hence uniformly bounded on the compact set $I^2 \cap \left\{ (a,b)\in \nbR^2|\ |a^2-b|> ({K_m \lambda_-^k} \vartheta^0)/ {2}\right\}$. Thanks to the factor $1/k$ in the definition \eqref{def_H_k} of $H_k$, it is easily verified that this uniform bound on that set is in fact an $\mathrm{O}(1/k)$, and so is also an $\mathrm{O}(1/\lambda_-^k)$. This latter fact, coupled with \eqref{first_uniform_bound}, yields that the first term on the righthandside of \eqref{derive_psi} is an $\mathrm{O}(1/\lambda_-^k)$ uniformly in $(a,b)\in \nbR^2$. Similarly, a similar analysis along with the inequality \eqref{lemma_prop_main2} proved previously yields that second term on the righthandside of \eqref{derive_psi} is an $\mathrm{O}(1/\lambda_-^k)$ uniformly in $(a,b)\in \nbR^2$. Again, a similar analysis yields similar upper bounds for $\partial_b \psi_k(a,b)$, so that \eqref{lemma_prop_main3} holds. The upper bound \eqref{lemma_prop_main3_bis} is obtained by similar arguments thanks to \eqref{lemma_prop_main2_bis}. This concludes the proof of Lemma \ref{lemma_before_prop_main_theo1}
\end{proof}
We now turn to the proof of Proposition \ref{prop_C_1_u_k}. Let us set $U(k):=(C_1,u_k)$, $k\in \nbN$. The definition \eqref{def_estimator_general} as well as the finite increment theorem yields the existence of some (random) $c_{k,n}\in (0,1)$ such that
\begin{eqnarray*}
\psi_{k}(\bar{Y}_n, \bar{Y}_{n,k}) - \psi_k(U(k))&=& \psi_k\left(U(k) + \frac{S_n(k)}{n}\right) - \psi_k(U(k))\\
&=&\nabla \psi_k\left(U(k) + c_{k,n}\frac{S_n(k)}{n}\right).\frac{S_n(k)}{n},
\end{eqnarray*}
so that, thanks to \eqref{lemma_prop_main3}, we obtain the $L^2$ bound
\begin{equation}\label{proof_main_theo1_1}
||\psi_{k}(\bar{Y}_n, \bar{Y}_{n,k}) - \psi_k(U(k))||_2 \le M_1(k) \left|\left| \frac{S_n(k)}{n}\right|\right|_2.
\end{equation}
We now observe that, for $k$ large enough we have from \eqref{expr_C_1_u_k} as well as Assumption $\mathbf{ (A6)}$ that $|C_1^2-u_k|=\lambda_0^k \left|\sum_{j=0}^k \lambda_0^{-j} \chi_j + \lambda_0(C_1^2-C_2)\right|\ge \lambda_0^k K_m/2$, so that the definition of $\chi_k$ entails that $\varpi_k(|C_1^2-u_k|)=1$. Now, by Assumption $\mathbf{ (A7)}$ we have $0\le C_1=||Y_0||_1 \le C_Y$ and $0\le u_k=\nbE(Y_0 Y_{k+1})\le ||Y_0||_2 ||Y_{k+1}||_2=||Y_0||_2^2\le C_Y^2$ (by the Cauchy Scwharz inequality) so that the definition of function $G$ entails that $G(C_1)=G(u_k)=1$. Thus $\psi_k(U(k))=G(C_1)G(u_k)H_k(C_1^2-u_k)=\frac{1}{k}\ln |C_1^2-u_k|=S_k$ for $k$ large enough. All in all, we thus have, from \eqref{observation_S_k} and \eqref{expr_C_1_u_k}, that
$$
|\psi_k(U(k)) - \ln \lambda_0|=\frac{1}{k}\left| \sum_{j=0}^k \lambda_0^{-j} \chi_j + \lambda_0(C_1^2-C_2)\right|=\mathrm{O}\left(\frac{1}{k}\right)
$$
for $k$ large enough. The upper bound \eqref{bound_S_k_n} is deduced from the above inequality, Minkowski's inequality, \eqref{proof_main_theo1_1} as well as the estimate $M_1(k)=\mathrm{O}\left({1}/{\lambda_-^k} \right)$ proved in Lemma \ref{lemma_before_prop_main_theo1}. \zak

\subsection{Proof of Proposition \ref{prop_main_theo2}}\label{sec:proof_prop_main_theo2}
The scheme somewhat has some common ideas with the proof of Theorem \ref{theo_CLT_vector_k_0} in Section \ref{sec:proof_theo_k_0} (with the notable difference that $k_0$ in this section is now replaced with $k_n$ with $\lim_{n\to\infty}k_n=\infty$) combined with a triangular central limit theorem for stationary sequences of random variables proved in \cite{FZ05}, and is decomposed in several steps. Let $(k_n)_{n\in \N}$ be a sequence of integers verifying \eqref{Cond_CLT_k_n}.\\
\paragraph{$\diamond$ Step 1:} we first prove the existence of the limit $\mathfrak{S}$ in \eqref{CLT_general_Sigma}. We first observe that
\begin{equation}\label{pr_prop_k_n_1}
\frac{1}{n} \nbE(S_n(k_n) S_n(k_n)')=\frac{1}{n} \sum_{h=-n}^n (n-|h|)\nbE \left(X_0(k_n)X_h(k_n)'\right)
\end{equation}
where we recall that $S_n(.)$ and $X_h(.)$ are defined in \eqref{def_estimators}. Since
\begin{equation}\label{pr_prop_k_n_2}
\nbE \left(X_0(k_n)X_h(k_n)'\right)= \left[
\begin{array}{cc}
\mbox{Cov}(Y_0,Y_h) & \mbox{Cov}(Y_0, Y_h Y_{h+k_n+1})\\
 \mbox{Cov}(Y_0 Y_{k_n+1}, Y_h ) & \mbox{Cov}(Y_0Y_{k_n+1}, Y_hY_{h+k_n+1})
\end{array}
\right],
\end{equation}
we then see that \eqref{pr_prop_k_n_1} converges if we prove that $\sum_{h=1}^n \nbE \left(X_0(k_n)X_h(k_n)'\right)$ and\\ $\sum_{h=1}^n h\; \nbE \left(X_0(k_n)X_h(k_n)'\right)$ converge by the dominant convergence theorem, in which case the limit of \eqref{pr_prop_k_n_1} is related to the limit of $\sum_{h=1}^n \nbE \left(X_0(k_n)X_h(k_n)'\right)$ as $n\to\infty$. For this we study the corresponding terms in \eqref{pr_prop_k_n_2}. The simplest term in the latter expression is $\mbox{Cov}(Y_0,Y_h)$, of which corresponding series converges thanks to a direct application of the bound \eqref{bound_sum_Davydov1} that yields $|\mbox{Cov}(Y_0,Y_h)|\le K \left[ \alpha_{{\epsilon}}\left( \left\lfloor ({h-1})/{2}\right\rfloor\right)^{1-2/\beta} + \lambda_0^{\left\lfloor \frac{h-1}{2}\right\rfloor}\right]$ for some constant $K>0$. In that case the corresponding limit may also be written as
\begin{equation}
\frac{1}{n}\sum_{h=-n}^n (n-|h|) \mbox{Cov}(Y_0,Y_h) \longrightarrow 2 \sum_{h=1}^\infty (C_1^2 - u_{h-1}),\quad n\to \infty .\label{pr_prop_k_n_2bis}
\end{equation}
We next study $\mbox{Cov}(Y_0, Y_h Y_{h+k_n+1})$. We get from \eqref{decompo1} that
\begin{eqnarray}
\mbox{Cov}(Y_0, Y_h Y_{h+k_n+1})&=& \sum_{s=0}^{k_n}\lambda_0^s \left[ \sum_{t=0}^{h-1} \lambda_0^t \mbox{Cov}(Y_0, \epsilon_{h-t} \epsilon_{k_n+1+h-s}) + \lambda_0^{h}
\mbox{Cov}(Y_0, Y_0 \epsilon_{k_n+1+h-s}) \right]\nonumber\\
&& + \lambda_0^{k_n+1} \mbox{Cov}(Y_0, Y_h^2).\label{pr_prop_k_n_3}
\end{eqnarray}
We wish to apply the dominant convergence theorem to establish the limit of\\ $\sum_{h=1}^{n} \mbox{Cov}(Y_0, Y_h Y_{h+k_n+1})$ as $n\to\infty$. For this we observe that, similarly to \eqref{ineg_utile}:
$$
|\mbox{Cov}(Y_0, Y_h Y_{h+k_n+1})| \le K \left[ \alpha_\epsilon \left( \left\lfloor \frac{h-1}{2}\right\rfloor\right)^{1-2/\beta} + \lambda_0^{\left\lfloor \frac{h-1}{2}\right\rfloor} + \lambda_0^{h}\right]
$$
for some constant $K>0$, of which righthandside upper bound is independent from $n$ and summable in $h\ge 1$. Thus, we now need to study the convergence of the terms on the righthandside of \eqref{pr_prop_k_n_3} as $n\to\infty$ for fixed $h\ge 1$. We start with $\sum_{s=0}^{k_n}\lambda_0^s  \sum_{t=0}^{h-1} \lambda_0^t \mbox{Cov}(Y_0, \epsilon_{h-t} \epsilon_{k_n+1+h-s})$. We write that
\begin{eqnarray*}
\mbox{Cov}(Y_0, \epsilon_{h-t} \epsilon_{k_n+1+h-s})&=&\mbox{Cov}(Y_0 \epsilon_{h-t} ,\epsilon_{k_n+1+h-s}) + \nbE(Y_0 \epsilon_{h-t})\nbE(\epsilon_{k_n+1+h-s})\\
&& - \nbE(Y_0)\nbE(\epsilon_{h-t} \epsilon_{k_n+1+h-s})\\
&=& \mbox{Cov}(Y_0 \epsilon_{h-t} ,\epsilon_{k_n+1+h-s}) +  \nbE(Y_0 \epsilon_{h-t})m_0 - C_1 \nbE(\epsilon_{h-t} \epsilon_{k_n+1+h-s}),
\end{eqnarray*}
of which terms are easier to analyse than $\mbox{Cov}(Y_0, \epsilon_{h-t} \epsilon_{k_n+1+h-s})$ directly. For fixed $h,s,t\ge 1$, the Davydov inequality entails that $\lambda_0^s \lambda_0^t\mbox{Cov}(Y_0 \epsilon_{h-t} ,\epsilon_{k_n+h-s})\longrightarrow 0$ as $n\to \infty$. Furthermore, the Cauchy Schwarz inequality yields
$$
|\lambda_0^s \mbox{Cov}(Y_0 \epsilon_{h-t} ,\epsilon_{k_n+1+h-s})| \le \lambda_0^s  ||Y_0||_4 ||\epsilon||_4 ||\epsilon||_2
$$
of which upper bound is summable w.r.t. $s\in \nbN$, so that a domination convergence theorem argument yields that
$$
\sum_{s=0}^{k_n}\lambda_0^s  \sum_{t=0}^{h-1} \lambda_0^t \mbox{Cov}(Y_0\epsilon_{h-t} ,\epsilon_{k_n+1+h-s}) \longrightarrow 0,\quad n\to \infty ,
$$
for fixed $h\ge 1$. Moreover, the following limits are easily obtained as $n\to\infty$:
\begin{eqnarray*}
\sum_{s=0}^{k_n}\lambda_0^s  \sum_{t=0}^{h-1} \lambda_0^t \nbE(Y_0 \epsilon_{h-t})m_0 & \longrightarrow & \sum_{t=0}^{h-1} \lambda_0^t \nbE(Y_0 \epsilon_{h-t}) \frac{m_0}{1-\lambda_0}= \sum_{t=0}^{h-1} \lambda_0^t v_{h-t-1} \frac{m_0}{1-\lambda_0},\\
\sum_{s=0}^{k_n}\lambda_0^s  \sum_{t=0}^{h-1} \lambda_0^t C_1 \nbE(\epsilon_{h-t} \epsilon_{k_n+1+h-s}) & \longrightarrow & \sum_{t=0}^{h-1} \lambda_0^t  C_1 m_0^2 \frac{1}{1-\lambda_0}
\end{eqnarray*}
so that the following limit is established:
\begin{eqnarray}
\sum_{s=0}^{k_n}\lambda_0^s  \sum_{t=0}^{h-1} \lambda_0^t \mbox{Cov}(Y_0, \epsilon_{h-t} \epsilon_{k_n+1+h-s}) & \longrightarrow & \frac{m_0}{1-\lambda_0} \sum_{t=0}^{h-1} \lambda_0^t \left[ v_{h-t-1} - C_1 m_0\right]\nonumber\\
 &=& - \frac{m_0}{1-\lambda_0} \sum_{t=0}^{h-1} \lambda_0^t \chi_{h-t-1},\quad n\to \infty ,
\label{pr_prop_k_n_4}
\end{eqnarray}
thanks to Relations \eqref{expr_v_k} and \eqref{expr_first_moment}. A similar analysis yields
\begin{eqnarray}
\sum_{s=0}^{k_n}\lambda_0^s \lambda_0^{h} \mbox{Cov}(Y_0, Y_0 \epsilon_{k_n+1+h-s}) &=& \sum_{s=0}^{k_n}\lambda_0^s \lambda_0^{h} \left[\mbox{Cov}(Y_0^2,\epsilon_{k_n+1+h-s}) +C_2 m_0- C_1 \nbE(Y_0 \epsilon_{k_n+1+h-s})\right]\nonumber\\
& \longrightarrow & \sum_{s=0}^\infty \lambda_0^s \lambda_0^{h} \times 0  + \sum_{s=0}^\infty \lambda_0^s \lambda_0^{h} C_2 m_0 - \sum_{s=0}^\infty \lambda_0^s \lambda_0^{h}C_1^2 m_0\nonumber\\
&=& \lambda_0^{h} m_0 \frac{C_2-C_1^2}{1-\lambda_0}\label{pr_prop_k_n_5}\\
\lambda_0^{k_n+1} \mbox{Cov}(Y_0, Y_h^2) &\longrightarrow & 0 \label{pr_prop_k_n_6}
\end{eqnarray}
as $n\to\infty$, for fixed $h\ge 1$. Combining \eqref{pr_prop_k_n_4}, \eqref{pr_prop_k_n_5}, \eqref{pr_prop_k_n_6} and \eqref{pr_prop_k_n_3}, we then obtain
\begin{multline}\label{pr_prop_k_n_7_minus1}
\sum_{h=1}^n \mbox{Cov}(Y_0, Y_h Y_{h+k_n+1}) \longrightarrow - \frac{m_0}{1-\lambda_0} \sum_{h=1}^\infty \sum_{t=0}^{h-1} \lambda_0^t \chi_{h-t-1} +   m_0 \frac{C_2-C_1^2}{1-\lambda_0}  \sum_{h=1}^\infty \lambda_0^{h}\\
= - \frac{m_0}{1-\lambda_0} \frac{1}{1-\lambda_0} \sum_{h=0}^\infty \chi_h +  m_0 \frac{C_2-C_1^2}{1-\lambda_0}\frac{\lambda_0}{1-\lambda_0}= \frac{m_0}{(1-\lambda_0)^2} \left[ - \sum_{h=0}^\infty \chi_h + \lambda_0(C_2-C_1^2) \right],\quad n\to\infty .
\end{multline}
A similar analysis yields that $\sum_{h=1}^n h\; \mbox{Cov}(Y_0, Y_h Y_{h+k_n+1})$ converges as $n\to\infty$ so that $\lim_{n\to\infty} \sum_{h=1}^n \frac{h}{n}\; \mbox{Cov}(Y_0, Y_h Y_{h+k_n})=0$, from which, along with \eqref{pr_prop_k_n_7_minus1} and the fact that $\lim_{n\to\infty} \mbox{Cov}(Y_0, Y_0 Y_{k_n+1})=0$, gives that
\begin{equation}\label{pr_prop_k_n_7_0}
\frac{1}{n}\sum_{h=1}^n (n-|h|)\mbox{Cov}(Y_0, Y_h Y_{h+k_n+1}) \longrightarrow 2 \frac{m_0}{(1-\lambda_0)^2} \left[ - \sum_{h=0}^\infty \chi_h + \lambda_0(C_2-C_1^2) \right],\quad n\to\infty .
\end{equation}
We then study the term $\sum_{h=-1}^{-n} \mbox{Cov}(Y_0, Y_h Y_{h+k_n+1})$. A change of index $h:=-h$ as well as the stationarity of the process $\{Y_n,\ n\in\nbZ \}$ yields
\begin{equation}\label{pr_prop_k_n_7_01}
\sum_{h=-n}^{-1} \mbox{Cov}(Y_0, Y_h Y_{h+k_n+1})=\sum_{h=1}^{n} \mbox{Cov}(Y_h, Y_0 Y_{k_n+1}).
\end{equation}
We split the righthandside of \eqref{pr_prop_k_n_7_01} into the two sums $\sum_{h=1}^{k_n}$ and $\sum_{h=k_n+1}^{n}$. Writing for $h=1,...,k_n$ that 
\begin{eqnarray}
\mbox{Cov}(Y_h, Y_0 Y_{k_n+1})&=&\mbox{Cov}(Y_0, Y_h Y_{k_n+1})+ \nbE(Y_0)\nbE(Y_h Y_{k_n+1})- \nbE(Y_h)\nbE(Y_0 Y_{k_n+1})\nonumber\\
&=&\mbox{Cov}(Y_0, Y_h Y_{k_n+1})+ C_1 u_{k_n-h}-C_1 u_{k_n}\nonumber\\
&=& \mbox{Cov}(Y_0, Y_h Y_{k_n+1})+ C_1 (u_{k_n-h}-C_1^2)-C_1 (u_{k_n}-C_1^2),\label{pr_prop_k_n_7_02}
\end{eqnarray}
we notice that, similarly to \eqref{ineg_utile}, $|\mbox{Cov}(Y_0, Y_h Y_{k_n+1})|\le  K \left[ \alpha_\epsilon \left( \left\lfloor {h}/{2}\right\rfloor\right)^{1-2/\beta} + \lambda_0^{\left\lfloor \frac{h}{2}\right\rfloor} + \lambda_0^{h}\right]$ which, since the mixing coefficient function $\alpha_\epsilon(\cdot)$ is decreasing and $\left \lfloor {h}/{2}\right\rfloor \ge \lfloor{(k_n}/{4})-1\rfloor$, $h=\left \lfloor {k_n}/{2}\right\rfloor,...,k_n$, yields that
\begin{equation}\label{pr_prop_k_n_7_03}
\left|\sum_{h=\left\lfloor \frac{k_n}{2}\right\rfloor}^{k_n} \mbox{Cov}(Y_0, Y_h Y_{k_n})\right|\le k_n K \left[ \alpha_\epsilon \left( \left\lfloor  \frac{k_n}{4}-1\right\rfloor\right)^{1-2/\beta} + \lambda_0^{  \left\lfloor  \frac{k_n}{4}-1\right\rfloor     } + \lambda_0^{ \left\lfloor \frac{k_n}{2}\right\rfloor       }\right] \longrightarrow 0,
\end{equation}
Next, writing $\mbox{Cov}(Y_0, Y_h Y_{k_n+1})=\mbox{Cov}(Y_0 Y_h, Y_{k_n+1})+ \nbE(Y_0 Y_h)\nbE(Y_{k_n+1})- \nbE(Y_0)\nbE(Y_h Y_{k_n+1})= \mbox{Cov}(Y_0 Y_h, Y_{k_n+1})+ C_1(u_{h-1}-C_1^2) + C_1(C_1^2-u_{k_n-h})$, and since, by an argument similar to \eqref{pr_prop_k_n_7_03}, $\sum_{h=1}^{ \left\lfloor \frac{k_n}{2}\right\rfloor -1} \mbox{Cov}(Y_0 Y_h, Y_{k_n+1}) \longrightarrow 0$ as $n\to\infty$, and $\sum_{h=1}^{ \left\lfloor \frac{k_n}{2}\right\rfloor -1} (C_1^2-u_{k_n-h})=\sum_{h=k_n-\left\lfloor \frac{k_n}{2}\right\rfloor+1}^{k_n-1} (C_1^2-u_{h})\longrightarrow 0$ as $n\to\infty$, we thus get that 
$$
\sum_{h=1}^{ \left\lfloor \frac{k_n}{2}\right\rfloor -1}\mbox{Cov}(Y_0, Y_h Y_{k_n+1})\longrightarrow C_1 \sum_{h=1}^\infty (u_{h-1}-C_1^2),\quad n\to\infty ,
$$
which, coupled with \eqref{pr_prop_k_n_7_03}, yields that
\begin{equation}\label{pr_prop_k_n_7_04}
\sum_{h=1}^{k_n} \mbox{Cov}(Y_0, Y_h Y_{k_n+1}) \longrightarrow C_1 \sum_{h=1}^\infty (u_{h-1}-C_1^2),\quad n\to\infty .
\end{equation}
Pluged into \eqref{pr_prop_k_n_7_02}, and since $k_n (u_{k_n}-C_1^2) = \mathrm{O}(k_n \lambda_0^{k_n})\longrightarrow 0$ as $n\to \infty$ thanks to Corollary \ref{prop_C_1_u_k} and $\sum_{h=1}^{k_n} (u_{k_n-h}-C_1^2)=\sum_{h=0}^{k_n-1} (u_{h}-C_1^2) \longrightarrow \sum_{h=0}^{\infty } (u_{h-1}-C_1^2)$, we thus get
\begin{equation}\label{pr_prop_k_n_7_05}
\sum_{h=1}^{k_n}  \mbox{Cov}(Y_h, Y_0 Y_{k_n+1}) \longrightarrow 2C_1 \sum_{h=1}^{\infty } (u_{h-1}-C_1^2),\quad n\to \infty .
\end{equation}
We now consider the $\sum_{h=k_n+1}^{n}$ sum in the splitting of righthandside sum of \eqref{pr_prop_k_n_7_01}. Using the stationarity of $\{Y_n,\ n\in\nbZ \}$ and a change of index $h:=h-k_n$, we first write
\begin{equation}\label{pr_prop_k_n_7_06}
\sum_{h=k_n+1}^n  \mbox{Cov}(Y_h, Y_0 Y_{k_n+1})=\sum_{h=0}^{n-k_n-1} \mbox{Cov}( Y_{-k_n-1}Y_0 ,Y_h).
\end{equation}
We use a dominated convergence argument to determine the limit of \eqref{pr_prop_k_n_7_06} as $n\to \infty$. Similarly to \eqref{ineg_utile}, the following inequality holds
\begin{equation}\label{pr_prop_k_n_7_07}
| \mbox{Cov}( Y_{-k_n-1}Y_0 ,Y_h)| \le K \left[ \alpha_\epsilon \left( \left\lfloor \frac{h}{2}\right\rfloor\right)^{1-2/\beta} + \lambda_0^{\left\lfloor \frac{h}{2}\right\rfloor} + \lambda_0^{h}\right],
\end{equation}
the upper bound being summable. Next, we decompose the summand in the righthandside of \eqref{pr_prop_k_n_7_06} in the same spirit as \eqref{pr_prop_k_n_7_02} as
\begin{equation}\label{pr_prop_k_n_7_08}
\mbox{Cov}( Y_{-k_n-1}Y_0 ,Y_h)= \mbox{Cov}( Y_{-k_n-1},Y_0 Y_h) + C_1 (u_{h-1}-C_1^2)-C_1 (u_{k_n}-C_1^2).
\end{equation}
For fixed $h\in \nbN$, a standard argument yields that $$|\mbox{Cov}( Y_{-k_n-1},Y_0 Y_h) | \le K \left[ \alpha_\epsilon \left( \left\lfloor \frac{k_n+1}{2}\right\rfloor\right)^{1-2/\beta} + \lambda_0^{\left\lfloor \frac{k_n+1}{2}\right\rfloor} + \lambda_0^{h}\right] \longrightarrow 0$$ as $n\to \infty$. We already saw that  $\lim_{n\to \infty }u_{k_n}-C_1^2=0$, so that from \eqref{pr_prop_k_n_7_08} the following limit holds
\begin{equation}\label{pr_prop_k_n_7_09}
\mbox{Cov}( Y_{-k_n-1}Y_0 ,Y_h)\longrightarrow  C_1 (u_{h-1}-C_1^2),\quad n\to \infty,\ \forall h\in \nbN .
\end{equation}
Hence, \eqref{pr_prop_k_n_7_06} along with \eqref{pr_prop_k_n_7_07} and \eqref{pr_prop_k_n_7_09} yield by the dominated convergence theorem that
$$
\sum_{h=k_n+1}^n  \mbox{Cov}(Y_h, Y_0 Y_{k_n+1}) \longrightarrow \sum_{h=0}^\infty C_1 (u_{h-1}-C_1^2)  =  C_1(C_2-C_1^2) + C_1 \sum_{h=1}^{\infty } (u_{h-1}-C_1^2),
$$
which, coupled with \eqref{pr_prop_k_n_7_05} as well as \eqref{pr_prop_k_n_7_01}, yields
$$
\sum_{h=-n}^{-1} \mbox{Cov}(Y_0, Y_h Y_{h+k_n+1}) \longrightarrow C_1(C_2-C_1^2) + 3C_1 \sum_{h=1}^{\infty } (u_{h-1}-C_1^2),\quad n\to \infty ,
$$
which in turn gives
$$
\frac{1}{n}\sum_{h=-n}^{-1} (n-|h|)\mbox{Cov}(Y_0, Y_h Y_{h+k_n+1}) \longrightarrow 2C_1(C_2-C_1^2) + 3C_1 \sum_{h=1}^{\infty } (u_{h-1}-C_1^2),\quad n\to \infty .$$
Gathering \eqref{pr_prop_k_n_7_0} and the above limit we eventually arrive after some easy calculation at
\begin{multline}\label{pr_prop_k_n_7}
\frac{1}{n}\sum_{h=-n}^n (n-|h|)\mbox{Cov}(Y_0, Y_h Y_{h+k_n+1}) \longrightarrow -2 \frac{m_0}{(1-\lambda_0)^2}  \sum_{h=0}^\infty \chi_h \\
+ \frac{C_1}{1-\lambda_0}(C_2-C_1^2) +3 C_1 \sum_{h=1}^{\infty } (u_{h-1}-C_1^2) ,\quad n\to\infty .
\end{multline}
We then study $\mbox{Cov}(Y_0Y_{k_n+1}, Y_hY_{h+k_n+1})$. The limit of $\sum_{h=1}^n \mbox{Cov}(Y_0Y_{k_n+1}, Y_hY_{h+k_n+1})$ is obtained by splitting the sum in three blocks, namely $\sum_{h=1}^{\lfloor k_n/2 \rfloor}$, $\sum_{\lfloor k_n/2 \rfloor+1}^{k_n+1}$ and $\sum_{h=k_n+2}^n$.\\
We start by the summation over $h=1,...,\lfloor k_n/2 \rfloor$. We exploit here the fact that $h$ is "far" from $k_n+1$, and write that
\begin{eqnarray}
\mbox{Cov}(Y_0Y_{k_n+1}, Y_hY_{h+k_n+1}) &=& \mbox{Cov}(Y_0 Y_h, Y_{k_n+1} Y_{h+k_n+1}) + \nbE(Y_0 Y_h) \nbE(Y_{k_n+1} Y_{k_n+1+h}) \nonumber\\
&& - \nbE(Y_0 Y_{k_n+1})\nbE(Y_{h} Y_{k_n+1+h})\nonumber\\
&=& \mbox{Cov}(Y_0 Y_h, Y_{k_n+1} Y_{h+k_n+1}) + u_{h-1}^2 - u_{k_n}^2.\label{pr_prop_k_n_8}
\end{eqnarray}
Similarly to \eqref{ineg_utile} we have $$|\mbox{Cov}(Y_0Y_{h}, Y_{k_n+1}Y_{h+k_n+1})| \le K \left[ \alpha_\epsilon\left( \left\lfloor \frac{k_n-h}{2}\right\rfloor\right)^{1-2/\beta} + \lambda_0^{\left\lfloor \frac{k_n-h}{2}\right\rfloor} + \lambda_0^{k_n+1-h}\right]$$ which, since the mixing coefficient function $\alpha_\epsilon(\cdot)$ is decreasing and $\lfloor ({k_n-h})/{2} \rfloor \ge \lfloor({k_n}/{4})-1\rfloor$, $h=1,...,\lfloor k_n/2 \rfloor$, entails that
\begin{multline}\label{pr_prop_k_n_9}
\sum_{h=1}^{\lfloor k_n/2 \rfloor} |\mbox{Cov}(Y_0Y_{h}, Y_{k_n+1}Y_{h+k_n+1})| \le K \lfloor k_n/2 \rfloor \left[ \alpha_\epsilon\left( \left\lfloor\frac{k_n}{4}-1\right\rfloor \right)^{1-2/\beta} + \lambda_0^{\left\lfloor\frac{k_n}{4}-1\right\rfloor} + \lambda_0^{(k_n+1)/2}\right]\\ 
\longrightarrow 0,\quad n\to \infty .
\end{multline}
Remembering that $C_1^2 -u_k = \mathrm{O}(\lambda_0^k)$ from Corollary \ref{prop_C_1_u_k}, so that $\lim_{k\to \infty}k(C_1^2 -u_k)=0$, we also have, writing $u_{h-1}^2 - u_{k_n-1}^2= (u_{h-1}^2 -C_1^4) + (C_1^
4- u_{k_n-1}^2)$:
\begin{equation}\label{pr_prop_k_n_10}
\sum_{h=1}^{\lfloor k_n/2 \rfloor} (u_{h-1}^2 - u_{k_n-1}^2)= \sum_{h=1}^{\lfloor k_n/2 \rfloor} (u_{h-1}^2 -C_1^4) + \lfloor k_n/2 \rfloor (C_1^4- u_{k_n-1}^2) \longrightarrow  \sum_{h=1}^{\infty} (u_{h-1}^2 -C_1^4),\quad n\to \infty ,
\end{equation}
as indeed $\lfloor k_n/2 \rfloor (C_1^ 4- u_{k_n-1}^2)= \mathrm{O}\left(\lfloor k_n/2 \rfloor (C_1^2- u_{k_n-1})\right)\longrightarrow 0$ as $n\to\infty$. Gathering \eqref{pr_prop_k_n_9} and \eqref{pr_prop_k_n_10} in \eqref{pr_prop_k_n_8} thus yields
\begin{equation}\label{pr_prop_k_n_11}
\sum_{h=1}^{\lfloor k_n/2 \rfloor} \mbox{Cov}(Y_0Y_{k_n}, Y_hY_{h+k_n}) \longrightarrow \sum_{h=1}^{\infty} (u_{h-1}^2 -C_1^4),\quad n\to \infty.
\end{equation}
We now consider the summation over $h=\lfloor k_n/2 \rfloor +1,..., k_n+1$. We this time exploit the fact that $h$ and $k_n+1+h$ are respectively "far" from $0$ and $k_n+1$, and write
\begin{eqnarray}
\mbox{Cov}(Y_0Y_{k_n+1}, Y_hY_{h+k_n+1}) &=& \mbox{Cov}(Y_0 Y_h Y_{k_n+1} ,Y_{h+k_n+1}) + [\mbox{Cov}(Y_0, Y_h Y_{k_n+1} ) \nonumber\\
&&  + \nbE(Y_0)\nbE(Y_h Y_{k_n+1})]\nbE(Y_{h+k_n+1})- \nbE(Y_{0}Y_{k_n+1}) \nbE(Y_{h}Y_{h+k_n+1}) \nonumber\\
&=& \mbox{Cov}(Y_0 Y_h Y_{k_n+1} ,Y_{h+k_n+1}) + [\mbox{Cov}(Y_0, Y_h Y_{k_n+1} ) \nonumber\\
&& + C_1 u_{k_n-h}]C_1 - u_{k_n}^2 .\label{pr_prop_k_n_12}
\end{eqnarray}
Similarly to \eqref{pr_prop_k_n_9}:
\begin{eqnarray*}
\sum_{\lfloor k_n/2 \rfloor +1 }^{k_n+1}| \mbox{Cov}(Y_0 Y_h Y_{k_n+1} ,Y_{h+k_n}+1)| &\le & K \lfloor k_n/2 \rfloor \left[ \alpha_\epsilon\left( \left\lfloor\frac{k_n}{4}-1\right\rfloor \right)^{1-2/\beta} + \lambda_0^{\left\lfloor\frac{k_n}{4}-1\right\rfloor} + \lambda_0^{  \frac{k_n+1}{2}    }\right]\\&&\longrightarrow 0,\quad n\to\infty ,\\
\sum_{\lfloor k_n/2 \rfloor +1}^{k_n+1} |\mbox{Cov}(Y_0, Y_h Y_{k_n+1} ) |  &\le & K \lfloor k_n/2 \rfloor \left[ \alpha_\epsilon\left( \left\lfloor\frac{k_n}{4}-1\right\rfloor \right)^{1-2/\beta} + \lambda_0^{\left\lfloor\frac{k_n}{4}-1\right\rfloor} + \lambda_0^{ \frac{k_n+1}{2} }\right]\\&&\longrightarrow 0, \quad n\to\infty .
\end{eqnarray*}
And, writing $C_1^2 u_{k_n-h} -  u_{k_n}^2 = C_1^2 (u_{k_n-h}- C_1^2) + C_1^4 -  u_{k_n}^2$, we obtain, similarly to \eqref{pr_prop_k_n_10}:
\begin{multline*}
\sum_{h=\lfloor k_n/2 \rfloor +1}^{k_n+1} C_1^2 u_{k_n-h} -  u_{k_n}^2 =   C_1^2 \sum_{h=0}^{k_n- \lfloor k_n/2 \rfloor - 1} (u_{h-1}- C_1^2) + \left( k_n - \lfloor k_n/2 \rfloor \right)(C_1^4 -  u_{k_n}^2)\\
\longrightarrow C_1^2 \sum_{h=0}^\infty (u_{h-1}- C_1^2),\quad n\to \infty ,
\end{multline*}
so that we obtain from \eqref{pr_prop_k_n_12} the following limit
\begin{equation}\label{pr_prop_k_n_13}
\sum_{h=\lfloor k_n/2 \rfloor +1 }^{k_n+1} \mbox{Cov}(Y_0Y_{k_n}, Y_hY_{h+k_n}) \longrightarrow C_1^2 \sum_{h=0}^\infty (u_{h-1}- C_1^2),\quad n\to\infty .
\end{equation}
Then, we consider the summation over $h=k_n+2,...,n$. The expansion established in \eqref{proof_step_1_CTL_k0} reads here
\begin{multline}\label{pr_prop_k_n_14}
\mbox{Cov}(Y_0Y_{k_n+1}, Y_hY_{h+k_n+1})=\sum_{s=0}^{k_n}\lambda_0^s \left[ \sum_{t=0}^{h-k_n-2} \lambda_0^t \mbox{Cov}(Y_0Y_{k_n+1}, \epsilon_{h-t} \epsilon_{h+k_n+1-s})\right. \\ \left. + \lambda_0^{h-k_n-1}
\mbox{Cov}(Y_0Y_{k_n+1}, Y_{k_n+1} \epsilon_{h+k_n+1-s}) \right]
 + \lambda_0^{k_n+1} \mbox{Cov}(Y_0Y_{k_n+1}, Y_h^2).
\end{multline}
We study the summation over $h=k_n+2,...,n$ of each term in \eqref{pr_prop_k_n_14}, and start with the first term, which reads, thanks to a change of variable $r:=h-k_n-1$,
\begin{eqnarray}
\sum_{h=k_n+2}^n \sum_{s=0}^{k_n}\  \sum_{t=0}^{h-k_n-2} \lambda_0^{s+t} \mbox{Cov}(Y_0Y_{k_n+1}, \epsilon_{h-t} \epsilon_{h+k_n+1-s})&=& \sum_{r=1}^{n-k_n-1} J_1(r,k_n),\label{pr_prop_k_n_15}\\
J_1(r,k_n)&:=& \sum_{s=0}^{k_n} \sum_{t=0}^{r-1} \lambda_0^{s+t} \mbox{Cov}(Y_0Y_{k_n+1}, \epsilon_{r+k_n+1-t} \epsilon_{r+2k_n+2-s}).\nonumber
\end{eqnarray}
Studying the limit of \eqref{pr_prop_k_n_15} is done by applying the dominant convergence theorem. We see from \eqref{bound_sum_Davydov0} that $\left|  \sum_{t=0}^{r-1} \lambda_0^{t} \mbox{Cov}(Y_0Y_{k_n+1}, \epsilon_{r+k_n+1-t} \epsilon_{r+2k_n+2-s})\right|\le K\left[ \alpha_\epsilon\left( \left\lfloor ({r-1})/{2}\right\rfloor\right)^{1-2/\beta} + \lambda_0^{\left\lfloor \frac{r-1}{2}\right\rfloor}\right]$ for all $s=0,...,k_n$, so that $|J_1(r,k_n)|$ is dominated as follows
$$
|J_1(r,k_n)|\le  K\left[ \alpha_\epsilon\left( \left\lfloor \frac{r-1}{2}\right\rfloor\right)^{1-2/\beta} + \lambda_0^{\left\lfloor \frac{r-1}{2}\right\rfloor}\right]
$$
of which righhandside is summable. As to the pointwise convergence of $J_1(r,k_n)$ for fixed $r$ as $n\to\infty$, we use again a domination argument in the variables $s$ and $t$. Indeed we have that $|\lambda_0^{s+t} \mbox{Cov}(Y_0Y_{k_n+1}, \epsilon_{r+k_n+1-t} \epsilon_{r+2k_n+2-s})|\le K \lambda_0^{s+t}$ for some constant $K$ thanks to the Cauchy Schwarz inequality, and we have
\begin{eqnarray}
&&\mbox{Cov}(Y_0Y_{k_n+1}, \epsilon_{r+k_n+1-t} \epsilon_{r+2k_n+2-s}) = \mbox{Cov}(Y_0,Y_{k_n+1} \epsilon_{r+k_n+1-t} \epsilon_{r+2k_n+2-s}) \nonumber\\
&&+ \nbE(Y_0) \nbE(Y_{k_n+1} \epsilon_{r+k_n+1-t} \epsilon_{r+2k_n+2-s})  - \nbE(Y_0Y_{k_n+1}) \nbE(\epsilon_{r+k_n+1-t} \epsilon_{r+2k_n+2-s}) \nonumber\\
&&= \mbox{Cov}(Y_0,Y_{k_n+1} \epsilon_{r+k_n+1-t} \epsilon_{r+2k_n+2-s})\nonumber\\
&& +C_1 \nbE(Y_{0} \epsilon_{r-t} \epsilon_{r+k_n+1-s})- u_{k_n}\nbE (\epsilon_{r-t} \epsilon_{r+k_n+1-s}) \label{pr_prop_k_n_16}
\end{eqnarray}
Standard arguments yields the following limits as $n\to \infty$:
\begin{eqnarray*}
\mbox{Cov}(Y_0,Y_{k_n+1} \epsilon_{r+k_n+1-t} \epsilon_{r+2k_n+2-s})&\longrightarrow & 0,\\
\nbE(Y_{0} \epsilon_{r-t} \epsilon_{r+k_n+1-s}) &\longrightarrow & \nbE(Y_{0} \epsilon_{r-t}) m_0,\\
u_{k_n}\nbE (\epsilon_{r-t} \epsilon_{r+k_n+1-s}) &\longrightarrow & C_1^2 m_0^2 ,
\end{eqnarray*}
which, plugged into \eqref{pr_prop_k_n_16}, yields the following thanks to Relation \eqref{expr_v_k}
\begin{eqnarray*}
J_1(r,k_n) &\longrightarrow & \sum_{s=0}^\infty \sum_{t=0}^{r-1}\lambda_0^{s+t} [C_1 \nbE(Y_{0} \epsilon_{r-t}) m_0 -C_1^2 m_0^2]\\
&=& \frac{C_1 m_0}{1-\lambda_0}\sum_{t=0}^{r-1} \lambda_0^t [v_{r-t-1}-C_1 m_0] = - \frac{C_1 m_0}{1-\lambda_0}\sum_{t=0}^{r-1} \lambda_0^t \chi_{r-t-1}\\
&=& - \frac{C_1 m_0}{1-\lambda_0}\lambda_0^r \sum_{t=1}^{r} \lambda_0^{-t} \chi_{t-1},\quad n\to \infty .
\end{eqnarray*}
Hence the following limit from \eqref{pr_prop_k_n_15}:
\begin{multline}\label{pr_prop_k_n_17}
\sum_{h=k_n+2}^n \sum_{s=0}^{k_n} \sum_{t=0}^{h-k_n-2} \lambda_0^{s+t} \mbox{Cov}(Y_0Y_{k_n+1}, \epsilon_{h-t} \epsilon_{h+k_n+1-s}) \longrightarrow - \frac{C_1 m_0}{1-\lambda_0}\sum_{r=1}^\infty \lambda_0^r \sum_{t=1}^{r} \lambda_0^{-t} \chi_{t-1}\\
=- \frac{C_1 m_0}{(1-\lambda_0)^2}\sum_{t=1}^{\infty} \lambda_0^{-t} \chi_{t-1}.
\end{multline}
We now pass on to the second term in \eqref{pr_prop_k_n_14} and we write
\begin{eqnarray}
\sum_{h=k_n+2}^n \sum_{s=0}^{k_n}\  \lambda_0^{s+h}
\mbox{Cov}(Y_0, Y_0 \epsilon_{k_n+1+h-s})&=& \sum_{r=1}^{n-k_n-1} J_2(r,k_n),
\label{pr_prop_k_n_18}\\
J_2(r,k_n) &=& \sum_{s=0}^{k_n}\  \lambda_0^{s+r+k_n+1}
\mbox{Cov}(Y_0, Y_0 \epsilon_{r+2k_n+2-s}).\nonumber
\end{eqnarray}
From the Cauchy Schwarz inequality as well as crude bounding we get that $|J_2(r,k_n)|$ is upper bounded by $K\lambda_0^{r+k_n}$ for some constant $K$, so that from \eqref{pr_prop_k_n_17} we obtain that
\begin{equation}
\left| \sum_{h=k_n+2}^n \sum_{s=0}^{k_n}\  \lambda_0^{s+h}
\mbox{Cov}(Y_0, Y_0 \epsilon_{k_n+1+h-s})\right| =\mathrm{O}(\lambda_0^{k_n})\longrightarrow 0,\quad n\to\infty .\label{pr_prop_k_n_19}
\end{equation}
As to the third term in \eqref{pr_prop_k_n_14}, standard estimates yield that $\sum_{h=1}^\infty \mbox{Cov}(Y_0, Y_h^2)$ is a convergent series, so that we get
\begin{equation}
\sum_{h=k_n+2}^n \lambda_0^{k_n+1} \mbox{Cov}(Y_0, Y_h^2) =\mathrm{O}(\lambda_0^{k_n})\longrightarrow 0,\quad n\to\infty ,\label{pr_prop_k_n_20}
\end{equation}
so that, gathering \eqref{pr_prop_k_n_17}, \eqref{pr_prop_k_n_19} and \eqref{pr_prop_k_n_20}, we get from \eqref{pr_prop_k_n_14} that
\begin{equation}
\sum_{h=k_n+2}^n \mbox{Cov}(Y_0Y_{k_n+1}, Y_hY_{h+k_n+1}) \longrightarrow - \frac{C_1 m_0}{(1-\lambda_0)^2}\sum_{t=1}^{\infty} \lambda_0^{-t}\chi_{t-1},\quad n\to \infty .\label{pr_prop_k_n_21}
\end{equation}
Finally, gathering \eqref{pr_prop_k_n_11}, \eqref{pr_prop_k_n_13} and \eqref{pr_prop_k_n_21} we arrive at the following limit as $n\to \infty $:
\begin{equation}
\sum_{h=1}^n \mbox{Cov}(Y_0Y_{k_n}, Y_hY_{h+k_n+1}) \longrightarrow  \sum_{h=1}^{\infty} (u_{h-1}^2 -C_1^4) +  C_1^2 \sum_{h=0}^\infty (u_{h-1}- C_1^2) - \frac{C_1 m_0}{(1-\lambda_0)^2}\sum_{t=1}^{\infty} \lambda_0^{-t}\chi_{t-1}.\label{pr_prop_k_n_22}
\end{equation}
A similar analysis yields that $\sum_{h=1}^n h\; \mbox{Cov}(Y_0Y_{k_n}, Y_h Y_{h+k_n})$ converges as $n\to\infty$ so that $\lim_{n\to\infty} \sum_{h=1}^n \frac{h}{n}\; \mbox{Cov}(Y_0Y_{k_n}, Y_h Y_{h+k_n})=0$. Also, it is not difficult to check that\\ $\lim_{n\to\infty} \mbox{Cov}(Y_0Y_{k_n+1}, Y_0 Y_{k_n+1})=\lim_{n\to\infty} \mbox{Cov}(Y_0^2,Y_{k_n+1}^2) + C_2^2 - C_1^4 + C_1^2 - u_{k_n}^2=C_2^2 - C_1^4$. 

As to  $\sum_{h=-n}^{-1} \mbox{Cov}(Y_0Y_{k_n+1}, Y_h Y_{h+k_n+1})$, we use again by a stationarity argument and a change of index that it is equal to $\sum_{h=1}^{n} \mbox{Cov}(Y_0Y_{k_n+1}, Y_h Y_{h+k_n+1})$, so that its limit as $n\to\infty$ is also given by the righthandside of \eqref{pr_prop_k_n_22}. Thus, we obtain the following limit
\begin{multline}
\frac{1}{n}\sum_{h=-n}^n (n-|h|)\mbox{Cov}(Y_0Y_{k_n+1}, Y_h Y_{h+k_n+1}) \longrightarrow C_2^2 - C_1^4 + 2 \sum_{h=1}^{\infty} (u_{h-1}^2 -C_1^4) +  2 C_1^2 \sum_{h=0}^\infty (u_{h-1}- C_1^2)\\
 - 2 \frac{C_1 m_0}{(1-\lambda_0)^2}\sum_{t=1}^{\infty} \lambda_0^{-t}\chi_{t-1},\quad n\to \infty .\label{pr_prop_k_n_23}
\end{multline}
Hence, in view of \eqref{pr_prop_k_n_1}, \eqref{pr_prop_k_n_2} and the limits \eqref{pr_prop_k_n_2bis}, \eqref{pr_prop_k_n_7} and \eqref{pr_prop_k_n_23}, we thus proved the expression \eqref{expression_big_Sigma} for $\mathfrak{S}$.\\
\paragraph{$\diamond$ Step 2:} Recalling for all $m\in \nbN$ the processes $\{Y_{n,m},\ n\in \nbZ \}$ and $\{Z_{n,m},\ n\in \nbZ \}$ in \eqref{proof_step_2_CTL_k0_2_0},  as well as introducing
\begin{eqnarray}
X_{j,m}(k_n)&=& X_{j,m}^1(k_n) + X_{j,m}^2(k_n)\label{def_X_j_k_n}\\
X_{j,m}^1(k_n)&=& [Y_{j,m}- \nbE(Y_{j,m}), Y_{j+k_n+1,m}- \nbE(Y_{j+k_n+1,m})]' \nonumber\\
X_{j,m}^2(k_n)&=& [Z_{j,m}- \nbE(Z_{j,m}), Z_{j+k_n+1,m}- \nbE(Z_{j+k_n+1,m})]',\nonumber
\end{eqnarray}
we now follow the scheme in Step 2 of the proof of Theorem \ref{theo_CLT_vector_k_0} with again the non negligible difference that $k_0$ is now replaced by $k_n$, and show that
\begin{equation}\label{Step2_k_n}
\frac{1}{\sqrt{n}}\sum_{i=1}^n X_{i,m}^1(k_n)\stackrel{\cal D}{\longrightarrow} {\cal N}(0,\mathfrak{S}^m),\quad n\to\infty ,
\end{equation}
for some semi-definite positive matrix $\mathfrak{S}^m\in \nbR^{2\times 2}$. 
For this we need to verify Condition (2) in \cite{FZ05} which says that the following limit exists and defines that matrix:
\begin{multline}\label{Step_2_CTL_kn_def_sigma2}
\frac{1}{n}\nbE\left( \left( \sum_{i=1}^n X_{i,m}^1(k_n)\right)\left( \sum_{i=1}^n X_{i,m}^1(k_n)\right)'\right)=\frac{1}{n}\nbE\left( \sum_{1\le i,j \le n}X_{i,m}^1(k_n)'X_{j,m}^1(k_n)\right)\\
=\frac{1}{n}\sum_{h=-n}^n (n-|h|) \nbE (X_{0,m}^1(k_n),X_{h,m}^1(k_n))
\longrightarrow \mathfrak{S}^m
\end{multline}
as $n\to\infty$, which we prove now. For this we first express the expectation above as covariances as follows 
\begin{equation}\label{Cov_X0m}
\nbE(X_{0,m}^1(k_n),X_{h,m}^1(k_n))=
\left[
\begin{array}{cc}
\mbox{Cov}(Y_{0,m},Y_{h,m}) & \mbox{Cov}(Y_{0,m},Y_{h+k_n+1,m})\\
\mbox{Cov}(Y_{k_n+1,m},Y_{h,m}) & \mbox{Cov}(Y_{k_n+1,m},Y_{h+k_n+1,m})
\end{array}
\right] ,
\end{equation}
where $\mbox{Cov}(Y_{k_n+1,m},Y_{h+k_n+1,m})=\mbox{Cov}(Y_{0,m},Y_{h,m})$ by stationarity. We then study the convergence of the series with general terms given by the entries of the above matrix, i.e. convergence of $\sum_{h=-n}^n \mbox{Cov}(Y_{0,m},Y_{h,m})$, $\sum_{h=-n}^n \mbox{Cov}(Y_{0,m},Y_{h+k_n+1,m})$ and $\sum_{h=-n}^n \mbox{Cov}(Y_{k_n+1,m},Y_{h,m}) $ as $n\to \infty$. Standard arguments yield the convergence of\\ $\sum_{h=-n}^n \mbox{Cov}(Y_{0,m},Y_{h,m})$, as the summand does not depend on $n$, so that we are going to focus on the convergence of $\sum_{h=-n}^n \mbox{Cov}(Y_{0,m},Y_{h+k_n+1,m})$, the one with the summand $\mbox{Cov}(Y_{k_n+1,m},Y_{h,m})$ being dealt with similarly. Remembering the definition \eqref{proof_step_2_CTL_k0_2_0} for $Y_{0,m}$ and $Y_{h,m}$, we have
\begin{equation}\label{Cov_X0m_0}
\mbox{Cov}(Y_{0,m},Y_{h+k_n+1,m})=\sum_{r,r'=0}^m \mbox{Cov}(\theta^{(r)}_0\circ\epsilon_{-r}, \theta^{(r')}_{h+k_n+1}\circ\epsilon_{h+k_n+1-r'}).
\end{equation}
It thus suffices to prove that $\sum_{h=-n}^n \mbox{Cov}(\theta^{(r)}_0\circ\epsilon_{-r}, \theta^{(r')}_{h+k_n+1}\circ\epsilon_{h+k_n+1-r'})$ admits a limit as $n \to \infty$ for all $r$ and $r'$ in $0,...,m$ to be able to conclude as to the limit of $\sum_{h=-n}^n \mbox{Cov}(Y_{0,m},Y_{h+k_n+1,m})$. We prove that the limit exists when $r$ and $r'$ are both $0$ w.l.o.g. Remembering that $\theta^{(0)}_n=\mbox{Id}$, we thus need to study the limit of
$\sum_{h=-n}^n \mbox{Cov}(\epsilon_{0}, \epsilon_{h+k_n+1})$
which we again split into $\sum_{h=0}^n$ and $\sum_{h=-n}^{-1}$. A dominated convervgence argument gives that 
\begin{equation}\label{Cov_X0m_1_0}
\sum_{h=0}^n \mbox{Cov}(\epsilon_{0}, \epsilon_{h+k_n+1}) \longrightarrow 0,\quad n\to \infty.
\end{equation}
Next, writing that
\begin{equation}\label{Cov_X0m_2}
\sum_{h=-n}^{-1}\mbox{Cov}(\epsilon_{0}, \epsilon_{h+k_n+1})=\sum_{h=1}^{n}\mbox{Cov}(\epsilon_{0}, \epsilon_{k_n+1-h})=\sum_{h=1}^{k_n+1}+ \sum_{h=k_n+2}^{n},
\end{equation}
of which two latter sums have the following limits (remember that $k_n=\mathrm{o}(n)$ by assumption, so that $n-k_n\longrightarrow \infty$ as $n\to \infty$):
\begin{eqnarray*}
\sum_{h=1}^{k_n+1}\mbox{Cov}(\epsilon_{0}, \epsilon_{k_n+1-h}) &=& \sum_{h=0}^{k_n}\mbox{Cov}(\epsilon_{0}, \epsilon_{h})=\sum_{h=0}^{k_n} \nu_h\\
&\longrightarrow & \sum_{h=0}^{\infty} \nu_h ,\quad n\to \infty ,\\
 \sum_{h=k_n+2}^{n} \mbox{Cov}(\epsilon_{0}, \epsilon_{k_n+1-h}) &=&  \sum_{h=k_n+2}^{n} \mbox{Cov}(\epsilon_{0}, \epsilon_{h-k_n-1})=\sum_{h=1}^{n-k_n-1} \mbox{Cov}(\epsilon_{0}, \epsilon_{h})\\
 &=& \sum_{h=1}^{n-k_n-1} \nu_h \longrightarrow  \sum_{h=1}^{\infty} \nu_h ,\quad n\to \infty 
\end{eqnarray*}
which, plugged into \eqref{Cov_X0m_2} and \eqref{Cov_X0m_1_0}, yields the following limit
$$
\sum_{h=-n}^n \mbox{Cov}(Y_{0,m},Y_{h+k_n+1,m}) \longrightarrow \nu_0 + 2 \sum_{h=1}^{\infty} \nu_h ,\quad n\to \infty .
$$
Note again that what we wanted was simply the existence of the limit, of which explicit expression is just a bonus. All in all, we thus have from \eqref{Cov_X0m_0} that $\sum_{h=-n}^n \mbox{Cov}(Y_{0,m},Y_{h+k_n+1,m})$ admits a limit as $n\to \infty$. Similarly, one proves the existence of $\sum_{h=-n}^n |h \mbox{Cov}(Y_{0,m},Y_{h+k_n+1,m})|$ so that, by \eqref{Cov_X0m} and the dominated convergence theorem the limit $\mathfrak{S}^m$ indeed exists in \eqref{Step_2_CTL_kn_def_sigma2}.

We now conclude by saying that Condition (1) in \cite{FZ05} is satisfied with $\nu^*=2\beta-2$. As discussed in the beginning of \citet[Section 2]{FZ05}, and using the notation therein, Condition (3) is verified here with $T_n=2h_n$, and thanks to the assumption \eqref{Cond_CLT_k_n}. Hence the convergence in distribution \eqref{Step2_k_n} holds.\\
\paragraph{$\diamond$  Step 3:} As in Step 3 of the proof of Proposition \ref{prop_main_theo2}, we prove that 
\begin{equation}\label{step_3_CTL_kn}
\lim_{m\to \infty}\limsup_{n\to \infty}\nbP\left( \left| \left| \frac{1}{\sqrt{n}} \sum_{i=1}^n X_{i,m}^2(k_n) \right|\right|> \epsilon\right)=0,\quad \forall \epsilon >0.
\end{equation}
The equivalent of \eqref{proof_step_2_CTL_k0_18} is here
\begin{multline}\label{step3_to_prove1}
\nbP\left( \left| \left| \frac{1}{\sqrt{n}} \sum_{i=1}^n X_{i,m}^2(k_n) \right|\right|> \epsilon\right) \le \frac{1}{n\epsilon^2}\nbE\left( \left| \left| \sum_{i=1}^n X_{i,m}^2(k_n) \right|\right|^2\right)\\
= \frac{1}{n\epsilon^2}\sum_{h=-n}^n(n-|h|)\nbE\left( X_{0,m}^2(k_n)'X_{h,m}^2(k_n)\right).
\end{multline}
We see that it suffices to prove that%
\begin{equation}\label{step3_to_prove2_0}
\sum_{h=-n}^n\| \nbE\left( X_{0,m}^2(k_n)'X_{h,m}^2(k_n)\right)\| \le K \upsilon^m,\quad \sum_{h=-n}^n |h|.\|\nbE\left( X_{0,m}^2(k_n)'X_{h,m}^2(k_n)\right)\|\le K \upsilon^m
\end{equation}
in order to prove \eqref{step_3_CTL_kn}. We only prove this property for $\sum_{h=-n}^n\nbE\left( X_{0,m}^2(k_n)'X_{h,m}^2(k_n)\right)$, the proof being similar for $\sum_{h=-n}^n h\nbE\left( X_{0,m}^2(k_n)'X_{h,m}^2(k_n)\right)$. For this, and since, as in \eqref{Cov_X0m}, one has the expression
\begin{equation}\label{Cov_X02m}
\nbE(X_{0,m}^2(k_n),X_{h,m}^2(k_n))=
\left[
\begin{array}{cc}
\mbox{Cov}(Z_{0,m},Z_{h,m}) & \mbox{Cov}(Z_{0,m},Z_{h+k_n+1,m})\\
\mbox{Cov}(Z_{k_n+1,m},Z_{h,m}) & \mbox{Cov}(Z_{0,m},Z_{h,m})
\end{array}
\right] ,
\end{equation}
we saw that it suffices to prove that
\begin{equation}\label{step3_to_prove2}
\sum_{h=-n}^n |\mbox{Cov}(Z_{0,m}, Z_{h+k_n+1})| \le K\upsilon^m,\quad \forall n\in \nbN ,
\end{equation}
the other terms in \eqref{Cov_X02m} being dealt with similarly. Similarly to the decomposition \eqref{proof_step_2_CTL_k0_8}, 
we write thanks to the definition \eqref{proof_step_2_CTL_k0_2_0}, and for $h= 0,...,n$:
\begin{eqnarray}
\mbox{Cov}(Z_{0,m}, Z_{h+k_n+1}) 
&=& I_1(h,m,k_n)+I_2(h,m,k_n),\label{Step3_decomp}\\
I_1(h,m,k_n)&:=& \sum_{m+1\le i <\lfloor h/2 \rfloor +k_n+1} \mbox{Cov} \left(Z_{0,m}, [\theta^{(i)}_{h+k_n+1}\circ\epsilon_{h+k_n+1-i}]\right), \nonumber\\
I_2(h,m,k_n)&:=& \sum_{  \max(\lfloor h/2 \rfloor +k_n+1 ,m+1 ) \le i } \mbox{Cov} \left(Z_{0,m}, [\theta^{(i)}_{h+k_n+1}\circ\epsilon_{h+k_n+1-i}]\right), \nonumber
\end{eqnarray}
with the convention that the sum over an empty set is equal to $0$ (which is the case for example for $ I_1(h,m,k_n)$ when $\lfloor h/2 \rfloor \ge m+1$). An argument similar to \eqref{proof_step_2_CTL_k0_9} and \eqref{proof_step_2_CTL_k0_11} yields the following inequalities
\begin{eqnarray*}
|I_1(h,m,k_n)| &\le & K \sum_{m+1\le i <\lfloor h/2 \rfloor +k_n+1} \left\| Z_{0,m} - \nbE (Z_{0,m})\right\|_\beta \\
&& .\left\| \theta^{(i)}_{h+k_n+1}\circ\epsilon_{h+k_n+1-i} - \nbE \left(\theta^{(i)}_{h+k_n+1}\circ\epsilon_{h+k_n+1-i}\right) \right\|_\beta \;
\alpha_\epsilon(\lfloor h/2 \rfloor)^{1-2/\beta},\\
&& h=0,...,n,\\
\left\| Z_{0,m} - \nbE (Z_{0,m})\right\|_\beta &\le & K \nu^m ,
\end{eqnarray*}
leading to the bound
\begin{equation}\label{Step3_I1}
\left|I_1(h,m,k_n)\right| \le K \upsilon^m \alpha_\epsilon\left(\lfloor h/2 \rfloor\right)^{1-2/\beta},\quad h=0,...,n.
\end{equation}
As to $I_2(h,m,k_n)$, we need to split the sum further according to whether $\lfloor h/2 \rfloor +k_n+1$ is larger or less than $m+1$. When it is larger than $m+1$ then an upper bound as in \eqref{proof_step_2_CTL_k0_13} holds, so that we obtain
\begin{eqnarray*}
|I_2(h,m,k_n)| &\le & K \nu^{[\lfloor h/2 \rfloor +k_n+1} \nu^m \nbu_{[\lfloor h/2 \rfloor +k_n +1\ge m+1]} + K \nu^m \nu^m\nbu_{[\lfloor h/2 \rfloor +k_n +1< m+1]},\\
&\le & K \nu^{[\lfloor h/2 \rfloor} \nu^m \nbu_{[\lfloor h/2 \rfloor +k_n +1\ge m+1]} + K \nu^{2m}\nbu_{[h < 2(m+1)]},\ h=0,...,n
\end{eqnarray*}
where we used the fact that $\lfloor h/2 \rfloor +k_n +1< m+1$ implies that $h< 2(m+1)$. Summing \eqref{Step3_I1} and the above inequality on $h=0,...,n$ leads to 
\begin{multline}\label{Step3_sumI}
\sum_{h=0}^n |\mbox{Cov}(Z_{0,m}, Z_{h+k_n+1})| \le  K \nu^m \left[\sum_{h=0}^\infty \alpha_\epsilon\left(\lfloor h/2 \rfloor\right)^{1-2/\beta} \right] \\
+ K \nu^m \left[\sum_{h=0}^\infty \nu^{\lfloor h/2 \rfloor} \right] + 2(m+1)\nu^{2m} \le K \nu^m .
\end{multline}
We now consider $\sum_{h=-n}^{-1} |\mbox{Cov}(Z_{0,m}, Z_{h+k_n+1})|$. We first observe that, by a change of index and stationarity,
\begin{equation}\label{Step3_change_index}
\sum_{h=-n}^{-1} |\mbox{Cov}(Z_{0,m}, Z_{h+k_n+1})| = \sum_{h=1}^{n} |\mbox{Cov}(Z_{k_n+1,m}, Z_{h,m})|=\sum_{h=1}^{k_n+1} + \sum_{h=k_n+2}^{n}.
\end{equation}
Let us first notice that the following bound holds thanks to standard arguments:
$$
|\mbox{Cov}(Z_{h,m}, Z_{0,m})| \le K \nu^m \left[  \alpha_\epsilon\left(\lfloor h/2 \rfloor\right)^{1-2/\beta}  + \upsilon^{\lfloor h/2 \rfloor}  \right],
$$
so that the two sums on the righthandside of \eqref{Step3_change_index} verify the bounds
\begin{eqnarray*}
\sum_{h=1}^{k_n+1} |\mbox{Cov}(Z_{k_n+1,m}, Z_{h,m})| &=& \sum_{h=1}^{k_n+1} |\mbox{Cov}(Z_{k_n+1-h,m}, Z_{0,m})|= \sum_{h=0}^{k_n} |\mbox{Cov}(Z_{h,m}, Z_{0,m})|\\
&\le & \sum_{h=0}^{\infty} |\mbox{Cov}(Z_{h,m}, Z_{0,m})| \le K \nu^m \sum_{h=0}^{\infty}\left[  \alpha_\epsilon\left(\lfloor h/2 \rfloor\right)^{1-2/\beta}  + \upsilon^{\lfloor h/2 \rfloor}  \right]\\
\sum_{h=k_n+2}^{n} |\mbox{Cov}(Z_{k_n+1,m}, Z_{h,m})| &=& \sum_{h=k_n+2}^{n} |\mbox{Cov}(Z_{0,m}, Z_{h-k_n+1,m})| = \sum_{h=1}^{n-k_n-1} |\mbox{Cov}(Z_{0,m}, Z_{h,m})| \\
&\le & \sum_{h=0}^{\infty} |\mbox{Cov}(Z_{h,m}, Z_{0,m})| \le K \nu^m \sum_{h=0}^{\infty}\left[  \alpha_\epsilon\left(\lfloor h/2 \rfloor\right)^{1-2/\beta}  + \upsilon^{\lfloor h/2 \rfloor}  \right]
\end{eqnarray*}
which, plugged into \eqref{Step3_change_index}, yields that $\sum_{h=-n}^{-1} |\mbox{Cov}(Z_{0,m}, Z_{h+k_n+1})|  \le K \nu^m$. In turn, this latter bound associated to \eqref{Step3_sumI} yields \eqref{step3_to_prove2} hence \eqref{step3_to_prove2_0}.
Thus, all in all, \eqref{step_3_CTL_kn} holds.\\
\paragraph{$\diamond$ Step 4:} As in the final step of the proof of Theorem \ref{theo_CLT_vector_k_0}, \eqref{Step2_k_n} and \eqref{step_3_CTL_kn} yield that $\lim_{m\to \infty}\mathfrak{S}^m=\mathfrak{S}$ defined by  \eqref{expression_big_Sigma}, as well as the convergence in disrtibution \eqref{CLT_general}. \zak

\subsection{Proof of Theorem \ref{main_theo_general1}}\label{proof_gen1}
Setting $k=k_n$ in \eqref{bound_S_k_n}, and remembering from \eqref{def_estimator_general} that $\hat{S}_n=\psi_{k_n}(\bar{Y}_n, \bar{Y}_{k_n+1,n})$, we get
$$
||\hat{S}_n - \ln \lambda_0||_2 \le K_S \left( \frac{1}{\lambda_-^{k_n}} \left|\left| \frac{\hat{S}_n}{n}\right|\right|_2 +  \frac{1}{k_n}\right) = K_S \left( \frac{1}{ \lambda_-^{k_n}\sqrt{n}} \left|\left| \frac{\hat{S}_n}{\sqrt{n}}\right|\right|_2 +  \frac{1}{k_n}\right).
$$
The convergence \eqref{CLT_general_Sigma} in Proposition \ref{prop_main_theo1} implies that $\left|\left| {\hat{S}_n}/{\sqrt{n}}\right|\right|_2=\mathrm{O}(1)$. The trick here is to choose $k_n=\lfloor c \ln(n)\rfloor$ where $c<-{1}/({2 \ln \lambda_-})$ which implies that $\lim_{n\to \infty }\lambda_-^{k_n}\sqrt{n}/k_n=\infty $, so that the upper bound on the righthanside of the above inequality is an $\mathrm{O}(1/k_n)=\mathrm{O}(1/\ln(n))$, proving the inequality \eqref{conv_gen_estimator_quad}. Finally, the convergence in probability of $\hat{N}_n$ comes from the a.s. convergence of $\bar{Y}_n$ towards $C_1$ and the convergence in probability of $e^{\hat{S}_n}$ (the latter stemming from the convergence in $\mathbb{L}^2$ of $\hat{S}_n$) towards $\lambda_0$, coupled with Relation \eqref{expr_first_moment}.
\zak
\subsection{Proof of Theorem \ref{main_theo_genera2}}\label{proof_gen2}
The increment theorem of function $\psi_{k_n}$ at point $U(k_n):=(C_1, u_{k_n})$ gives
\begin{equation}\label{Taylor_CLT}
\hat{S}_n-\psi_{k_n}(U(k_n))=\psi_{k_n}\left(U(k_n) + \frac{S_n(k_n)}{n}\right)-\psi_{k_n}(U(k_n))= \nabla \psi_{k_n}\left(U(k_n) + c_n\frac{S_n(k_n)}{n}\right).\frac{S_n(k_n)}{n}
\end{equation}
for some r.v. $c_n\in(0,1)$. Using again the increment theorem to $\nabla \psi_{k_n}$ at some point $U(k_n)$, we get, thanks to the upper bound \eqref{lemma_prop_main3_bis} for $D^2 \psi_{k_n}$ obtained in the upcoming Lemma \ref{lemma_before_prop_main_theo1}, that
\begin{multline}\label{Taylor_CLT_1}
\left| \left| k_n \lambda_0^{k_n}\left[ \nabla \psi_{k_n}\left(U(k_n) + c_n\frac{S_n(k_n)}{n}\right)- \nabla \psi_{k_n}\left(U(k_n) \right) \right] \right|\right|\\
\le k_n \lambda_0^{k_n} \sup_{(a,b)\in \nbR^2} ||D^2 \psi_{k_n}(a,b)||.\; \left| \left| c_n\frac{S_n(k_n)}{n} \right|\right|\\
\le K k_n \lambda_0^{k_n}\frac{1}{\lambda_-^{k_n}}\left| \left| c_n\frac{S_n(k_n)}{n} \right|\right|\le  K k_n \lambda_0^{k_n}\frac{1}{\lambda_-^{k_n}\sqrt{n}}\left| \left| \frac{S_n(k_n)}{\sqrt{n}} \right|\right|
\end{multline}
for some constant $K>0$. Furthermore, for $n$ large enough we have from \eqref{expr_C_1_u_k} as well as Assumption $\mathbf{ (A6)}$ that $|C_1^2-u_{k_n}|=\lambda_0^{k_n} \left|\sum_{j=0}^{k_n} \lambda_0^{-j} \chi_j + \lambda_0(C_1^2-C_2)\right|\ge \lambda_0^{k_n} K_m/2$, so that the definition \eqref{def_chi_Km} of $\varpi_k$ entails that $\varpi_{k_n}(|C_1^2-u_{k_n}|)=1$ and $\varpi_{k_n}'(|C_1^2-u_{k_n}|)=0$. This in turn entails that $\nabla \psi_{k_n}\left(U(k_n) \right)=\left( {2C_1}/({k_n|C_1^2 - u_{k_n}|}), -{1}/({k_n|C_1^2 - u_{k_n}|}) \right)'$ (see the upcoming calculation of $\partial_a\psi_k$ in \eqref{derive_psi} and a similar calculation for $\partial_b\psi_k$ for all $k$ in the proof of Lemma \ref{lemma_before_prop_main_theo1} in Section \ref{subsec:proof_gradient}).
Since $k_n$ obviously satisfies \eqref{Cond_CLT_k_n}, the central limit theorem \eqref{CLT_general} in Proposition \ref{prop_main_theo2} holds, and thanks to the expression \eqref{expr_C_1_u_k} we thus have
\begin{equation}\label{Taylor_CLT_2}
k_n \lambda_0^{k_n} \nabla \psi_{k_n}\left(U(k_n) \right)\frac{S_n(k_n)}{\sqrt{n}}\stackrel{\cal D}{\longrightarrow} {\cal N}
\left( 0, V
 \mathfrak{S}V'
\right),\quad n\to \infty
\end{equation}
where $\mathfrak{S}$ is given by \eqref{expression_big_Sigma} and $V$ is given by \eqref{def_sigma_gen_V}. Now, the choice of $k_n:=\lfloor c \ln n\rfloor$ where $c<-{1}/({2 \ln \lambda_-)}$ implies that $\lim_{n\to \infty} \lambda_-^{k_n}\sqrt{n}=\infty$, and $\left| \left| {S_n(k_n)}/{\sqrt{n}} \right|\right|$ is bounded in $\mathbb{L}^2$ thanks to \eqref{CLT_general_Sigma}, so that \eqref{Taylor_CLT_1} implies that
\begin{multline*}\label{ineq_later}
\left| \left| k_n \lambda_0^{k_n}\left[ \nabla \psi_{k_n}\left(U(k_n) + c_n\frac{S_n(k_n)}{n}\right)- \nabla \psi_{k_n}\left(U(k_n) \right) \right] .\frac{S_n(k_n)}{\sqrt{n}}\right|\right|\\
\le 
\left| \left| k_n \lambda_0^{k_n}\left[ \nabla \psi_{k_n}\left(U(k_n) + c_n\frac{S_n(k_n)}{n}\right)- \nabla \psi_{k_n}\left(U(k_n) \right) \right] \right|\right|.\left| \left| \frac{S_n(k_n)}{\sqrt{n}} \right|\right|\\
\le K k_n \lambda_0^{k_n}\frac{1}{\lambda_-^{k_n}\sqrt{n}}\left| \left| \frac{S_n(k_n)}{\sqrt{n}} \right|\right|^2 \stackrel{\mathbb{L}^1}{\longrightarrow} 0
\end{multline*}
as $n\to\infty$ (in the above inequalities, it is convenient to take a multiplicative matrix norm $||\cdot||$ which verifies $||Mv|| \le ||M||.||v||$ for all matrix $M$ and vector $v$). Coupled with \eqref{Taylor_CLT_2},  we then get the following asymptotic normality
\begin{equation}\label{Taylor_CLT_3}
k_n \lambda_0^{k_n} \nabla \psi_{k_n}\left(U(k_n) + c_n\frac{S_n(k_n)}{n}\right).\frac{S_n(k_n)}{\sqrt{n}}\stackrel{\cal D}{\longrightarrow} {\cal N}
\left( 0, V
 \mathfrak{S}V'
\right),\quad n\to \infty .
\end{equation}
Having considered the righthandside in \eqref{Taylor_CLT}, we now study its lefthandside. Since we saw that $\varpi_{k_n}(|C_1^2-u_{k_n}|)=1$ for $n$ large enough, we thus have that $ \psi_{k_n}(U(k_n))={k^{-1}_n}\ln |C_1^2- u_{k_n} |$ so that, from \eqref{expr_C_1_u_k}:
\begin{multline}
k_n \lambda_0^{k_n} [\psi_{k_n}(U(k_n))- \ln \lambda_0]= \lambda_0^{k_n} \ln \left| \sum_{j=0}^{k_n} \lambda_0^{-j} \chi_j + \lambda_0(C_1^2-C_2)\right|\\
= \lambda_0^{k_n} \ln \left| \sum_{j=0}^{\infty} \lambda_0^{-j} \chi_j + \lambda_0(C_1^2-C_2)\right| + \lambda_0^{k_n} \ln \left|1 -  \frac{\sum_{j=k_n+1}^{\infty} \lambda_0^{-j} \chi_j}{\left| \sum_{j=0}^{\infty} \lambda_0^{-j} \chi_j + \lambda_0(C_1^2-C_2)\right|}
\right|.\label{Taylor_CLT_4}
\end{multline}
The assumption $\nu_h= \mathrm{O}(\zeta^h)$ entails from the definition \eqref{def_chi} and $\zeta<\lambda_-\le \lambda_0$ that $\chi_j=\mathrm{O}(\zeta^j)$ so that $\sum_{j=k_n+1}^{\infty} \lambda_0^{-j} \chi_j=\mathrm{O}\left(  \left( {\zeta}/{\lambda_0}\right)^{k_n} \right)$, a quantity that tends to $0$ as $n\to\infty$. We thus deduce from \eqref{Taylor_CLT_4} and the classical expansion $\ln (1+u)= \mathrm{O}(u)$ as $u\to 0$ that
\begin{equation}\label{Taylor_CLT_5}
k_n \lambda_0^{k_n} [\psi_{k_n}(U(k_n))- \ln \lambda_0]=\lambda_0^{k_n} \ln \left| \sum_{j=0}^{\infty} \lambda_0^{-j} \chi_j + \lambda_0(C_1^2-C_2)\right| + \mathrm{O}(\zeta^{k_n}),\quad n\to\infty .
\end{equation}
Gathering \eqref{Taylor_CLT} and \eqref{Taylor_CLT_5} yields
\begin{multline*}
\hat{S}_n=\ln \lambda_0 + \frac{1}{k_n} \ln \left| \sum_{j=0}^\infty \lambda_0^{-j} \chi_j + \lambda_0(C_1^2-C_2)\right| \\
+ \frac{1}{\sqrt{n} k_n \lambda_0^{k_n}}\left[ k_n \lambda_0^{k_n} \nabla \psi_{k_n}\left(U(k_n) + c_n\frac{S_n(k_n)}{n}\right).\frac{S_n(k_n)}{\sqrt{n}} +  \mathrm{O}(\sqrt{n}\zeta^{k_n})\right].
\end{multline*}
Now, the assumption $c>-{1}/({2 \ln \zeta})$ implies that $\lim_{n\to\infty} \sqrt{n}\zeta^{k_n}=0$, so that the expansion \eqref{expansion_Sn} holds with $Z_n:= k_n \lambda_0^{k_n} \nabla \psi_{k_n}\left(U(k_n) + c_n{S_n(k_n)}/{n}\right).{S_n(k_n)}/{\sqrt{n}} +  \mathrm{O}(\sqrt{n}\zeta^{k_n})$, which converges in distribution towards ${\cal N}(0,\sigma)$ thanks to  \eqref{Taylor_CLT_3}, with $\sigma$ defined as \eqref{def_sigma_gen}. Also note that the assumption $c<-{1}/({2 \ln \lambda_-})$ implies that $ Z_n/{\sqrt{n}k_n \lambda_0^{k_n}} $ is indeed as second asymptotic term, as indeed we have $\lim_{n\to \infty} \sqrt{n} \lambda_0^{k_n}=\infty$.
\zak

\begin{center}
{\bf Acknowledgements}
\end{center}
The authors thank the anonymous referee and the Associate Editor  for the very careful reading of the paper.

\bibliographystyle{apalike}
\bibliography{biblio-INAR}



\end{document}